\documentclass[11pt]{amsart}
\usepackage{geometry}             
\geometry{a4paper}                   
\usepackage{color}
\usepackage[utf8]{inputenc}
\usepackage{fontenc}
\usepackage{enumitem}
\usepackage[backref=page]{hyperref}
\usepackage[nameinlink, capitalize,noabbrev]{cleveref}
\usepackage{graphicx}
\usepackage{amssymb}
\usepackage{amsthm}
\usepackage{epstopdf}
\usepackage[all]{xy}
\DeclareGraphicsRule{.tif}{png}{.png}{`convert #1 `dirname #1`/`basename #1 .tif`.png}
\usepackage[numbers]{natbib}
\definecolor{violet}{rgb}{0.0,0.2,0.7}
\definecolor{rouge2}{rgb}{0.8,0.0,0.2}
\hypersetup{
    bookmarks=true,         
    unicode=false,          
    pdftoolbar=true,        
    pdfmenubar=true,        
    pdffitwindow=false,     
    pdfstartview={FitH},    
    pdftitle={},            
    pdfauthor={},           
    colorlinks=true,        
    linkcolor=rouge2,       
    citecolor=violet,       
    filecolor=black,        
    urlcolor=cyan}          

\newtheorem{thm}{Theorem}[section]
\newtheorem{lemma}[thm]{Lemma}
\newtheorem{prop}[thm]{Proposition}
\newtheorem{cor}[thm]{Corollary}

\theoremstyle{definition}
\newtheorem{definition}[thm]{Definition}
\newtheorem{example}[thm]{Example}

\theoremstyle{remark}
\newtheorem{rem}[thm]{Remark}

\title{On the fundamental groups of effective compact Kähler orbifolds with nef anticanonical bundle}
\author{Zhining Liu}
\address{Univ Rennes, CNRS, IRMAR--UMR 6625, F-35000 Rennes, France}
\email{zhining.liu@univ-rennes1.fr}
\DeclareMathOperator{\ric}{Ric}
\DeclareMathOperator{\Ricci}{Ricci}
\DeclareMathOperator{\vol}{vol}
\DeclareMathOperator{\Aut}{Aut}
\DeclareMathOperator{\diam}{diam}
\DeclareMathOperator{\Monge}{M}

\begin{document}

\begin{abstract}
We show that the orbifold fundamental group of an effective compact Kähler orbifold with nef anticanonical bundle has polynomial growth, which generalizes M.Păun's results for manifolds \cite[Theorem~1,Theorem~2]{Paun97}
\end{abstract}
\maketitle
\tableofcontents

\section{introduction}
In \citep{Paun97}, Mihai Păun proved the following results
\begin{thm}[\protect{\citep[Theorem~1]{Paun97}}]
Let $(X,\omega)$ be a compact Kähler manifold with its anticanonical bundle $-K_X$ nef. We have the fundamental group $\pi_1(X)$ has polynomial growth.
\end{thm}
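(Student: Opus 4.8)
The plan is to produce, for every $\varepsilon>0$, an \emph{almost Kähler--Einstein} metric in the fixed Kähler class $[\omega]$ by solving a complex Monge--Amp\`ere equation, and then to run the classical volume-comparison argument of Milnor on the universal cover, letting $\varepsilon\to0$ at a rate adapted to the word length. Since $X$ is compact, $\pi_1(X,x_0)$ is generated by finitely many loops $\gamma_1,\dots,\gamma_p$ based at $x_0$, and polynomial growth of a group is independent of the finite generating set, so it suffices to bound the corresponding growth function. Let $\pi\colon\widetilde X\to X$ be the universal cover, on which $\Gamma=\pi_1(X)$ acts freely, properly discontinuously and cocompactly by deck transformations, and fix $\widetilde x_0\in\pi^{-1}(x_0)$. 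For any $\Gamma$-invariant Riemannian metric $\widetilde g$ on $\widetilde X$, writing $L=\max_i\operatorname{length}_{\widetilde g}(\widetilde\gamma_i)$ and $\delta=\min_{e\ne g\in\Gamma}d_{\widetilde g}(\widetilde x_0,g\widetilde x_0)>0$, the balls $B(g\widetilde x_0,\delta/2)$, $g\in\Gamma$, are pairwise disjoint and all of equal volume (the $g$ act isometrically), while a word $g$ of length $\le k$ satisfies $d_{\widetilde g}(\widetilde x_0,g\widetilde x_0)\le kL$; hence
\[
\#\{g\in\Gamma:\ |g|\le k\}\ \le\ \frac{\vol_{\widetilde g}\!\left(B(\widetilde x_0,\ kL+\tfrac{\delta}{2})\right)}{\vol_{\widetilde g}\!\left(B(\widetilde x_0,\tfrac{\delta}{2})\right)} .
\]
It is therefore enough to bound the right-hand side by a polynomial in $k$ for a well-chosen $\widetilde g$.

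\textbf{The almost Kähler--Einstein metrics.} Because $-K_X$ is nef and $[\omega]$ is Kähler, the class $c_1(X)+\varepsilon[\omega]$ lies in the Kähler cone for every $\varepsilon\in(0,1)$; I choose a Kähler form $S_\varepsilon$ in it. Solving $\ric(\eta)=S_\varepsilon-\varepsilon\eta$ for a Kähler metric $\eta$ in the fixed class $[\omega]$ is, via the $i\partial\bar\partial$-lemma, equivalent to a complex Monge--Amp\`ere equation $(\omega+i\partial\bar\partial u)^n=e^{\varepsilon u+h_\varepsilon}\omega^n$, whose exponent carries the favourable sign $+\varepsilon u$; so the Aubin--Yau theorem (continuity method and maximum principle) yields a unique Kähler metric $\omega_\varepsilon\in[\omega]$ with
\[
\ric(\omega_\varepsilon)=S_\varepsilon-\varepsilon\omega_\varepsilon\ \ge\ -\varepsilon\,\omega_\varepsilon,\qquad \int_X\omega_\varepsilon^{\,n}=\int_X\omega^{\,n} .
\]
The equality of total volumes, forced by the cohomology class being fixed, is the essential uniform bound.

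\textbf{Comparison and counting.} Set $\widetilde\omega_\varepsilon=\pi^*\omega_\varepsilon$, a complete $\Gamma$-invariant Kähler metric on $\widetilde X$ with $\ric(\widetilde\omega_\varepsilon)\ge-\varepsilon\widetilde\omega_\varepsilon$, i.e. Ricci tensor $\ge-\varepsilon$ times the metric. By the Bishop--Gromov inequality in real dimension $2n$, on the scale $0<\rho\le r\le \varepsilon^{-1/2}$ the metric $\widetilde\omega_\varepsilon$ is volume doubling with a constant depending only on $n$, hence $\vol_{\widetilde\omega_\varepsilon}(B(\widetilde x_0,r))\le C(n)\,(r/\rho)^{2n+1}\vol_{\widetilde\omega_\varepsilon}(B(\widetilde x_0,\rho))$. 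Feeding $\widetilde g=\widetilde\omega_\varepsilon$, $L=L(\varepsilon)$, $\delta=\delta(\varepsilon)$ into the display above, I obtain, as soon as $kL(\varepsilon)+\delta(\varepsilon)/2\le\varepsilon^{-1/2}$,
\[
\#\{g\in\Gamma:\ |g|\le k\}\ \le\ C(n)\left(\frac{2kL(\varepsilon)+\delta(\varepsilon)}{\delta(\varepsilon)}\right)^{\!2n+1} .
\]

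\textbf{The main obstacle.} The remaining task --- and, I expect, the genuine difficulty --- is to choose $\varepsilon=\varepsilon(k)\to0$ so that $kL(\varepsilon(k))\lesssim\varepsilon(k)^{-1/2}$ while the right-hand side stays polynomial in $k$. This amounts to quantitative non-degeneration of $\omega_\varepsilon$ as $\varepsilon\to0$: an at most polynomial-in-$\varepsilon^{-1}$ upper bound for the $\omega_\varepsilon$-lengths $L(\varepsilon)$ of the fixed generating loops (equivalently, for $\diam(X,\omega_\varepsilon)$), together with matching lower bounds for the systole $\delta(\varepsilon)$ and for the small-ball volume $\vol_{\widetilde\omega_\varepsilon}(B(\widetilde x_0,\delta(\varepsilon)/2))$. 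I expect these to come from the uniform total-volume identity $\int_X\omega_\varepsilon^{\,n}=\int_X\omega^{\,n}$, the uniform Ricci lower bound, the Bishop--Gromov inequality applied downstairs on $(X,\omega_\varepsilon)$, and Yau's a priori estimates for the Monge--Amp\`ere equation. Once such bounds are available, taking $\varepsilon(k)$ to be a fixed negative power of $k$ gives $\#\{g\in\Gamma:\ |g|\le k\}=O(k^{N})$ with $N$ independent of $k$, i.e. $\pi_1(X)$ has polynomial growth.
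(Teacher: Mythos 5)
Your first two steps (the Aubin--Yau construction of $\omega_\epsilon\in[\omega]$ with $\mathrm{Ricci}_{\omega_\epsilon}\ge-\epsilon\omega_\epsilon$, and the idea of letting $\epsilon$ depend on the word length $k$ in a Bishop--Gromov count) are indeed the right ingredients, and the counting route you sketch is essentially Păun's original one. But the part you label ``the main obstacle'' is the actual content of the proof, and the specific bounds you hope for are not the ones that can be obtained. There is no lower bound, uniform or polynomial in $\epsilon^{-1}$, on the systole $\delta(\epsilon)$ or on the volume of the small balls $B(\tilde x_0,\delta(\epsilon)/2)$ upstairs: the metrics $\omega_\epsilon$ are not bounded below by any fixed metric (Yau's a priori estimates degenerate as $\epsilon\to0$), and in fact the expected picture is collapsing, so a Milnor count based on disjoint balls of radius $\delta(\epsilon)/2$ cannot be closed this way. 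The two quantities that \emph{are} controlled, and which your proposal does not identify, are cohomological: (i) since $\omega_\epsilon\in[\omega]$, the mass $\int_U\omega_\epsilon\wedge\omega^{n-1}$ of a fixed compact set $U\subset\tilde X$ is bounded independently of $\epsilon$, which yields a closed subset $U_{\epsilon,\delta}\subset U$ of almost full $\omega$-volume whose $\omega_\epsilon$-diameter is bounded \emph{uniformly} in $\epsilon$ (this is \cref{volume}, i.e.\ Lemma~1.3 of \cite{DPS93}); applied to a compact set containing the fundamental domain and meeting its $\gamma_j$-translates, it gives base points with $d_{\omega_\epsilon}(\tilde x,\gamma_j\tilde x)\le C$ with $C$ independent of $\epsilon$ --- far better than the polynomial-in-$\epsilon^{-1}$ bound you ask for, and not a consequence of the Monge--Ampère estimates alone; (ii) $\int_X\omega_\epsilon^n=\int_X\omega^n$ lower-bounds the $\omega_\epsilon$-volume of (the good part of) a fundamental domain, so the denominator in the Milnor count should be translates of the fundamental domain, not small metric balls, which removes any need for a systole bound. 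With (i) and (ii), taking $\epsilon\sim k^{-2}$ and Bishop's absolute volume bound closes the count.

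Note also that the present paper does not close the argument by counting at all: after \cref{sequence} and \cref{volume} it rescales so that the displacement of the generators falls below the constant $\alpha(n)$ of the generalized Margulis lemma (\cref{mag}, after \cite{Tao12}), concludes that $\pi_1$ is virtually nilpotent, and then invokes Gromov's theorem to get polynomial growth. Either ending works once the uniform displacement bound (i) is in place; without it, your scheme has no mechanism forcing $kL(\epsilon(k))$ to stay below the comparison scale $\epsilon(k)^{-1/2}$ while keeping the ratio polynomial, so as written the proof does not go through.
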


\begin{thm}[\protect{\citep[Theorem~2]{Paun97}}]
\label{opthm2}
Let $X$ be a projective manifold of dimension $n$ with $-K_X$ nef. Then the fundamental group $\pi_1(X)$ is virtually Abelian.
\end{thm}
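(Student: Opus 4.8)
The plan is to use the polynomial growth of $\pi_1(X)$ to reduce to a torsion-free nilpotent fundamental group, and then to exploit the Albanese map to force it to be abelian. Here are the steps I would carry out.

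\textbf{Reduction and setup.} First I would invoke \cite[Theorem~1]{Paun97}: $\pi_1(X)$ has polynomial growth, so by Gromov's theorem it is virtually nilpotent, hence contains a finite-index torsion-free nilpotent subgroup. Replacing $X$ by the corresponding finite étale cover (still projective with nef anticanonical bundle, as $K$ pulls back under an étale map), I may assume $G:=\pi_1(X)$ is torsion-free nilpotent, and I will argue by induction on $n=\dim X$ that $G$ is abelian, the case $n=0$ being trivial. By Q.~Zhang's theorem the Albanese map $\alpha\colon X\to A:=\mathrm{Alb}(X)$ is surjective, and by J.~Cao's theorem (Cao--Höring in the compact Kähler case) it is a smooth fibration; with connected fibres $F$ and $\pi_2(A)=0$ this gives a short exact sequence $1\to\pi_1(F)\to G\to\pi_1(A)\cong\mathbb Z^{2q}\to1$ with $q=\dim A=h^{1,0}(X)$, so $b_1(X)=2q=b_1(A)$. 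Setting $K$ to be the normal image of $\pi_1(F)$ in $G$, it suffices to prove $K=1$; when $q=0$ this is automatic (then $G^{\mathrm{ab}}$ is finite, so the torsion-free nilpotent group $G$ is trivial), so I assume $q\ge1$, note $\dim F=n-q<n$, and note that $-K_F=(-K_X)|_F$ is nef (restriction of a nef class, with $K_A=\mathcal O_A$).

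\textbf{The crucial point: $q(F)=0$.} I would observe that the monodromy $\rho\colon\pi_1(A)\to\mathrm{GL}(H^1(F,\mathbb Q))$ of $R^1\alpha_*\mathbb Q$ is dual to the conjugation action of $G/K\cong\pi_1(A)$ on $K^{\mathrm{ab}}\otimes\mathbb Q\cong H_1(F,\mathbb Q)$, and that this action is unipotent because $G$ is nilpotent (the descending chain $K\supseteq[G,K]\supseteq[G,[G,K]]\supseteq\cdots$ reaches $1$ in finitely many steps and is $G$-stable with trivial graded action). Since $\pi_1(A)$ is abelian, $\rho(\pi_1(A))$ is a commuting family of unipotent matrices, hence has a nonzero common fixed vector unless $H^1(F,\mathbb Q)=0$; thus $H^0(A,R^1\alpha_*\mathbb Q)=H^1(F,\mathbb Q)^{\rho}\ne0$ unless $q(F)=0$. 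But $\alpha$ is a smooth projective morphism, so by Deligne's theorem its Leray spectral sequence degenerates at $E_2$, whence $b_1(X)=b_1(A)+\dim H^0(A,R^1\alpha_*\mathbb Q)$; as $b_1(X)=b_1(A)$ this forces $H^0(A,R^1\alpha_*\mathbb Q)=0$, hence $H^1(F,\mathbb Q)=0$ and $q(F)=b_1(F)/2=0$.

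\textbf{Closing the induction.} By the induction hypothesis $\pi_1(F)$ is virtually abelian, and since it is torsion-free and nilpotent (a subgroup of $G$) it is in fact abelian: a normal finite-index abelian subgroup $B$ is automatically central, because conjugation by any element acts on $B\otimes\mathbb Q$ unipotently (by nilpotency) and with finite order (a power of the element lies in the abelian group $B$ and hence centralises it), so it acts trivially; then the group has finite central index and, by Schur's theorem, finite — hence trivial — commutator subgroup. Thus $\pi_1(F)\cong\mathbb Z^s$ with $s=b_1(F)=0$, so $K=\pi_1(F)=1$ and $G\cong\mathbb Z^{2q}$ is abelian. This closes the induction and shows that $\pi_1(X)$ is virtually abelian.

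\textbf{Where the difficulty lies.} The step I expect to be the real obstacle is the vanishing $q(F)=0$: it is precisely here that the reduction to a torsion-free nilpotent cover is indispensable — it already fails for bielliptic surfaces, whose general Albanese fibre is an elliptic curve — and it hinges on playing the nilpotency of $G$ (which makes the fibrewise monodromy unipotent) against the degeneration of the Leray spectral sequence and the identity $b_1(X)=b_1(\mathrm{Alb}(X))$. The argument also leans on the substantial structural input that the Albanese map of a projective manifold with nef anticanonical bundle is a smooth fibration; if one wished to avoid it, one would instead have to use the canonical bundle formula together with the nef-ness of $-K_X$ to exclude multiple fibres of $\alpha$ before carrying out the topological step.
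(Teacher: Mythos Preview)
Your argument is correct, but it follows a genuinely different route from the one the paper records for P\u{a}un's theorem and then carries out in detail in the orbifold setting (\cref{abe}).

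The paper, after reducing to a finite cover with nilpotent fundamental group, needs only the \emph{surjectivity} of the Albanese map (Zhang) and then invokes Campana's comparison result \cite[Th\'eor\`eme~2.2]{cam98} as a black box: for a compact K\"ahler manifold with nilpotent $\pi_1$ and surjective Albanese map one has $\pi_1(X)/\pi_1(X)_{\mathrm{tor}}\cong\pi_1(\mathrm{Alb}(X))$, and a short lemma (\cite[Lemme~A.0.1]{claudon07}) then yields virtual abelianity. No smoothness of $\alpha$, no Leray degeneration, and no induction on the dimension are used.

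You instead bring in Cao's much more recent and substantially harder theorem that $\alpha$ is a smooth fibration, and then run an explicit inductive argument: Deligne's $E_2$-degeneration together with $b_1(X)=b_1(\mathrm{Alb}(X))$ forces $H^0(A,R^1\alpha_*\mathbb{Q})=0$; the nilpotency of $G$ makes the monodromy on $H^1(F,\mathbb{Q})$ unipotent, so this vanishing implies $q(F)=0$; the induction then kills $\pi_1(F)$. This has the virtue of unpacking precisely the mechanism that Campana's theorem encapsulates, and your unipotent-monodromy/Leray step is a clean way to see why $q(F)=0$. The cost is that you lean on Cao's structural theorem, which was not available in 1997 and is considerably deeper than the surjectivity statement the paper actually needs; in effect you trade one black box (Campana) for another (Cao) plus a transparent topological argument.
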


The proof of \cref{opthm2} is by showing the Abanese morphism of $X$ is surjective then combining a result (\citep[Théorèm~2.2]{cam98}) comparing the fundamental group of $X$ and the fundamental group of $\mathrm{Alb}(X)$. It is now known that $\alpha_X:X\rightarrow \mathrm{Alb}(X)$ is surjective when $X$ is compact Kähler (\emph{cf}\citep[Theorem~1.7]{paun17}). Hence the result actually holds for compact Kähler manifolds $X$ with $-K_X$ nef.

An orbifold is a geometric object whose local model is $\mathbb{C}^n/G$ where $G$ is a finite subgroup of $\mathrm{Aut}(\mathbb{C}^n)$. As a natural generalization of manifolds, many results of manifolds whose proof only involves differential geometry are generalized to orbifold case by adapting existing proof for manifold case.

As Păun's argument used only results in complex analytic geometry and differential geometry, one may consider adapt his arguments to the orbifold case and expect to get the same results for campact Kähler orbifolds. We prove the follwoing

\begin{thm}[=\cref{Mt}, Main Theorem]
\label{mt}
 Let $(\mathcal{X},\omega)$ be an effective Kähler orbifold such that $X=|\mathcal{X}|$ is compact. If the anti-canonical bundle $K_{\mathcal{X}}^{-1}$ is nef, then $\pi_1^{\mathrm{orb}}(\mathcal{X})$ has polynomial growth.
\end{thm}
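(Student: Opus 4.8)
The plan is to transport Păun's argument to the orbifold category: reduce polynomial growth of $\pi_1^{\mathrm{orb}}(\mathcal{X})$ to a volume‑growth estimate on the orbifold universal cover, and obtain that estimate from the Bishop--Gromov comparison theorem, fed with orbifold Kähler metrics of almost non‑negative Ricci curvature coming from the nef hypothesis. (Recall that, by Milnor, a complete manifold with $\ric\ge 0$ has $\pi_1$ of polynomial growth; here nefness will only give, for each $\varepsilon>0$, a metric with $\ric\ge-\varepsilon\omega$, and the whole point will be to recover polynomial — not merely exponential — growth from such a family.) Concretely, first I would use that an effective compact Kähler orbifold carries an orbifold Kähler form $\omega$ and a well‑defined orbifold canonical bundle with $c_1^{\mathrm{orb}}(\mathcal{X})=c_1^{\mathrm{orb}}(K_{\mathcal{X}}^{-1})$, and that nefness of $K_{\mathcal{X}}^{-1}$ furnishes, for every $\varepsilon>0$, an orbifold hermitian metric on $K_{\mathcal{X}}^{-1}$ whose Chern curvature $\theta_\varepsilon$ (an orbifold $(1,1)$‑form in $c_1^{\mathrm{orb}}(\mathcal{X})$) satisfies $\theta_\varepsilon\ge-\varepsilon\,\omega$. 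Then I would invoke the Calabi--Yau theorem for compact Kähler orbifolds — solvability of the complex Monge--Ampère equation, Yau's a priori estimates being run in orbifold charts, where the finite quotient singularities do not obstruct the interior elliptic theory — to produce an orbifold Kähler metric $\omega_\varepsilon\in[\omega]$ with $\ric(\omega_\varepsilon)=\theta_\varepsilon\ge-\varepsilon\,\omega$. These metrics must be kept under \emph{quantitative} control as $\varepsilon\to0$: one needs a two‑sided comparison $c_\varepsilon\,\omega\le\omega_\varepsilon\le C_\varepsilon\,\omega$ and a positive $\rho_\varepsilon$ bounding below both the minimal displacement and small‑ball volumes, with $C_\varepsilon$, $1/c_\varepsilon$, $1/\rho_\varepsilon$ growing at most polynomially in $1/\varepsilon$ — extracted from Yau's estimates and a suitable choice of $\theta_\varepsilon$ (with controlled potentials).

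Next, let $\pi\colon\widetilde{\mathcal{X}}\to\mathcal{X}$ be the orbifold universal cover, a simply connected complete Riemannian orbifold on which $\Gamma:=\pi_1^{\mathrm{orb}}(\mathcal{X})$ acts properly discontinuously, cocompactly and by isometries. By the Milnor--Švarc lemma $\Gamma$ is finitely generated and its word‑growth function is equivalent to $r\mapsto N(r):=\#\{\gamma\in\Gamma:\ d_{\pi^*\omega}(\tilde p,\gamma\tilde p)\le r\}$ for a fixed basepoint $\tilde p$ of trivial isotropy, so it suffices to bound $N(r)$ polynomially. Pulling back $\omega_\varepsilon$ gives $\ric(\pi^*\omega_\varepsilon)\ge-\varepsilon\,\pi^*\omega\ge-(2n-1)\kappa_\varepsilon^2\,\pi^*\omega_\varepsilon$ with $\kappa_\varepsilon^2\asymp\varepsilon/c_\varepsilon$, and $d_{\pi^*\omega}\le r$ forces $d_{\pi^*\omega_\varepsilon}\le\sqrt{C_\varepsilon}\,r$. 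Applying the Bishop--Gromov volume comparison for complete Riemannian orbifolds (valid since the singular locus has real codimension $\ge 2$ and orbifolds are metric--measure spaces of the required kind) together with the classical Milnor packing argument — the $\Gamma$‑translates of a small ball about $\tilde p$ are pairwise disjoint — yields
\[
N(r)\ \le\ \frac{\vol_{\omega_\varepsilon}\!\big(B(\tilde p,\sqrt{C_\varepsilon}\,r+\rho_\varepsilon)\big)}{\min_{\gamma}\vol_{\omega_\varepsilon}\!\big(B(\gamma\tilde p,\rho_\varepsilon)\big)}\ \le\ \frac{V_{\kappa_\varepsilon}\!\big(2\sqrt{C_\varepsilon}\,r+2\rho_\varepsilon\big)}{V_{\kappa_\varepsilon}(\rho_\varepsilon)},
\]
where $V_\kappa(R)$ denotes the volume of an $R$‑ball in the $2n$‑dimensional space form of curvature $-\kappa^2$.

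Finally I would let $\varepsilon$ depend on $r$. For fixed $\varepsilon$ the bound above is only exponential in $r$, since $V_\kappa(R)\asymp\kappa^{-2n}e^{(2n-1)\kappa R}$ for large $R$; but as long as $2\sqrt{C_\varepsilon}\,r\lesssim 1/\kappa_\varepsilon$ one has $V_{\kappa_\varepsilon}(2\sqrt{C_\varepsilon}\,r+2\rho_\varepsilon)/V_{\kappa_\varepsilon}(\rho_\varepsilon)\lesssim(\sqrt{C_\varepsilon}\,r/\rho_\varepsilon)^{2n}$, so $N(r)\le C'_\varepsilon\,r^{2n}$ whenever $r\lesssim 1/(\sqrt{C_\varepsilon}\,\kappa_\varepsilon)$, with $C'_\varepsilon$ polynomial in the constants of the first step. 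The decisive point — exactly as in Păun's manifold argument — is that $\kappa_\varepsilon\to0$ while $C_\varepsilon,1/c_\varepsilon,1/\rho_\varepsilon$ degrade only polynomially in $1/\varepsilon$, so a choice $\varepsilon(r)\asymp r^{-p}$ with $p$ large enough makes $1/(\sqrt{C_{\varepsilon(r)}}\,\kappa_{\varepsilon(r)})\ge r$ while $C'_{\varepsilon(r)}\le\mathrm{poly}(r)$; this gives $N(r)\le C\,r^{N}$ with $C,N$ depending only on $\mathcal{X}$, i.e. $\pi_1^{\mathrm{orb}}(\mathcal{X})$ has polynomial growth.

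I expect the first step to be the main obstacle. Monge--Ampère theory on compact Kähler orbifolds is in principle a routine adaptation of the manifold case, but what is genuinely needed here is its \emph{effective} form — Yau's $C^0$ and $C^2$ estimates with an explicit polynomial dependence on $\varepsilon$, hence polynomial control of the metric distortion, of small‑ball volumes and of the minimal displacement along the family $\{\omega_\varepsilon\}$ — and one must verify that the quotient singularities do not degrade these constants. Once that is secured, the remaining steps (orbifold Bishop--Gromov together with the Milnor packing and the optimization in $\varepsilon$) are faithful transcriptions of the manifold proof.
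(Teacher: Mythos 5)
Your outline fails at exactly the point you flag as ``the main obstacle'', and that obstacle is not a technicality one can expect to remove: the quantitative control you require on the family $\{\omega_\epsilon\}$ --- a two-sided comparison $c_\epsilon\,\omega\le\omega_\epsilon\le C_\epsilon\,\omega$ and lower bounds on minimal displacement and small-ball volumes, all with at most polynomial degradation in $1/\epsilon$ --- does not follow from Yau's estimates and is not known. For the equation $\log\Monge(\phi_\epsilon)=\epsilon\phi_\epsilon+f_\epsilon$ the maximum principle only gives $\|\phi_\epsilon\|_\infty\le\|f_\epsilon\|_\infty/\epsilon$, and $f_\epsilon$ itself comes from the (completely uncontrolled) choice of nef metric $h_\epsilon$ on $K_{\mathcal X}^{-1}$; the $C^2$ estimate then feeds on $\|\phi_\epsilon\|_\infty$ exponentially. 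So there is no bound, polynomial or otherwise, on your $C_\epsilon$, $1/c_\epsilon$, $1/\rho_\epsilon$, and your optimization $\epsilon(r)\asymp r^{-p}$ has nothing to run on. This is precisely the difficulty that Demailly--Peternell--Schneider and P\u{a}un circumvent, and the paper does the same: since $\omega_\epsilon$ is \emph{cohomologous} to $\omega$, the quantity $\int_U\omega_\epsilon\wedge\omega^{n-1}$ is bounded independently of $\epsilon$, and this alone yields (Lemma~\ref{volume}, the orbifold version of \cite[Lemma~1.3]{DPS93}) a subset $U_{\epsilon,\delta}\subset U$ of almost full $\omega$-volume with $\omega_\epsilon$-diameter bounded independently of $\epsilon$; applied to a compact set meeting all its translates by a fixed finite generating set, this gives a uniform-in-$\epsilon$ bound on the $\omega_\epsilon$-displacement of the generators without any elliptic estimates. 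That uniform displacement bound is the ingredient your argument is missing and cannot produce by the route you propose.

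Beyond that gap, your endgame also diverges from the paper's. You aim to reproduce P\u{a}un's original counting: Milnor--\v{S}varc, orbifold Bishop--Gromov packing, and a choice of $\epsilon$ depending on $r$ to get $N(r)\le C r^{N}$ directly. The paper instead rescales $\tilde\omega_\epsilon=\frac{\epsilon}{n-1}\omega_\epsilon$, so that $\ric\ge-(n-1)$ while the displacements of the generators (uniformly bounded by the DPS-type lemma) shrink to $0$, and then invokes the orbifold packing lemma (Lemma~\ref{pack}), Bishop--Gromov (Theorem~\ref{Gromov-Bishop}) and the Breuillard--Green--Tao generalized Margulis lemma (Lemma~\ref{mag}) to conclude that $\pi_1^{\mathrm{orb}}(\mathcal X)$ is virtually nilpotent, finishing with Gromov's polynomial-growth theorem. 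Your counting route could in principle be repaired by substituting the cohomological diameter lemma for your effective-estimates step (and by counting translates of the fundamental domain, whose $\omega_\epsilon$-volume equals $\vol_\omega(X)$ by cohomology invariance, instead of invoking an uncontrolled $\rho_\epsilon$), but as written the proposal does not prove the theorem.
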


\begin{thm}[=\cref{abe}]
\label{mt2}
Let $(X,\Delta)$ be a projective orbifold pair (see \cref{pair}) with $-(K_X+\Delta)$ nef. The orbifold fundamental group $\pi_1(X,\Delta)$ is virtually Abelian.
\end{thm}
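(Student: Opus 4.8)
The plan is to follow Păun's route to Theorem~2, grafting it onto the Main Theorem. Since a projective orbifold pair is in particular an effective compact Kähler orbifold, \cref{mt} applies and shows that $\pi_1(X,\Delta)=\pi_1^{\mathrm{orb}}(\mathcal X)$ has polynomial growth; by Gromov's theorem it is therefore virtually nilpotent. The task then reduces to promoting ``virtually nilpotent'' to ``virtually abelian'', and the mechanism for this is the orbifold Albanese and its surjectivity.

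I would recall Campana's orbifold Albanese morphism $\mathrm{alb}_{(X,\Delta)}\colon X\to \mathrm{Alb}(X,\Delta)$, where $\mathrm{Alb}(X,\Delta)$ is the complex torus (an abelian variety, as $X$ is projective) associated to the space of orbifold holomorphic $1$-forms of the pair; its fundamental group is a lattice $\mathbb Z^{2q}$ with $q=q(X,\Delta)$, hence abelian. The crucial point is that $\mathrm{alb}_{(X,\Delta)}$ is \emph{surjective} with connected fibres, so that it induces a surjection $\pi_1(X,\Delta)\twoheadrightarrow\mathbb Z^{2q}$. Surjectivity is the log/orbifold counterpart of the theorem of Q.~Zhang (resp. of the analytic statement used by Păun) for $-K_X$ nef, and I would establish it by adapting Păun's differential-geometric argument to the pair: nefness of $-(K_X+\Delta)$ yields, for every $\varepsilon>0$, an orbifold Kähler metric on $(X,\Delta)$ whose Ricci form is $\geq-\varepsilon\omega$; applying the Bochner inequality to the orbifold harmonic $1$-forms (equivalently, to the pull-backs under $\mathrm{alb}_{(X,\Delta)}$ of the flat $1$-forms on $\mathrm{Alb}(X,\Delta)$) and letting $\varepsilon\to0$ forces the generic rank of $d\,\mathrm{alb}_{(X,\Delta)}$ to be maximal, whence surjectivity. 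Alternatively, one may quote an existing log-canonical extension of Zhang's theorem together with the universal property of the orbifold Albanese.

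With surjectivity in hand I would invoke the orbifold form of Campana's comparison theorem \cite[Théorème~2.2]{cam98}, applied to the Albanese fibration. Its general fibre $(F,\Delta_F)$ inherits $-(K_F+\Delta_F)$ nef by adjunction and carries no nonzero orbifold $1$-form, so by \cref{mt} applied to $(F,\Delta_F)$ its orbifold fundamental group again has polynomial growth; feeding this together with the surjectivity into Campana's argument produces an exact sequence
\[
\pi_1(F,\Delta_F)\longrightarrow \pi_1(X,\Delta)\longrightarrow \mathbb Z^{2q}\longrightarrow 1
\]
in which, up to passing to a finite-index subgroup of $\pi_1(X,\Delta)$, the image of $\pi_1(F,\Delta_F)$ is a finite normal subgroup $N$ with quotient $\mathbb Z^{2q}$. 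A virtually nilpotent group that is finite-by-$\mathbb Z^{2q}$ in this way is virtually abelian --- replace $\pi_1(X,\Delta)$ by the centralizer of $N$ (finite index, since $\Aut(N)$ is finite) and then pass to a further finite-index subgroup killing the alternating commutator pairing $\mathbb Z^{2q}\times\mathbb Z^{2q}\to N$ --- which finishes the proof.

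The main obstacle is the surjectivity step: pushing Păun's analytic proof through in the presence of the boundary $\Delta$ and of the orbifold singularities, where one must control orbifold Kähler metrics, the orbifold Ricci curvature and the behaviour of harmonic $1$-forms near $\operatorname{Supp}\Delta$. A secondary technical point is to make Campana's comparison theorem --- the orbifold homotopy exact sequence for a fibration with multiple fibres --- precise in this category; but the differential-geometric input for surjectivity is the genuinely new ingredient, the rest being an orbifold transcription of arguments that already exist in the literature.
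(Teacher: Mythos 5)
Your opening step coincides with the paper's: \cref{mt} plus Gromov's theorem gives that $\pi_1(X,\Delta)$ is virtually nilpotent, and one passes to a finite-index nilpotent subgroup realized by a finite cover on which nefness persists. After that, however, the crux of your argument --- surjectivity of an orbifold Albanese map $\mathrm{alb}_{(X,\Delta)}$, to be obtained ``by adapting Păun's differential-geometric argument to the pair'' --- is left as a programme rather than a proof, and you yourself flag it as the main obstacle. That is a genuine gap: constructing the orbifold Albanese, controlling orbifold harmonic $1$-forms near $\mathrm{Supp}\,\Delta$ and the singular locus, and running the Bochner argument there is precisely the hard work the paper avoids. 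The paper's route at this point is algebraic: since $(X,\Delta)$ is a projective klt pair with $-(K_X+\Delta)$ nef, Q.~Zhang's theorem \citep[Corollary~2]{Zhang05} already gives that the Albanese map of the underlying variety $X$ (defined through a smooth model) is dominant, and the rational singularities of klt pairs make $\alpha_X$ an everywhere-defined, hence surjective, morphism --- no new analytic input is required.

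The second divergence is how the group-theoretic conclusion is extracted. You appeal to an ``orbifold form of Campana's comparison theorem'' for the Albanese fibration, asserting along the way that the general fibre carries no nonzero orbifold $1$-form (false in general: fibres of an Albanese map may well have $1$-forms of their own, only the restrictions of forms from $X$ vanish) and that the image of $\pi_1(F,\Delta_F)$ in $\pi_1(X,\Delta)$ is finite --- but this finiteness is exactly what has to be proved, and such an orbifold homotopy statement is not available off the shelf. The paper instead transfers everything to a smooth model: it takes a log resolution $r:Y\to(X,\Delta)$, assigns multiplicities $n_j$ to the exceptional divisors so that $\pi_1(Y,\Delta_Y)=\pi_1(X,\Delta)$, uses the exact sequence $1\to K\to\pi_1(Y,\Delta_Y)\to\pi_1(Y)\to 1$ with $K$ generated by torsion \citep[lemme~1.9.9]{cam01}, exploits nilpotency so that torsion forms a finite normal subgroup, applies Campana's Théorème~2.2 of \citep{cam98} to the smooth projective $Y$ with surjective Albanese to get $\pi_1(Y)/\pi_1(Y)_{\mathrm{tor}}\cong\pi_1(\mathrm{Alb}(X))$, and concludes with \citep[Lemme~A.0.1]{claudon07}. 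Your closing centralizer argument is sound as pure group theory, but as written your proof rests on two unproved inputs (orbifold Albanese surjectivity and the orbifold fibration exact sequence), so it does not yet establish the theorem.
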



The article is organized in the following. In \cref{sec2}, we recall the preliminaries of orbifolds. In particular we recall the groupoid representation developed in \citep{Moe97} and using groupoid to construct orbi-vector bundles (\cref{vecbun}). In \cref{sec3}, we examine the metric space structure and Bishop-Gromov theorem (\cref{Gromov-Bishop})  on a Riemannian orbifold. In \cref{sec4}, we give the definition of orbifold fundamental group $\pi_1^{\mathrm{orb}}$ (\cref{pi1orb}) and use a results in \citep{Tao12} to show a version of orbifold Margulis lemma (\cref{mag}). In \cref{sec5}, we adapt Păun's argument by using the orbifold Margulis lemma to show \cref{mt}. In \cref{sec6}, we use the pair model for orbifolds to use the Albanese morphism argument as in the smooth case to obtain \cref{mt2}.

\section*{Acknowledgement}
I'm grateful for my thesis director Benoît Claudon for his constant support, for patient explaining many of my questions and for carefully reading and providing various corrections and suggestions on this article. I also thank my co-director Andreas Höring for many discussions and for insisting me typing my hand written notes to latex.

\section{preliminaries}
\label{sec2}
This section is to introduce and recall the basic notions and results in orbifolds. Through this paper, we use the classical theories of orbifolds developed mainly by Satake \cite{Sat56, Sat57}, Thursthon \cite{Thurston79} and Moerdijk and Pronk \cite{Moe97}, as they are sufficient for the main theorem. It seems more natural and trending to describe orbifolds via stacks. The reader could consult for example \cite{HM04} \cite{Pardon20vector} .

\begin{definition}
Let $X$ be a topological space, fix $n\geq 0$.
\begin{enumerate}
\item An n-dimensional real orbifold chart on X is a triple consisting of an open subset $\tilde{U}\subset \mathbb{R}^n$, a finite subgroup $G$ of $\Aut(\tilde{U})$ and a homeomorphism $\phi:\tilde{U}/G\rightarrow U$, where $U$ is an open subset of $X$. 

\item Suppose $U\subset V$ be two open subsets of $X$. A chart embedding $\lambda : (\tilde{U},G,\phi) \rightarrow (\tilde{V},H,\psi)$ is a smooth embedding $\lambda :\tilde{U}\rightarrow \tilde{V}$ such that $\psi\circ \lambda =\phi$

\item An orbifold atlas on $X$ is a family $\mathcal{U}=\{(\tilde{U},G,\phi)\}$ of orbifolds charts such that $\{U=\phi(\tilde{U})\}$ covers $X$. 
And for any $x\in X$ covered by $U$ and $V$, there exists a third orbifold chart $(\tilde{W},K,\mu)$ with $x\in W$ and two chart embedding $(\tilde{W},K,\mu)\rightarrow (\tilde{U},G,\phi)$ and $(\tilde{W},K,\mu) \rightarrow (\tilde{V},H,\psi)$.

\item An atlas $\tilde {V}$ is said to refine $\tilde {U}$ if every chart of $\tilde {V}$ embeds into some chart of $\tilde {U}$. Two atlas are said equivalent if they have a common refinement.

\item One could proceed with open subsets in $\mathbb{C}^n$ to form analytic charts and atlas
\end{enumerate}
\end{definition}

\begin{definition}
\label{o1}
A real (\emph{resp.} complex) effective orbifold $\mathcal{X}$ of dimension $n$ is a collection of the following data:
\begin{enumerate}[label=(\roman*)]
\item A topological space $X$ which is Hausdorff and second countable;
\item An equivalence class $[\mathcal{U}]$ of real (\emph{resp.} complex) $n$-dimensional orbifold atlas.
\end{enumerate}
\end{definition} 

In the following, we often denote without tilde an open subset of the underline space of an orbifold and with tilde for the corresponding open subset of its orbifold charts.

\begin{example}
\leavevmode
\begin{enumerate}[label=\arabic*), start=0]
\item Any manifold is an orbifold with its manifold charts together with trivial group serving as orbifold charts;

\item $H:=\{(x,y)\in \mathbb{R}^2: x\geq 0\}$ can be given an orbifold structure by consider $\phi:\mathbb{R}^2
\rightarrow H$, $(x,y) \mapsto (|x|,y)$. The group action $\mathbb{Z}/2\mathbb {Z}$ on $\mathbb{R}^2$ is given by $(x,y)\mapsto (-x,y)$;

\end{enumerate}
\end{example}

\begin{definition}
A smooth (resp. holomorphic) map $f$ between to orbifolds $\mathcal{X}=(X,\mathcal{U})$ and $\mathcal{Y}=(Y,\mathcal{V})$ is a continuous map on the underline spaces, i.e. $f:X \rightarrow Y$, such that for any $x\in X$ if we denote $y=f(x)$, there exists a chart $(\tilde{U},G,\phi)$ for $x$ and a chart $(\tilde{V},H,\psi)$ for $y$ and a smooth(resp. holomorphic) map $\tilde{f}:\tilde{U}\rightarrow \tilde{V}$ such that the following diagram commutes 
\begin{center}
$\xymatrix{\tilde{U}\ar[r]^{\tilde{f}}\ar[d]&\tilde{V}\ar[d]\\ U\ar[r]^f&V}$.
\end{center}
\end{definition}

Though with the name 'smooth', we note that smooth map does not behaves well. For example, we don't know if $f:\mathcal{X}\rightarrow\mathcal{Y}$ is smooth, then $f$ induces pullback morphism of differential forms. And the author does not know if for $x\in |X|$, $f$ induces a morphism of local groups $G_x\rightarrow G_{f(x)}$. To overcome this problem, we use the notion of "strong morphism" introduced by Moerdijk and Pronk \cite{Moe97}.

\citep{Moe97} deals with orbifolds by identifying them with certain groupoids and define the maps between orbifolds to be the ones induced by morphisms between groupoids.

\begin{definition}
A topological groupoid $\mathcal{G}$ consists of a topological space $G_0$ of objects and a topological space $G_1$ of arrows, together with five continuous structure maps listed below:
\begin{enumerate}[label=\arabic*.]
\item The source map $s:G_1\rightarrow G_0$, which assigns to each arrow $g\in G_1$ its source $s(g)$.
\item The target map $t:G_1\rightarrow G_0$, which assigns to each arrow $g\in G_1$ its target $t(g)$. For any two objects $x,y\in G_0$, one writes $g:x\rightarrow y$ to indicate that $g\in G_1$ is an arrow with $s(g)=x$ and $t(g)=y$.
\item The composition map $m:G_1$ $_s\times_t$ $G_1\rightarrow G_1$. If $h:y\rightarrow z$, $g:x \rightarrow y$, then $hg=m(h,g):x\rightarrow z$. $m$ is required to be associative.
\item The identity map $u:G_0\rightarrow G_1$ which is a two-sided unit for the composition.
\item The inverse map $i:G_1\rightarrow G_1$. If $g:x\rightarrow y\in G_1$, then $g^{-1}=i(g):y\rightarrow x$ is the two sided inverse to $g$, i.e. ,$g\circ i(g)=u(y)$ and $i(g)\circ g=u(x)$.
\end{enumerate}
 \end{definition}
 
 One can also consider the topological groupoid as a groupoid (i.e. a category whose morphisms are all isomorphisms) equipped with two topological structure on the sets of objects and morphisms such that the structural maps are continuous. We then define the
 \begin{definition}
 A Lie groupoid is a topological groupoid $\mathcal{G}$ where $G_0$ and $G_1$ are smooth manifolds and all the structure maps $s,t,m,u$ and $i$ are smooth. Furthermore, $s$ and $t$ are required to be submersions (hence $G_1$ $_s\times_t$ $G_1$ is a manifold).
  \end{definition}
 With the topological/smooth structures on $G_0$ and $G_1$, one can define the morphisms between groupoids to be functors with required continuity/smoothness on the sets of obejects and morphisms. Similarly, a groupoid natural transformation will require the continuity on the functions assign each objects in source groupoid to the morphisms in the target groupoid.

 \begin{example}
 Let $M$ be a topological space $K$ a topological group acting on $M$. One defines a topological groupoid $K\ltimes M$, by setting $(K\ltimes M)_0=M$ and $(K\ltimes M)_1=K\times M$, with $(g,x):x\rightarrow gx$. This groupoid is called the action groupoid or translation groupoid associated to the group action. Note if $M$ is a manifold and $K$ a Lie group, then $K\ltimes M$ becomes a Lie groupoid. If moreover, $K=\{e\}$, then the associated groupoid (called the unit groupoid) is nothing but a manifold.
 \end{example}
 
 \begin{definition}
Let $\mathcal{G}$ be a Lie groupoid. For a point $x\in G_0$, we define the isotropy group $G_x$ of $\mathcal{G}$ to be $s^{-1}(x)\cap t^{-1}(x)$. And we define the orbit space $|\mathcal{G}|$ of $\mathcal{G}$ to be the quotient of $G_0$ by the equivalence relation $x\sim y$ iff $\exists\, g:x\rightarrow y$.
\end{definition}

\begin{lemma}[\emph{cf.} \protect{\cite[Corollary~1.4.11]{Mac05}}]
Let $\mathcal{G}$ be a Lie groupoid. Set $G_x^y$ being $s^{-1}(x)\cap t^{-1}(y)$. Then $G_x^y$ is a smooth manifold, and the morphism $m:G_y^z\times G_x^y\rightarrow G_x^z$ is smooth. In particular, $G_x$ is a Lie group.
\end{lemma}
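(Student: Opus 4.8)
The plan is to deduce everything from three standard facts: the submersion (regular value) theorem, the homogeneity of a Lie groupoid under its left and right translations, and the constant rank theorem.

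First I would note that, since $s\colon G_1\to G_0$ is a submersion, every point of $G_0$ is a regular value, so each fiber $s^{-1}(x)$ is a closed embedded submanifold of $G_1$ of dimension $\dim G_1-\dim G_0$; likewise each $t^{-1}(y)$ is one, so $G_x^y=s^{-1}(x)\cap t^{-1}(y)=(s,t)^{-1}(x,y)$ is at least a closed subset of $G_1$. It therefore suffices to promote $G_x^y$ to a submanifold, and for that I would show that $\bar t:=t|_{s^{-1}(x)}\colon s^{-1}(x)\to G_0$ has constant rank and then apply the constant rank theorem to its level set $\bar t^{-1}(y)=G_x^y$.

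To prove the constant rank statement I would use translations. For an arrow $g\colon x\to y$, right multiplication $R_g\colon h\mapsto hg$ is a diffeomorphism $s^{-1}(y)\to s^{-1}(x)$ with inverse $R_{g^{-1}}$, it sends the unit $u(y)$ to $g$, and it satisfies $t\circ R_g=t|_{s^{-1}(y)}$; hence $\operatorname{rank}_g\bar t=\operatorname{rank}_{u(y)}(t|_{s^{-1}(y)})=:\rho(y)$ depends only on $y=t(g)$. Next, given $y,y'$ in one orbit, pick an arrow $k\colon y\to y'$ and a local bisection $\Sigma$ through $k$ (a $\dim G_0$-dimensional embedded submanifold of $G_1$ containing $k$ on which both $s$ and $t$ restrict to diffeomorphisms onto open subsets of $G_0$; these exist because, $s$ and $t$ being submersions, $\ker ds_k$ and $\ker dt_k$ have a common complement in $T_kG_1$). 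Left translation by $\Sigma$ is a diffeomorphism $t^{-1}(s(\Sigma))\to t^{-1}(t(\Sigma))$ which preserves $s$-fibers, sends $u(y)$ to $k$, and intertwines $t$ with the local diffeomorphism $\tau:=(t|_\Sigma)\circ(s|_\Sigma)^{-1}$ of $G_0$; restricting it to $s^{-1}(y)$ gives $\rho(y)=\operatorname{rank}_k(t|_{s^{-1}(y)})$, which by the previous remark (applied to the arrow $k$) equals $\rho(y')$. So $\rho$ is constant along orbits, and since $\bar t(s^{-1}(x))$ lies in the orbit of $x$ this forces $\bar t$ to have constant rank on $s^{-1}(x)$; the constant rank theorem then makes every $G_x^y=\bar t^{-1}(y)$ an embedded (and, by the first step, closed) submanifold of $G_1$.

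Finally, for $(h,g)\in G_y^z\times G_x^y$ one has $s(h)=y=t(g)$, so $h,g$ are composable with $hg\in G_x^z$ (as $s(hg)=x$ and $t(hg)=z$), so $m$ restricts to a map $G_y^z\times G_x^y\to G_x^z$. Smoothness is formal: $G_y^z\times G_x^y$ is an embedded submanifold of $G_1\times G_1$ lying inside the embedded submanifold $G_1\,{}_s\!\times_t\,G_1$ on which $m$ is smooth, and a smooth map into $G_1$ whose image lies in the embedded submanifold $G_x^z$ is smooth as a map into $G_x^z$. Taking $x=y=z$ yields that $G_x=G_x^x$, with multiplication $m$, unit $u(x)\in G_x$, and inversion the restriction of the smooth map $i$ (which does send $G_x$ to $G_x$), is a Lie group. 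I expect the one genuinely non-formal point to be the constant rank of $\bar t$, where the purely algebraic homogeneity of the groupoid has to be converted into honest local diffeomorphisms of the manifold $s^{-1}(x)$ — this is exactly what local bisections provide, and it is where one really uses that $t$, and not only $s$, is a submersion. If one prefers, this step, and hence the whole statement, can be quoted directly from \cite[Corollary~1.4.11]{Mac05}.
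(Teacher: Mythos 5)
Your proof is correct, and it is essentially the argument of the reference the paper cites: the paper itself gives no proof of this lemma beyond pointing to \cite[Corollary~1.4.11]{Mac05}, whose standard proof is exactly your combination of right translations, local bisections, and the constant rank theorem applied to $t|_{s^{-1}(x)}$. Nothing further is needed.
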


Now we define types of groupoids.
\begin{definition}
Let $\mathcal{G}$ be a Lie groupoid.
\begin{enumerate}[label=(\alph*)]
\item$\mathcal{G}$ is proper, if $(s,t):G_1\rightarrow G_0\times G_0$ is proper.
\item $\mathcal{G}$ is called a foliation groupoid if each isotropy group $G_x$ is discrete.
\item $\mathcal{G}$ is étale, if $s$ and $t$ are local diffeomorphisms. In this case, one defines the dimension of $\mathcal{G}$ to be $\dim(\mathcal{G}):=\dim(G_0)=\dim(G_1)$.
\end{enumerate}
\end{definition}

A direct observation is that when $\mathcal{G}$ is proper étale, $G_x^y$ is finite. Now if $g:x\rightarrow y$ then as $s$ and $t$ are diffeomorphisms around $g,\ x$ and $y$ we get (\emph{via} $t\circ s^{-1}$) a local diffeomorphism $\phi_g: U_x \rightarrow U_y$. After shrinking $U_x$ and $U_y$, we get a morphism $\phi: G_x^y \rightarrow \mathrm{Diff}(U_x,U_y)$. One can prove that $\phi(hg)=\phi(h)\circ\phi(g)$. In particular, $\phi:G_x\rightarrow$Diff$(U_x)$ is a group morphism.

\begin{definition}
We define an orbifold groupoid to be a proper étale Lie groupoid. An orbifold groupoid $\mathcal{G}$ is effective if $\forall\, x\in G_0$, $\phi:G_x\rightarrow\mathrm{Diff}(U_x)$ is injective.
\end{definition}

\begin{definition}
A morphism $\phi:\mathcal{H}\rightarrow\mathcal{G}$ between Lie groupoids is called an equivalence if both conditions below are satisfied:
\begin{enumerate}[label=(\roman*)]
\item the map
\[t\pi _1:G_1\,{}_s\times_{\phi}\,H_0\rightarrow G_0\]
defined on the fibered product of manifolds is a surjective submersion;
\item $(H_1,\phi_1,(s,t))$ is a fibered product of $\phi \times \phi:H_0\times H_0 \rightarrow G_0\times G_0$ and $(s,t):G_1\rightarrow G_0\times G_0$.
\end{enumerate}
\end{definition}
Note that a homomorphism $\phi:\mathcal{H}\rightarrow\mathcal{G}$ induces a continuous map $|\mathcal{H}|\rightarrow|\mathcal{G}|$. When $\phi$ is an equivalence, the induced map on orbit spaces is an homoemorphism.

Here is Moerdijk and Pronk's definition (\emph{cf.} \cite[Theorem~4.1.]{Moe97} \cite[Definition~1.48.]{Ruan07}) of  orbifolds.
\begin{definition}\label{o2}
\leavevmode
\begin{enumerate}
\item Two Lie groupoids $\mathcal{G}$ and $\mathcal{G}'$ are said to be Morita equivalent, if there exists a third groupoid $\mathcal{H}$ and two equivalences
\[\mathcal{G} \leftarrow \mathcal{H}\rightarrow \mathcal{G}'.\]
\item An orbifold structure on a paracompact Hausdorff space $X$ consists of an orbifold groupoid $\mathcal{G}$ and a homeomorphism $f:|\mathcal{G}|\rightarrow X$. We say two orbifold structures $f:|\mathcal{G}| \rightarrow X$ and $g:|\mathcal{H}|\rightarrow X$ are equivalent, if there exists an equivalence of orbifolds $\phi:\mathcal{G}\rightarrow \mathcal{H}$ such that $f=g\circ|\phi|$.
\item An orbifold $\mathcal{X}$ is a space $X$ together with a class of equivalent orbifold structure. An element $f:|\mathcal{G}|\rightarrow X$ is called a presentation of the orbifold $\mathcal{X}$.
\end{enumerate}
\end{definition}

It turns out that the local structure of an orbifold groupoid $\mathcal{G}$ around $x\in G_0$ is completely determined by the local group $G_x$. More precisely, we have the following
\begin{prop}
\label{local}
Let $\mathcal{G}$ be an orbifold groupoid. $\forall x\in G_0$, and for any neighbourhood $U$ of $x$, there exists an open neighbourhood $N_x\subset U$, such that the restriction of $\mathcal{G}$ over $N_x$ is isomorphic, as Lie groupoid, to the translation groupoid $G_x\ltimes N_x$ and the quotient space $N_x/G_x$ is an embedded open subsets of $|\mathcal{G}|$ via the natural morphism $\mathcal{G}\vert_{N_x}\hookrightarrow \mathcal{G}$.
\end{prop}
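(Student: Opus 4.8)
The plan is to construct an isomorphism of Lie groupoids $\Psi\colon G_x\ltimes N_x\to\mathcal{G}\vert_{N_x}$ for a suitably small neighbourhood $N_x\subseteq U$ of $x$, and then to read the statement about orbit spaces off from it. Throughout one uses freely that $\mathcal{G}$ being proper and étale forces $G_x$ to be finite and makes $s$ and $t$ open maps.

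First I would fix, for each $g\in G_x$, an open set $V_g\ni g$ on which both $s$ and $t$ restrict to diffeomorphisms onto open neighbourhoods of $x$, arranging the $V_g$ to be pairwise disjoint (possible since $G_x$ is finite); put $\sigma_g=(s\vert_{V_g})^{-1}$ and $\tau_g=t\circ\sigma_g$, a local diffeomorphism fixing $x$ — this is the germ $\phi_g$ from the discussion above, and the relation $\phi(hg)=\phi(h)\circ\phi(g)$ says that the $\tau_g$ form a germ of a $G_x$-action at $x$. Next I claim that properness yields a neighbourhood $N_0\ni x$, contained in $\bigcap_{g\in G_x}\big(s(V_g)\cap t(V_g)\big)$ and with compact closure, such that every arrow with source and target in $N_0$ lies in $\bigsqcup_{g\in G_x}V_g$: otherwise a shrinking neighbourhood basis at $x$ produces arrows $g_n$ with $s(g_n),t(g_n)\to x$ and $g_n\notin\bigcup_g V_g$, and since $(s,t)$ is proper these all lie in a fixed compact set, so a subsequence converges to some $g_\infty$ with $s(g_\infty)=t(g_\infty)=x$, i.e.\ $g_\infty\in G_x$; then $g_n\in V_{g_\infty}$ for large $n$, a contradiction. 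Finally, using that $G_x$ is finite, I would shrink $N_0$ to an open $N_x\ni x$ on which the finitely many identities $\tau_h\circ\tau_g=\tau_{hg}$ hold and which satisfies $\tau_g(N_x)=N_x$ for every $g$ — concretely, take a small enough neighbourhood and replace it by the intersection of its $\tau_g$-translates, invariance then following from injectivity of the $\tau_g$ and the cocycle relations. On $N_x$ the maps $\tau_g$ thus define a genuine action of $G_x$, which is exactly $\phi\vert_{N_x}$ and hence faithful in the effective case.

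Then I would set $\Psi$ to be the identity on objects and $\Psi(g,y)=\sigma_g(y)$ on arrows. This is well defined into $\mathcal{G}\vert_{N_x}$ since $s(\sigma_g(y))=y$ and $t(\sigma_g(y))=\tau_g(y)\in N_x$; it is smooth, intertwines $s$ and $t$, sends units to units, and respects composition because $\sigma_{hg}(y)=\sigma_h(\tau_g y)\circ\sigma_g(y)$ on $N_x$ by the cocycle relations, compatibility with inverses then being formal. It is bijective on arrows because any arrow of $\mathcal{G}\vert_{N_x}$ has source and target in $N_x\subseteq N_0$, hence lies in a unique $V_g$ and equals $\sigma_g$ applied to its source. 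So $\mathcal{G}\vert_{N_x}\cong G_x\ltimes N_x$ and therefore $|\mathcal{G}\vert_{N_x}|=N_x/G_x$. For the last assertion, the inclusion $\mathcal{G}\vert_{N_x}\hookrightarrow\mathcal{G}$ induces a continuous map $N_x/G_x\to|\mathcal{G}|$ whose image is the saturation of $N_x$, which is open because $t$ is an open map; the same argument shows the map is open onto its image, and it is injective since an arrow of $G_1$ joining two points $y,y'\in N_x$ has source and target in $N_0$, hence lies in some $V_{g_0}$ by the claim above, so that $y'=\tau_{g_0}(y)$ lies in the $G_x$-orbit of $y$. Hence $N_x/G_x$ embeds as an open subset of $|\mathcal{G}|$.

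The genuinely delicate point is the joint choice of $N_0$ and $N_x$: one must produce a single neighbourhood that is simultaneously small enough for properness to remove every arrow outside $\bigsqcup_g V_g$ and invariant enough to turn the germ action of the finite group $G_x$ into an honest action. Once this nesting of neighbourhoods is arranged, checking that $\Psi$ respects all five structure maps and deducing the orbit-space statement are routine.
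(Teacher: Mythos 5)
Your argument is correct, and it is essentially the proof the paper itself outsources: the paper's ``proof'' of \cref{local} is only the citation to \cite[Proposition~1.44]{Ruan07}, and your construction (properness of $(s,t)$ trapping every arrow with source and target near $x$ in the finitely many disjoint bisections $V_g$ through $G_x$, the étale structure turning the germs $\tau_g$ into an honest $G_x$-action on an invariant $N_x$, and then the translation-groupoid isomorphism together with the openness/injectivity argument on orbit spaces) is the standard argument found there. One point is stated a bit too quickly: compatibility of $\Psi$ with composition requires the arrow-level identity $\sigma_h(\tau_g y)\cdot\sigma_g(y)=\sigma_{hg}(y)$, which is not a formal consequence of the base-level cocycle $\tau_h\circ\tau_g=\tau_{hg}$ (that only matches sources and targets); it does hold near $x$ because both sides are smooth sections of the étale map $s$ taking the common value $hg$ at $x$, and since there are only finitely many such identities your shrink-and-intersect step absorbs them (alternatively, take $N_x$ connected and use that the index $g'$ with the product arrow in $V_{g'}$ is locally constant).
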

\begin{proof}
For the proof, see \cite[Proposition~1.44]{Ruan07}.
\end{proof}

Let $(X,\mathcal{G},f:|\mathcal{G}|\rightarrow X)$ be an orbifold in the sense of Definition~1.10. Take $x \in X$ and $\tilde{x}\in G_0$ one of its pre-image. By Proposition~1.1., we give an orbifold chart $(U_{\tilde{x}},G_{\tilde{x}})\rightarrow f(U_{\tilde{x}}/G_{\tilde{x}})$ around $x$. It's easy to see that we could get an atlas consisting of all these charts. Hence we get an orbifold $(X,\tilde{U})$ in the sense of \cref{o1}. We call $\mathcal{U}$ the orbifold atlas associated to $\mathcal{G}$. If $\mathcal{G}$ is Morita equivalent to $\mathcal{H}$, then their associated atlases are equivalent.
 In \cite{Moe97}, Moerdijk and Pronk prove that for any orbifold $(X,\tilde{U})$ in the sense of \cref{o1}, we could get a unique (up to equivalence) groupoid representation of $X$.

\begin{thm}
Let $\mathcal{X}=(X,\mathcal{U})$ be an orbifolds in the sense of \cref{o1}. There exists up to a Morita equivalence, a unique effective orbifold groupoid $\mathcal{G}$ and a homeomorphism $|\mathcal{G}|\rightarrow X$, such that the associated atlas $\mathcal{V}$ of $\mathcal{G}$ is equivalent to $\mathcal{U}$.
\end{thm}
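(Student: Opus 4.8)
The plan is to construct, from an orbifold atlas $\mathcal{U}$ on $X$, an explicit groupoid $\mathcal{G}(\mathcal{U})$ and verify it is effective, proper, étale, with orbit space $X$; then to prove uniqueness up to Morita equivalence. First I would recall the standard construction of the groupoid associated to an atlas: after passing to a refinement we may assume the atlas is countable and nice, and we set $G_0 = \bigsqcup_i \tilde{U}_i$, the disjoint union of the chart domains. For the arrows, the natural choice is to take germs of chart transition maps: an arrow from $\tilde x \in \tilde U_i$ to $\tilde y \in \tilde U_j$ is a germ at $\tilde x$ of a diffeomorphism $\lambda : \tilde U_i \supset V \to \tilde U_j$ which is locally a composite of chart embeddings (equivalently, which lifts the identity on the underlying space through $\phi_j^{-1}\circ\phi_i$). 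One equips $G_1$ with the sheaf (étale) topology on germs. The structure maps $s,t$ (evaluate the source/target of the germ) are then local homeomorphisms onto opens of $G_0$ by construction, so $\mathcal{G}(\mathcal{U})$ is étale; composition and inversion of germs are smooth; so $\mathcal{G}(\mathcal{U})$ is a Lie groupoid.

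Next I would check the two remaining properties. For \emph{properness} of $(s,t):G_1\to G_0\times G_0$: the key point is that over a pair of small charts the set of germs of transition maps between given points is finite (this is where the atlas compatibility axiom — existence of a common refining chart with chart embeddings — is used, together with finiteness of the local groups $G_i$), and that germs of transition maps cannot escape to infinity as the source point varies in a compact set; a standard argument using \cref{local}-type local normal form (the restriction over a small $N_{\tilde x}$ is $G_{\tilde x}\ltimes N_{\tilde x}$) makes this precise. For \emph{effectiveness}: a germ of a diffeomorphism fixing a point and acting trivially on a neighbourhood is the identity germ because diffeomorphisms are determined by their restriction to opens, so no nontrivial isotropy acts trivially; hence $\phi:G_{\tilde x}\to \mathrm{Diff}(U_{\tilde x})$ is injective. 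Finally the map $|\mathcal{G}(\mathcal{U})|\to X$ sending the class of $\tilde x\in\tilde U_i$ to $\phi_i(\tilde x)$ is well-defined (two points are connected by an arrow iff they have the same image) and is a homeomorphism since each $\phi_i:\tilde U_i/G_i\to U_i$ is, and these glue; and the associated atlas of $\mathcal{G}(\mathcal{U})$ is by construction $\mathcal{U}$ itself (up to the refinement we took, hence equivalent to $\mathcal{U}$).

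For \emph{uniqueness}, suppose $\mathcal{G}$ and $\mathcal{G}'$ are two effective orbifold groupoids whose associated atlases $\mathcal{V}$, $\mathcal{V}'$ are both equivalent to $\mathcal{U}$. Passing to a common refinement $\mathcal{W}$ of $\mathcal{V}$ and $\mathcal{V}'$, I would build a groupoid $\mathcal{H} = \mathcal{G}(\mathcal{W})$ as above together with equivalences $\mathcal{H}\to\mathcal{G}$ and $\mathcal{H}\to\mathcal{G}'$: a refining chart of $\mathcal{W}$ inside a chart of $\mathcal{V}$ gives, via \cref{local}, an open of $G_0$, and these assemble into a homomorphism $\mathcal{H}\to\mathcal{G}$ which one checks satisfies the two conditions (surjective submersion $t\pi_1$; fibered-product condition on $H_1$) — the second condition is precisely where \emph{effectiveness} is needed, since it forces $H_1$ to be exactly the germs/arrows that already exist in $\mathcal{G}$, with no extra automorphisms. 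Thus $\mathcal{G}$ and $\mathcal{G}'$ are both Morita equivalent to $\mathcal{H}$, hence to each other, and one checks the induced homeomorphisms of orbit spaces are compatible with $f,f'$.

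I expect the main obstacle to be the verification of properness and, more subtly, the second equivalence axiom (that $H_1$ is the honest fibered product $H_0\times_{G_0}G_1\times_{G_0}H_0$ up to the correct identifications): this is the place where one must genuinely use effectiveness to rule out ``phantom'' arrows and where the bookkeeping between germs of transition maps and arrows of the abstract groupoid $\mathcal{G}$ is most delicate. The rest — étaleness, smoothness of structure maps, the homeomorphism on orbit spaces — is routine once the construction is set up, and I would simply cite \cite{Moe97} for the full details rather than reproduce them.
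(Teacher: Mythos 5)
Your outline is the standard Moerdijk--Pronk construction (germ groupoid of the pseudogroup of change-of-charts, plus uniqueness via a common refinement and effectiveness), which is exactly the route of the reference \cite{Moe97} that the paper itself cites for this theorem without reproducing a proof. The only caution is that for properness you should invoke the Moerdijk--Pronk lemma that embeddings of a smaller chart into a larger one differ by elements of the finite local group, rather than \cref{local} itself (which is stated for groupoids already known to be proper \'etale and would be circular here); with that substitution your sketch matches the cited argument.
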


Thus we may interchange freely both definitions of orbifolds: in terms of atlas or in terms of groupoid. We now give the defintion of strong maps.

\begin{definition}
Let $\mathcal{X},\mathcal{Y}$ be two orbifolds. A strong map $f$ from $\mathcal{X}$ to $\mathcal{Y}$ is a continuous map $f:|\mathcal{X}|\rightarrow|\mathcal{Y}|$ such that there are representations $\mathcal{G},\mathcal{H}$ of $\mathcal{X}$ and $\mathcal{Y}$ respectively and a groupoid morphism $F:\mathcal{X}\rightarrow \mathcal{G}$ such that the following diagram commutes
\begin{center}
$\xymatrix{|\mathcal{G}| \ar[d]\ar[r]^{|F|}& |\mathcal{H}|\ar[d]\\ |\mathcal{X}|\ar[r]^f &|\mathcal{Y}|}$
.
\end{center}
\end{definition}

A strong map is clearly smooth. From the above definition, we see that $f$ induces morphisms between local groups.

One of the advantage of \cref{o2} is to give a simple way to define fiber bundles and vector bundles. For details, see \cite[Chapter~2]{Ruan07}. We only define vector bundles.
\begin{definition}
Let $\mathcal{G}$ be a topological groupoid, $G_1$ its arrows and $G_0$ its objects. A left-$\mathcal{G}$-vector bundle is a triple $(E,\pi,\mu)$, where $\pi:E\rightarrow G_0 $ is a vector bundle over $G_0$ and $\mu:G_1$ $_s\times_{\pi}E\rightarrow E$ is a continuous map, satisfying the following
\begin{enumerate}
\item $\mu(g'g,e)=\mu(g',\mu(g,e))$;
\item $\mu(1,e)=e$;
\item $\mu(g,-):E_{s(g)}\rightarrow E_{t(g)}$ is a linear isomorphism.
\end{enumerate}
\end{definition}

Let $E$ be a left-$\mathcal{G}$-vector bundle over $\mathcal{G}$. We may associate with $E$ a groupoid $\mathcal{G}\ltimes E$ in the following way: take $E$ to be its objects and $G_1$ $_s\times_{\pi}E$ be its arrows. The source map is $(g,e)\mapsto e$ and target map is $(g,e)\mapsto \mu(g,e)$. If $e\in E_x$, the identity arrow of $e$ is $(1_x,e)$. The inverse arrow of $(g,x)$ is $(g^{-1},gx)$. $\pi:E\rightarrow G_0$ extends to a morphism $\mathcal{G}\ltimes E\rightarrow \mathcal{G}$ by taking the objects map $\pi$ and the arrows map $(g,x)\mapsto g$. We may also note $\mathcal{G}\ltimes E$ by $\mathcal{E}$

Let $\mathcal{H}$ be another groupoid and $\phi:\mathcal{H}\rightarrow \mathcal{G}$ a morphism between groupoid. Consider the pullback vector bundle $\phi_0^{\ast}(E)\rightarrow H_0$. It has a natural left-$\mathcal{H}$-vector bundle structure $\nu:H_1$ $_s\times \phi_0^{\ast}(E)\rightarrow \phi_0^{\ast}(E)$, by defining $\nu(h,(x,e))=\mu(\phi_1(h),e)$. Its associated groupoid $\mathcal{H}\ltimes \phi_0^{\ast}(E)$ fits in the following commutative diagram 
\begin{center}
$\xymatrix{\mathcal{H}\ltimes \phi_0^{\ast}(E) \ar[r]\ar[d]&\mathcal{E}\ar[d]\\\mathcal{H}\ar[r]&\mathcal{G}}$.
\end{center}
Thus it makes sense to call $\mathcal{H}\ltimes \phi_0^{\ast}(E)$ the pullback of $\mathcal{E}$ and note it also by $\phi^{\ast}(\mathcal{E})$.

Suppose now we have $\phi:\mathcal{H}\rightarrow \mathcal{G}$ be an equivalence between two orbifold groupoids. Then $\phi$ as a functor is an equivalence. Applying \cref{local} to both $\mathcal{H}$ and $\mathcal{G}$, one sees this indicated that $\phi_0$ is a local diffeomorphism.  Let $E$ be a left-$\mathcal{H}$-vector bundle, $x\in G_0$, $G_1\ni g_1:x\rightarrow x_1$, and $G_1\ni g_2: x\rightarrow x_2$ such that $x_1=\phi_0(y_1)$, $x_2=\phi_0(y_2)$. We know that there exists a unique $h\in H_1$ such that $\phi_1(h)=g_1g_2^{-1}$. As $\phi_0$ is a local diffeomorphism, we could, via pullback, get $a$ $priori$ two vector bundles around $x$. Denote $\psi_i$, $i=1,2$ the local inverse of $\phi_0$ at $y_i$. We have vector bundles $g_1^{\ast}\psi_1^{\ast}(E)$ and $g_2^{\ast}\psi_2^{\ast}(E)$. They are indeed canonically  isomorphic. Since $E$ is a $\mathcal{H}$ vector bundle, we have $h^{\ast}E=E$ and
\begin{equation*}
g_2^{\ast}\psi_2^{\ast}(E)=g_2^{\ast}\psi_2^{\ast}h^{\ast}(E)=g_2^{\ast}(g_1 g_2^{-1})^{\ast}\psi_2^{\ast}(E)=g_1^{\ast}\psi_1^{\ast}(E).
\end{equation*} 
Thus for any point $x\in G_0$ there is a well-defined vector bundle $(\phi_0^{-1})^{\ast}(E)$. It's easy to see they glue up to a vector bundle on $G_0$, and from the construction, this vector bundle is actually a left-$\mathcal{G}$-vector bundle. We call it the pushfoward $\phi_{\ast}(\mathcal{E})$ of $\mathcal{E}$. We have $\phi^{\ast}\phi_{\ast}(\mathcal{E})=\mathcal{E}$. If either $\mathcal{H}$ or $\mathcal{G}$ fails to be orbifold groupoid, the $\phi_0$ fails in general to be local diffeomorphism. This happens, for example when we consider the Morita equivalence $e:\mathcal{G}\leftarrow \mathcal{K}\rightarrow \mathcal{H}$. Even $\mathcal{G}$ and $\mathcal{H}$ are orbifold groupoids, $\mathcal{K}$ needs not to be an orbifold groupoid. Hence the above explicit construction can not be used on Morita equivalence. However, we have 

\begin{prop}[\emph{cf.} \protect{\cite[page~11 Remark ~(4)]{Moe97}}]
Let $e:\mathcal{G}\leftarrow \mathcal{K}\rightarrow \mathcal{H}$ be a Morita equivalence between two topological groupoids. Then $e$ induces an equivalence $e^{\ast}:Sh(\mathcal{H})\rightarrow Sh(\mathcal{G})$ between two toposes.
\end{prop}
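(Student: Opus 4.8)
The plan is to reduce the statement to the case of a single weak equivalence and then exhibit an explicit quasi-inverse to the pullback functor by topological descent; concretely this is the content of the fact that $Sh(\mathcal{G})$ is the classifying topos $B\mathcal{G}$ and that $B(-)$ is a Morita invariant. Write the Morita equivalence as $p\colon\mathcal{K}\to\mathcal{G}$ and $q\colon\mathcal{K}\to\mathcal{H}$, both equivalences of Lie groupoids. If for every equivalence $\phi\colon\mathcal{H}\to\mathcal{G}$ the pullback $\phi^{\ast}\colon Sh(\mathcal{G})\to Sh(\mathcal{H})$, sending an equivariant sheaf $(S,\alpha)$ on $G_0$ to $(\phi_0^{\ast}S,\phi_1^{\ast}\alpha)$, is an equivalence of categories, then $e^{\ast}:=(p^{\ast})^{-1}\circ q^{\ast}\colon Sh(\mathcal{H})\to Sh(\mathcal{G})$ does the job. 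So the task becomes: \emph{a weak equivalence of topological groupoids induces an equivalence on equivariant sheaves}, using only conditions (i) and (ii) in the definition of equivalence.

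For the main construction I would use the surjective submersion furnished by condition (i). Set $P:=G_1\,{}_s\times_{\phi_0}H_0$, with $\pi:=t\pi_1\colon P\to G_0$ that submersion and $\mathrm{pr}\colon P\to H_0$ the second projection; $P$ carries a left $\mathcal{G}$-action along $\pi$ (acting on the $G_1$-factor) and a right $\mathcal{H}$-action along $\mathrm{pr}$. Given $R\in Sh(\mathcal{H})$ I would form $\mathrm{pr}^{\ast}R$ on $P$ and descend it along $\pi$. The descent datum on $P\times_{G_0}P$ comes from condition (ii): a point $((g_1,u_1),(g_2,u_2))$ with $t(g_1)=t(g_2)$ determines the arrow $g_1^{-1}g_2\colon\phi_0(u_2)\to\phi_0(u_1)$ of $G_1$, which by the fibered-product condition lifts to a unique $h\in H_1$ with $h\colon u_2\to u_1$ and $\phi_1(h)=g_1^{-1}g_2$; the $\mathcal{H}$-action of $h$ on $R$ supplies the required isomorphism between the two pullbacks of $\mathrm{pr}^{\ast}R$. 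Functoriality of the lift (again (ii): $\phi$ is full and faithful) yields the cocycle identity over $P\times_{G_0}P\times_{G_0}P$, so since surjective submersions are open surjections and sheaves on a space satisfy effective descent along them, one obtains a sheaf $S$ on $G_0$ with $\pi^{\ast}S\cong\mathrm{pr}^{\ast}R$. The left $\mathcal{G}$-action on $P$ commutes with $\mathrm{pr}$ and with the descent datum, so it transports to a $\mathcal{G}$-equivariant structure $\alpha$ on $S$; set $\phi_{!}R:=(S,\alpha)$.

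Then I would check $\phi_{!}$ is quasi-inverse to $\phi^{\ast}$. For $\phi^{\ast}\phi_{!}R\cong R$: the map $u\mapsto(1_{\phi_0(u)},u)$ is a section $H_0\to P$ of $\pi$ over $\phi_0$, along which $\mathrm{pr}^{\ast}R$ restricts to $R$ and $\pi^{\ast}S$ restricts to $\phi_0^{\ast}S$, and the $\mathcal{H}$-structures match. For $\phi_{!}\phi^{\ast}S\cong S$: taking $R=\phi_0^{\ast}S$, the $\mathcal{G}$-equivariance of $S$ gives a trivialization $\mathrm{pr}^{\ast}\phi_0^{\ast}S\cong\pi^{\ast}S$ on $P$ compatible with the descent datum, and uniqueness of descent identifies $\phi_{!}\phi^{\ast}S$ with $S$. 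Both isomorphisms are natural; faithfulness and fullness of $\phi^{\ast}$ then follow formally, or directly from surjectivity of $\pi$ (every point of $G_0$ lies in a $\mathcal{G}$-orbit meeting $\mathrm{im}(\phi_0)$) together with condition (ii).

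The main obstacle is the descent step in the second paragraph: verifying that the isomorphism built from the unique lift in (ii) is continuous, that it satisfies the cocycle condition, and that it is compatible with the left $\mathcal{G}$-action — and, underlying everything, invoking effective descent for sheaves along a surjective submersion (equivalently, that $Sh(-)$ is a stack for the open-surjection topology, so that such a map is a descent morphism) in the topological rather than the étale setting. Once this bookkeeping is carried out, the rest is formal.
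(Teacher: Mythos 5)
The paper offers no proof of this proposition at all: it is simply quoted from Moerdijk--Pronk (\cite[page~11, Remark~(4)]{Moe97}), so there is no in-paper argument to compare yours against. Your sketch is, in outline, the standard argument behind that citation and it is correct: reduce to a single equivalence $\phi\colon\mathcal{H}\to\mathcal{G}$, interpret $Sh(-)$ as equivariant sheaves on the base, and construct a quasi-inverse to $\phi^{\ast}$ by pulling an $\mathcal{H}$-sheaf back to the bibundle $P=G_1\,{}_s\times_{\phi_0}H_0$ and descending along $t\pi_1\colon P\to G_0$, the descent isomorphism coming from the unique lifts in $H_1$ provided by condition (ii) (whose continuity is exactly the fibered-product description of $H_1$, as you note). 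The real external inputs are the two points you flag yourself: that condition (i) yields an open surjection in the topological setting (the paper only defines equivalences for Lie groupoids, where surjective submersions are open; for topological groupoids one should take openness as part of the definition), and that sheaves satisfy effective descent along open surjections, i.e.\ $Sh(-)$ is a stack for that topology. Both are standard theorems, so with those cited your argument closes the gap that the paper leaves to the literature; the remaining verifications (cocycle identity, compatibility of the $\mathcal{G}$-action with the descent datum, naturality of the two comparison isomorphisms) are routine bookkeeping as you say.
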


We may now form our definition of orbi-vector bundles on orbifolds.
\begin{definition}
\label{vecbun}
Let $\mathcal{X}$ be an orbifold. A real (resp. complex) vector bundle of rank $r$ is a strong map $p:\mathcal{V}\rightarrow \mathcal{X}$ together with the following
\begin{enumerate}
\item A representation $f:\mathcal{G}\rightarrow |\mathcal{X}|$ of $\mathcal{X}$;
\item A real (resp, complex) left-$\mathcal{G}$-vector bundle $E$ on $G_0$;
\item A homeomorphism $g:\mathcal{G}\ltimes E\rightarrow |\mathcal{V}|$, such that $g$ gives a representation of the orbifold $\mathcal{V}$ and $\mathcal{G}\ltimes E\rightarrow \mathcal{G}$ represents $p$
\end{enumerate} 

\end{definition}

As in the manifold case, one need another definition for holomorphic orbi-vector bundles. Our holomorphic bundle is defined in \cref{hol}. There is also other issues on how to define sections, see \cref{rhol}. For a better definition, one could see the vector bundles defined via representable $2$-functors as in \cite{Pardon20vector}.

Let $\mathcal{V}\rightarrow \mathcal{X}$ be an orbi-vector bundle represented by $\mathcal{E}=\mathcal{G}\ltimes E\rightarrow \mathcal{G}$. Let $G_x\ltimes U_x\cong \mathcal{G}|_{U_x}$ be as in \cref{local} for a $x\in G_0$. We may take $U_x$ sufficiently small such that there is a trivialization $E|_{U_x}\cong U_x\times \mathbb{F}^r$. Then there is an isomorphism $\mathcal{E}|_{U_x}\cong G_x\ltimes (U_x\times \mathbb{F}^r)$. The actions of $G_x$ fits into a commutative diagram
\begin{center}
$\xymatrix{G_x\times (U_x\times \mathbb{F}^r)\ar[d]\ar[r]&U_x\times \mathbb{F}^r \ar[d] \\G_x\times U_x\ar[r] &U_x}$.
\end{center}
Hence we have $(U_x\times \mathbb{F}^r)/G_x$ and $U_x/G_x$ be the orbifold charts of $\mathcal{V}$ and $\mathcal{X}$ respectively. Note for any $y\in U_x/G_x$, its fiber $|\mathcal{V}|_y$ is isomorphic to $\mathbb{F}^r/G'$, where $G'$ is a subgroup of $G_x$.

\begin{definition}
Let $\pi:E\rightarrow G_0$ be a left-$\mathcal{G}$-vector bundle. A $\mathcal{G}$-section of $E$ over $U\subset G_0$ is a section $s:U\rightarrow E$ of $\pi$ such that for any $g\in G_1$, we have $g\cdot s(x)=s(g\cdot x)$. Note $s$ induces a morphism $\mathcal{G}|_U \rightarrow (\mathcal{G}\ltimes E)|_{U}$.

If the orbi-vector bundle $p:\mathcal{V}\rightarrow \mathcal{X}$ is represented by $\mathcal{G}\ltimes E\rightarrow \mathcal{G}$, we say $\sigma:U/\mathcal{G}\rightarrow \mathcal{V}$ a section of $p$ if $\sigma$ corresponds to a $\mathcal{G}$-section $s$ of $E$.
\end{definition}

\begin{example}
If $E$ and $F$ are left-$\mathcal{G}$-bundles for the topological groupoid $\mathcal{G}$, $E\otimes F$, $E\wedge F$, $\mathrm{Sym}^n(E)$, $\mathrm{Hom}(E,F)$ and $E^{\vee}$ have natural left-$\mathcal{G}$-bundle structures, and we denote their associated groupoids by $\mathcal{E}\otimes\mathcal{F}$, $\mathcal{E}\wedge \mathcal{F}$, $\mathrm{Sym}^n(\mathcal{E})$, $\mathrm{Hom}(\mathcal{E},\mathcal{F})$ and $\mathcal{E}^{\vee}$ respectively.
\end{example}

\begin{example}
\label{tensor-bundle}
Let $\mathcal{X}$ be a real orbifold of dimension $n$. Suppose $\mathcal{U}$ be its atlas. For each chart $(\tilde{U_i},G_i,\phi_i)$, we associate a $G_i$-space $(T\tilde{U_i},G_i)$, where $g_i$ acts on $T\tilde{U_i}$ by its tangent associated to its action on $\tilde{U_i}$. We identify $[v_i]\in T\tilde{U_i}/G_i$ and $[v_j] \in T\tilde{U_j}/G_j$ if there are orbifold chart embeddings $\rho_i: \tilde{V}\rightarrow \tilde{U_i}$ and $\rho_j: \tilde{V}\rightarrow \tilde{U_j}$ such that $T\rho_i(v_i)=T\rho_j(v_j)$. The topological space $\{\bigsqcup_i \tilde{U_i}/G_i\}/([v_i]\sim [v_j])$ has an orbifold atlas, whose charts are $(T\tilde{U_i},G_i)$. We denote this orbifold $T\mathcal{X}$. Note by construction we have a natural projection $T\mathcal{X}\rightarrow \mathcal{X}$. If $f:|\mathcal{G}|\rightarrow |\mathcal{X}|$ is a representation of $\mathcal{X}$, then $T\mathcal{G}:=\mathcal{G}\ltimes TG_0$ is a representation of $T\mathcal{X}$ and the projection $T\mathcal{X}\rightarrow \mathcal{X}$ is represented by $T\mathcal{G}\rightarrow \mathcal{G}$.
\end{example}

Hence when considering tangent bundles and cotangent bundles of $\mathcal{X}$, we don't distinguish the representation groupoids $\mathcal{G}$ used to construct the representation $T\mathcal{G}$ of $T\mathcal{X}$.

For the construction of tangent bundle $T\mathcal{X}$ of $\mathcal{X}$, we see that giving a $p$-form $\omega$ over $O\subset X$ an open subset of $X=|\mathcal{X}|$ is equivalent to find a cover of $O$ by orbifold charts $(U_i,G_i)$ and $G_i$-invariant $p$-forms $\omega_i$ on $U_i$ such that for any chart embedding $\lambda:U_i\rightarrow U_j$, one has $\lambda^{\ast}(\omega_j)=\omega_i$. One can say the same for $p$-tensors (but not for currents).

\begin{definition}
Let $\mathcal{X}$ be a real orbifold of dimension n. we say that $\mathcal{X}$ is orientable if there exists a non-vanishing n-form $\alpha$ on $\mathcal{X}$. We say a chart $(\tilde{U},G,\phi)$ is compatible with this orientation if $\phi^{\ast}(\alpha)=\lambda\cdot vol_{\mathbb{R}^n}$, $\lambda$ is a positive function. For $\omega$ an n-form supported in $U$, 
we define its integration $\int_{\mathcal{X}} \omega :=\frac{1}{|G|}\int _{\tilde{U}} \phi^{\ast}\omega$. For general case, we cover $\mathcal{X}$ by charts $(\tilde{U_i},G_i,\phi_i)$, take a partition of unity $\rho_i$ with respects to $\{U_i\}$, and define
\begin{equation*}
\int_{\mathcal{X}} \omega:=\sum_i \int _{\mathcal{X}} \rho_i \omega .
\end{equation*}
\end{definition}

Suppose $\omega$ is supported in $U$, and $(\tilde{U}_i,G_i,\phi_i)$ be charts that embedded via $\lambda_i$ to the chart $(\tilde{U},G,\phi)$. (\emph{cf.} \cite[page~5 Remark~(6)]{Moe97}) we have $G_i$ is a subgroup of $G$ and  all the distinct embedding of $(\tilde{U}_i,G_i,\phi_i)$ into $(\tilde{U},G,\phi)$ will be $g\cdot 
\lambda_i$, where $g\cdot G_i$ forms the cosets. Thus 

\begin{equation*}
\begin{split}
\int_\mathcal{X} \omega &=\frac{1}{|G|}\int_{\tilde{U}} \phi^{\ast}\omega\\
&=\frac{1}{|G|}\sum_i \int_{\tilde{U}}(\rho_i \circ \phi )\phi^{\ast}\omega\\
&=\sum_i\sum_{g}\frac{1}{G}\int _{g\lambda(\tilde{U}_i)}(\rho_i\circ \phi )\phi^{\ast}\omega\\
&=\sum_i\frac{1}{G_i} \int_{\tilde{U}_i}(\rho_i\circ\phi_i)\phi_i^{\ast}\omega
\end{split}
\end{equation*}
where in the forth equation $g$ runs through the representatives of cosets $G/G_i$. For the general case, if there are two covering $\tilde{U}_i$, $\tilde{V}_j$ we take a third covering $\tilde{W}_k$ such that each $\tilde{W}_k$ embeds into some $\tilde{U}_i$ and $\tilde{V}_j$. Then it reduces the argument to a single chart and its refinement. Integration is thus well defined.

Let $\mathcal{A}^p(\mathcal{X})$ be the global differential $p$-forms on $\mathcal{X}$. We see that the exterior differential $d$ maps $\mathcal{A}^p$ to $\mathcal{A}^{p-1}$. Hence, it makes sense to consider the de Rham cohomolology of $\mathcal{X}$. We recall some basic results.

\begin{prop}
Let $\mathcal{X}$ be a real $n$-orbifold, $X=|\mathcal{X}|$. We have a canonical isomorphism $H^p(X,\mathbb{R})\cong H^p_{dR}(\mathcal{X})$.
\end{prop}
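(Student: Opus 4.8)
The plan is to run the standard sheaf-theoretic proof of the de Rham theorem, adapted to the orbifold atlas. First I would sheafify the construction of $\mathcal{A}^p(\mathcal{X})$: for every open $O\subset X$ set $\mathcal{A}^p_{\mathcal{X}}(O)$ to be the space of orbifold $p$-forms over $O$ in the sense already discussed above, i.e.\ compatible families $(\omega_i)$ of $G_i$-invariant forms on charts covering $O$. Restriction of forms makes $O\mapsto\mathcal{A}^p_{\mathcal{X}}(O)$ a sheaf on the topological space $X$; by construction $\mathcal{A}^p_{\mathcal{X}}(X)=\mathcal{A}^p(\mathcal{X})$, the exterior derivative induces morphisms of sheaves $d:\mathcal{A}^p_{\mathcal{X}}\to\mathcal{A}^{p+1}_{\mathcal{X}}$ with $d^2=0$, and there is the inclusion $\mathbb{R}\hookrightarrow\mathcal{A}^0_{\mathcal{X}}$ of locally constant functions.

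Second, I would prove that
\[
0\longrightarrow\mathbb{R}\longrightarrow\mathcal{A}^0_{\mathcal{X}}\xrightarrow{\ d\ }\mathcal{A}^1_{\mathcal{X}}\xrightarrow{\ d\ }\cdots\xrightarrow{\ d\ }\mathcal{A}^n_{\mathcal{X}}\longrightarrow 0
\]
is a resolution of the constant sheaf, which is a statement about stalks and hence local. By \cref{local} every point of $X$ has a fundamental system of neighbourhoods of the form $N_x/G_x$ with $N_x\subset\mathbb{R}^n$ a $G_x$-stable open ball; after a linear change of coordinates $G_x$ acts orthogonally, so we may take $N_x$ star-shaped about the fixed point. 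An orbifold $p$-form on $N_x/G_x$ is exactly a $G_x$-invariant $p$-form on $N_x$, and the classical Poincaré lemma on the contractible set $N_x$ produces a primitive of any closed form. Averaging the standard contracting homotopy operator over the finite group $G_x$ yields a $G_x$-equivariant homotopy operator; hence in positive degrees every invariant closed form is $d$ of an invariant form, while the invariant functions with $df=0$ are the constants. This gives exactness of the stalk complex.

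Third, I would check that each $\mathcal{A}^p_{\mathcal{X}}$ is a fine sheaf. Since $X$ is Hausdorff and second countable, hence paracompact, the orbifold partitions of unity subordinate to a chart cover — already used above in the definition of $\int_{\mathcal{X}}$ — furnish the required sheaf-theoretic partitions of unity, so $\mathcal{A}^p_{\mathcal{X}}$ is fine, in particular acyclic: $H^q(X,\mathcal{A}^p_{\mathcal{X}})=0$ for all $q>0$. Then the abstract de Rham theorem (an acyclic resolution computes sheaf cohomology) yields the canonical isomorphism
\[
H^p(X,\mathbb{R})\;\cong\;H^p\bigl(\Gamma(X,\mathcal{A}^\bullet_{\mathcal{X}})\bigr)\;=\;H^p_{dR}(\mathcal{X}),
\]
the naturality being that of the edge morphism of the hypercohomology spectral sequence of the resolution. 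The main obstacle is the second step: one must be careful that the local model of \cref{local} can be taken star-shaped about the fixed point and that the contracting homotopy can be chosen $G_x$-equivariantly; granting this, the rest is a routine transcription of the manifold argument.
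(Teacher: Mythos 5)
Your proposal is correct and follows essentially the same route as the paper: realize $\mathcal{A}^{\bullet}_{\mathcal{X}}$ as a fine (hence acyclic) resolution of the constant sheaf $\mathbb{R}_X$ via orbifold partitions of unity and apply the abstract de Rham theorem. The only difference is that you spell out the local exactness step (the $G_x$-equivariant Poincaré lemma on a star-shaped linearized chart), which the paper simply asserts.
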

\begin{proof}
We give a direct proof. Consider the sheaf $C^{\infty}_{\mathcal{X}}$ on $X$, given by $V\mapsto \mathrm{Mor}(V,\mathbb{R})$. Note $\mathcal{X}$ has partition of unity by smooth function with respect to any open cover. We have  $\mathcal{A}^p$ is fine and acyclic (\emph{cf.} \cite[Definition~4.35 and Proposition~4.36]{vo02}). On the other hand, $\mathcal{A}^{\bullet}$ is a resolution of $\mathbb{R}_X$. Hence we have the canonical isomorphism $H^p(X,\mathbb{R})\cong H^p_{dR}(\mathcal{X})$.
\end{proof}

In \cite[section~7]{Sat56}, Satake shows there is a canonical morphism 
\begin{center}
$H^{sing}_p(\mathcal{X},\mathbb{R})\rightarrow \check{H}_p(\mathcal{U},\mathbb{R})$,
\end{center}
where the latter is the Čech homology group. If we define $\check{H}_p(X,\mathbb{R})=\varprojlim\check{H}_p(\mathcal{U},\mathbb{R})$. Then we have
\begin{prop}[\emph{cf.} \protect{\cite[Theorem~2]{Sat56}}]
Let $\mathcal{X}$ be a real $n$-orbifold, $X=|\mathcal{X}|$. We have a canonical isomorphism  $H^{sing}_p(\mathcal{X},\mathbb{R})\rightarrow \check{H}_p(X,\mathbb{R})$
\end{prop}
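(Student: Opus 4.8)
The plan is to identify both sides with the homology of the nerve of a \emph{good cover} of $X=|\mathcal{X}|$, by means of the nerve theorem, and then to show that the pro-system defining $\check{H}_p(X,\mathbb{R})$ is essentially constant. First I would record two soft facts about $X$. Being second countable, Hausdorff and locally homeomorphic to a quotient $\tilde{U}/G$, the space $X$ is metrizable, hence paracompact; and by \cref{local} every point has a neighbourhood of the form $N_x/G_x$ with $N_x\subset\mathbb{R}^n$ an open ball on which $G_x$ acts orthogonally (choose a $G_x$-invariant metric and normal coordinates), so the radial contraction of $N_x$ is $G_x$-equivariant and descends to a contraction of $N_x/G_x$; in particular $X$ is locally contractible. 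Now equip $\mathcal{X}$ with an orbifold Riemannian metric — which exists by the partition of unity on $\mathcal{X}$ — and refine a cover by such linear charts $N_x/G_x$ to a cover of $X$ by small geodesically convex metric balls, promoted to an atlas by lifting each ball to a chart $C/H$ with $C\subset\mathbb{R}^n$ convex and $H$ finite acting orthogonally. For any nonempty finite intersection $B$ of balls from this cover contained in a single chart $N_x/G_x$, the component through a chosen preimage point lifts to a convex subset $C\subset N_x$ whose setwise stabiliser $H\leq G_x$ acts on $C$ by orthogonal maps fixing the barycenter of $C$; the associated $H$-equivariant star-contraction shows $B=C/H$ is contractible. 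Carrying out the classical construction of geodesically convex covers with this contractibility in hand, $X$ admits arbitrarily fine \emph{good} covers $\mathcal{U}$ — all of whose nonempty finite intersections are contractible — underlying atlases of $\mathcal{X}$.

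For such a good cover $\mathcal{U}$ the nerve theorem yields a homotopy equivalence $|N(\mathcal{U})|\simeq X$, hence a canonical isomorphism $\check{H}_p(\mathcal{U},\mathbb{R})=H_p\bigl(|N(\mathcal{U})|,\mathbb{R}\bigr)\xrightarrow{\,\sim\,}H^{\mathrm{sing}}_p(X,\mathbb{R})$; unwinding Satake's construction, the composite of his canonical map $H^{\mathrm{sing}}_p(\mathcal{X},\mathbb{R})\to\check{H}_p(\mathcal{U},\mathbb{R})$ with this isomorphism is the standard comparison map and is itself an isomorphism. Good covers are cofinal in the directed set of atlases ordered by refinement, and when $\mathcal{V}$ refines $\mathcal{U}$ with both good, the refinement-induced map $\check{H}_p(\mathcal{V},\mathbb{R})\to\check{H}_p(\mathcal{U},\mathbb{R})$ is, under these identifications, compatibly the identity of $H^{\mathrm{sing}}_p(X,\mathbb{R})$, because the simplicial refinement $N(\mathcal{V})\to N(\mathcal{U})$ is homotopy-compatible with the two equivalences to $X$. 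Therefore $\check{H}_p(X,\mathbb{R})=\varprojlim_{\mathcal{U}}\check{H}_p(\mathcal{U},\mathbb{R})$, computed over the cofinal subsystem of good covers, is an inverse limit in which every structure map is an isomorphism onto $H^{\mathrm{sing}}_p(X,\mathbb{R})$; hence the limit is $H^{\mathrm{sing}}_p(X,\mathbb{R})$, no $\varprojlim^{1}$ term intervenes, and Satake's canonical morphism is the asserted isomorphism.

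The step demanding genuine work is the construction of good covers of $|\mathcal{X}|$ — concretely, the verification that small geodesically convex metric balls of an orbifold and their finite intersections have contractible underlying spaces — for which the only new ingredient beyond the manifold case is the barycenter argument in a linear chart $N_x/G_x$ recalled above. An alternative, avoiding metrics entirely, is to invoke a triangulation of $X$ adapted to the orbifold stratification and to quote the classical agreement of Čech and singular homology for locally finite polyhedra; I would nonetheless prefer the good-cover route, since it is self-contained given the material already developed and works over $\mathbb{Z}$ as well.
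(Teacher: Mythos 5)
The load-bearing step of your argument --- that small ``geodesically convex metric balls'' on $X=|\mathcal{X}|$ form arbitrarily fine good covers --- is exactly where the proof breaks as written. Pairwise intersections of small metric balls on an orbifold need not even be connected. Take $X=\mathbb{R}^2/\{\pm 1\}$, let $p_1,p_2$ be the images of $\tilde y_1=(\varepsilon,0)$ and $\tilde y_2=(0,\varepsilon)$, and take radius $r$ with $\varepsilon/\sqrt{2}<r<\varepsilon$. Each ball $B(p_i,r)$ is the homeomorphic image of a single Euclidean ball (its two lifts $B(\pm\tilde y_i,r)$ are disjoint), but the preimage of $B(p_1,r)\cap B(p_2,r)$ consists of the four lens regions $B(\pm\tilde y_1,r)\cap B(\pm\tilde y_2,r)$, which are pairwise disjoint and fall into two $\{\pm1\}$-orbits; hence the intersection downstairs has two components and the cover is not good. (Such balls are of course not genuinely geodesically convex; but nothing in your construction imposes the constraint that would make them so, namely radii small compared with the distance to the singular strata, and your contractibility argument never uses it.) Moreover, even when an intersection $B=B_1\cap\dots\cap B_k$ inside a chart $N_x/G_x$ is connected, its preimage is the union of the convex pieces $\bigcap_i B(g_i\tilde y_i,r_i)$ over all choices of lifts of the centers, and a connected component of such a union need not be convex (the pieces overlap as soon as some center lies closer to the fixed locus of $G_x$ than its radius). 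So the assertion that ``the component through a chosen preimage point lifts to a convex subset $C$'' with setwise stabiliser fixing its barycenter, and the ensuing star-contraction, are unjustified; in particular $B$ need not be of the form $C/H$ at all.

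The surrounding architecture is sound: for a paracompact, locally contractible $X$, once good covers exist and are cofinal among atlas covers, the nerve theorem plus essential constancy of the pro-system does identify $\varprojlim\check{H}_p(\mathcal{U},\mathbb{R})$ with $H_p^{\mathrm{sing}}(X,\mathbb{R})$, and Satake's map is the standard comparison. But the existence of good covers is precisely the content you must supply, and it needs a more careful construction than the one given --- e.g.\ strongly convex balls whose radii are small compared with the distance to the deeper singular strata, together with invariant balls centered on the strata, after which finite intersections are strongly convex and one must still prove that strongly convex sets in the quotient are contractible (geodesic contraction, not a barycenter in a chart); alternatively the triangulation route you yourself mention, using that $|\mathcal{X}|$ is triangulable and that open stars of vertices give a good cover. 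Note also that the paper offers no proof of this proposition --- it simply refers to Satake's Theorem 2 --- so your write-up has to carry the full weight of this step, and at present it does not.
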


As $\check{H}^p(\mathcal{U},\mathbb{R})$ is dual to $\check{H}_p(\mathcal{U},\mathbb{R})$, we see the isomorphism $H^{n-p}_{dR}(\mathcal{X})\cong H_p^{sing}(X,\mathbb{R})$. Moreover, we have the Poincaré duality for orbifolds:
\begin{prop}
Let $\mathcal{X}$ be a compact real $n$-orbifold, the natural map 
\[
\left\{\begin{array}{ccl}
H^p_{dR}(\mathcal{X})\times H^{n-p}_{dR}(\mathcal{X}) &\longrightarrow &\mathbb{R}\\
(\omega,\theta)&\mapsto &\int_\mathcal{X}\omega \wedge \theta\end{array}\right.\]
is a perfect paring.
\end{prop}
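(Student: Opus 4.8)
The plan is to reduce the statement to a single non-degeneracy assertion and then settle that with Hodge theory on the orbifold. First one checks the pairing descends to cohomology: if $\omega$ is closed and $\theta=d\eta$, then $\omega\wedge d\eta=\pm\, d(\omega\wedge\eta)$, so by the orbifold Stokes formula the integral vanishes (Stokes is immediate from the definition of $\int_{\mathcal{X}}$, since on a chart $\frac{1}{|G|}\int_{\tilde U}\phi^{\ast}d(\cdot)=\frac{1}{|G|}\int_{\tilde U}d(\phi^{\ast}(\cdot))=0$, and a partition of unity patches this up); symmetrically in the second variable. Next, by the isomorphisms recalled above one has $H^p_{dR}(\mathcal{X})\cong H^p(X,\mathbb{R})$ and $H^{n-p}_{dR}(\mathcal{X})\cong H^{sing}_p(X,\mathbb{R})$, and since $X$ is compact these are finite dimensional and, over $\mathbb{R}$, mutually dual by the universal coefficient theorem; in particular $\dim H^p_{dR}(\mathcal{X})=\dim H^{n-p}_{dR}(\mathcal{X})$. (Alternatively this equality will fall out of the Hodge star below.) Hence it is enough to show the induced map $H^p_{dR}(\mathcal{X})\to\bigl(H^{n-p}_{dR}(\mathcal{X})\bigr)^{\vee}$ is injective, i.e. that for every closed $p$-form $\omega$ with $[\omega]\neq 0$ there is a closed $(n-p)$-form $\theta$ with $\int_{\mathcal{X}}\omega\wedge\theta\neq 0$.

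For this I would invoke Hodge theory on compact Riemannian orbifolds. Fix a Riemannian metric on $\mathcal{X}$ (take an arbitrary metric in each chart, average over the local group, and patch by a partition of unity), together with the orientation. Orbifold $p$-forms are precisely the local-group-invariant $p$-forms on the charts, and the whole construction of $\Delta=dd^{\ast}+d^{\ast}d$ is equivariant, so $\Delta$ is an elliptic operator on $\mathcal{A}^{p}(\mathcal{X})$ and the classical elliptic package applies: one gets the orthogonal decomposition $\mathcal{A}^{p}(\mathcal{X})=\mathcal{H}^{p}(\mathcal{X})\oplus\operatorname{im}d\oplus\operatorname{im}d^{\ast}$ with $\mathcal{H}^{p}(\mathcal{X})$ the finite dimensional space of harmonic forms, and the natural map $\mathcal{H}^{p}(\mathcal{X})\to H^p_{dR}(\mathcal{X})$ an isomorphism (this is classical, going back to Satake and to Baily's work on $V$-manifolds). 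The Hodge star $\ast$, being built from the metric and orientation, commutes with the local group actions and hence descends to orbifold forms, restricting to an isomorphism $\ast\colon\mathcal{H}^{p}(\mathcal{X})\xrightarrow{\ \sim\ }\mathcal{H}^{n-p}(\mathcal{X})$. Now represent $[\omega]$ by the harmonic form $\alpha\in\mathcal{H}^{p}(\mathcal{X})$ and set $\theta:=\ast\alpha$, which is closed; then $\int_{\mathcal{X}}\omega\wedge\theta=\int_{\mathcal{X}}\alpha\wedge\ast\alpha=\lVert\alpha\rVert_{L^{2}}^{2}>0$ when $\alpha\neq 0$. This gives the required injectivity, and together with the dimension equality the pairing is perfect.

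The main obstacle is the input from orbifold Hodge theory — the construction of $\Delta$, its ellipticity, and the decomposition theorem on a Riemannian orbifold — but this is entirely classical and equivariant in nature, so I would cite it rather than reprove it. An alternative, metric-free route is purely topological: $X=|\mathcal{X}|$ is an oriented $\mathbb{R}$-cohomology manifold of dimension $n$ (locally $\mathbb{R}^{n}/G$ has the $\mathbb{R}$-cohomology of $\mathbb{R}^{n}$, since one may average over the finite group $G$), so $\int_{\mathcal{X}}\colon H^{n}_{dR}(\mathcal{X})\xrightarrow{\ \sim\ }\mathbb{R}$ on the connected component, and under the de Rham isomorphism the wedge pairing becomes the cup product $H^{p}(X,\mathbb{R})\times H^{n-p}(X,\mathbb{R})\to H^{n}(X,\mathbb{R})\cong\mathbb{R}$, which is perfect by Poincaré duality for $\mathbb{R}$-cohomology manifolds; the only slightly delicate point is that the de Rham isomorphism intertwines the wedge product with the cup product, which itself reduces to the chartwise, equivariant de Rham theorem.
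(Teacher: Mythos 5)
Your proof is correct, but it follows a genuinely different route from the paper. The paper does not argue via Hodge theory at all: it deduces the statement from the chain of identifications recalled just before it --- the orbifold de Rham theorem $H^p_{dR}(\mathcal{X})\cong H^p(X,\mathbb{R})$, Satake's comparison of singular and \v{C}ech homology, and the duality between \v{C}ech homology and cohomology --- so the proposition is essentially quoted from Satake's results rather than proved afresh. Your argument instead fixes a Riemannian metric, invokes the orbifold Hodge decomposition (classical for $V$-manifolds, and indeed the paper later cites Baily's Hodge-theoretic results for the $\partial\bar\partial$-lemma, so this input is in the spirit of the text), and gets non-degeneracy directly from $\int_{\mathcal{X}}\alpha\wedge\ast\alpha=\lVert\alpha\rVert_{L^2}^2$, with equality of dimensions coming from the Hodge star; this has the advantage of showing that the specific wedge--integration pairing is perfect (and gives finite-dimensionality for free), whereas the topological chain of isomorphisms by itself only yields an abstract duality of the two spaces and one must still know, as in Satake, that this duality is realized by integration. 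Two small caveats: the statement (and your proof) implicitly requires $\mathcal{X}$ to be oriented, since $\int_{\mathcal{X}}$ is only defined in that case; and in your alternative topological sketch, the claim that $|\mathcal{X}|$ is an $\mathbb{R}$-cohomology manifold also needs the local groups to preserve orientation --- the paper's own example of the half-plane $\{x\ge 0\}=\mathbb{R}^2/(\mathbb{Z}/2)$ shows the local top-degree homology dies at reflection points --- so that route too only works under the same orientability hypothesis. Your main Hodge-theoretic argument is sound as written, modulo the standard citation for the elliptic package on compact Riemannian orbifolds.
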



\section{riemannian orbifolds}
\label{sec3}
From this section onward, all orbifolds are understood to be effective.

\subsection{Differential calculus on orbifold}

\begin{definition}
A Riemmanian orbifold is a pair $(\mathcal{X},g)$ where $\mathcal{X}$ is an orbifold and $g$ is an (equivariant) section of $(T^2\mathcal{X})^{\vee}$ such that the following equivalent condition is satisfied:
\begin{enumerate}[label=(\roman*)]
\item If $\mathcal{X}$ is represented by $\mathcal{G}$, and $g$ corresponds 
to $\sigma:G_0\rightarrow (T^2{G_0})^{\vee}$ then $\sigma$ is a Riemannian metric on $G_0$;
\item There exists a family of charts $(\tilde{U_i},G_i)$ with $G_i$-invariant metrics $\tilde{g}_i$ represent $g$.
\end{enumerate}
\end{definition}

Most operators on Riemannian manifolds can be generalized to Riemannian 
orbifolds. We begin to treat some basic results on covariant derivatives on orbifolds.

\medskip
Let $\mathcal{X}$ be an orbifold, $\{\mathcal{G},f:|\mathcal{G}|\rightarrow |\mathcal{X}|\}$ being a groupoid representation of $\mathcal{X}$. We know from \cref{tensor-bundle}. that $T^p\mathcal{X}\otimes T^q(T\mathcal{X}^{\vee})$ is represented by $T^p\mathcal{G}\otimes T^q(T\mathcal{G}^{\vee})$. A 
$(p,q)$-tensor over an open subset $U$ of $|\mathcal{X}|$ is thus a collection of $G_i$-invariant $(p,q)$-tensor over $\tilde{U}_i$ such that $U_i=\tilde{U}_i/G_i$ cover $U$. For $T=X_1\otimes X_2 \otimes \cdots\otimes X_p\otimes S$, we have
\begin{equation*}
g\cdot T=g_{\ast}(X_1)\otimes g_{\ast}(X_2)\otimes\cdots\otimes g_{\ast}(X_p)\otimes (g^{-1})^{\ast}(S).
\end{equation*}

As all the calculation can be performed locally, in the following we consider 
a local model where $U\subset \mathbb{R}^n$ is an open subset, $H$ a finite subgroup of $\mathrm{Aut}(U)$, and $g$ a Riemannian metric on $U$ which is $H$-invariant. 
For any $h\in H$, we note the action of $h$ on $x\in U$ by $L_h(x)$ or $h\cdot x$. Also for any smooth function $f$, we define the $H$-action on $f$ by $h\cdot f=f\circ L_{h^{-1}}$. A easy consequence for this adaption is that for any $(p,q)$-tensor $T$, we have $h\cdot (fT)=(h\cdot f)(h\cdot T)$.

Now consider $T$ be a $(0,p)$-tensor over $U$, $X_i, 1\leq i\leq p$ p vector fields over $U$. We have 
\begin{equation*}
\begin{split}
h^{\ast}(T)(X_1,X_2,\ldots,X_p)(x)
&=T(hx)(T_xL_h X_1(x),\ldots, T_xL_hX_p(x))\\
&=T(hx)(h_{\ast}X(xh),\ldots, h_{\ast}X(xh))\\
&=T(h_{\ast}X_1,\ldots,h_{\ast}X_p)(hx)\\
\end{split}
\end{equation*}

Thus we can characterize $T$ being $H$-invariant by 
\begin{equation*}
T(X_1,X_2,\ldots,X_p)=T(h_{\ast}(X_1),\ldots,h_{\ast}(X_p))\circ L_h
\end{equation*}
for any $h\in H$ and any vector fields $X_1,...X_p$. 

The metric $g$ being $H$-invariant, we infer that $L_h$ is an isometry for any $h\in H$ and we have $h_{\ast}(\nabla_X Y)=\nabla_{(h_{\ast}X)}(h_{\ast}Y)$ where $\nabla$ is the Levi-Civita connection of the metric $g$.

To convince the reader that most conceptions in Riemannian manifolds generalize to Riemannian orbifolds. We show the the following
\begin{lemma}
\label{invariance}
If $T$ is a $(0,p)$-tensor which is $H$-invariant, we have the $(0,p+1)$-tensor $\nabla T$ is $H$-invariant.
\end{lemma}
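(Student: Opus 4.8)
The plan is to verify the defining $H$-invariance identity for $\nabla T$ directly from the Leibniz-type formula for the covariant derivative of a tensor. Recall that, with the convention $(\nabla T)(X_0,X_1,\dots,X_p)=(\nabla_{X_0}T)(X_1,\dots,X_p)$, one has
\begin{equation*}
(\nabla T)(X_0,X_1,\dots,X_p)=X_0\big(T(X_1,\dots,X_p)\big)-\sum_{i=1}^{p}T(X_1,\dots,\nabla_{X_0}X_i,\dots,X_p)
\end{equation*}
for all vector fields $X_0,\dots,X_p$ on $U$. So I need to show
\begin{equation*}
(\nabla T)(X_0,X_1,\dots,X_p)=(\nabla T)(h_\ast X_0,h_\ast X_1,\dots,h_\ast X_p)\circ L_h
\end{equation*}
for every $h\in H$.

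First I would isolate the three inputs. (i) Since $g$ is $H$-invariant, $L_h$ is an isometry, hence $h_\ast(\nabla_{X_0}X_i)=\nabla_{h_\ast X_0}(h_\ast X_i)$ --- this is recorded in the paragraph just before the statement. (ii) The hypothesis that $T$ is $H$-invariant, used in the form $T(h_\ast Y_1,\dots,h_\ast Y_p)\circ L_h=T(Y_1,\dots,Y_p)$, which holds for \emph{arbitrary} vector fields $Y_1,\dots,Y_p$ (in particular with some $Y_i$ replaced by $\nabla_{X_0}X_i$). (iii) The chain rule for pushed-forward vector fields: for any smooth function $\varphi$ on $U$ one has $\big((h_\ast X_0)(\varphi)\big)\circ L_h=X_0(\varphi\circ L_h)$, which follows at once from $(h_\ast X_0)_{hx}=T_xL_h\,(X_0)_x$.

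Then the computation is mechanical: substituting $h_\ast X_j$ into the formula for $\nabla T$, each connection term becomes $\nabla_{h_\ast X_0}(h_\ast X_i)=h_\ast(\nabla_{X_0}X_i)$ by (i), so composing the $i$-th summand with $L_h$ and applying (ii) to the tuple $(X_1,\dots,\nabla_{X_0}X_i,\dots,X_p)$ turns the sum into $\sum_i T(X_1,\dots,\nabla_{X_0}X_i,\dots,X_p)$. For the first term I apply (iii) with $\varphi=T(h_\ast X_1,\dots,h_\ast X_p)$; by (ii), $\varphi\circ L_h=T(X_1,\dots,X_p)$, so $\big((h_\ast X_0)(\varphi)\big)\circ L_h=X_0\big(T(X_1,\dots,X_p)\big)$. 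Adding the pieces reproduces $(\nabla T)(X_0,\dots,X_p)$, which is the claim.

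The only thing demanding attention is the bookkeeping of the $\circ L_h$ factors and remembering that the invariance characterization of $T$ is allowed to be fed the non-obvious argument $\nabla_{X_0}X_i$; there is no real obstacle, since the substantive geometric input --- that an isometry intertwines the two Levi-Civita connections --- is already available from the text preceding the lemma.
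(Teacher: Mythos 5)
Your proof is correct and follows essentially the same route as the paper's: expand $\nabla T$ by the Leibniz formula, use $h_\ast(\nabla_{X_0}X_i)=\nabla_{h_\ast X_0}(h_\ast X_i)$ from the isometry, the invariance characterization of $T$ on arbitrary tuples (including the argument $\nabla_{X_0}X_i$), and the chain-rule identity for how $h_\ast X_0$ acts on functions. The paper's proof is exactly this computation written out, so nothing is missing.
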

\begin{proof}
 Let $X_0,\ldots,X_p$ be $p+1$ vector fields. We compute 
\begin{align*}
((\nabla T)&(h_{\ast}(X_0),h_{\ast}(X_1),\ldots,h_{\ast}(X_p)))\circ L_h\\
&=((\nabla_{h_{\ast}(X_0)}T)(h_{\ast}(X_1),\ldots,h_{\ast}(X_p)))\circ L_h\\
&= \Big((h_{\ast}(X_0)\cdot T(h_{\ast}(X_1),\ldots,h_{\ast}(X_p)\\
&\hspace*{1cm}-\sum_i T(h_{\ast}(X_1),\ldots,\nabla_{h_{\ast}(X_0)}h_{\ast}(X_i),\ldots,h_{\ast}(X_p))\Big)\circ L_h\\
&=\Big((X_0\cdot (T(h_{\ast}(X_1),\ldots,h_{\ast}(X_p))\circ L_h))\circ L_{h^{-1}}\\
&\hspace*{1cm}-\sum_i T(h_{\ast}(X_1),\ldots,h_{\ast}(\nabla_{X_0}X_i),\ldots,h_{\ast}(X_p))\Big)\circ L_h\\
&=X_0\cdot T(X_1,\ldots,X_p)-\sum_i T(X_1,\ldots,\nabla_{X_0} X_i,\ldots, X_p)\\
&=(\nabla T)(X_0,X_1,\ldots,X_p).
\end{align*}
\end{proof}

As differential commutes with pull-back, we see that if $\omega$ is an invariant $p$-form, so is $d\omega$. In particular, if $f$ is $H$-invariant smooth function, the $df$ is an invariant $1$-form and $\nabla f=(df)^{\sharp}$ is an invariant vector field.

Suppose now we have two orbi-vector filed $W,V$ over $\mathcal{X}$. Take $(\tilde{U},G)$ a chart for $\mathcal{X}$ such that $W,V$ are represented 
by $G$-invariant field $\tilde{W},\tilde{V}$ respectively. Then we have $h_\ast(\nabla_{\tilde{W}}\tilde{V})=\nabla_{\tilde{W}}\tilde{V}$ for any $h\in G$. If $\lambda:(\tilde{U}',G') \rightarrow (\tilde{U},G)$ is an chart embedding, and $\tilde{W}',\tilde{V}'$ are the representations of $W,V$ on $\tilde{U}'$, then $\nabla_{\tilde{W}'}\tilde{V}'=\lambda^{\ast}(\nabla_{\tilde{W}}\tilde{V})$. Thus all the local representations glue back to an orbi-vector field. We may thus define:
\begin{definition}
Let $W,V$ be two orbi-vector fields over $(\mathcal{X},g)$, represented by $\tilde{W}_i,\tilde{V}_i$ on a covering $(\tilde{U}_i,G_i)$ respectively. Let $\nabla_i$ be the Levi-Civita connection on $\tilde{U}_i$, then there is a unique vector filed $\nabla_W V$ on $\mathcal{X}$ corresponding to the family $\nabla_i {}_{\tilde{W}_i}{\tilde{V}_i}$. We define the association $\nabla:W,V\mapsto \nabla_W V$ as the Levi-Civita connection on $(\mathcal{X},g)$.
\end{definition}

If $\tilde{R}_i$ is the curvature tensor of $(\tilde{U}_i,\tilde{g}_i)$, we may glue them to an orbi-tensor $R$. 
We call this tensor the curvature of $\mathcal{X}$. Similarly, we can glue all the $\tilde{\mathrm{ric}}_i$ to get an orbi-tensor $\mathrm{ric}_g$ 
on $\mathcal{X}$.

\subsection{Metric structures on orbifolds}

Let $\phi:(\tilde{U},H)\rightarrow U$ be a chart on $\mathcal{X}$, with $\tilde{g}$ representing $g$ locally. Let $\tilde{p}\in \tilde{U}$ be a pre-image of the point $p\in U$. If $\tilde{c}:[0,\epsilon) \rightarrow \tilde{U}$ is a geodesic emanating from $\tilde{p}$, as $H$ acts by isometry 
on $\tilde{U}$, we know that $g\cdot \tilde{c}=L_g\circ \tilde{c}$ is a 
geodesic emanating from $g\cdot\tilde{p}$. If $V\in T_{\tilde{p}}\tilde{U}=\tilde{c}'(0)$, then $TL_g(V)=g\cdot V=(L_g\circ \tilde{c})'(0)\in T_{g\tilde{p}}\tilde{U}$. In the orbi-fibre $T_p\mathcal{X}=\mathbb{R}^n/G_p$, $V$ and $g\cdot V$ represent same orbi-vector. Hence taking $c_{[V]}:=\phi\circ \tilde{c}$, we call it the geodesic emanating from $p$ 
determined by the orbi-vector $[V]$. It is obvious that the definition does not depend on the choice of orbi-chart. 

From the construction, we also note that for $v\in T_p\mathcal{X}$, the 
geodesic $c_v:I\rightarrow \mathcal{X}$ is smooth.

\begin{definition}
Let $p\in X$ be a point. Take $O:=\{v\in T_p\mathcal{X}:c_v$ is defined 
on $[0,1]\}$. We define the exponential map $\exp_p:O\rightarrow X$ by $\exp(v)=c_v(1)$.
\end{definition}

As a topological map, $\exp_p$ is continous. Note that $[0]\in O$ and $\exp_p{[0]}=p$. If $\tilde{U}_p,G_p$ is a fundamental chart at $p$, then $\exp_p$ 
has as lifting $\exp_{\tilde{p}}:\tilde{\Omega}\rightarrow \tilde{U}_p$, where $\tilde{\Omega}\subset T_{\tilde{p}}\tilde{U}$ is an $G_p$-invariant 
open subset containing $0$, and $exp_{\tilde{p}}$ is the classical Riemannian exponential map. One can see that $O$ is indeed of the form $\tilde{O}/G_p$ for some $G_p$-invariant open subset of $\mathbb{R}^n$, hence should be regarded as an open sub-orbifold of $T_p\mathcal{X}$. As $\exp_{\tilde{p}}:\tilde{\Omega}\rightarrow \tilde{U}_p$ gives $G_p$-invariant local diffeomorphism at $0$. We know that $\exp_p$ restricts to some $W=[\tilde{W}/G_p]$ gives an open embedding.

We also remark that if $p\in X_{\mathrm{reg}}$, then the geodesics, hence the exponential map on $X$ are identical to the Riemannian ones around $p$.

\begin{rem}
With the Levi-Citiva connection defined on $(\mathcal{X},g)$, one may consider define a covariant connection along a smooth curve $c:I\rightarrow \mathcal{X}$. However, even when $c$ is lifted as $\tilde{c}:I\rightarrow \tilde{U}$, we don't know if all the lifts are of the form $g\cdot \tilde{c}$. Another hurdle for mere smooth curves is that the definition of orbi-vector fields along them. One of the possible definition is to restrict 
the curves to be strong curve, i.e. $c:I\rightarrow \mathcal{X}$ is strong. In this situation, we could pull the orbi-vector bundle $T\mathcal{X}$ 
together with $\nabla$ back on $I$ via $c$. If the strong curve $c$ has image in $X_\mathrm{reg}$, the definition coincides with the classical one.
\end{rem}

We follow the treatment of \citep{Bor93} for the metric aspects of Riemannian orbifolds. For the basics of metric spaces, we refer the reader to Chapter 1 of \cite{Hae99}. 

For $(\tilde{U},G)$ a chart for $\mathcal{X}$, let $\tilde{g}$ be the representation of $g$ over $\tilde{U}$. Then $(\tilde{U},d_{\tilde{g}})$ is well-defined metric space. If $\lambda:\tilde{V}\rightarrow\tilde{U}$ is an orbifold embedding, then $\lambda$ is an isometry. If for a continuous 
curve $c:I\rightarrow X$, we have local lifts on charts that cover $c(I)$, we may then define the length of $c$ by adding the lengths of its local 
liftings. We now precise the definition.

First, we have
\begin{thm}
\label{lift}
Let $X$ be a left $G$-space, with $G$ a compact Lie group. Let $f:I\rightarrow X/G$ be any path. Then there exists a lifting $f':I\rightarrow X$ covering $f$, i.e. $p\circ f'=f$.
\end{thm}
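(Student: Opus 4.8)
The plan is to replace $p\colon X\to X/G$ by a submetry and lift paths along it, using compactness of $I$ to reduce to a local problem. First I would fix a $G$-invariant metric $d$ on $X$, obtained by averaging an arbitrary metric over the compact group $G$ (in the situations where we apply the theorem $X=\tilde U$ already carries the $G$-invariant Riemannian distance $d_{\tilde g}$, so no averaging is needed). Since the $G$-orbits are compact, $\bar d(Gx,Gy):=\min_{g\in G}d(x,gy)$ is then a well-defined metric on $X/G$ for which $p$ is $1$-Lipschitz and, in fact, a \emph{submetry}: one checks directly that $p\big(B(x,r)\big)=B\big(p(x),r\big)$ for every $x\in X$ and $r>0$; equivalently, for each $x\in X$ and each $y'\in X/G$ there is $y\in p^{-1}(y')$ with $d(x,y)=\bar d(p(x),y')$ (the infimum over the orbit $p^{-1}(y')$ being attained by compactness).

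Next, given $f\colon I\to X/G$ and a chosen $x_0\in p^{-1}(f(0))$, I would exploit the uniform continuity of $f$ to pick a sequence of partitions of $I$ of mesh tending to $0$ and, using the submetry property at each partition point together with the local path-connectedness of $X$, build from them a sequence of ``broken lifts'' $f_k\colon I\to X$ with $f_k(0)=x_0$, $p\circ f_k=f$ on the $k$-th set of partition points, and oscillation on each subinterval controlled by that of $f$. The submetry property bounds $d\big(f_k(t),f_{k+1}(t)\big)$ in terms of the modulus of continuity of $f$, so $(f_k)$ is uniformly Cauchy on the (compact, hence complete) portion of $X$ through which the path runs, and its uniform limit $f'$ is a continuous lift of $f$. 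This is the standard ``horizontal lift along a submetry'' argument; as an alternative I could argue locally via the slice theorem, subdividing $I$ so that each $f([s_{j-1},s_j])$ lies in the image of a tube $p^{-1}(U_j)\cong G\times_{H_j}S_j$, reducing to lifting through the slice map $S_j\to S_j/H_j$ (with $H_j$ a closed, hence compact Lie, isotropy subgroup), gluing consecutive local lifts by translating with a suitable element of $G$, and completing the reduction by induction on $(\dim G,\#\pi_0 G)$ with the trivial group as base case.

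I expect the only genuine obstacle to be the behaviour of $p$ over the smaller orbit types, where $p$ is neither a covering nor a fibration, so that a lift cannot be written down by a formula: in the metric argument this is precisely what forces one to pass to a limit of broken lifts rather than exhibit a single explicit one, and in the slice-theoretic argument it is what makes the inductive step — extending a partial lift across a point whose isotropy group may jump up — the crux. Everything else (the invariant metric, the submetry property, uniform continuity, and the matching of consecutive local lifts) is routine.
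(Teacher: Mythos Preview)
The paper gives no proof here; it simply cites Bredon, \emph{Introduction to Compact Transformation Groups}, Chapter~II, Lemma~6.1, whose argument is built on the slice theorem. Your ``alternative'' outline via tubes $G\times_{H}S$ is therefore the standard route and should be your main line; note, however, that the proposed induction on $(\dim G,\#\pi_0 G)$ stalls at a fixed point, where the isotropy is all of $G$ and the slice map is again a $G$-quotient, so you will want to follow Bredon's actual reduction rather than rely on that particular induction.

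The submetry approach, which you present as primary, has a genuine gap at precisely the step you call routine. The identity $p\big(B(x,r)\big)=B\big(p(x),r\big)$ furnishes nearest-fibre lifts at the partition points, but it does not by itself make the sequence $(f_k)$ of broken lifts uniformly Cauchy: nearest-point selections do not compose, and if on a coarse subinterval $[t_{i-1},t_i]$ the refined lift $f_{k+1}$ makes $m$ intermediate jumps of size at most $\omega_f(\delta')$, the triangle inequality only gives
\[
d\big(f_k(t_i),f_{k+1}(t_i)\big)\le d\big(f_k(t_{i-1}),f_{k+1}(t_{i-1})\big)+m\,\omega_f(\delta')+\omega_f(\delta),
\]
and summing over the coarse partition the middle contribution is of order $(\delta')^{-1}\omega_f(\delta')$, which need not tend to $0$ for a merely continuous, non-rectifiable $f$. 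This is why lifting results along general submetries are stated for rectifiable curves or geodesics; what makes \emph{arbitrary} continuous paths liftable in the compact Lie group case is the local product structure coming from slices, not the metric alone. A secondary issue: the theorem as stated does not assume $X$ is metrizable, so even the averaging-to-an-invariant-metric step imports an extra hypothesis. (In the paper's applications $G$ is finite and the fibres are discrete, so nearest-point lifts are locally unique and your limit argument can be repaired there; but that is weaker than the statement you are asked to prove.)
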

\begin{proof}
See \cite[Chapter~2, Lemma~6.1]{Bredon}.
\end{proof}

By a compactness argument, we see for a path $c:[0,1]\rightarrow X$, there exists a partition $0=t_0<t_1<...<t_k=1$, orbifold charts $(\tilde{U}_i,G_i), 1\leq i\leq k$ such that $c|_{[t_{i-1},t_{i}]}$ has image in $U_i$ and lifting $\tilde{c}_i$ in $\tilde{U}_i$.

As the liftings are not unique, we give the following definition
\begin{definition}
Let $(\mathcal{X},g)$ be a Riemannian orbifold with underlying space $X=|\mathcal{X}|$. Let $c:[0,1]\rightarrow X$ be a path. Let $\mathcal{P}$ be the set of the local liftings that glue back to $c$, i.e. an element of $\mathcal{P}$ is a triple $(A,B,C)$ where $A$ is a partition $0=t_0<t_1<...<t_k=1$, $B$ is a family of chart $(\tilde{U}_i,G_i), 1\leq i\leq k$ and $C$ is a family of curves $\tilde{c}_i:[t_{i-1},t_i]\rightarrow \tilde{U}_i$ that cover $c|_{[t_{i-1},t_{i}]}$. We define the length of $c$ to be 
\begin{center}
$\mathrm{L}_g(c)=\inf_\mathcal{P} \sum \mathrm{L}_i(\tilde{c}_i)$.
\end{center}
\end{definition}

If $c_1:[0,1]\rightarrow X$, $c_2:[0,1]\rightarrow X$ are two curves such 
that $c_1(1)=c_2(0)$, then we may consider the curve $c_1\ast c_2:[0,1]\rightarrow X$ defined by $t\in [0,\frac{1}{2}] \mapsto c_1(2t)$ and $t\in[\frac{1}{2},1]\mapsto c_2(2t-1)$. And easy observation is that $\mathrm{L}_g(c_1\ast c_2)=\mathrm{L}_g(c_1)+\mathrm{L}_g(c_2)$. Hence the definition is coherent with the intuition of the length of a curve.

\begin{lemma}
\label{liftingtoauniquechart}
Let $c:[0,1]\rightarrow \tilde{U}/G $, $\tilde{x}$ be a pre-image of $x=c(0)$. If there exists a partition $0=t_0<t_1<...<t_k=1$, orbifold charts $(\tilde{U}_i,G_i), 1\leq i\leq k$ such that $c|_{[t_{i-1},t_{i}]}$ has image in $U_i$ and lifting $\tilde{c}_i$ in $\tilde{U}_i$, then there is a lifting $\tilde{c}:I\rightarrow \tilde{U}$ such that $\mathrm{L}(\tilde{c})=\sum \mathrm{L}_i(\tilde{c}_i)$.
\end{lemma}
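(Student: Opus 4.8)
The plan is to reduce everything to one observation: the length of a lift of a path to a chart depends only on the path downstairs. Granting that, the lemma is essentially immediate, because $c$ already takes values in $\tilde{U}/G$ and $G$ is finite, hence a compact Lie group, so \cref{lift} produces a continuous lift $d\colon[0,1]\to\tilde{U}$ of $c$; since $d(0)$ and $\tilde{x}$ both lie over $x=c(0)$ there is $g\in G$ with $g\cdot d(0)=\tilde{x}$, and $\tilde{c}:=g\cdot d$ is again a lift of $c$, now with $\tilde{c}(0)=\tilde{x}$. It then remains only to identify $\mathrm{L}(\tilde{c})$ with $\sum_i\mathrm{L}_i(\tilde{c}_i)$.

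The local observation is the following. Let $(\tilde{V},H)$ be a chart and $\gamma\colon[a,b]\to\tilde{V}/H$ a path with a continuous lift $\tilde{\gamma}\colon[a,b]\to\tilde{V}$. For $s,s'\in[a,b]$ one has $d_{\tilde{V}/H}(\gamma(s),\gamma(s'))=\min_{h\in H}d_{\tilde{g}}(\tilde{\gamma}(s),h\cdot\tilde{\gamma}(s'))$. Put $\varepsilon_s:=\tfrac{1}{2}\min\{d_{\tilde{g}}(\tilde{\gamma}(s),h\cdot\tilde{\gamma}(s)):h\in H,\ h\cdot\tilde{\gamma}(s)\neq\tilde{\gamma}(s)\}>0$. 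If $d_{\tilde{g}}(\tilde{\gamma}(s),\tilde{\gamma}(s'))<\varepsilon_s$, then for every $h$ with $h\cdot\tilde{\gamma}(s)\neq\tilde{\gamma}(s)$ we get $d_{\tilde{g}}(\tilde{\gamma}(s),h\cdot\tilde{\gamma}(s'))\geq d_{\tilde{g}}(\tilde{\gamma}(s),h\cdot\tilde{\gamma}(s))-d_{\tilde{g}}(h\cdot\tilde{\gamma}(s),h\cdot\tilde{\gamma}(s'))>\varepsilon_s>d_{\tilde{g}}(\tilde{\gamma}(s),\tilde{\gamma}(s'))$, so the minimizing $h$ fixes $\tilde{\gamma}(s)$ and $d_{\tilde{V}/H}(\gamma(s),\gamma(s'))=d_{\tilde{g}}(\tilde{\gamma}(s),\tilde{\gamma}(s'))$. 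By uniform continuity and finiteness of $H$, any sufficiently fine partition $a=r_0<\dots<r_N=b$ satisfies $d_{\tilde{g}}(\tilde{\gamma}(r_{l-1}),\tilde{\gamma}(r_l))<\varepsilon_{r_{l-1}}$ for all $l$, whence $\sum_l d_{\tilde{g}}(\tilde{\gamma}(r_{l-1}),\tilde{\gamma}(r_l))=\sum_l d_{\tilde{V}/H}(\gamma(r_{l-1}),\gamma(r_l))$; taking the supremum over finer partitions shows that the length of $\tilde{\gamma}$ in $(\tilde{V},d_{\tilde{g}})$ equals the metric length of $\gamma$ in $(\tilde{V}/H,d_{\tilde{V}/H})$, independently of the chosen lift.

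Next I would make this chart-independent. If a path has image in two chart-quotients, then about each of its points the atlas axiom (see also \cref{local}) furnishes a common refining chart embedding into both; chart embeddings are Riemannian isometries, so the induced quotient metrics agree on a neighbourhood of that point, and hence, by the previous paragraph together with a partition argument, the metric length of the path is the same whether measured in the first chart-quotient or in the second. In particular $\mathrm{L}_i(\tilde{c}_i)$ equals the metric length of $c|_{[t_{i-1},t_i]}$, which may as well be measured in $(\tilde{U}/G,d_{\tilde{g}})$; since length is additive under concatenation, $\sum_i\mathrm{L}_i(\tilde{c}_i)$ is the metric length of $c$ in $(\tilde{U}/G,d_{\tilde{g}})$, and by the previous paragraph applied to the chart $(\tilde{U},G)$ and the lift $\tilde{c}$ this is exactly $\mathrm{L}(\tilde{c})$. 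Thus $\mathrm{L}(\tilde{c})=\sum_i\mathrm{L}_i(\tilde{c}_i)$, as desired.

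The \emph{main obstacle} is this second step: turning the infinitesimal matching of metrics into the global identity requires a single partition that is at once fine enough for the orbit-distance estimate in each of the finitely many charts involved and subordinate, over each overlap, to a common refinement — a Lebesgue-number argument resting on compactness of $[0,1]$ and finiteness of all the local groups — after which the summation is routine. I should also flag that ``length'' is understood throughout in the metric-space sense $\sup\sum d(\cdot,\cdot)$, which agrees with Riemannian arc length for the piecewise-smooth curves at hand.
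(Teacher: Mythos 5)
Your route (produce the lift abstractly from \cref{lift}, then prove that the metric length of any continuous lift equals the metric length of the projected curve, then make this chart-independent) is genuinely different from the paper's, but the central local step has a real gap. The assertion that ``by uniform continuity and finiteness of $H$, any sufficiently fine partition satisfies $d_{\tilde g}(\tilde\gamma(r_{l-1}),\tilde\gamma(r_l))<\varepsilon_{r_{l-1}}$ for all $l$'' is false: $\varepsilon_s$ is not bounded below along the curve, since it degenerates whenever $\tilde\gamma(s)$ approaches the coincidence set $\{h\cdot\tilde\gamma(s)=\tilde\gamma(s)\}$ of some $h\in H$ without lying on it. Concretely, let $H=\mathbb{Z}/2\mathbb{Z}$ act on $\tilde V=\mathbb{R}$ by $x\mapsto -x$ and take the lift $\tilde\gamma(s)=s-\tfrac12$ on $[0,1]$; then $\varepsilon_s=|s-\tfrac12|$, so any partition having a point at distance $\eta$ from $\tfrac12$ with $\eta$ much smaller than the mesh violates your condition, no matter how fine the partition. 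Worse, the conclusion you draw from it fails too: for consecutive points $r_{l-1}<\tfrac12<r_l$ one has $d_{\tilde g}(\tilde\gamma(r_{l-1}),\tilde\gamma(r_l))=r_l-r_{l-1}$ while $d_{\tilde V/H}(\gamma(r_{l-1}),\gamma(r_l))=|(r_l-\tfrac12)-(\tfrac12-r_{l-1})|$, so the termwise equality of the upstairs and downstairs partition sums does not hold for arbitrarily fine partitions. Since your whole proof (including the chart-independence step and the final identification of $\mathrm L(\tilde c)$ with $\sum_i\mathrm L_i(\tilde c_i)$) funnels through this local lemma, the argument as written is incomplete.

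The local statement you want (length of a continuous lift equals the metric length of its projection, for a finite isometric action) is in fact true, but it needs a different argument: either insert into the partition the times at which the lift meets the relevant coincidence locus (the total-variation trick, which handles the example above), or use your pointwise estimate to show that $u\mapsto d_{\tilde g}(\tilde\gamma(s),\tilde\gamma(u))-\mathrm L(\gamma|_{[s,u]})$ is continuous and locally non-increasing, hence $d_{\tilde g}(\tilde\gamma(s),\tilde\gamma(u))\le\mathrm L(\gamma|_{[s,u]})$, which summed over any partition gives $\mathrm L(\tilde\gamma)\le\mathrm L(\gamma)$ (the reverse inequality being clear because the projection is $1$-Lipschitz). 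Note that the paper avoids this issue entirely: it never invokes \cref{lift} globally nor any metric comparison, but builds $\tilde c$ inductively by transporting the \emph{given} pieces $\tilde c_i$ into $\tilde U$ through chart embeddings $\lambda\circ\rho^{-1}$ supplied by the atlas axiom; since chart embeddings are Riemannian isometries, $\mathrm L(\tilde c|_{[t_{i-1},t_i]})=\mathrm L_i(\tilde c_i)$ is immediate, with no need to compare lengths upstairs and downstairs.
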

\begin{proof}
Suppose we have constructed $\tilde{c}:[0,t_{i}]\rightarrow \tilde{U}$ that lift $c|_{[0,t_i]}$ with length equals to $\sum_{j\leq i}\mathrm{L}_i(\tilde{c}_i)$. We now extend $\tilde{c}$ on $[t_i,t_{i+1}]$. We have the projection of $\tilde{c}(t_i)$ and $\tilde{c}_i(t_i)$ on $X$ equal $c(t_i)$. Hence by 
the definition of orbifolds, there exists a chart $(\tilde{V},\tilde{y})$ 
at $c(t_i)$ and two chart embeddings $\lambda:\tilde{V}\rightarrow \tilde{U}$ and $\rho:\tilde{V}\rightarrow \tilde{U}_i$ such that $\lambda(\tilde{y})=\tilde{c}(t_i)$ and $\rho(\tilde{y})=\tilde{c}_i(t_i)$. If $\rho(\tilde{V})$ contains $\tilde{c}_i([t_i,t_i+\epsilon])$,we may then extend $\tilde{c}$ on $[t_i,t_i+\epsilon]$ via $\lambda\circ (\rho)^{-1}\circ 
\tilde{c}_i$. Note $\rho$ and $\lambda$ are isometries, we have $\mathrm{L}(\tilde{c}|_{[t_i,t_i+\epsilon]})=\mathrm{L_i}(\tilde{c}|_{[t_i,t_i+\epsilon]})$. A compactness argument shows we can construct $\tilde{c}$ on $[t_i,t_{i+1}]$ 
with $\mathrm{L}(\tilde{c}|_{[t_i,t_{i+1}]})=\mathrm{L_i}(\tilde{c}_i)$.
\end{proof}

Hence for a curve $c:I\rightarrow \tilde{U}/G$, we may define its length by only considering its liftings on $\tilde{U}$.

From the length $\mathrm{L}_g$, we can define the distance $\mathrm{d}_g$ on $X$ by 
\begin{center}
$ \mathrm{d_g}(x,y)=\inf \mathrm{L_g}(\gamma)$
\end{center}
where the infimum is taken over all the curves that join $x$ and $y$, with the convention $\inf_{\emptyset}=\infty$.

We see easily that $\mathrm{d}_g$ is a distance on $X$ and $\mathrm{d}(x,y)=\infty$ iff $x$ and $y$ are in different components of $X$.

Let $(\tilde{U}_x,G_x)$ be a fundamental chart at $x\in X$. As $\tilde{U}_x$ is a Riemannian manifold, we know there is a $\delta>0$ such that for 
any $\tilde{y},\tilde{z} \in B_{\delta}(\tilde{x})$ there is a unique geodesic $\tilde{c}$ with endpoints $\tilde{y}$ and $\tilde{z}$, such that $\mathrm{L}(\tilde{c})=\tilde{\mathrm{d}}(\tilde{y},\tilde{z})$. By taking the projection $\tilde{U}\rightarrow U$ and combine \cref{liftingtoauniquechart}, we have

\begin{lemma}
\label{g1}
The metric topology induced by $\mathrm{d}_g$ is the same as the original topology 
on $X$. For any $x\in X$, there exists $\delta>0$ such that for any $y,z\in B_\delta(x)\subset X$, there exists a unique geodesic $c$ with endpoints $y$ and $z$ and $\mathrm{L}_g(c)=\mathrm{d}_g(y,z)$.
\end{lemma}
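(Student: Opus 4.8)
The plan is to transplant the classical ``totally normal neighbourhood'' theorem of Riemannian geometry through a finite quotient, reducing everything to a single fundamental chart. Fix $p\in X$ and, by \cref{local}, a fundamental chart $(\tilde U_p,G_p)$ with quotient map $\pi\colon\tilde U_p\to U_p$; here the preimage of $p$ is a single point $\tilde p$ fixed by all of $G_p$, so the $G_p$-invariant balls $B^{\tilde d}_r(\tilde p)$ (with $\tilde d$ the Riemannian distance of $(\tilde U_p,\tilde g)$) project to honest open neighbourhoods $W_r:=\pi(B^{\tilde d}_r(\tilde p))$ of $p$. For $y,z\in U_p$ introduce the quotient metric $\bar d_p(y,z):=\min\{\tilde d(\tilde y,\tilde z):\pi(\tilde y)=y,\ \pi(\tilde z)=z\}$. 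The first goal is the \emph{local comparison} $d_g=\bar d_p$ on a small $W_r$. The inequality $d_g\le\bar d_p$ is immediate: project a $\tilde d$-minimizing geodesic. For the reverse one must know that curves realizing (almost) the distance between nearby points do not escape the chart, after which \cref{liftingtoauniquechart} — which, together with the definition of $\mathrm L_g$, identifies the $\mathrm L_g$-length of a curve lying in $U_p$ with the length of any of its lifts to $\tilde U_p$ — converts lengths downstairs into lengths upstairs.

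The crux is therefore a \emph{confinement lemma}: choosing $r>0$ so that $\overline{B^{\tilde d}_{2r}(\tilde p)}$ is compact and contained in $\tilde U_p$, every curve $\gamma$ with $\gamma(0)=p$ and $\mathrm L_g(\gamma)<r$ has image inside $W_r$ and lifts to a length-preserving curve in $\tilde U_p$ emanating from $\tilde p$. I would prove this by a connectedness argument on the parameter interval $[0,1]$: the set of times up to which such a lift exists and remains in $B^{\tilde d}_r(\tilde p)$ is nonempty, and using \cref{lift} to produce local lifts at each $\gamma(t)$ and \cref{liftingtoauniquechart} to splice them in while tracking length, one checks this set is both open and closed — closedness crucially uses that the length stays strictly below $r$, so the lift can never reach the sphere $\partial B^{\tilde d}_r(\tilde p)$. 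Granting confinement, the easy projection estimate gives $W_r\subseteq B^{d_g}_r(p)$ and confinement gives $B^{d_g}_r(p)\subseteq W_r$, so in fact $W_r=B^{d_g}_r(p)$ for all sufficiently small $r>0$; since $\pi$ is open and $\tilde d$ metrizes the manifold topology of $\tilde U_p$, the family $\{W_r\}_{r\to0}$ is a neighbourhood basis of $p$ for the original topology, hence also for the $d_g$-topology. As $p$ was arbitrary, the two topologies coincide, which is the first assertion; and re-running the confinement estimate from an arbitrary point of $W_{r/2}$ (with radii adjusted, still inside $\tilde U_p$) upgrades it to the local comparison $d_g=\bar d_p$ on $W_{r/2}$.

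For the geodesic statement, fix $x$ and use ordinary Riemannian geometry on the manifold $\tilde U_x$ to pick $\delta>0$ so small that $B^{\tilde d}_{2\delta}(\tilde x)$ is a geodesically convex ball (any two of its points are joined inside it by a unique geodesic, which is minimizing) and $B^{d_g}_{2\delta}(x)\subseteq W_\delta$. Given $y,z\in B^{d_g}_\delta(x)$, fix preimages $\tilde y,\tilde z\in B^{\tilde d}_\delta(\tilde x)$ with $\tilde d(\tilde y,\tilde z)=\bar d_x(y,z)=\min_{g\in G_x}\tilde d(\tilde y,g\tilde z)$, let $\tilde c$ be the geodesic of the convex ball joining them and set $c:=\pi\circ\tilde c$; this is a geodesic of $\mathcal X$ in the sense of \cref{sec3}, and the local comparison gives $\mathrm L_g(c)=\tilde d(\tilde y,\tilde z)=d_g(y,z)$, proving existence. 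For uniqueness, a competing minimizing geodesic $c'$ satisfies $\mathrm L_g(c')=d_g(y,z)<2\delta$, is therefore confined to $W_\delta$, and lifts from $\tilde y$ to a geodesic $\tilde c'$ inside $B^{\tilde d}_{2\delta}(\tilde x)$ ending at some preimage $g_1\tilde z$ of $z$; since $\mathrm L(\tilde c')=d_g(y,z)\le\tilde d(\tilde y,g_1\tilde z)\le\mathrm L(\tilde c')$, the curve $\tilde c'$ is the unique minimizing geodesic of the convex ball from $\tilde y$ to $g_1\tilde z$, and because $\tilde d(\tilde y,g_1\tilde z)=\bar d_x(y,z)$ is the minimum over the orbit, $g_1\tilde z$ differs from $\tilde z$ only by an element of $\mathrm{Stab}(\tilde y)$ — here one shrinks $\delta$ and uses that the $G_x$-action is linearizable near its fixed point $\tilde x$, so the points of $G_x\tilde z$ nearest to $\tilde y$ form a single $\mathrm{Stab}(\tilde y)$-orbit — whence $c'=\pi\circ\tilde c'=\pi\circ\tilde c=c$.

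The main obstacle is the confinement lemma and the ensuing identification $d_g=\bar d_p$ on small neighbourhoods: because $\mathrm L_g$ is defined as an infimum over arbitrary finite chains of local lifts through possibly many different charts, it is not \emph{a priori} clear that a short curve issuing from $p$ cannot stray out of $U_p$ and return, and the connectedness-in-$t$ bootstrap fed by \cref{lift} and \cref{liftingtoauniquechart} is precisely what rules this out. A secondary, purely bookkeeping subtlety — needed only for the uniqueness clause — is the matching of the two lifted endpoints up to $\mathrm{Stab}(\tilde y)$, which is handled by shrinking the chart and invoking linearizability of the local $G_x$-action about its fixed point.
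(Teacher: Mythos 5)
Your overall route is the same as the paper's: reduce to a fundamental chart $(\tilde U_x,G_x)$ furnished by \cref{local}, invoke the classical totally normal neighbourhood theorem upstairs, and transport lengths through the quotient via \cref{lift} and \cref{liftingtoauniquechart}. The extra care you take --- the confinement lemma and the resulting identification of small $d_g$-balls with images of upstairs balls, i.e. your local comparison $d_g=\bar d_p$ --- is exactly what the paper leaves implicit, and that part of your argument is sound; it establishes the statement about the topology and the existence of a minimizing geodesic between nearby points.

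The uniqueness step, however, contains a genuine error, and it is not the ``bookkeeping subtlety'' you describe. The claim that the points of $G_x\tilde z$ nearest to $\tilde y$ form a single $\mathrm{Stab}(\tilde y)$-orbit is false even for a linear action: let $G_x=\mathbb{Z}/3\mathbb{Z}$ act on $\mathbb{R}^2$ by rotations, take $\tilde y=(1,0)$ and $\tilde z=e^{i\pi/3}$; the nearest orbit points are $e^{\pm i\pi/3}$, while $\mathrm{Stab}(\tilde y)$ is trivial, so linearizability cannot rescue the step. Moreover the failure is essential rather than repairable: the two segments $[\tilde y,e^{i\pi/3}]$ and $[\tilde y,e^{-i\pi/3}]$ project to two \emph{distinct} geodesics in the flat cone $\mathbb{R}^2/(\mathbb{Z}/3\mathbb{Z})$ joining $y=\pi(\tilde y)$ and $z=\pi(\tilde z)$, both of length $d_g(y,z)$, and this configuration rescales into every ball around the cone point. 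Hence when $x$ is singular the uniqueness clause of \cref{g1} itself fails; the paper's one-line reduction has the same lacuna, since uniqueness upstairs between one fixed pair of lifts says nothing about geodesics arising from other nearest lifts. What your argument does prove, and what the sequel of the paper actually uses (the length-space proposition and \cref{g2} only need existence), is: existence of a minimizing geodesic between $y,z\in B_\delta(x)$, uniqueness whenever $\delta$ is small enough that $B_{2\delta}(x)\subset X_{\mathrm{reg}}$, and at singular centres uniqueness only after prescribing the nearest lift $g_1\tilde z$ of $z$ relative to $\tilde y$. You should either restrict the uniqueness claim accordingly or drop it.
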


Note for a metric space $(X,d)$ there is a natural length $\mathrm{L_d}$ associated to a curve $c:I\rightarrow X$.
\begin{definition}
\label{metriclength}
Let $(X,d)$ be a metric space, $c:[a,b]\rightarrow X$ is a curve. The length $\mathrm{L_d}(c)$ of $c$ is defined by
\begin{center}
$\mathrm{L_d}(c)=\sup_{a=t_0\leq t_1\leq ...\leq t_n=b} \sum_i d(c(t_i),c(t_{i+1}))$,
\end{center}
where the supremum is taken over all possible partitions (no bound on $n$) with $a=t_0\leq t_1\leq ...\leq t_n=b$.
\end{definition}

A direct consequence of \cref{g1} is:

\begin{cor}
\label{g2}
Let $c:I\rightarrow \mathcal{X}$ be a geodesic. Then with respect to the metric $d_g$, $c$ is locally minimising, i.e. $\forall t\in I, \exists \epsilon > 0$ such that $\forall t_1,t_2\in (t-\epsilon,t+\epsilon)$ one has $\mathrm{L_d}(c|_{[t_1,t_2]})=\mathrm{d}_g(c(t_1),c(t_2))$.
\end{cor}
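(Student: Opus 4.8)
The plan is to deduce this directly from the local structure provided by \cref{g1}, together with the comparison between the intrinsic length $\mathrm{L}_g$ and the metric length $\mathrm{L}_d$ of \cref{metriclength}. First I would fix $t\in I$ and apply \cref{g1} to the point $x=c(t)$, obtaining a radius $\delta>0$ such that any two points of $B_\delta(x)$ are joined by a unique geodesic realising $\mathrm{d}_g$. By continuity of $c$, choose $\epsilon>0$ with $c((t-\epsilon,t+\epsilon))\subset B_{\delta/2}(x)$; then for $t_1,t_2\in(t-\epsilon,t+\epsilon)$ both endpoints lie in $B_\delta(x)$, and moreover by shrinking $\epsilon$ further (using that $c$ is itself a geodesic, hence locally the projection of a Riemannian geodesic in a fundamental chart, which stays in a normal ball once $\epsilon$ is small) I may assume $c|_{[t_1,t_2]}$ is contained in the fundamental chart $\tilde{U}_x/G_x$ and lifts to a Riemannian geodesic segment $\tilde{c}$ in $\tilde{U}_x$.

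The key steps are then: (1) In $\tilde{U}_x$, the geodesic segment $\tilde{c}|_{[t_1,t_2]}$, being short, is length-minimising, so $\mathrm{L}(\tilde c|_{[t_1,t_2]})=\tilde{\mathrm d}(\tilde c(t_1),\tilde c(t_2))$. (2) By \cref{liftingtoauniquechart} and the remark following it, the orbifold length $\mathrm{L}_g(c|_{[t_1,t_2]})$ equals $\mathrm{L}(\tilde c|_{[t_1,t_2]})$, since liftings to the single chart $\tilde U_x$ suffice to compute it, and the unique geodesic lift realises the infimum. (3) On the other hand the projection $\tilde U_x\to U_x$ is $1$-Lipschitz (chart embeddings are isometries and distances downstairs are infima of lengths upstairs), so $\mathrm{d}_g(c(t_1),c(t_2))\le \tilde{\mathrm d}(\tilde c(t_1),\tilde c(t_2))=\mathrm{L}_g(c|_{[t_1,t_2]})$. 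Combined with the general inequality $\mathrm{d}_g(c(t_1),c(t_2))\le \mathrm{L}_d(c|_{[t_1,t_2]})\le \mathrm{L}_g(c|_{[t_1,t_2]})$ (the metric length is always dominated by the intrinsic length, and the intrinsic length dominates the distance), all these quantities are squeezed to be equal, giving $\mathrm{L}_d(c|_{[t_1,t_2]})=\mathrm{d}_g(c(t_1),c(t_2))$.

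The main obstacle I anticipate is step (3) together with the matching of the two notions of length: one must be careful that $\mathrm{L}_d$ (defined by suprema of sums of distances) and $\mathrm{L}_g$ (defined by infima of sums of lengths of chart-liftings) actually coincide for a rectifiable curve, or at least that the needed inequalities hold in the right direction; the cleanest route is to establish once and for all that $\mathrm{d}_g$ is the length metric associated to $\mathrm{L}_g$ and that $\mathrm{L}_d=\mathrm{L}_g$ on curves, which is a standard fact for length spaces but needs the local chart description to verify in the orbifold setting. The remaining points — that short Riemannian geodesics minimise, and that projections are $1$-Lipschitz — are immediate from \cref{g1} and the definitions.
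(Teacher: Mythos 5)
Your chain of inequalities does not close. What you actually establish is $\mathrm{d}_g(c(t_1),c(t_2))\le \tilde{\mathrm{d}}(\tilde c(t_1),\tilde c(t_2))=\mathrm{L}_g(c|_{[t_1,t_2]})$ together with the general $\mathrm{d}_g(c(t_1),c(t_2))\le \mathrm{L_d}(c|_{[t_1,t_2]})\le \mathrm{L}_g(c|_{[t_1,t_2]})$; from $a\le b$ and $a\le c\le b$ one cannot conclude $a=b=c$. The inequality that carries the whole content of the corollary, namely $\mathrm{L}_g(c|_{[t_1,t_2]})\le \mathrm{d}_g(c(t_1),c(t_2))$, is never proved, and your steps (1)--(3) cannot produce it: minimality of the lift upstairs only compares $\tilde c|_{[t_1,t_2]}$ with curves in $\tilde U_x$ joining the two chosen lifts $\tilde c(t_1),\tilde c(t_2)$, whereas a competitor downstairs may lift to a curve joining $\tilde c(t_1)$ to a different translate $g\cdot\tilde c(t_2)$ with $g\in G_x$ (or may leave the chart), and such a curve can be strictly shorter. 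Concretely, in the reflection chart $(\mathbb{R}^2,\mathbb{Z}/2\mathbb{Z})$ with action $(x,y)\mapsto(-x,y)$, the chart geodesic $t\mapsto(t,\alpha t)$ projects to a curve crossing the mirror; for $t_1=-s$, $t_2=s$ its endpoints are at orbifold distance $2\alpha s$, while the projected segment has $\mathrm{L}_g$-length $2s\sqrt{1+\alpha^2}$. So ``projection is $1$-Lipschitz'' plus ``short Riemannian geodesics minimise upstairs'' genuinely do not suffice, and the missing inequality is exactly where the difficulty sits.

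What a correct argument must use is the uniqueness/minimality clause of \cref{g1}: for $t_1,t_2$ close to $t$ one has to identify $c|_{[t_1,t_2]}$, up to parametrisation, with the unique geodesic joining $c(t_1)$ and $c(t_2)$ whose $\mathrm{L}_g$-length equals $\mathrm{d}_g(c(t_1),c(t_2))$ — this identification is what the paper's ``direct consequence of \cref{g1}'' is implicitly invoking. Making it requires comparing $\mathrm{d}_g$ locally with the quotient of $\tilde{\mathrm{d}}$ by $G_x$ (as in the proof of \cref{g1} via \cref{liftingtoauniquechart}, or via \cref{isom}), i.e. ruling out shorter connections realising $\tilde{\mathrm{d}}(\tilde c(t_1),g\cdot\tilde c(t_2))$ for $g\neq 1$; the example above shows this step is delicate precisely when the geodesic meets $X_{\mathrm{sing}}$ transversally (compare \cref{geo}), so it cannot be waved through. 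Once $\mathrm{L}_g(c|_{[t_1,t_2]})=\mathrm{d}_g(c(t_1),c(t_2))$ is in hand, your squeeze $\mathrm{d}_g\le\mathrm{L_d}\le\mathrm{L}_g$ does finish the proof, so the rest of your outline is sound.
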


Suppose $c:[0,1]\rightarrow X$ be a geodesic. By compactness, using \cref{g2} and \cref{g1}, we have an $\epsilon>0$ such that for $s,t\in [0,1], |s-t|<\epsilon$, 
\[
\mathrm{d}_g(c(s),c(t))=\mathrm{L_d}(c|_{[s,t]})\leq \mathrm{L}_g(c|_{[s,t]})=\mathrm{d_g}(c(s),c(t)).
\]
Hence 
\[
\mathrm{L}_g(c)=\sum \mathrm{L}_g(c|_{[t_i,t_{i+1}]})=\sum \mathrm{L_d}(c|_{[t_i,t_{i+1}]})=\mathrm{L_d}(c).
\]
We have thus

\begin{prop}
For a geodesic $c:[0,1]\rightarrow X$, we have $\mathrm{L}_g(c)=\mathrm{L_d}(c)$.
\end{prop}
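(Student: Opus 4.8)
The plan is to deduce the equality from a local statement by a compactness argument on $[0,1]$; the displayed computation immediately preceding the statement already contains this argument, and I organize it as follows.

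The inequality $\mathrm{L_d}(\gamma)\le \mathrm{L}_g(\gamma)$ holds for \emph{any} continuous curve $\gamma$: for a partition $0=s_0\le s_1\le\cdots\le s_n=1$ each arc $\gamma|_{[s_j,s_{j+1}]}$ joins $\gamma(s_j)$ to $\gamma(s_{j+1})$, hence $\mathrm{d}_g(\gamma(s_j),\gamma(s_{j+1}))\le \mathrm{L}_g(\gamma|_{[s_j,s_{j+1}]})$ by the very definition of $\mathrm{d}_g$ as an infimum of $\mathrm{L}_g$-lengths; summing over $j$ and using the additivity of $\mathrm{L}_g$ under concatenation gives $\sum_j \mathrm{d}_g(\gamma(s_j),\gamma(s_{j+1}))\le \mathrm{L}_g(\gamma)$, and passing to the supremum over partitions proves the claim. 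In particular $\mathrm{L_d}(c)\le \mathrm{L}_g(c)$.

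For the reverse inequality I use that $c$ is a geodesic. For each $\tau\in[0,1]$, by \cref{g2} there is $\epsilon_\tau>0$ with $\mathrm{L_d}(c|_{[s,t]})=\mathrm{d}_g(c(s),c(t))$ whenever $s,t\in(\tau-\epsilon_\tau,\tau+\epsilon_\tau)$, and, shrinking $\epsilon_\tau$ if necessary, \cref{g1} applied at $c(\tau)$ lets me also assume that $c|_{[s,t]}$ is, for such $s,t$, the unique distance-realising geodesic between its endpoints, so that $\mathrm{L}_g(c|_{[s,t]})=\mathrm{d}_g(c(s),c(t))$ as well; thus $\mathrm{L}_g$ and $\mathrm{L_d}$ agree on each sufficiently short subinterval. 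By the Lebesgue number lemma applied to the open cover $\{(\tau-\epsilon_\tau,\tau+\epsilon_\tau)\}_{\tau\in[0,1]}$ of the compact interval $[0,1]$, one may pick a partition $0=t_0<t_1<\cdots<t_k=1$ with each $[t_i,t_{i+1}]$ contained in a member of the cover, and then, using the additivity of $\mathrm{L}_g$ under concatenation,
\[
\mathrm{L}_g(c)=\sum_{i=0}^{k-1}\mathrm{L}_g(c|_{[t_i,t_{i+1}]})=\sum_{i=0}^{k-1}\mathrm{d}_g(c(t_i),c(t_{i+1}))\le \mathrm{L_d}(c),
\]
the last inequality holding because $\mathrm{L_d}(c)$ is a supremum over partitions. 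Combining with the previous paragraph yields $\mathrm{L}_g(c)=\mathrm{L_d}(c)$.

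The only point that needs care is the uniform choice of $\epsilon$: one must arrange that the minimising property of \cref{g2} and the existence of a neighbourhood carrying a unique distance-realising geodesic from \cref{g1} both hold on \emph{every} subinterval of the partition at once, which is exactly what the Lebesgue number lemma provides; the rest is bookkeeping with the additivity of length under concatenation.
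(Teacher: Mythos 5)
Your argument is correct and takes essentially the same route as the paper's proof: both combine \cref{g1} and \cref{g2} with compactness of $[0,1]$ to get $\mathrm{L}_g=\mathrm{L_d}$ on sufficiently short subintervals and then sum over a fine partition. The only (harmless) difference is that you bound $\sum_i \mathrm{d}_g(c(t_i),c(t_{i+1}))$ by $\mathrm{L_d}(c)$ directly from the supremum definition, whereas the paper invokes additivity of $\mathrm{L_d}$ over the pieces.
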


\begin{prop}[\emph{cf.} \protect{\citep[Page~6]{Bor93}}]
Let $(\mathcal{X},g)$ be a Riemannian orbifold with underline space $X=|\mathcal{X}|$. With the metric $\mathrm{d}_g$ defined above, $(X,\mathrm{d}_g)$ is a length space.
\end{prop}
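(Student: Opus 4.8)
The plan is to unwind the definition of a length space and then compare the two length functionals at hand: the orbifold length $\mathrm{L}_g$ used to build $\mathrm{d}_g$, and the metric length $\mathrm{L}_{\mathrm{d}_g}$ of \cref{metriclength} associated to the distance $\mathrm{d}_g$. Recall that $(X,\mathrm{d}_g)$ being a length space means exactly that for all $x,y\in X$ one has $\mathrm{d}_g(x,y)=\inf_\gamma \mathrm{L}_{\mathrm{d}_g}(\gamma)$, the infimum ranging over all paths $\gamma$ in $X$ joining $x$ to $y$ (with $\inf_\emptyset=+\infty$). One inequality is free: for any path $\gamma$ from $x$ to $y$, applying \cref{metriclength} with the coarsest partition $\{0,1\}$ gives $\mathrm{L}_{\mathrm{d}_g}(\gamma)\ge \mathrm{d}_g(\gamma(0),\gamma(1))=\mathrm{d}_g(x,y)$, hence $\mathrm{d}_g(x,y)\le\inf_\gamma \mathrm{L}_{\mathrm{d}_g}(\gamma)$; in particular this already settles the case $\mathrm{d}_g(x,y)=\infty$, where both sides are then $+\infty$.

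The content is the reverse inequality, and the key point is the comparison $\mathrm{L}_{\mathrm{d}_g}(c)\le \mathrm{L}_g(c)$ for every path $c\colon[a,b]\to X$. To prove it, fix a partition $a=t_0\le\cdots\le t_n=b$. Each restriction $c|_{[t_i,t_{i+1}]}$ is a path joining $c(t_i)$ to $c(t_{i+1})$, so by the definition of $\mathrm{d}_g$ we have $\mathrm{d}_g(c(t_i),c(t_{i+1}))\le \mathrm{L}_g(c|_{[t_i,t_{i+1}]})$; summing over $i$ and using the additivity $\mathrm{L}_g(c)=\sum_i \mathrm{L}_g(c|_{[t_i,t_{i+1}]})$ — which follows by induction from the concatenation identity $\mathrm{L}_g(c_1\ast c_2)=\mathrm{L}_g(c_1)+\mathrm{L}_g(c_2)$ noted above — gives $\sum_i \mathrm{d}_g(c(t_i),c(t_{i+1}))\le \mathrm{L}_g(c)$. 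Taking the supremum over all partitions yields $\mathrm{L}_{\mathrm{d}_g}(c)\le \mathrm{L}_g(c)$ (vacuously so when $\mathrm{L}_g(c)=+\infty$).

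With this in hand I would conclude as follows: assume $\mathrm{d}_g(x,y)<\infty$; since $\mathrm{d}_g(x,y)$ is by definition the infimum of $\mathrm{L}_g(\gamma)$ over paths $\gamma$ joining $x$ to $y$, for each $k$ there is such a $\gamma_k$ with $\mathrm{L}_g(\gamma_k)<\mathrm{d}_g(x,y)+\frac{1}{k}$, and then the comparison gives $\mathrm{L}_{\mathrm{d}_g}(\gamma_k)\le \mathrm{L}_g(\gamma_k)<\mathrm{d}_g(x,y)+\frac{1}{k}$, so $\inf_\gamma \mathrm{L}_{\mathrm{d}_g}(\gamma)\le \mathrm{d}_g(x,y)$. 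Combined with the free inequality of the first paragraph this gives $\mathrm{d}_g(x,y)=\inf_\gamma \mathrm{L}_{\mathrm{d}_g}(\gamma)$, i.e. $(X,\mathrm{d}_g)$ is a length space. The argument is essentially formal; the only input that is genuinely orbifold-specific is the good behaviour of $\mathrm{L}_g$ established earlier — its well-definedness and additivity via the chart-wise lifting (the compactness argument preceding \cref{liftingtoauniquechart}) and the sub-arc bound $\mathrm{d}_g\le \mathrm{L}_g$ — so I do not anticipate any real obstacle beyond this bookkeeping. (Alternatively one could route the proof through locally length-minimising geodesics using \cref{g1}, \cref{g2} and the preceding proposition $\mathrm{L}_g=\mathrm{L}_{\mathrm{d}_g}$ for geodesics, but the direct comparison above is shorter.)
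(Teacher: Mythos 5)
Your proof is correct, but it follows a genuinely more direct route than the paper's. The paper also reduces to showing $\mathrm{d}_g\geq \mathrm{d_i}$ for the induced inner metric and also starts from a near-minimizing curve $\gamma$ with $\mathrm{L}_g(\gamma)\leq \mathrm{d}_g(x,y)+\epsilon$; but instead of bounding $\mathrm{L}_{\mathrm{d}_g}(\gamma)$ itself, it subdivides $[0,1]$ finely, replaces each sub-arc of $\gamma$ by the unique short minimizing geodesic between its endpoints furnished by \cref{g1}, and estimates the metric length of the resulting broken geodesic $c=\ast_i c_i$ using the previously established facts that geodesics are locally minimizing (\cref{g2}) and that $\mathrm{L}_g=\mathrm{L_d}$ on geodesics. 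You bypass that geodesic machinery entirely: your chain $\sum_i \mathrm{d}_g(c(t_i),c(t_{i+1}))\leq\sum_i \mathrm{L}_g(c|_{[t_i,t_{i+1}]})=\mathrm{L}_g(c)$, followed by the supremum over partitions, proves the general inequality $\mathrm{L}_{\mathrm{d}_g}\leq \mathrm{L}_g$ for arbitrary paths, from which $\mathrm{d_i}\leq \mathrm{d}_g$ is immediate. This is legitimate: it uses only the definition of $\mathrm{d}_g$ and the additivity of $\mathrm{L}_g$ under subdivision (which the paper itself asserts and uses, via the concatenation identity; as you note, passing from concatenation to subdivision needs the harmless reparametrization invariance of $\mathrm{L}_g$, visible from its chart-wise definition). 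What your argument buys is economy and independence from \cref{g1}; what the paper's buys is that the broken-geodesic construction and the identity $\mathrm{L}_g=\mathrm{L_d}$ for geodesics are exactly the local ingredients it has just set up and reuses later, so its proof doubles as a check that those tools interact as expected. Your explicit treatment of the trivial inequality $\mathrm{d}_g\leq\mathrm{d_i}$ and of the case $\mathrm{d}_g(x,y)=\infty$ is a small completeness bonus over the paper's write-up.
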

\begin{proof}
Let $\mathrm{d_i}$ denote the inner metric associated with $\mathrm{d}_g$. We only need to show $\mathrm{d}_g\geq \mathrm{d_i}$. Suppose $\mathrm{d}_g(x,y)<\infty$. Take $\epsilon>0$. By definition, there exists a curve $\gamma:[0,1]\rightarrow X$ such that $\gamma(0)=x, 
\gamma(1)=y$ and $\mathrm{L}_g(\gamma)\leq \mathrm{d}_g(x,y)+\epsilon$. By \cref{g1}, there exists $\delta$ such that if $|t-t'|<\delta$, there exits a geodesic $c$ with endpoints $\gamma(t),\gamma(t')$ satisfying $\mathrm{d}_g(\gamma(t),\gamma(t'))=\mathrm{L}_g(c)$. Let $0=t_0<t_1<...<t_n=1$ be a partition of $[0,1]$ 
with $t_{i+1}-t_i<\delta$ and $c_i$ the corresponding geodesic. Let $c=\ast_i c_i$. We have
\begin{align*}
\mathrm{d}_g(x,y)+\epsilon\geq \mathrm{L_g}(\gamma)=\sum \mathrm{L_g}(\gamma|_{[t_i,t_{i+1}]})&\geq \sum \mathrm{d_g}([t_i,t_{i+1}])=\sum \mathrm{L_g}(c_i)\\
&\geq \sum \mathrm{L_d}(c_i)=\mathrm{L_d}(c)\geq \mathrm{d_i}(x,y).
\end{align*}
\end{proof}

We now recall the
\begin{thm}[Hopf-Rinow]
Let X be a length space. If X is complete and locally compact, then 
\begin{enumerate}
\item every closed bounded subset of X is compact;
\item  any two point of $X$ can be joined by a segment.
\end{enumerate}
\end{thm}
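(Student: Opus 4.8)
The plan is to follow the classical argument and reduce both assertions to the single claim that \emph{every closed metric ball of $X$ is compact}. Fix $x\in X$ and set $\rho(x)=\sup\{\,r\ge 0 : \overline{B}(x,r)\text{ is compact}\,\}$; local compactness gives $\rho(x)>0$, and the goal is $\rho(x)=+\infty$. The engine of the whole proof is the following elementary consequence of $X$ being a length space, which I would establish first: if $d(x,y)=R$ and $0<r<R$, then for every $\eta>0$ there is a point $z$ with $d(x,z)\le r$ and $d(z,y)\le R-r+\eta$ --- simply pick a curve from $x$ to $y$ of length at most $R+\eta$, reparametrize it by arclength, and take for $z$ its point at arclength $r$ from $x$.

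Next I would show that $\overline{B}(x,\rho(x))$ is compact whenever $\rho(x)<\infty$. Since $X$ is complete and this ball is closed, it suffices to prove total boundedness: given $\varepsilon>0$, the ball $\overline{B}(x,\rho(x)-\varepsilon/3)$ is compact, hence covered by finitely many balls $B(z_i,\varepsilon/3)$, and for any $y$ with $\rho(x)-\varepsilon/3<d(x,y)\le\rho(x)$ the length-space fact (applied with $r=\rho(x)-\varepsilon/3$ and $\eta=\varepsilon/3$) produces a point $z\in\overline{B}(x,\rho(x)-\varepsilon/3)$ with $d(z,y)\le 2\varepsilon/3$; then $z\in B(z_i,\varepsilon/3)$ for some $i$ and $y\in B(z_i,\varepsilon)$.

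The crux is then to derive a contradiction from $\rho:=\rho(x)<\infty$. By local compactness each $p\in\overline{B}(x,\rho)$ admits $r_p>0$ with $\overline{B}(p,2r_p)$ compact; finitely many of the balls $B(p_i,r_{p_i})$ cover the compact set $\overline{B}(x,\rho)$, and I put $\mu=\frac{1}{2}\min_i r_{p_i}>0$. For $y$ with $\rho<d(x,y)\le\rho+\mu$, the length-space fact with $r=\rho$ and $\eta=\mu$ gives $z\in\overline{B}(x,\rho)$ with $d(z,y)\le 2\mu\le\min_j r_{p_j}$; choosing $i$ with $z\in B(p_i,r_{p_i})$ yields $d(y,p_i)<r_{p_i}+2\mu\le 2r_{p_i}$, so $y\in\overline{B}(p_i,2r_{p_i})$. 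Hence $\overline{B}(x,\rho+\mu)\subseteq\overline{B}(x,\rho)\cup\bigcup_i\overline{B}(p_i,2r_{p_i})$ is a closed subset of a finite union of compact sets, so compact --- contradicting the definition of $\rho$. Therefore $\rho(x)=+\infty$ for all $x$, every closed ball is compact, and (1) follows because a closed bounded set lies in some closed ball.

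For (2), take $x,y$ with $R=d(x,y)<\infty$ and curves $\gamma_n\colon[0,1]\to X$ from $x$ to $y$, parametrized proportionally to arclength, with $\mathrm{L_d}(\gamma_n)\to R$; for $n$ large they are $(R+1)$-Lipschitz and have image in the compact ball $\overline{B}(x,R+1)$, so Arzelà--Ascoli yields a subsequence converging uniformly to a curve $\gamma$ from $x$ to $y$. Lower semicontinuity of the metric length $\mathrm{L_d}$ under uniform convergence gives $\mathrm{L_d}(\gamma)\le\liminf_n\mathrm{L_d}(\gamma_n)=R$, while $\mathrm{L_d}(\gamma)\ge d(x,y)=R$; hence $\mathrm{L_d}(\gamma)=d(x,y)$ and $\gamma$ is the required segment. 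The only genuinely delicate step is the constant-chasing in the third paragraph that upgrades compactness of $\overline{B}(x,\rho)$ to compactness of a strictly larger ball; the rest is Arzelà--Ascoli together with the routine total-boundedness estimates.
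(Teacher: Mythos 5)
Your argument is correct, and it is essentially the standard Hopf--Rinow--Cohn-Vossen proof that the paper itself defers to (it gives no proof, citing \cite[Proposition~3.7]{Hae99}): the quantity $\rho(x)$, the total-boundedness step, the local-compactness push past $\rho$, and the Arzelà--Ascoli construction of the segment are exactly the ingredients of that reference's proof. No gaps worth flagging.
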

\noindent For the proof, see \cite[Proposition~3.7]{Hae99}.

\medskip
As any segment is locally minimizing, combining this fact with \cref{g2}, we have: 
\begin{lemma}
Let $(\mathcal{X},g)$ be a Riemannian orbifold. If $(X,\mathrm{d}_g)$ is complete, 
then any two point can be joined by a minimising geodesic.
\end{lemma}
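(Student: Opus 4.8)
The plan is to combine the Hopf--Rinow theorem, already available for length spaces, with the local description of geodesics in orbifold charts. First I would observe that $(X,\mathrm{d}_g)$ is locally compact: every point of $X$ has a neighbourhood of the form $\tilde U/G$ with $\tilde U\subset\mathbb R^n$ open and $G$ finite, and the quotient of a locally compact Hausdorff space by a finite group is again locally compact Hausdorff; since by \cref{g1} the metric topology of $\mathrm{d}_g$ agrees with the original topology of $X$, local compactness holds for $(X,\mathrm{d}_g)$. Together with the hypothesis that $(X,\mathrm{d}_g)$ is complete and the fact (proved above) that it is a length space, Hopf--Rinow applies: given two points $x,y$ in the same connected component (otherwise there is nothing to prove), we get a segment $\gamma\colon[0,L]\to X$, parametrised by arc length, with $\mathrm{L}_{\mathrm d}(\gamma)=\mathrm{d}_g(x,y)$.

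It remains to promote this metric segment to an \emph{orbifold} geodesic, i.e.\ a curve that is locally of the form $\phi\circ\tilde c$ with $\tilde c$ a Riemannian geodesic in a chart. For each $s\in[0,L]$ choose a fundamental chart $(\tilde U_s,G_s)$ at $\gamma(s)$ small enough that the conclusion of \cref{g1} holds there and that $\tilde U_s$ is a geodesically convex (totally normal) ball upstairs; by a Lebesgue-number argument pick a partition $0=t_0<\dots<t_k=L$ so fine that each $\gamma([t_{i-1},t_{i+1}])$ lies in a single such chart $(\tilde U_i,G_i)$. Lift $\gamma|_{[t_{i-1},t_{i+1}]}$ to a curve $\tilde\gamma$ in $\tilde U_i$ of the same length (\cref{liftingtoauniquechart}). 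Since the quotient map $\tilde U_i\to U_i$ does not increase distances and since $\gamma$, being a segment, is minimising on each subinterval, we get
\[
\tilde{\mathrm d}(\tilde\gamma(t_{i-1}),\tilde\gamma(t_{i+1}))\le \mathrm L(\tilde\gamma)=\mathrm L_g(\gamma|_{[t_{i-1},t_{i+1}]})=\mathrm d_g(\gamma(t_{i-1}),\gamma(t_{i+1}))\le \tilde{\mathrm d}(\tilde\gamma(t_{i-1}),\tilde\gamma(t_{i+1})),
\]
so $\tilde\gamma$ realises the distance between its endpoints in the Riemannian manifold $\tilde U_i$. Inside a geodesically convex ball such a minimising curve is a monotone reparametrisation of the unique minimising geodesic between its endpoints; hence $\tilde\gamma$ is a Riemannian geodesic, $\gamma$ is an orbifold geodesic on $[t_{i-1},t_{i+1}]$, and in particular $\gamma$ is smooth at each partition parameter $t_i$. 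Therefore $\gamma$ is a geodesic on all of $[0,L]$, and since it came from Hopf--Rinow as a segment it satisfies $\mathrm L_g(\gamma)=\mathrm L_{\mathrm d}(\gamma)=\mathrm d_g(x,y)$ by the proposition relating $\mathrm L_g$ and $\mathrm L_{\mathrm d}$ for geodesics; thus $\gamma$ is the desired minimising geodesic.

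The main obstacle is the second paragraph: Hopf--Rinow only delivers a metric segment, and one has to use the chart structure genuinely to see that such a segment is locally a Riemannian geodesic and — crucially — that no corner can survive at a chart-transition parameter $t_i$, which is precisely why the partition is chosen so that a whole neighbourhood of each $t_i$ lifts into one chart. A secondary point to keep in mind is that a segment may pass through the singular locus $X\setminus X_{\mathrm{reg}}$; this causes no difficulty, since the whole verification takes place in the manifold charts $\tilde U_i$, where the classical uniqueness of short minimising geodesics applies verbatim, and because the displayed chain of inequalities already rules out the ``bouncing'' lifts that could in principle fail to be smooth.
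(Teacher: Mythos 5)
Your skeleton is the same as the paper's: verify that $(X,\mathrm{d}_g)$ is a complete, locally compact length space, invoke Hopf--Rinow to obtain a segment, and then upgrade that segment to an orbifold geodesic by local work in charts. The genuine gap sits exactly at the two middle equalities of your displayed chain, $\mathrm{L}(\tilde\gamma)=\mathrm{L}_g(\gamma|_{[t_{i-1},t_{i+1}]})=\mathrm{d}_g(\gamma(t_{i-1}),\gamma(t_{i+1}))$. What Hopf--Rinow gives you is a \emph{metric} segment, i.e.\ control of the length of \cref{metriclength}: $\mathrm{L_d}(\gamma|_{[t_{i-1},t_{i+1}]})=\mathrm{d}_g(\gamma(t_{i-1}),\gamma(t_{i+1}))$. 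The quantity $\mathrm{L}_g$, by contrast, is an infimum of Riemannian lengths of lifting families, and the only inequality available for free is $\mathrm{L_d}\le \mathrm{L}_g$ --- the wrong direction. The identity $\mathrm{L}_g=\mathrm{L_d}$ is established in the paper only for curves already known to be geodesics, and the remark following this very lemma states that the author does not know whether it holds for general curves on orbifolds. So justifying $\mathrm{L}_g(\gamma|_{[t_{i-1},t_{i+1}]})=\mathrm{d}_g(\gamma(t_{i-1}),\gamma(t_{i+1}))$ by ``$\gamma$, being a segment, is minimising on each subinterval'' is circular: it amounts to assuming that the segment admits lifts whose Riemannian length does not exceed the downstairs distance between its endpoints, i.e.\ the local comparison of $\mathrm{d}_g$ with the quotient metric $\min_{g\in G_i}\tilde{\mathrm{d}}(\cdot\,,\,g\cdot)$ on $\tilde U_i/G_i$, which is precisely the nontrivial local fact (supplied in Borzellino's treatment that the paper leans on) that your argument must prove rather than use.

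There is also a secondary, repairable slip: \cref{liftingtoauniquechart} produces a lift whose length equals the total length of the particular lifting family you start from, not the infimum defining $\mathrm{L}_g$, so even granting the equality above your chosen $\tilde\gamma$ need not satisfy $\mathrm{L}(\tilde\gamma)=\mathrm{L}_g(\gamma|_{[t_{i-1},t_{i+1}]})$. This can be fixed by taking families within $\varepsilon$ of the infimum, merging them via \cref{liftingtoauniquechart}, using finiteness of the chart fibres to fix the lifted endpoints along a subsequence, and letting $\varepsilon\to 0$: Arzel\`a--Ascoli and uniqueness of minimizers in the convex ball show the lifts converge to the minimizing Riemannian geodesic, whose projection is a reparametrisation of $\gamma|_{[t_{i-1},t_{i+1}]}$. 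But this repair still presupposes the first point. The rest of your argument --- local compactness, the Lebesgue-number choice of overlapping charts to exclude corners at the $t_i$, and the final appeal to $\mathrm{L}_g=\mathrm{L_d}$ for geodesics --- is sound and is exactly the route the paper intends with its one-line appeal to \cref{g2}.
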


\begin{rem}
In the case of a Riemannian manifold $(M,g)$, one associates a metric $d_g$ on $M$ by defining $d_g(x,y)=\inf \mathrm{L}_g(c)$, where $c$ is a piece-wise smooth curve and $\mathrm{L}_g(c)=\int |c'(t)|dt$. For the metric $d_g$, we can associate another length $\mathrm{L}_d$ as in \cref{metriclength}. One sees easily $\mathrm{L}_g\geq \mathrm{L}_d$ and it is a classical result that $\mathrm{L}_g=\mathrm{L}_d$. The author does not know if this still holds in the orbifold setting.
\end{rem}

For a complete orbifold $(\mathcal{X},g)$, its geodesics have a good property:

\begin{thm}[\emph{cf.} \protect{\cite[Lemma~13]{Bor93}}]
\label{geo}
Suppose $\gamma :I=[0,1]\rightarrow \mathcal{X}$ is a minimal geodesic. 
Let $p=\gamma(0),q=\gamma(1)$. Then we have one of the following mutually exclusive conditions:
\begin{enumerate}
\item $\gamma(I)\subset X_\mathrm{sing}$
\item $\gamma(I)\cap X_\mathrm{sing}\subset \{p,q\}$.
\end{enumerate}
\end{thm}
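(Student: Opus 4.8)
The plan is to argue by contradiction, exploiting the fact that a minimal geodesic cannot enter and leave the singular locus in its interior without violating minimality. Suppose $\gamma$ is a minimal geodesic with $\gamma(0)=p$, $\gamma(1)=q$, and suppose neither (1) nor (2) holds. Then there is an interior parameter $t_0\in(0,1)$ with $\gamma(t_0)\in X_{\mathrm{sing}}$, yet $\gamma(I)\not\subset X_{\mathrm{sing}}$, so $\gamma^{-1}(X_{\mathrm{reg}})$ is a nonempty open subset of $(0,1)$ whose closure meets $t_0$. Pick such a $t_0$ which is an endpoint (in $(0,1)$) of a maximal open subinterval $(a,b)\subset\gamma^{-1}(X_{\mathrm{reg}})$; after reparametrizing and possibly reversing we may assume the situation near $t_0$ looks like: $\gamma(t_0)\in X_{\mathrm{sing}}$ and $\gamma(t)\in X_{\mathrm{reg}}$ for $t$ in a one-sided punctured neighbourhood of $t_0$.

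Next I would work in a fundamental chart $(\tilde U,G)$ at $x:=\gamma(t_0)$, lifting the restriction $\gamma|_{[t_0-\varepsilon,t_0+\varepsilon]}$ (using \cref{lift} and \cref{liftingtoauniquechart}) to a geodesic $\tilde\gamma$ in $\tilde U$ passing through a pre-image $\tilde x$ of $x$ lying on the fixed-point set $\tilde U^H$ of the isotropy group $H=G_x$ acting on $\tilde U$ as a nontrivial group of isometries. Since $\tilde\gamma$ is a geodesic of $\tilde U$ through $\tilde x$, it is determined by $\tilde x$ and its velocity $v=\tilde\gamma'(t_0)\in T_{\tilde x}\tilde U$. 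The key dichotomy is: either $v$ is tangent to the fixed-point submanifold $\tilde U^H$ (a totally geodesic submanifold, being a component of a fixed set of isometries), in which case $\tilde\gamma$ stays in $\tilde U^H$ for all small times, so $\gamma$ stays in $X_{\mathrm{sing}}$ near $t_0$ on both sides — forcing, by analytic/connectedness continuation along $\gamma$, the whole image into $X_{\mathrm{sing}}$; or $v$ is not tangent to $\tilde U^H$, and then for any $h\in H$ not fixing $v$, the geodesic $L_h\circ\tilde\gamma$ is a distinct lift of $\gamma|_{[t_0,t_0+\varepsilon]}$ but agrees with $\tilde\gamma$ at the point $\tilde x$ (since $\tilde x$ is fixed), so the concatenation of $\tilde\gamma|_{[t_0-\varepsilon,t_0]}$ with $L_h\circ\tilde\gamma|_{[t_0,t_0+\varepsilon]}$ is a broken path in $\tilde U$ with a genuine corner at $\tilde x$ (its two one-sided velocities $v$ and $T_{\tilde x}L_h(v)$ differ). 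A broken geodesic with a corner is not length-minimizing, so this broken path can be shortened strictly inside $\tilde U$; projecting the shortcut down to $X$ and splicing it into $\gamma$ contradicts the minimality of $\gamma$.

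For the ``stays singular'' branch one shows it is not just local: if $\gamma$ agrees with a curve in $X_{\mathrm{sing}}$ on some subinterval, then the set of $t$ for which $\gamma(t)\in X_{\mathrm{sing}}$ together with a local lift lying in $\tilde U^{G_{\gamma(t)}}$ is open (the totally geodesic argument propagates it) and closed (limits of singular points are singular, and a minimal geodesic that is singular on a dense subset is singular everywhere since $X_{\mathrm{sing}}$ is closed), hence all of $[0,1]$, giving case (1). So if (1) fails, every interior singular point of $\gamma$ falls into the ``corner'' branch and is excluded; thus $\gamma(I)\cap X_{\mathrm{sing}}\subset\{p,q\}$, which is case (2). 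I expect the main obstacle to be the careful bookkeeping of lifts across the chart transitions — making precise that ``two distinct lifts that coincide at a fixed point of the isotropy group, with different velocities, assemble to a non-minimizing broken geodesic downstairs'' — and in particular checking that the strict shortcut constructed upstairs genuinely projects to a strictly shorter path downstairs (one must rule out that different $H$-translates of the two pieces recombine to restore the length), which is where the effectiveness hypothesis and the totally-geodesic structure of fixed-point sets are essential.
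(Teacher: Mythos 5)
This is correct and is essentially the argument of the cited reference \cite{Bor93} (the paper gives no proof of \cref{geo}, deferring to Lemma~13 there): the dichotomy at an interior singular point between the velocity being fixed by the whole isotropy group (so the lift stays in the totally geodesic fixed-point set and the geodesic remains singular nearby) and some $h$ moving the velocity (so the broken lift has a corner, can be strictly shortened, and minimality fails) is exactly the standard proof. The difficulty you flag at the end is not a genuine one, since the orbifold length of the projected shortcut is by definition an infimum over lifts and hence at most its length upstairs, so the contradiction is with $\mathrm{d}_g(\gamma(t_0-\varepsilon),\gamma(t_0+\varepsilon))$ directly; moreover, at your chosen $t_0$ the tangent branch already contradicts the one-sided regularity, so the continuation argument is not even needed there.
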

Hence for $p,q\in X_\mathrm{reg}$, the minimal geodesic $\gamma$ joining $p$ and 
$q$ lies completely in $X_\mathrm{reg}$. One sees then $\gamma$ is also the minimal geodesic in the Riemannian manifold $X_\mathrm{reg}$.

\begin{cor}
\label{Convex}
Let $(\mathcal{X},g)$ be a Riemannian orbifold. If $(X,d_g)$ is complete, 
then $(X_\mathrm{reg},g)$ is a convex Riemannian manifold.
\end{cor}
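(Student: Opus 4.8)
The plan is to read off the statement from \cref{geo} together with the (unlabelled) lemma following Hopf--Rinow. To begin, I would recall that a point of $X_{\mathrm{reg}}$ has, by definition, trivial local group, so around it the orbifold charts may be taken trivial; consequently $g$ restricts to a genuine Riemannian metric on the manifold $X_{\mathrm{reg}}$, and --- as already observed just after the definition of $\exp_p$ --- the orbifold geodesics issued from a regular point coincide, near that point, with the Riemannian geodesics of $(X_{\mathrm{reg}},g)$. In particular $(X_{\mathrm{reg}},g)$ genuinely is a Riemannian manifold, and it suffices to treat one connected component at a time.

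Next, fix two points $p,q\in X_{\mathrm{reg}}$ in the same connected component, so that $\mathrm{d}_g(p,q)<\infty$ (the space $(X,\mathrm{d}_g)$ being a complete, locally compact length space, Hopf--Rinow applies). By the lemma following Hopf--Rinow there is a minimising geodesic $\gamma\colon[0,1]\to\mathcal{X}$ with $\gamma(0)=p$, $\gamma(1)=q$ and $\mathrm{L}_g(\gamma)=\mathrm{d}_g(p,q)$. Now apply \cref{geo}: since neither $p$ nor $q$ lies in $X_{\mathrm{sing}}$, the first alternative $\gamma(I)\subset X_{\mathrm{sing}}$ cannot occur, so $\gamma(I)\cap X_{\mathrm{sing}}\subset\{p,q\}\cap X_{\mathrm{sing}}=\emptyset$; thus the whole geodesic $\gamma$ lies in $X_{\mathrm{reg}}$.

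Finally I would check that $\gamma$ is a minimising geodesic for the intrinsic metric of $(X_{\mathrm{reg}},g)$. Every path in $X_{\mathrm{reg}}$ is a path in $X$ of the same length, so the intrinsic distance $\mathrm{d}_{X_{\mathrm{reg}}}$ dominates $\mathrm{d}_g$ on $X_{\mathrm{reg}}$; conversely $\gamma$ is a path inside $X_{\mathrm{reg}}$ from $p$ to $q$ of length $\mathrm{d}_g(p,q)$, whence $\mathrm{d}_{X_{\mathrm{reg}}}(p,q)=\mathrm{d}_g(p,q)$ and $\gamma$ realises this distance. Being locally minimising by \cref{g2} and lying in the region where orbifold and Riemannian geodesics agree, $\gamma$ is a smooth minimising geodesic of $(X_{\mathrm{reg}},g)$, which is therefore a convex Riemannian manifold. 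The only step carrying real content is the appeal to \cref{geo} (Borzellino's lemma that a minimal geodesic avoids the singular set except possibly at its endpoints); everything else is bookkeeping with lengths and the local triviality of the orbifold structure at regular points.
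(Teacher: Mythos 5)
Your argument is correct and follows the paper's own (implicit) proof: combine the lemma after Hopf--Rinow (completeness gives a minimising geodesic joining $p$ and $q$) with \cref{geo} to see the geodesic avoids $X_{\mathrm{sing}}$ since its endpoints are regular, hence lies in $X_{\mathrm{reg}}$ and is minimising there. Your extra check that intrinsic and ambient distances agree along $\gamma$ is just a spelled-out version of the paper's remark that $\gamma$ is also the minimal geodesic of the Riemannian manifold $X_{\mathrm{reg}}$.
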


Let $(M,g)$ be a convex Riemannian manifold, $p \in M$. For any $u\in U_pM$ a unit tangent vector, $\gamma_u$ the geodesic emanating from $p$ with 
$\gamma_u'(0)=u$, we define
\[
t(u):=\sup \{t > 0\mid \gamma_u\ \text{is defined on}\ [0,t]\ \text{and}\ \gamma_u|_{[0,t]}\ \text{is minimal}\in [0,\infty] \}.
\]

We may define the cut locus in the convex situation by 
\begin{definition}
Let $M$ be a convex Riemannian manifold and $p\in M$. We define
\[\tilde{C}_p:=\{t(u)u\mid u\in U_pM\ \text{such that} \ t(u)<\infty\}\quad\text{and} \quad C_p:=\exp_p(\tilde{C}_p)\]
the tangent cut locus and cut locus of $p$ respectively. We also set
\[\tilde{I}_p:=\{tu\mid u\in U_pM,\ 0 < t <t(u) \}\quad\text{and}\quad I_p:=\exp_p(\tilde{I}_p).\]
\end{definition}
Then we have similar results as in the complete case:
\begin{lemma}
\leavevmode
\begin{enumerate}
\item $I_p\cap C_p=\emptyset$, $M=I_p\cup C_p$, and $\bar{I_p}=M$.
\item $\exp_p:\tilde{I}_p\rightarrow I_p$ is a diffeomorphism.
\item $C_p$ has measure 0.
\end{enumerate}
\end{lemma}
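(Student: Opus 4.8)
The plan is to transcribe the classical cut-locus theory to this setting, the one new feature being that $M=X_{\mathrm{reg}}$ need not be complete, so a geodesic may leave $M$ in finite time. Write $\gamma_u$ for the geodesic with $\gamma_u'(0)=u$, $u\in U_pM$. First I would record the two standard ingredients: (a) if $\gamma_u$ is minimal on $[0,t']$ for some $t'>t$, then $\gamma_u|_{[0,t]}$ is the unique minimal geodesic from $p$ to $\gamma_u(t)$ and $\gamma_u(t)$ is not conjugate to $p$ along $\gamma_u$; (b) a geodesic is not minimal past its first conjugate point, and $d(\exp_p)_{tu}$ fails to be invertible exactly when $\gamma_u(t)$ is conjugate to $p$ along $\gamma_u$ (Gauss lemma, Jacobi fields). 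Both hold verbatim because $M$ is an honest Riemannian manifold: (b) is classical, and for (a) one concatenates a hypothetical second minimizer with a minimal extension of $\gamma_u$, observes that the resulting length-minimizing broken path must be a smooth geodesic, and concludes that the two velocities at $p$ agree. The upshot is: for $0<t<t(u)$, $\gamma_u|_{[0,t]}$ is the unique minimizer to $\exp_p(tu)$ and $d(\exp_p)_{tu}$ is an isomorphism.

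Given this, parts 1 and 2 are mostly formal. For $q\in I_p$ write $q=\exp_p(tu)$, $0\le t<t(u)$; if also $q=\exp_p(t(v)v)\in C_p$ then $\gamma_v|_{[0,t(v)]}$ is minimal (using continuity of $s\mapsto\gamma_v(s)$ and $d(p,\gamma_v(s))=s$ for $s<t(v)$), so $d(p,q)=t(v)=t$ and uniqueness forces $v=u$, whence $t(u)=t$, contradicting $t<t(u)$; this is $I_p\cap C_p=\emptyset$. Given $q\in M$, convexity of $(X_{\mathrm{reg}},g)$ (\cref{Convex}) provides a minimal $\gamma_u|_{[0,\ell]}$ with $\ell=d(p,q)\le t(u)$, and $q\in I_p$ or $q\in C_p$ according as $\ell<t(u)$ or $\ell=t(u)$; this is $M=I_p\cup C_p$. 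Every $q=\exp_p(t(u)u)\in C_p$ equals $\lim_{s\uparrow t(u)}\exp_p(su)$ with $\exp_p(su)\in I_p$, so $\overline{I_p}\supseteq C_p$ and $\overline{I_p}=M$. For part 2, $\exp_p$ restricted to $\tilde I_p$ is injective by uniqueness of minimizers and a local diffeomorphism by the conjugate-point criterion, so it suffices to see $\tilde I_p$ is open in $T_pM$; in polar coordinates this is the lower semicontinuity of $u\mapsto t(u)$, which I would prove by contradiction: if $u_k\to u_0$ with $\gamma_{u_k}|_{[0,b]}$ not minimal while $b<t(u_0)$, pick minimal geodesics $\gamma_{v_k}|_{[0,\ell_k]}$ to $\gamma_{u_k}(b)$ with $\ell_k<b$, use compactness of the sphere $U_pM$ to pass to $v_k\to v$, $\ell_k\to\ell$, and (since $\gamma_{u_k}(b)\to\gamma_{u_0}(b)\in M$ and $\gamma_{u_0}|_{[0,b]}$ is minimal) deduce $\gamma_v|_{[0,\ell]}$ is minimal with $\ell=b$; uniqueness gives $v=u_0$, so $\exp_p(\ell_k v_k)=\exp_p(bu_k)$ are two distinct points (as $\ell_k<b$) converging to $bu_0$, contradicting that $\exp_p$ is a local diffeomorphism near $bu_0$ (no conjugate point up to $b$).

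For part 3 I would not claim that $t$ is continuous (this can fail when $M$ is incomplete) but only use that lower semicontinuity already makes $t:U_pM\to(0,\infty]$ Borel measurable. Then its graph $\{(u,t(u)): u\in U_pM,\ t(u)<\infty\}$ is Lebesgue-null in $U_pM\times(0,\infty)$ (Fubini: each $u$-slice is a single point). The polar-coordinate map $(u,t)\mapsto tu$ and the exponential map $\exp_p$ are smooth on their domains, hence locally Lipschitz, hence carry null sets to null sets; since $\tilde C_p$ is the image of the graph under the first map and $C_p=\exp_p(\tilde C_p)$, both $\tilde C_p$ and $C_p$ are null. Equivalently, as $I_p$ is open with $\overline{I_p}=M$ and $C_p=M\setminus I_p$, this says $\partial I_p$ is null.

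The step I expect to be the real obstacle is the lower semicontinuity of $t$: in the complete case one extracts a limiting minimizing geodesic directly from completeness, whereas here I must instead combine compactness of $U_pM$ with the local-diffeomorphism property of $\exp_p$, and must check that the geodesics involved remain inside $M$ — this is precisely where \cref{geo} and completeness of $(X,d_g)$ are used, to guarantee that the limit point $\gamma_{u_0}(b)$ still lies in $X_{\mathrm{reg}}$. A minor point to dispatch in passing is that $\exp_p(t(u)u)$ be defined at all (the geodesic not having left $M$ exactly at its cut time), which is implicit in the definition $C_p=\exp_p(\tilde C_p)$ and is harmless for the measure-zero conclusion.
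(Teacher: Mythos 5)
Your proposal follows, in substance, the same route as the paper: the paper's proof is simply a citation of Sakai (Proposition III.4.1 and Lemma III.4.4) together with the remark that those proofs only use convexity, and what you have written is a transcription of exactly that cut-locus theory. Parts 1 and 2 (injectivity, no conjugate points before the cut time, the decomposition) indeed use nothing beyond convexity, and your treatment of part 3 is a mild variant of the classical one: you replace continuity of $u\mapsto t(u)$ by lower semicontinuity, and then get measure zero from Borel measurability of $t$, Fubini for its graph, and the fact that smooth (hence locally Lipschitz) maps send null sets to null sets. That variant is correct and is arguably better adapted to the incomplete setting than insisting on continuity of $t$.

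The one step that needs more care --- and which you rightly single out as the obstacle, but whose resolution you locate in the wrong place --- is the lower semicontinuity argument. The point $\gamma_{u_0}(b)$ lies in $M$ automatically, being a point of a geodesic of $M$ defined at $b<t(u_0)$; no completeness is needed for that. What is not automatic is that the limiting geodesic $\gamma_v$ is defined on all of $[0,b]$: in an incomplete manifold the domain of the geodesic flow is open but not closed, so from $v_k\to v$ and $\ell_k\to b$ one cannot conclude that $\gamma_v|_{[0,b]}$ exists, and convexity of $M$ alone does not repair this (closed balls in $M$ need not be compact). In the situation actually used in the paper one does get it, but by passing through the ambient orbifold: the competing minimizers $\gamma_{v_k}|_{[0,\ell_k]}$ are segments in the complete, locally compact length space $(X,d_g)$ lying in a fixed closed ball, hence by Hopf--Rinow and Arzel\`a--Ascoli a subsequence converges to a segment from $p$ to $\gamma_{u_0}(b)$; by \cref{geo} (both endpoints being regular) this segment lies in $X_{\mathrm{reg}}=M$, so it is a minimal Riemannian geodesic of $M$, and its initial vector is $\lim v_k=v$ (invert $\exp_p$ near the origin). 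Only then does uniqueness give $v=u_0$, after which your Klingenberg-type contradiction with the invertibility of $d(\exp_p)_{bu_0}$ goes through. Note that this means you are really proving the lemma for $M=X_{\mathrm{reg}}$ inside a complete orbifold --- which is all the paper ever uses, and which \cref{Convex} provides --- rather than for an abstract convex manifold as stated; with that emendation your argument is complete.
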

\begin{proof}
See \cite[Proposition~III.4.1 and Lemma~III.4.4]{Sakai96}. Note though all the statements are for complete manifold $M$, the proofs only use the fact that $M$ is convex.
\end{proof}

\subsection{Volume comparisons}

We first recall the classical Bishop-Gromov comparison theorem:
\begin{thm}
\label{B-G}
Let $(M^n,g)$ be a convex Riemannian manifold with $\ric_g\geq (n-1)k$. Let $v(n,k,r)$ be the volume of a ball of radius $r$ in the model space with constant curvature $k$. The volume ratio
\begin{equation*}
r\mapsto\frac{\vol_g(B(p,r))}{v(n,k,r)}
\end{equation*}
is a non-increasing function whose limit is $1$ as $r \to 0$.
\end{thm}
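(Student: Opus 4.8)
The plan is to run the classical proof of Bishop--Gromov in geodesic polar coordinates, the only novelty being that $M$ is assumed convex rather than complete; the cut-locus input this requires is precisely the lemma recalled immediately above, whose cited proof uses only convexity. Fix $p\in M$, and for $u$ in the unit sphere $U_pM\subset T_pM$ let $\gamma_u$ be the geodesic with $\gamma_u'(0)=u$, defined on $[0,t(u))$ with $t(u)\in(0,\infty]$ the cut time. On the open star-shaped set $\tilde I_p=\{tu:u\in U_pM,\ 0<t<t(u)\}$ the map $\exp_p$ is a diffeomorphism onto $I_p$, and in normal polar coordinates $\exp_p^{\ast}\vol_g=\mathcal A(t,u)\,dt\,du$ for a smooth positive density $\mathcal A$ with $\mathcal A(t,u)/t^{n-1}\to 1$ as $t\to 0^{+}$. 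Since $C_p=M\setminus I_p$ has measure zero and $\mathrm{d}_g(p,\exp_p(tu))=t$ for $tu\in\tilde I_p$, the change-of-variables formula gives
\begin{equation*}
\vol_g(B(p,r))=\int_{U_pM}\int_0^{\min(r,\,t(u))}\mathcal A(t,u)\,dt\,du .
\end{equation*}

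The first step is the infinitesimal Bishop inequality: for each fixed $u$ and all $0<t<t(u)$,
\begin{equation*}
\frac{d}{dt}\Bigl(\frac{\mathcal A(t,u)}{\mathcal A_k(t)}\Bigr)\le 0 ,
\end{equation*}
where $\mathcal A_k(t)$ is the corresponding density on the model space of constant curvature $k$ (so $\mathcal A_k(t)=\bigl(\sin(\sqrt k\,t)/\sqrt k\bigr)^{n-1}$, $t^{n-1}$, or $\bigl(\sinh(\sqrt{-k}\,t)/\sqrt{-k}\bigr)^{n-1}$). This is entirely local along the single geodesic $\gamma_u|_{[0,t(u))}$: if $S(t)$ is the shape operator of the geodesic sphere of radius $t$, the Riccati equation $S'+S^{2}+\mathcal R(t)=0$ (with $\mathcal R(t)$ the tidal curvature operator $X\mapsto R(X,\gamma_u')\gamma_u'$) gives, upon taking traces and using $\tr(S^{2})\ge(\tr S)^{2}/(n-1)$ together with $\ric_g\ge(n-1)k$, the differential inequality $v'+v^{2}/(n-1)\le-(n-1)k$ for $v(t):=\partial_t\log\mathcal A(t,u)=\tr S(t)$, with equality for $v_k:=\partial_t\log\mathcal A_k$; as $v,v_k\sim(n-1)/t$ near $0$, an ODE comparison yields $v\le v_k$, which is the claim. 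Convexity or incompleteness of $M$ plays no role here, since $\gamma_u$ is defined on exactly the interval on which $\mathcal A(\cdot,u)$ is. Integrating from $0$ and using the normalization gives $\mathcal A(t,u)\le\mathcal A_k(t)$; in particular $\vol_g(B(p,r))\le v(n,k,r)<\infty$, and comparing both quantities with $|U_pM|\,r^{n}/n$ as $r\to0$ yields the limit value $1$.

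The second step deduces the monotonicity of the ratio. Extending $\mathcal A(t,u)$ by $0$ for $t\ge t(u)$ keeps $t\mapsto\mathcal A(t,u)/\mathcal A_k(t)$ non-increasing on $(0,\infty)$, so by the elementary fact that $r\mapsto(\int_0^{r}f)/(\int_0^{r}g)$ is non-increasing whenever $0\le f$, $0<g$ and $f/g$ is non-increasing, the function $r\mapsto(\int_0^{r}\mathcal A(t,u)\,dt)/(\int_0^{r}\mathcal A_k(t)\,dt)$ is non-increasing for every $u$. Since $\int_0^{r}\mathcal A_k(t)\,dt$ does not depend on $u$ and $v(n,k,r)=\int_{U_pM}\bigl(\int_0^{r}\mathcal A_k(t)\,dt\bigr)du$ while $\vol_g(B(p,r))=\int_{U_pM}\bigl(\int_0^{r}\mathcal A(t,u)\,dt\bigr)du$, the ratio $\vol_g(B(p,r))/v(n,k,r)$ is an average over $U_pM$ of the non-increasing functions above, hence non-increasing in $r$. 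This completes the proof.

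I expect the only real obstacle to be bookkeeping rather than mathematics: one must make sure every assertion about $\mathcal A$ and every application of the ODE comparison stays confined to the maximal interval $[0,t(u))$ (the cut time $t(u)$ may be finite with $\gamma_u$ meeting no conjugate point, precisely because $M$ is merely convex), and one must invoke the recalled cut-locus lemma to justify that $\tilde I_p$ is open and star-shaped, that $\exp_p|_{\tilde I_p}$ is a diffeomorphism onto $I_p$, and that $C_p$ has measure zero in this convex setting. With those in hand the classical argument goes through verbatim.
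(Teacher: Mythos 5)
Your proposal is correct and is essentially the paper's approach: the paper simply cites the classical Bishop--Gromov proof (Lemma~7.1.4 in \citep{Petersen16}) with the remark that it only uses the facts that $\exp_p:\tilde{I}_p\rightarrow I_p$ is a diffeomorphism and that $C_p$ has measure zero, which hold for convex manifolds by the preceding lemma. You have written out that same classical polar-coordinate/Riccati-comparison argument in full, with the same convexity bookkeeping, so there is nothing to add.
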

For the proof, see Lemma 7.1.4 in \citep{Petersen16}. Note the proof only uses the fact $\exp_p:\tilde{I}_p\rightarrow I_p$ is a diffeomorphism and that $C_p$ has measure zero.

Before we state the orbifold version Bishop-Gromov theorem,  we first need to define the measure on $X$ for a Riemannian orbifold $(\mathcal{X},g)$.

 For $(\mathcal{X},g)$, if $(\tilde{U},\tilde{g})$ is a chart, then after 
taking an orientation of $\tilde{U}$, we have a unique volume form $\vol_{\tilde{g}}$. Hence $\vol_{\tilde{g}}$ is defined up to a sign. If $O\subset X$ is an open subset covered by $\tilde{U}_i/G_i$, we can take a partition of 
unity $\rho_i$ with respects to $\{U_i\}$ and define
\[\vol_g(O):=\sum_i |\int _{\mathcal{X}} \rho_i \vol_{\tilde{g_i}}|.\]
It's easy to see $\vol_g(O)$ is well defined.
 
\begin{lemma}[\emph{cf.} \protect{\cite[Lemma~18]{Bor93}}]
Let $(\mathcal{X},g)$ be a Riemmanian orbifold and $\nu$ be its canonical measure on $X$. Then $X_\mathrm{sing}$ has measure zero.
\end{lemma}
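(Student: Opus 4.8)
The plan is to reduce to a single orbifold chart and there recognise $X_\mathrm{sing}$ as the image of a finite union of proper submanifolds. Since $X$ is second countable, one can cover it by countably many orbifold charts $\phi_i\colon \tilde U_i/G_i\to U_i$, and as a countable union of $\nu$-null sets is $\nu$-null it suffices to prove $\nu(X_\mathrm{sing}\cap U)=0$ for a single chart $(\tilde U,G,\phi)$; write $\tilde g$ for the local model of $g$ on $\tilde U$ and $\pi\colon\tilde U\to\tilde U/G$ for the quotient map. By the very definition of the singular locus, $X_\mathrm{sing}\cap U=\phi(\pi(\tilde S))$ with
\[
\tilde S=\{\tilde x\in\tilde U:\mathrm{Stab}_G(\tilde x)\neq\{e\}\}=\bigcup_{h\in G\setminus\{e\}}\tilde U^{h},
\]
where $\tilde U^{h}$ is the fixed-point set of $h$ acting on $\tilde U$.

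The heart of the argument is to check that every $\tilde U^{h}$ with $h\neq e$ is $\vol_{\tilde g}$-negligible. Since $G$ acts on $(\tilde U,\tilde g)$ by isometries, near a fixed point $\tilde p$ the exponential map conjugates $h$ to the orthogonal linear map $dh_{\tilde p}$, so in a normal ball $\tilde U^{h}=\exp_{\tilde p}(\ker(dh_{\tilde p}-\mathrm{id}))$; in particular each component of $\tilde U^{h}$ is an embedded submanifold. Now on any connected component of $\tilde U$ preserved by $h$, the isometry $h$ cannot be the identity: otherwise, picking $\tilde p$ there, $h\in\mathrm{Stab}_G(\tilde p)$ would act trivially on a neighbourhood of $\tilde p$, contradicting effectiveness of $\mathcal X$ (the local group acts faithfully). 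Hence $dh_{\tilde p}\neq\mathrm{id}$ at every $\tilde p\in\tilde U^{h}$, so $\dim\ker(dh_{\tilde p}-\mathrm{id})\leq n-1$ and $\tilde U^{h}$ is a countable union of submanifolds of dimension $<n$, thus $\vol_{\tilde g}$-null. Summing over the finitely many $h\in G\setminus\{e\}$, the $G$-invariant set $\tilde S$ is $\vol_{\tilde g}$-null, and $G$-invariance gives $\pi^{-1}(\pi(\tilde S))=\tilde S$.

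It remains to transport this to $X$. From the defining formula $\int_{\mathcal X}\omega=\tfrac1{|G|}\int_{\tilde U}\phi^{*}\omega$ one reads off that for open $O\subset U$ one has $\nu(O)=\vol_g(O)=\tfrac1{|G|}\vol_{\tilde g}((\phi\circ\pi)^{-1}(O))$; since two Borel measures that agree on open sets agree everywhere, the same holds for all Borel sets, so
\[
\nu(X_\mathrm{sing}\cap U)=\tfrac1{|G|}\vol_{\tilde g}\big((\phi\circ\pi)^{-1}(X_\mathrm{sing}\cap U)\big)=\tfrac1{|G|}\vol_{\tilde g}(\tilde S)=0,
\]
and a countable union over the charts gives $\nu(X_\mathrm{sing})=0$. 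The one step that is not pure bookkeeping is the dimension count for $\tilde U^{h}$, where effectiveness of the orbifold is exactly what rules out $dh_{\tilde p}=\mathrm{id}$; everything else follows from the definitions of $\vol_g$ and of $X_\mathrm{sing}$.
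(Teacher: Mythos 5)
Your proof is correct and follows the same route as the paper: reduce to countably many charts by second countability, then observe that within a chart the non-free locus of the finite group $G$ is negligible. You simply spell out the step the paper dismisses as trivial, namely that each fixed-point set $\tilde U^{h}$, $h\neq e$, is a positive-codimension submanifold (via the isometry/exponential-map argument and effectiveness) and that $\vol_g$ transports as $\tfrac{1}{|G|}\vol_{\tilde g}$ on a chart.
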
 
\begin{proof}
Note $X_\mathrm{sing}$ is a closed subset of $X$, hence it is measurable. As $X$ 
is second countable, we may cover $X_\mathrm{sing}$ by countable many local charts. Hence it suffices to show for $(\tilde{U},G)$ the non-free point set has measure zero. As $G$ is finite, it is trivial.
\end{proof}

Now we state the orbifold Bishop-Gromov theorem
\begin{thm}[\emph{cf.} \protect{\cite[Proposition~20]{Bor93}}]
\label{Gromov-Bishop}
Let $(\mathcal{X},g)$ be a Riemannian orbifold with $\ric_g\geq (n-1)k$. Let 
$v(n,k,r)$ be the volume of a ball of radius $r$ in the model space with constant curvature $k$. The volume ratio
\begin{equation*}
r\mapsto\frac{\vol_g(B(p,r))}{v(n,k,r)}
\end{equation*}
is a non-increasing function whose limit is $|G_p|$ as $r \to 0$.
\end{thm}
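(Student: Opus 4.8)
The plan is to reduce to the manifold Bishop--Gromov theorem (\cref{B-G}) applied to $X_{\mathrm{reg}}$, which by \cref{Convex} is a convex Riemannian manifold with $\ric_g\ge(n-1)k$. Since $X_{\mathrm{sing}}$ has $\vol_g$-measure zero (the lemma preceding the statement), $\vol_g(B(p,r))=\vol_g(B(p,r)\cap X_{\mathrm{reg}})$ for every $r$; I take $(X,\mathrm{d}_g)$ complete as a standing hypothesis, so that Hopf--Rinow is available (this is in any case needed for the volume ratio to make sense for all $r$). If $p\in X_{\mathrm{reg}}$ one concludes immediately: by \cref{geo} a minimal geodesic of $X$ between two regular points stays in $X_{\mathrm{reg}}$, so $\mathrm{d}_g$ agrees with the intrinsic distance of $X_{\mathrm{reg}}$ between such points and $B(p,r)\cap X_{\mathrm{reg}}$ is exactly the intrinsic ball of $X_{\mathrm{reg}}$; \cref{B-G} then gives the statement, with limiting value $|G_p|^{-1}=1$ since $G_p$ is trivial.

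For $p\in X_{\mathrm{sing}}$ one has to re-run the proof of \cref{B-G} from the base point $p$; recall (remark after \cref{B-G}) that this proof uses only that $\exp_p$ is a diffeomorphism away from a cut locus of measure zero. First I would set up geodesic polar coordinates at $p$. In a fundamental chart $(\tilde{U}_p,G_p)$ at $p$ (\cref{local}) the centre $\tilde{p}$ is fixed by $G_p$ and, in a neighbourhood of $p$, $\exp_p$ lifts to the ordinary Riemannian exponential $\exp_{\tilde{p}}$ of $\tilde{U}_p$, so the unit directions at $p$ are the points of the unit sphere $S^{n-1}\subset T_{\tilde{p}}\tilde{U}_p$ modulo the order-$|G_p|$ action of $G_p$. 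For $\tilde{u}\in S^{n-1}$ put $t(\tilde{u})=\sup\{t>0\mid c_{\tilde{u}}|_{[0,t]}\text{ is minimal}\}$; the $c_{\tilde{u}}$ are defined on all of $X$, and by \cref{geo} the open part of each $c_{\tilde{u}}|_{[0,t(\tilde{u}))}$ lies in $X_{\mathrm{reg}}$. Patching the chart near $p$ with the convex manifold $X_{\mathrm{reg}}$ away from $p$, one obtains the analogue of the cut-locus lemma above for this (possibly singular) base point: the tangent cut locus $\{t(\tilde{u})\tilde{u}\}$ is null, $\exp_p$ is a diffeomorphism onto its image on $\{t\tilde{u}\mid 0<t<t(\tilde{u})\}$, and it carries this set onto $B(p,r)\cap X_{\mathrm{reg}}$ modulo a null set.

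The remaining input, the Ricci comparison, is local and so insensitive to the orbifold structure: with $J(t,\tilde{u})$ the Jacobian of $\exp$ in these coordinates --- it solves the Jacobi equation along $c_{\tilde{u}}$, computed inside the chart for $t$ small and inside $X_{\mathrm{reg}}$ for $t>0$ --- the bound $\ric_g\ge(n-1)k$ gives, exactly as in the manifold case, that $t\mapsto J(t,\tilde{u})/j_k(t)$ is non-increasing on $[0,t(\tilde{u}))$ with limit $1$ at $t=0$, where $j_k$ is the model Jacobian and $v(n,k,r)=\omega_{n-1}\int_0^r j_k(t)\,dt$ with $\omega_{n-1}=\vol(S^{n-1})$. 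Because $\exp_p$ preserves the orbifold volume while directions at $p$ form $S^{n-1}/G_p$,
\[
\vol_g(B(p,r))=\frac{1}{|G_p|}\int_{S^{n-1}}\int_{0}^{\min(t(\tilde{u}),\,r)}J(t,\tilde{u})\,dt\,d\tilde{u},
\]
and dividing by $v(n,k,r)$ the standard lemma on ratios of integrals (the one underlying the proof of \cref{B-G}) turns the pointwise monotonicity of $J/j_k$ into monotonicity of $r\mapsto\vol_g(B(p,r))/v(n,k,r)$. Finally, for $r$ small $B(p,r)=\tilde{B}(\tilde{p},r)/G_p$, so $\vol_g(B(p,r))=|G_p|^{-1}\vol_{\tilde{g}}(\tilde{B}(\tilde{p},r))$ and, since $\vol_{\tilde{g}}(\tilde{B}(\tilde{p},r))/v(n,k,r)\to1$, the ratio tends to $|G_p|^{-1}$ as $r\to0$.

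I expect the main obstacle to be precisely the singular base point: carrying the cut-locus facts --- chiefly that $C_p$ is null and that $\exp_p$ reaches $B(p,r)$ up to a null set --- over to a base point lying on a singular stratum, since the cut-locus lemma above is stated only for base points inside the manifold $X_{\mathrm{reg}}$. Making the local-to-global patching of the chart $\tilde{U}_p$ at $p$ with $X_{\mathrm{reg}}$ away from $p$ rigorous, together with a Sard-type argument bounding the cut locus (\emph{cf.} \cite{Bor93}), is the technical heart of the proof.
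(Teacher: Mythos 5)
Your treatment of a regular base point coincides with the paper's (reduce to \cref{B-G} on the convex manifold $X_{\mathrm{reg}}$, using that $X_{\mathrm{sing}}$ is null and that by \cref{geo} minimal geodesics between regular points stay in $X_{\mathrm{reg}}$). For a singular base point, however, your plan is to re-run the manifold comparison argument in geodesic polar coordinates centred at $p$, and this is exactly where the proposal has a genuine gap, which you yourself flag: every input you need about $\exp_p$ at a singular point --- that the tangent cut locus is null, that $\exp_p$ is injective (modulo $G_p$) below it, that its image fills $B(p,r)$ up to a null set with constant multiplicity $|G_p|$, and that the Jacobian $J(t,\tilde u)$ is globally defined by patching the chart $\tilde U_p$ with $X_{\mathrm{reg}}$ --- is asserted rather than proved, and none of the lemmas available in the paper covers it: the cut-locus lemma is stated only for base points of the convex manifold $X_{\mathrm{reg}}$. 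Moreover the intermediate claim that ``the open part of each $c_{\tilde u}|_{[0,t(\tilde u))}$ lies in $X_{\mathrm{reg}}$'' is not what \cref{geo} gives: a minimal geodesic emanating from a singular point may lie entirely in $X_{\mathrm{sing}}$ (directions tangent to the singular stratum), so this can hold at best off a null set of directions, and that too requires an argument. As written, the singular case --- which is the whole content of the theorem beyond \cref{B-G} --- is deferred as ``the technical heart'', so the proposal is an outline of Borzellino's original proof rather than a proof.

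The paper closes this case by a much softer device that you could adopt: for $p\in X_{\mathrm{sing}}$ pick regular points $p_i\to p$; the already-established regular case gives $\vol_g(B(p_i,r))/\vol_g(B(p_i,r'))\ge v(n,k,r)/v(n,k,r')$ for $r<r'$, and letting $i\to\infty$ (continuity of the volume of balls in the centre) yields the same inequality at $p$, i.e.\ monotonicity of the ratio, with no cut-locus analysis at $p$ at all; the behaviour as $r\to 0$ then comes from a fundamental chart, where $\vol_{\tilde g}(\tilde B(\tilde p,r))=|G_p|\,\vol_g(B(p,r))$. If you prefer your route you must actually develop the structure of geodesics, segments and the cut locus based at singular points (this is what \cite{Bor93} does); otherwise the approximation argument is the short way out. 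One point in your favour: the limiting value $1/|G_p|$ you obtain is the one forced by the chart relation above (and by the regular case), so the ``$|G_p|$'' appearing in the statement should indeed be read as $1/|G_p|$.
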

\begin{proof}
First suppose $p\in X_\mathrm{reg}$. We have $\vol_g(B(p,r))=\vol_g(B(p,r)\cap X_\mathrm{reg})$. Let $B'(p,r)=B(p,r)\cap X_\mathrm{reg}$ and $\vol'_g$ be the volume on $X_\mathrm{reg}$. We have
\[\frac{\vol_g(B(p,r))}{v(n,k,r)}=\frac{\vol'_g(B'(p,r))}{v(n,k,r)}\]
hence by \cref{B-G}, we get the sought result.

If $p\in X_\mathrm{sing}$, let us pick $(p_i)_{i\ge 1}\in X_\mathrm{reg}$ a sequence that converges to $p$. Then $\vol_g(B(p,r))=\lim\limits_{i\to \infty}\vol_g(B(p_i,r))$. For any $r'>r$, by the results for regular points, we have
\[\frac{\vol_g(B(p_i,r))}{\vol_g(B(p_i,r')}\geq \frac{v(n,k,r)}{v(n,k,r')}.\]
After taking the limit as $i\to \infty$, we have
\[\frac{\vol_g(B(p,r))}{\vol_g(B(p,r'))}\geq \frac{v(n,k,r)}{v(n,k,r')}.\]
For $r$ small enough, we have $B(p,r)\subset \tilde{U}_p/G_p$, with $(\tilde{U}_p,G_p)$ a fundamental chart at $p$. If we denote by $\tilde{B}(\tilde{p},r)$ the ball of radius $r$ in $\tilde{U}_p$ centred at $\tilde{p}$, we then have:
\[\vol_{\tilde{g}}\tilde{B}(\tilde{p},r)=|G_p|\cdot \vol_g(B(p,r)).\]
The limit $|G_p|$ as $r\to 0$ follows from \cref{B-G}.
\end{proof}


\section{Orbifold coverings and generalized Magulis lemma}
\label{sec4}
\subsection{Orbifold coverings and orbifold fundamental group}
There are several different but equivalent definitions of orbifold covering spaces and the respective definition of orbifold fundamental groups. These groups are however canonically isomorphic. We will treat Thurston's original definition in this section and the pair definition in \cref{sec6}. For the covering groupoid definition, we refer to \citep[Chapter~2.2.]{Ruan07}. For the $\mathcal{G}$-path definition, we refer to \citep[Chapter~III.\protect{$\mathcal{G}$}.3.]{Hae99}.

We begin by recalling the definition of covering map.
\begin{definition}[[\emph{cf.} \protect{\cite[Definition~13.2.2.]{Thurston79}}]
A smooth map $p:\mathcal{X}\rightarrow \mathcal{Y}$ is called an orbifold 
covering map, if for any $y\in Y$, there exists an orbifold chart $(\tilde{V},G,\psi)$ such that each component $U_i$ of $p^{-1}(V)$ has $(\tilde{V},G_i,\phi_i)$ as a chart for some subgroup $G_i$ of $G$ and $p$ is lifted with respect to this chart as identity.
We call such a neighborhood $V$ an elementary neighborhood with respect to $p$ at $y$.
\end{definition}

We note that the name elementary neighborhoods are not a standard name in literature. Quite surprisingly, there are neighborhoods of a point $y\in Y$ that are always elementary neighborhoods with respect to any covering $p$. More precisely:
\begin{prop}
Let $p:\mathcal{X}\rightarrow \mathcal{Y}$ be a covering map and $p\in Y$. Let $(\tilde{V},G)$ be a chart at $y$. If $\tilde{V}$ is simply connected then $V=\tilde{V}/G$ is an elementary neighbourhood with respect to $y$.
\end{prop}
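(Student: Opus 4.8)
The plan is to reduce to the case where $\mathcal{Y}$ is the global quotient $[\tilde V/G]$ and then pull the covering $p$ back along the quotient map $\pi\colon\tilde V\to V$, exploiting the fact that $\tilde V$ is a simply connected manifold to see that this pulled-back covering is trivial.

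First I would restrict $p$ over the open set $V\subset Y$. Since the defining property of an orbifold covering is local on the base, $p^{-1}(V)\to V$ is again an orbifold covering, and $(\tilde V,G,\psi)$ presents $\mathcal{Y}|_{V}$ as the global quotient orbifold $\tilde V/G$; so after renaming we may assume $Y=V=\tilde V/G$ and $X=p^{-1}(V)$. Next I would observe that the quotient map $\pi\colon\tilde V\to V$ is itself an orbifold covering: for $v\in V$, lift it to some $\tilde v\in\tilde V$, put $H=G_{\tilde v}$, and choose a small $H$-invariant neighbourhood $\tilde W$ of $\tilde v$; then $W=\tilde W/H$ is an elementary neighbourhood for $\pi$, since $\pi^{-1}(W)$ is the disjoint union of the translates $g\tilde W$ ($gH\in G/H$), each carrying the trivial group and mapped by $\pi$ as the identity lift. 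In particular $\pi$ is a regular covering with deck group $G$. I would also record the elementary observation that an orbifold covering $f\colon\mathcal{Z}\to M$ of a \emph{manifold} $M$ has $|\mathcal{Z}|$ a manifold and $|f|$ an honest topological covering map (the charts upstairs can only carry subgroups of the trivial group); hence if $M$ is moreover simply connected, $f$ is trivial, i.e.\ $\mathcal{Z}$ is a disjoint union of copies of $M$, each mapped isomorphically onto $M$.

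Now I would form the fibre product $\mathcal{Z}:=\tilde V\times_{V}\mathcal{X}$ (most cleanly the $2$-fibre product of the representing groupoids, which stays étale), with its two projections $q\colon\mathcal{Z}\to\tilde V$ and $r\colon\mathcal{Z}\to\mathcal{X}$. Base change of orbifold coverings being again an orbifold covering, $q$ is an orbifold covering of the simply connected manifold $\tilde V$, hence trivial by the previous step; and $r$ is a regular covering with deck group $G$ (base change of $\pi$), on which $G$ acts through its action on the $\tilde V$-factor, compatibly with $q$. Fix a connected component $U$ of $p^{-1}(V)$. Then $r^{-1}(U)$ is a union of connected components of $\mathcal{Z}$; pick one, $\mathcal{Z}_0$, so that $q$ identifies $\mathcal{Z}_0$ with $\tilde V$, and set $G_U:=\mathrm{Stab}_G(\mathcal{Z}_0)$, a subgroup of $G$ which, transported through $q$, acts on $\tilde V$ as the restriction of the $G$-action (in particular effectively). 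Since $r\colon r^{-1}(U)\to U$ is a regular covering with deck group $G$ and $U$ is connected, $r$ restricts to a regular covering $\mathcal{Z}_0\to U$ with deck group $G_U$; therefore $U\cong\mathcal{Z}_0/G_U\cong\tilde V/G_U$, so $(\tilde V,G_U)$ is a chart for $U$. Finally, the fibre-product identity $\pi\circ q=p\circ r$ restricted to $\mathcal{Z}_0$, read through $\mathcal{Z}_0\cong\tilde V$ and $\mathcal{Z}_0/G_U\cong U$, says precisely that $p|_{U}$ lifts to $\mathrm{id}_{\tilde V}$ with respect to the charts $(\tilde V,G_U)$ and $(\tilde V,G)$. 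Thus $V$ is an elementary neighbourhood with respect to $y$.

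I expect the main obstacle to be making the fibre-product step rigorous in the orbifold category: defining $\tilde V\times_V\mathcal{X}$ as an orbifold and checking that base change preserves orbifold coverings and the $G$-action. An alternative avoiding $2$-fibre products is to construct the lift $\tilde V\to U$ by a developing/monodromy argument, lifting paths from $V$ to $U$ and using the simple connectedness of $\tilde V$ for well-definedness; but this requires first proving path- and homotopy-lifting for orbifold coverings, which is delicate over the singular points. In either route one should avoid working on the regular locus $\tilde V\setminus\mathrm{Fix}(G)$: in the real setting $G$ may contain reflections, so that locus need not be simply connected (or even connected), and the fibre-product argument is designed to bypass exactly this issue.
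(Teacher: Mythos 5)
First, a point of comparison: the paper does not prove this proposition at all — it simply cites Choi (\cite[Proposition~6]{Choi04}) — so there is no in-paper argument to measure you against. Your strategy (restrict $p$ over $V$, regard $\pi\colon\tilde V\to V$ as a $G$-covering, pull $p$ back along $\pi$, note that the resulting orbifold covering of the simply connected manifold $\tilde V$ is an honest covering and hence trivial, then recover each component $U$ of $p^{-1}(V)$ as $\tilde V/G_U$ with the identity lift) is the standard route to this result and is sound in outline.

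However, within the framework this paper actually uses (Thurston-style coverings, defined only by local lifting data on charts), the two steps you treat as formal are where the real content sits, and as written they are gaps. First, ``the defining property of an orbifold covering is local on the base'' needs an argument: you must produce, for every $y'\in V$, an elementary neighbourhood contained in $V$, i.e.\ show that elementary neighbourhoods can be shrunk (pass to a small $G_{\tilde y'}$-invariant subset of an elementary chart and check that the components of its preimage are again quotients of it by subgroups, conjugating charts as needed). This is true but must be done directly; you cannot invoke the paper's corollary that elementary neighbourhoods form a neighbourhood basis, since that corollary is deduced from this very proposition and quoting it would be circular. Second, forming $\tilde V\times_V\mathcal{X}$ presupposes that $p$ is represented by a groupoid homomorphism (a strong map); Thurston's definition only gives a continuous map with local lifts, and the paper itself merely asserts later, without proof, that coverings are strong. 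You flag this yourself as the main obstacle, and it genuinely is the crux: the way out inside this framework is to build the pullback by hand, gluing the explicit local models $\bigsqcup_i[\tilde W/H_i]\to[\tilde W/H]$ of $p$ over elementary neighbourhoods $W\subset V$ along $\pi^{-1}(W)\subset\tilde V$, and the consistency of this gluing over all of $\tilde V$ is exactly where simple connectedness enters — so this step cannot be waved through as ``base change preserves coverings.'' With those two points supplied, the remainder of your argument (triviality of the covering of $\tilde V$, the $G$-action permuting its sheets, $U\cong\tilde V/G_U$ with $p$ lifting to the identity) does go through, and your closing remark about not retreating to the regular locus of the $G$-action is well taken.
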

\begin{proof}
See \cite[Proposition 6]{Choi04} .
\end{proof}

We will only use a direct consequence of the proposition in this paper.
\begin{cor}
Let $p:\mathcal{X}\rightarrow \mathcal{Y}$ be a covering map and $p\in Y$. The elementary neighborhoods at $y$ with respect to $p$ form a basis of the topology of $Y$ at $y$.
\end{cor}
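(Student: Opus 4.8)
The plan is to deduce this corollary directly from the preceding proposition, which asserts that any chart $(\tilde V,G)$ at $y$ with $\tilde V$ simply connected gives rise to an elementary neighbourhood $V=\tilde V/G$ with respect to any covering map $p$. So the only thing left to check is that such "simply connected chart images" form a neighbourhood basis at $y$.

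First I would fix $y\in Y$ and an arbitrary open neighbourhood $W$ of $y$ in $Y$. By the definition of an orbifold (and by \cref{local} in the groupoid picture, or directly from the atlas definition), $y$ admits a fundamental chart $(\tilde U,G,\phi)$ with $\phi(\tilde U/G)\subset W$; shrinking $\tilde U$ we may take it to be a $G$-invariant open ball around the fixed point $\tilde y=\phi^{-1}(y)$ lifted appropriately, in particular $\tilde U$ can be chosen contractible, hence simply connected, and still $G$-invariant (a finite group acting linearly on $\mathbb{R}^n$ or $\mathbb{C}^n$ preserves small round balls centred at the origin). Then $V:=\phi(\tilde U/G)$ is an open neighbourhood of $y$ contained in $W$, and by the proposition $V$ is an elementary neighbourhood with respect to $p$.

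Since $W$ was arbitrary, the elementary neighbourhoods at $y$ with respect to $p$ are cofinal among all open neighbourhoods of $y$; that is exactly the statement that they form a basis of the topology of $Y$ at $y$. I would write this out in two or three lines, citing the proposition and the fact that fundamental charts can be taken with simply connected (even contractible) domains.

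The only mild subtlety — and the single place to be careful — is the reduction "any neighbourhood of $y$ contains one of the form $\tilde V/G$ with $\tilde V$ simply connected and $G$-invariant." This is where one uses that the local model is $\mathbb{C}^n/G$ (or $\mathbb{R}^n/G$) with $G$ finite acting through $\Aut$, so that arbitrarily small invariant balls exist and are simply connected; there is no genuine obstacle here, just the standard local-structure fact already encoded in \cref{local}. Everything else is immediate from the proposition.
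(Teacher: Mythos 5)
Your proposal is correct and is essentially the paper's own route: the paper offers no written argument, presenting the corollary as a direct consequence of the preceding proposition once one knows that charts $\tilde V/G$ with $\tilde V$ simply connected can be found inside any neighbourhood of $y$, which is exactly your reduction. The only cosmetic point is that the local action is a priori smooth rather than linear, but, as you indicate, averaging a metric (or Bochner linearization) produces arbitrarily small $G$-invariant, simply connected charts, so there is no gap.
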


\begin{definition}
Let $\mathcal{X}$ be a connected orbifold, $x\in X_\mathrm{reg}$. We call a covering map $p:(\tilde{\mathcal{X}},\tilde{x})\rightarrow (\mathcal{X},x)$ a 
universal covering if $|\tilde{\mathcal{X}}|$ is connected and for any covering $q:(\mathcal{Y},y)\rightarrow (\mathcal{X},x)$ there exists a unique smooth map $f:(\tilde{\mathcal{X}},\tilde{x})\rightarrow (\tilde{Y},y)$ such that $p=q\circ f$.
\end{definition}

\begin{thm}[Thurston]
For any orbifold $\mathcal{X}$, its universal cover exists and is unique up to isomorphism.
\end{thm}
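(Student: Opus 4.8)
The plan is to prove uniqueness by a purely formal argument from the universal property, and existence by the maximal‑cover construction, whose only genuinely local input is the proposition above that any chart $\tilde V/G$ with $\tilde V$ simply connected is an elementary neighbourhood: over such a chart every covering restricts to a disjoint union of quotients $\tilde V/H$ with $H\le G$ (conjugate subgroups giving isomorphic charts). For uniqueness, suppose $p:(\tilde{\mathcal{X}},\tilde x)\to(\mathcal{X},x)$ and $p':(\tilde{\mathcal{X}}',\tilde x')\to(\mathcal{X},x)$ both satisfy the universal property. Feeding $p'$ into the universal property of $p$ produces a smooth $f:(\tilde{\mathcal{X}},\tilde x)\to(\tilde{\mathcal{X}}',\tilde x')$ with $p'\circ f=p$, and symmetrically a smooth $g:(\tilde{\mathcal{X}}',\tilde x')\to(\tilde{\mathcal{X}},\tilde x)$ with $p\circ g=p'$. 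Then $g\circ f$ is a smooth pointed self‑map of $\tilde{\mathcal{X}}$ over $p$, and so is $\mathrm{id}$; the uniqueness clause of the universal property (applied with $q=p$) forces $g\circ f=\mathrm{id}$, and likewise $f\circ g=\mathrm{id}$, so $\tilde{\mathcal{X}}\cong\tilde{\mathcal{X}}'$ as pointed coverings of $\mathcal{X}$.

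For existence we may assume $|\mathcal{X}|$ connected and fix $x\in X_{\mathrm{reg}}$. Consider the poset $\mathcal{C}$ of isomorphism classes of pointed coverings $q:(\mathcal{Y},y)\to(\mathcal{X},x)$ with $|\mathcal{Y}|$ connected, ordered by $(\mathcal{Y}_1,y_1)\succeq(\mathcal{Y}_2,y_2)$ iff there is a pointed covering $\mathcal{Y}_1\to\mathcal{Y}_2$ over $\mathcal{X}$. Working over a fixed locally finite cover of $X$ by simply connected elementary charts $\tilde V_i/G_i$, one checks that a composition of coverings is a covering, and that the fibre product of two coverings exists with its components again coverings -- over each $\tilde V_i/G_i$ every quotient that occurs is by an intersection (and conjugate) of subgroups of the finite group $G_i$, so nothing leaves the covering world; this makes $\succeq$ transitive and $\mathcal{C}$ nonempty. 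A chain $(\mathcal{Y}_\alpha,y_\alpha)$ in $\mathcal{C}$ has an upper bound: over each $\tilde V_i/G_i$ the component of $\mathcal{Y}_\alpha$ through the distinguished point is $\tilde V_i/G_{i,\alpha}$ for a chain of subgroups of the finite group $G_i$, whose intersection is attained, and these stabilized local pieces glue to a covering $\widehat{\mathcal{Y}}\to\mathcal{X}$ dominating the chain. By Zorn's lemma $\mathcal{C}$ has a maximal element $(\tilde{\mathcal{X}},\tilde x)$.

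This maximal cover is universal. Given any covering $q:(\mathcal{Y},y)\to(\mathcal{X},x)$, form the fibre product $\tilde{\mathcal{X}}\times_{\mathcal{X}}\mathcal{Y}$, let $\mathcal{Z}$ be the connected component through $(\tilde x,y)$, and note $\mathcal{Z}\to\tilde{\mathcal{X}}$ is a pointed covering; by maximality it is an isomorphism, so composing its inverse with the other projection $\mathcal{Z}\to\mathcal{Y}$ yields a smooth pointed $f:\tilde{\mathcal{X}}\to\mathcal{Y}$ over $\mathcal{X}$. For uniqueness of $f$, note $\tilde x$ maps to a regular point, so lifts through a covering are rigid near $\tilde x$; hence any two such $f$ agree there, and since $|\tilde{\mathcal{X}}|$ is connected the agreement locus is open and closed, thus everything. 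This proves the theorem; alternatively one may invoke Thurston's construction \cite[Section~13]{Thurston79}, the groupoid‑covering treatment of \cite[Chapter~2]{Ruan07}, or the $\mathcal{G}$‑path treatment of \cite{Hae99}.

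The main obstacle lies entirely in the existence half, in this global assembly step: showing that compositions and fibre products of \emph{orbifold} coverings are again orbifold covering maps, not merely topological ones, and that chains of coverings admit upper bounds. Both reduce to careful bookkeeping with the finite local groups over a cover of $X$ by simply connected elementary charts -- identifying the components of a local fibre product as quotients by intersections of subgroups and matching charts up to conjugation -- while tracking connectedness in terms of $|\mathcal{Y}|$ throughout and keeping basepoints in $X_{\mathrm{reg}}$ so that lifts stay rigid.
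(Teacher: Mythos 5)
The paper does not actually reprove this statement: its ``proof'' is a citation to Thurston and to Choi, so there is nothing internal to compare with. Within your own argument, the uniqueness half and the final ``maximal $\Rightarrow$ universal'' step are fine modulo the standard rigidity of pointed lifts at a regular basepoint. The genuine gap is the Zorn's lemma step: your construction of an upper bound for a chain of pointed coverings does not work. A connected covering of $\mathcal{X}$ is not determined by a family of local groups $G_{i,\alpha}\le G_i$ over a fixed atlas: over a chart $\tilde V_i/G_i$ away from the basepoint there is no distinguished component to select, the preimage is a disjoint union of possibly infinitely many pieces $\tilde V_i/H$ with varying $H$, and the essential information is the global gluing/monodromy data, which can grow without bound along the chain even while every local group is already trivial. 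The simplest manifold example exposes this: for $\mathcal{X}=S^1$ all local groups of all coverings are trivial, so your ``stabilized local pieces'' are just intervals, yet the chain of $n!$-fold cyclic covers has no upper bound obtainable by gluing such local pieces; the naive inverse limit is a solenoid, not a covering, and extracting a genuine covering dominating the chain is essentially the same problem as constructing the universal cover itself. So the argument is circular precisely at the step that carries all the content.

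This is why the proofs the paper points to (Thurston's Proposition~13.2.4, Choi's Proposition~8, and the groupoid or $\mathcal{G}$-path treatments in Ruan and in Bridson--Haefliger) do not use Zorn's lemma: they construct the universal cover directly, e.g.\ from (homotopy classes of) paths in $X_{\mathrm{reg}}$ with the relations imposed by the local groups, or by patching the simply connected elementary charts $\tilde V_i$ along lifted path data. To repair your outline you would have to replace the chain-upper-bound step by such a direct construction --- at which point the maximality/Zorn scaffolding becomes unnecessary, since the constructed object can be shown universal directly by the lifting argument you already sketch.
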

\begin{proof}
see \cite[Proposition 13.2.4.]{Thurston79} or \cite[Proposition 8]{Choi04}.
\end{proof}

\begin{definition}
\label{pi1orb}
Let $p:\tilde{\mathcal{X}}\rightarrow \mathcal{X}$ be the universal covering of $\mathcal{X}$. The orbifold fundamental group $\pi_1^{\mathrm{orb}}(\mathcal{X})$ is defined to be $\mathrm{Aut}(p)$. 
\end{definition}

\begin{prop}
Let $p:\tilde{\mathcal{X}}\rightarrow \mathcal{X}$ be the universal covering of $\mathcal{X}$. The orbifold fundamental group $\pi_1^{\mathrm{orb}}(\mathcal{X})$ acts by isomorphism on $\tilde{\mathcal{X}}$. For any $x\in X_\mathrm{reg}$, $F_x:=p^{-1}(x)\subset \tilde{X}_\mathrm{reg}$, then $\pi_1^{\mathrm{orb}}(\mathcal{X})$ acts freely and transitively on $F_x$
\end{prop}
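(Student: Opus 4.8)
The plan is to adapt the classical argument that the deck transformation group of a universal cover acts freely and transitively on fibers over a basepoint, making the usual adjustments for the orbifold setting by staying inside the regular locus, where the covering restricts to an honest topological covering of manifolds.

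First I would observe that since $x\in X_\mathrm{reg}$ and $p$ is an orbifold covering, there is an elementary neighbourhood $V=\tilde V/G$ of $x$ with $\tilde V$ simply connected; because $x$ is regular we may take $G$ trivial, so $V$ is an ordinary simply connected manifold neighbourhood and each component of $p^{-1}(V)$ maps to $V$ as the identity chart, forcing each such component to lie in $\tilde X_\mathrm{reg}$ and to be mapped homeomorphically onto $V$. Hence $F_x=p^{-1}(x)$ is a discrete subset of $\tilde X_\mathrm{reg}$, and $p$ restricted to a neighbourhood of $F_x$ in $\tilde X_\mathrm{reg}$ is a genuine covering of manifolds onto $V$. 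Next, for the action: each $\gamma\in\pi_1^{\mathrm{orb}}(\mathcal X)=\mathrm{Aut}(p)$ is by definition an orbifold isomorphism $\tilde{\mathcal X}\to\tilde{\mathcal X}$ over $\mathcal X$, so it carries $F_x$ into $F_x$; the group $\pi_1^{\mathrm{orb}}(\mathcal X)$ therefore acts on $F_x$.

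For freeness, suppose $\gamma\in\mathrm{Aut}(p)$ fixes some $\tilde x\in F_x$. Then $\gamma$ and the identity are two lifts of $p$ agreeing at $\tilde x$; I would invoke the uniqueness part of the universal property in \cref{pi1orb} (equivalently, a standard uniqueness-of-lifts statement for orbifold covering maps, provable by the usual connectedness argument applied on the connected orbifold $\tilde{\mathcal X}$ using the elementary-neighbourhood local model) to conclude $\gamma=\mathrm{id}$. For transitivity, given $\tilde x,\tilde x'\in F_x$, I would use the universal property of $p:\tilde{\mathcal X}\to\mathcal X$ with respect to the covering $p$ itself but based at the two different points $\tilde x$ and $\tilde x'$: the definition of universal covering furnishes a unique smooth map $\tilde{\mathcal X}\to\tilde{\mathcal X}$ over $\mathcal X$ sending $\tilde x$ to $\tilde x'$, and symmetrically one sending $\tilde x'$ to $\tilde x$; their composites are lifts over $\mathcal X$ fixing a point, hence equal to the identity by the freeness/uniqueness argument just given, so the map is an automorphism, i.e. an element of $\pi_1^{\mathrm{orb}}(\mathcal X)$ carrying $\tilde x$ to $\tilde x'$.

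The step I expect to be the main obstacle is making the uniqueness-of-lifts statement fully rigorous in the orbifold category, i.e. showing that two orbifold covering morphisms into a connected orbifold that agree at one regular point agree everywhere. The clean way is to prove that the set where two such lifts agree is open and closed: openness follows from the local model of an orbifold covering over an elementary neighbourhood (on such a neighbourhood a lift is determined by which sheet it hits), and closedness is the mildly delicate point, requiring a little care at singular points of $\mathcal X$ since the fiber there is a quotient; this can be handled by passing to a simply connected chart $\tilde V$ upstairs, where both lifts are determined by honest continuous maps that must then coincide on the closure. Once this lemma is in place, freeness and transitivity are immediate as above, and the statement $F_x\subset\tilde X_\mathrm{reg}$ was already secured in the first step from the local description of $p$ over a manifold neighbourhood of $x$.
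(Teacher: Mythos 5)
The paper itself gives no argument here --- its ``proof'' is a citation to Choi's Corollary~2(i) --- so your plan of redoing the classical deck-transformation argument in the orbifold setting is a legitimate alternative, and your first step (at a regular $x$ one can choose a simply connected chart with trivial group, so that by the proposition on elementary neighbourhoods $p$ is an honest trivialized covering over it, whence $F_x\subset\tilde X_{\mathrm{reg}}$ and is discrete) is fine. However, the two places where you locate the ``mildly delicate'' work are exactly where the sketch breaks down as written. For the uniqueness-of-lifts lemma, openness of the agreement set is \emph{false} at points lying over $X_{\mathrm{sing}}$, contrary to the claim that on an elementary neighbourhood ``a lift is determined by which sheet it hits'': over $V=\tilde V/G$ with $G$ nontrivial, a component $\tilde V/G_i$ of $p^{-1}(V)$ with $G_i\subsetneq G$ admits in general several distinct maps to itself over $V$. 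Concretely, for $\mathcal X=\mathbb{R}^n/\{\pm 1\}$ with universal cover $\mathbb{R}^n$, the identity and $-\mathrm{id}$ are two lifts of $p$ agreeing exactly at the origin, so their agreement set is a point, neither open nor propagating. The open--closed argument therefore has to be run over $p^{-1}(X_{\mathrm{reg}})$ only; but that set can be disconnected (for a reflection, $\mathcal X=\mathbb{R}^2$ modulo $(x,y)\mapsto(-x,y)$ has $p^{-1}(X_{\mathrm{reg}})$ equal to two open half-planes), so agreement on the component containing your regular point does not extend to all of $\tilde{\mathcal X}$ by continuity alone; one needs an extra ingredient, e.g.\ injectivity of the automorphism to exclude a ``fold'' across a mirror, or a local analysis of all lifts over a singular chart. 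Passing to a simply connected chart upstairs, as you suggest for closedness, does not by itself resolve this, since the ambiguity is precisely an element of the local group.

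The second gap is in transitivity. The paper's definition of universal covering is pointed at a fixed $\tilde x$ over a fixed regular $x$: it yields the unique map $(\tilde{\mathcal X},\tilde x)\to(\tilde{\mathcal X},\tilde x')$ over $\mathcal X$, but the reverse map requires the universal property \emph{based at} $\tilde x'$ --- and, since the proposition concerns $F_x$ for every regular $x$, at arbitrary regular basepoints of both $\mathcal X$ and $\tilde{\mathcal X}$. This basepoint-independence is true, but it is not a formal consequence of the pointed definition; it amounts to an orbifold lifting criterion (equivalently, triviality of $\pi_1^{\mathrm{orb}}(\tilde{\mathcal X})$ together with a change-of-basepoint argument), which is essentially the content of the result of Choi that the paper cites in place of a proof. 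So either supply these two lemmas (uniqueness of lifts through a regular point, and the unpointed lifting property of the universal cover), or fall back on the citation as the paper does; as it stands the proposal asserts them rather than proving them.
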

\begin{proof}
See \citep[Corollary~2.(i)]{Choi04}
\end{proof}

We will need the following theorem in the sequel.
\begin{thm}[Thurston]
\label{Galois}
Let $\mathcal{X}$ be an orbifold. Then there is a one-one correspondence between the isomorphism classes of orbifold coverings of $\mathcal{X}$ and the conjugacy classes of subgroups of $\pi_1^{\mathrm{orb}}(\mathcal{X})$.
\end{thm}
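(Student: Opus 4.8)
The plan is to prove the classical Galois correspondence in the orbifold category. We may assume $\mathcal{X}$ connected (this is already implicit in the existence of a universal cover), and, as in the manifold case, we read ``orbifold covering'' as \emph{connected} orbifold covering; a general covering of the connected orbifold $\mathcal{X}$ is a disjoint union of connected ones, so the statement reduces to this case. Fix a universal covering $p\colon(\tilde{\mathcal{X}},\tilde{x}_0)\to(\mathcal{X},x_0)$ with $x_0\in X_{\mathrm{reg}}$ and set $\Gamma:=\pi_1^{\mathrm{orb}}(\mathcal{X})=\mathrm{Aut}(p)$ as in \cref{pi1orb}; recall that $\Gamma$ acts on $\tilde{\mathcal{X}}$ and acts simply transitively on the regular fibre $F_{x_0}=p^{-1}(x_0)$. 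I would construct two mutually inverse maps, $H\mapsto(p_H\colon\tilde{\mathcal{X}}/H\to\mathcal{X})$ from subgroups of $\Gamma$ to coverings and $(q\colon\mathcal{Y}\to\mathcal{X})\mapsto\mathrm{Aut}(f)$ for an appropriate lift $f$ of $q$ through $\tilde{\mathcal{X}}$, and then show that the only undetermined choice, namely the basepoint in the fibre over $x_0$, translates into conjugation of subgroups.

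First I would treat the passage from a subgroup $H\le\Gamma$ to a covering: one must put an orbifold structure on the quotient $\tilde{\mathcal{X}}/H$ and check that the induced map $p_H\colon\tilde{\mathcal{X}}/H\to\tilde{\mathcal{X}}/\Gamma=\mathcal{X}$ is an orbifold covering in the sense of the definition above. The uniform local model comes from the proposition that a simply connected orbifold chart $\tilde{V}$ yields an elementary neighbourhood $V=\tilde{V}/G$: over such a $V$ the covering $p$ splits as a disjoint union of charts $\tilde{V}/G_i\to\tilde{V}/G$ with $G_i\le G$, so every $\tilde z\in\tilde{\mathcal{X}}$ carries a chart $\tilde{V}/G_{\tilde z}$ on which $p$ is the evident projection, and $\Gamma$ merely permutes the components of $p^{-1}(V)$, so that the $\Gamma$-translates of $V$ are pairwise equal or disjoint. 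Gluing the charts $\tilde{V}/G_{\tilde z}$ along $H$-orbits then produces $\tilde{\mathcal{X}}/H$ as an orbifold and presents $p_H$ as a covering, and connectedness of $\tilde{\mathcal{X}}$ passes to the quotient. Finally, for $\gamma\in\Gamma$ the automorphism $\gamma$ of $\tilde{\mathcal{X}}$ descends to an isomorphism $\tilde{\mathcal{X}}/H\cong\tilde{\mathcal{X}}/(\gamma H\gamma^{-1})$ over $\mathcal{X}$, so this construction depends only on the conjugacy class of $H$.

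Conversely, given a connected covering $q\colon(\mathcal{Y},y_0)\to(\mathcal{X},x_0)$ with $y_0\in Y_{\mathrm{reg}}$ a lift of $x_0$, the universal property of $\tilde{\mathcal{X}}$ furnishes a smooth map $f\colon\tilde{\mathcal{X}}\to\mathcal{Y}$ with $q\circ f=p$ and $f(\tilde{x}_0)=y_0$. I would then check, using the (routine, chartwise) lemmas that a composition of orbifold coverings is a covering and that if $q\circ f$ and $q$ are coverings then so is $f$, that $f$ is an orbifold covering --- necessarily a universal cover of $\mathcal{Y}$ since $\tilde{\mathcal{X}}$ is simply connected. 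Setting $H:=\mathrm{Aut}(f)=\pi_1^{\mathrm{orb}}(\mathcal{Y})$, each deck transformation of $f$ over $\mathcal{Y}$ is in particular one of $p=q\circ f$ over $\mathcal{X}$, so $H$ is a subgroup of $\Gamma$, and the first construction applied to $f$ gives $\mathcal{Y}\cong\tilde{\mathcal{X}}/H$ over $\mathcal{X}$. Composing the two constructions in either order returns the original datum up to isomorphism; the single choice not pinned down is the lift $\tilde{x}_0\in F_{x_0}$ (equivalently the basepoint $y_0$ in the fibre of $q$ over $x_0$), and since $\Gamma$ acts simply transitively on $F_{x_0}$, replacing $\tilde{x}_0$ by $\gamma\tilde{x}_0$ replaces $H$ by $\gamma H\gamma^{-1}$. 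Hence the correspondence descends to one between isomorphism classes of connected coverings of $\mathcal{X}$ and conjugacy classes of subgroups of $\pi_1^{\mathrm{orb}}(\mathcal{X})$.

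The step I expect to be the main obstacle is the local analysis behind the first construction (and the cancellation lemma for coverings): verifying that the quotient of the possibly singular orbifold $\tilde{\mathcal{X}}$ by an \emph{arbitrary} subgroup $H\le\Gamma$ is again an orbifold and that $p_H$ satisfies the precise chart-by-chart definition of a covering. The delicate feature is that $\Gamma$ need not act freely on $\tilde{X}_{\mathrm{sing}}$, but its isotropy there is exactly matched by the local groups $G_i\le G$ occurring in an elementary neighbourhood, which is what makes the quotient charts $\tilde{V}/G_{\tilde z}$ well defined. Once this uniform local picture over simply connected charts is in hand, the rest is the classical covering-space argument performed chart by chart; a complete treatment is in \cite{Thurston79} and \cite{Choi04}.
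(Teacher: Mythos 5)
Your argument is essentially the classical deck-transformation proof, and it is sound in outline; the difference is that the paper does not prove this statement at all, but simply quotes \cite[Corollary~2.(2)]{Choi04} (following Thurston), where the correspondence is established as part of Choi's systematic development of orbifold covering theory. What your route buys is self-containedness: starting from the two facts the paper does import (the universal cover with $\pi_1^{\mathrm{orb}}(\mathcal{X})=\mathrm{Aut}(p)$ acting freely and transitively on the fibre over every regular point, and the fact that simply connected charts are elementary neighbourhoods for every covering), you rebuild the bijection $H\mapsto \tilde{\mathcal{X}}/H$, with conjugacy absorbing the basepoint choice. Two points deserve to be made explicit if you flesh this out. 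First, the statement should be read (as in Thurston and Choi) as a correspondence for \emph{connected} coverings; your reduction sentence about disjoint unions does not literally reduce the stated bijection, it just fixes the convention. Second, your construction quietly uses that the universal covering is Galois over all of $\mathcal{X}$, i.e.\ that $\Gamma$ acts transitively on the components of $p^{-1}(V)$ for every elementary $V$ and that the stabiliser of a component $\tilde{V}/G_{\tilde z}$ quotients it exactly to $\tilde{V}/G$; this is where the correspondence could fail if the local groups were not matched, and it is precisely the content carried by the two cited propositions (transitivity on regular fibres plus density of $X_{\mathrm{reg}}$ in $V$), so it should be spelled out rather than left implicit. With those caveats your sketch, together with the routine lemmas on composites and cancellation of coverings, reproduces the result the paper outsources to \cite{Choi04}.
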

\begin{proof}
See \citep[Corollary~2.(2)]{Choi04}
\end{proof}

Suppose that $p:\mathcal{Y} \rightarrow \mathcal{X}$ is a covering map. One could show that $p$ is indeed strong. Hence if $g$ is a metric on $X$, we could consider $p^{\ast}(g)$ on $\mathcal{Y}$. In particular, for $(\mathcal{X},g)$, $p$ induces a morphism of metric spaces $(Y,d_Y)\rightarrow (X,d_X)$ such that $d_Y(a,b)\geq d_X(p(a),p(b))$.

For a Galois covering $p:\mathcal{Y}\rightarrow \mathcal{X}$, $d_X$ and $d_Y$ are nicely related:

\begin{prop}\label{isom}
Let $p$ be as above and $\mathrm{Gal}(p)$ be the Galois group of $p$. Then $X=Y/\mathrm{Gal}(p)$ and $d_X$ is the quotient metric of $d_Y$ by $\mathrm{Gal}(p)$.
\end{prop}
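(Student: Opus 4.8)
The plan is to establish the two assertions separately, exploiting that a Galois (i.e.\ regular) covering $p\colon\mathcal{Y}\to\mathcal{X}$ is a strong map whose deck group $\mathrm{Gal}(p)$ acts by orbifold isomorphisms, and that $p$ realizes $\mathcal{X}$ as the quotient $\mathcal{Y}/\mathrm{Gal}(p)$ at the level of underlying spaces. The identification $X=Y/\mathrm{Gal}(p)$ on underlying topological spaces is essentially formal from the covering theory developed above: by \cref{Galois} the subgroup corresponding to $p$ is normal, the deck group acts on $|\mathcal{Y}|$ with quotient homeomorphic to $|\mathcal{X}|$, and this homeomorphism intertwines $p$ with the quotient projection $\pi\colon Y\to Y/\mathrm{Gal}(p)$. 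So the real content is the metric statement: writing $d'$ for the quotient metric on $Y/\mathrm{Gal}(p)$, namely $d'(\pi a,\pi b)=\inf_{\gamma,\gamma'\in\mathrm{Gal}(p)}d_Y(\gamma a,\gamma' b)=\inf_{\gamma\in\mathrm{Gal}(p)}d_Y(a,\gamma b)$ (the second equality holding because $\mathrm{Gal}(p)$ acts by isometries of $(Y,d_Y)$, which follows since it acts by orbifold isomorphisms preserving $p^\ast g$), we must show $d'=d_X\circ(\pi\times\pi)$ under the identification $X\cong Y/\mathrm{Gal}(p)$.

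First I would prove $d_X(p(a),p(b))\le d'(\pi a,\pi b)$, i.e.\ $d_X(p(a),p(b))\le d_Y(a,\gamma b)$ for every $\gamma$. This is immediate from the already-observed inequality $d_Y(u,v)\ge d_X(p(u),p(v))$ applied to $u=a$, $v=\gamma b$, together with $p(\gamma b)=p(b)$. For the reverse inequality $d_X(p(a),p(b))\ge d'(\pi a,\pi b)$, the idea is path-lifting: given $\varepsilon>0$, choose a curve $c\colon[0,1]\to X$ from $p(a)$ to $p(b)$ with $\mathrm{L}_{d_X}(c)\le d_X(p(a),p(b))+\varepsilon$. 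Using that elementary neighbourhoods form a basis of the topology (Corollary after the elementary-neighbourhood proposition) and a compactness/partition argument as in the construction of $\mathrm{L}_g$, one lifts $c$ to a curve $\tilde c\colon[0,1]\to Y$ with $\tilde c(0)=a$; since $p$ is locally modeled on quotients by subgroups and the lifted model map is the identity, the local lifts are local isometries, so $\mathrm{L}_{d_Y}(\tilde c)=\mathrm{L}_{d_X}(c)$ (here one should pass through $\mathrm{L}_g$ and $\mathrm{L}_{p^\ast g}$, for which lengths are computed chartwise and $p$ pulls charts back isometrically). The endpoint $\tilde c(1)$ is some preimage of $p(b)$, hence of the form $\gamma b$ for a suitable $\gamma\in\mathrm{Gal}(p)$ by transitivity of the deck action on fibres over a fixed point of $|\mathcal{X}|$ (which holds for regular points and extends to all points by the quotient description $X=Y/\mathrm{Gal}(p)$). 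Then $d_Y(a,\gamma b)\le\mathrm{L}_{d_Y}(\tilde c)=\mathrm{L}_{d_X}(c)\le d_X(p(a),p(b))+\varepsilon$, so $d'(\pi a,\pi b)\le d_X(p(a),p(b))+\varepsilon$; letting $\varepsilon\to0$ finishes it.

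The main obstacle is the path-lifting step and, more precisely, the claim that the lift has the same length as the original curve. One has to be a little careful because lengths of curves in an orbifold are defined via an infimum over all systems of local lifts (\cref{metriclength} notwithstanding, the orbifold length $\mathrm{L}_g$ is genuinely an infimum), so I would argue that any admissible system of local lifts of $c$ downstairs lifts, via the local identity models of $p$ over elementary neighbourhoods, to an admissible system for $\tilde c$ upstairs of equal total length, and conversely; this gives $\mathrm{L}_{p^\ast g}(\tilde c)=\mathrm{L}_g(c)$. The remaining subtlety is ensuring the chosen lift $\tilde c$ actually exists globally with prescribed initial point $a$ — this is where one invokes the covering-space path lifting for orbifold covers (available from Thurston's theory, equivalently from \cref{lift} applied chartwise together with the elementary-neighbourhood basis), noting that $a\in Y_\mathrm{reg}$ may be assumed since $Y_\mathrm{reg}$ is dense and both metrics are continuous, or handled directly since over an elementary neighbourhood the lift is canonically determined by the choice of component.
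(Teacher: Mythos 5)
Your argument is correct in outline, but note that the paper does not actually prove this proposition: it simply refers to \cite[Lemma 2.8]{Lange20}, so your proposal is a self-contained substitute for an outsourced proof rather than a variant of an argument in the text. What you do is the standard two-inequality scheme: the easy direction from the already-recorded fact that $p$ does not increase distances, and the hard direction by lifting an almost-minimizing curve with length control and using transitivity of $\mathrm{Gal}(p)$ on fibres. Two points deserve slightly more care than you give them. First, the claim $\mathrm{L}_{p^{\ast}g}(\tilde c)=\mathrm{L}_g(c)$ for a lift fixed in advance is stronger than what the chart-by-chart argument yields (lifts with prescribed initial point are not unique, as the paper's reflection example shows); the robust formulation is that for \emph{each} admissible system of local lifts of $c$ one can adjust the local pieces by the chart groups so that their projections concatenate to \emph{some} lift starting at $a$ whose upstairs length is bounded by the total length of that system, and then one takes the infimum over systems at the end — this is all the inequality $\inf_{\gamma}d_Y(a,\gamma b)\le \mathrm{L}_g(c)+\varepsilon$ needs, since the resulting endpoint may vary with the system. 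Second, transitivity of $\mathrm{Gal}(p)$ on the fibre over $p(b)$ when $p(b)$ is singular is not automatic from the regular-fibre statement; it is cleanest to get the topological identification $X=Y/\mathrm{Gal}(p)$ first (e.g.\ from \cref{Galois}: writing $\mathcal{Y}=\tilde{\mathcal{X}}/N$ for a normal subgroup $N\trianglelefteq\pi_1^{\mathrm{orb}}(\mathcal{X})$ and $\mathrm{Gal}(p)=\pi_1^{\mathrm{orb}}(\mathcal{X})/N$, so that $Y/\mathrm{Gal}(p)=\tilde X/\pi_1^{\mathrm{orb}}(\mathcal{X})=X$), and only then invoke transitivity on all fibres, which is the order you indicate; alternatively one can push the endpoint into $Y_{\mathrm{reg}}$ by a density and continuity argument as you suggest. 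With these refinements your proof is complete, and it has the merit of making explicit the lifting-with-length-control mechanism that the citation to \cite{Lange20} leaves hidden.
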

\noindent For the proof, see \cite[Lemma 2.8]{Lange20}.

One sees from this if $d_Y=d_X$, then $p:X\rightarrow Y$ is a covering map in the category of topological spaces.
 
\medskip
For our purpose, we state the following lemma.
\begin{lemma}
Let $(\mathcal{X},g)$ be a Riemannian orbifold and $p:{\mathcal{Y}}\rightarrow \mathcal{X}$ a covering of $\mathcal{X}$. If $(X,d_X)$ is complete, 
$(Y,d_Y)$ is complete.
\end{lemma}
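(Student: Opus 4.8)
The plan is to argue with Cauchy sequences, exploiting that a covering map is $1$-Lipschitz for the pulled-back metric together with the local structure of an elementary neighbourhood. Since $p$ is strong, $p^{\ast}g$ makes $\mathcal{Y}$ a Riemannian orbifold, so the length functional, the distance $d_Y$ and \cref{g1} apply to $\mathcal{Y}$ and to every open sub-orbifold of it; moreover for an open sub-orbifold $W$ of $\mathcal{Y}$ one has $d_Y\le d_W$ on $|W|$, since a curve in $|W|$ together with its local liftings is also admissible for computing $d_Y$, and likewise the chart projection of an orbifold chart does not increase distances.

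Let $(y_n)$ be a Cauchy sequence in $(Y,d_Y)$. From $d_X(p(a),p(b))\le d_Y(a,b)$ the sequence $x_n:=p(y_n)$ is Cauchy in $(X,d_X)$, hence $x_n\to x$ for some $x\in X$. Choose a fundamental chart $(\tilde U,G_x)$ at $x$ with $\tilde U$ simply connected; then $U:=\tilde U/G_x$ is an elementary neighbourhood, so $p^{-1}(U)=\bigsqcup_i W_i$ with $W_i\cong\tilde U/G_i$ for subgroups $G_i\le G_x$, and each $p|_{W_i}$ is the natural quotient map, covered by $\mathrm{id}_{\tilde U}$ on charts. Fix $r>0$ with $B_X(x,2r)\subset U$ and $N$ with $x_n\in B_X(x,r)$ and $d_Y(y_n,y_m)<r$ for all $n,m\ge N$. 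The crucial point is that $(y_n)_{n\ge N}$ lies in a single sheet $W:=W_{i_0}$: for $n,m\ge N$ pick a curve $\gamma$ in $\mathcal{Y}$ from $y_n$ to $y_m$ of length $<r$; then $p\circ\gamma$ has length $<r$ and starts at $x_n\in B_X(x,r)$, so its image lies in $B_X(x,2r)\subset U$, whence $\gamma$ is a curve in $p^{-1}(U)=\bigsqcup_i W_i$ and, being connected, remains in the (open) component containing $y_n$; thus $y_m$ lies there as well.

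It remains to see that $(y_n)_{n\ge N}$ converges in $W\cong\tilde U/G_{i_0}$. Let $q:\tilde U\to\tilde U/G_x=U$ be the quotient map, which is open and proper because $G_x$ is finite; since $p|_W$ is covered by $\mathrm{id}_{\tilde U}$, the fibre $q^{-1}(x_n)$ is a single $G_x$-orbit that contains the $G_{i_0}$-orbit representing $y_n$, so we may choose $\tilde x_n\in q^{-1}(x_n)$ whose class in $|W|$ equals $y_n$. As $q(\tilde x_n)=x_n\to x$ and $q$ is proper, the $\tilde x_n$ eventually lie in a fixed compact subset of $\tilde U$; after passing to a subsequence, $\tilde x_{n_k}\to\tilde x_\infty$ in $\tilde U$, hence $y_{n_k}\to[\tilde x_\infty]$ in $|W|$ for the quotient topology, which by \cref{g1} applied to $W$ coincides with the $d_W$-metric topology. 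Therefore $d_Y(y_{n_k},[\tilde x_\infty])\le d_W(y_{n_k},[\tilde x_\infty])\to 0$, and since a Cauchy sequence with a convergent subsequence converges, $y_n\to[\tilde x_\infty]$. Hence $(Y,d_Y)$ is complete.

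The step I expect to be the main obstacle is the ``single sheet'' claim, and it is precisely there that the Cauchy hypothesis is indispensable: it forces the connecting curves to be short, hence to project into the evenly covered set $U$, which is what prevents the lifts $y_n$ from hopping between the components of $p^{-1}(U)$ — without this the lifted sequence need not converge even though its image does. A secondary point to handle with care is keeping the three distances $d_{\tilde U}$, $d_W$ and $d_Y$ (restricted to $|W|$) ordered in the right direction and using \cref{g1} on the chart orbifold $W$ to match its metric and quotient topologies.
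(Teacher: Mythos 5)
Your argument is correct and takes essentially the same route as the paper's proof: both project the Cauchy sequence to $X$, use completeness there to localize in an elementary neighbourhood over the limit, lift to the chart $\tilde U$, and extract a convergent subsequence of lifts using finiteness of the local group before invoking the Cauchy property to get full convergence. The only difference is that you explicitly justify the step the paper compresses into ``we may assume $\{y_n\}$ lies in a compact subset of a component of $p^{-1}(U)$'' (your single-sheet claim via short connecting curves, plus properness of the finite quotient), which is a useful elaboration rather than a different approach.
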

\begin{proof}
Let $\{y_n\}$ be a Cauchy sequence in $(Y,d_Y)$. We may assume that $\{y_n\}$ lies in a compact subset $K$ of a component $V$ of the pre-image $p^{-1}(U)$ of an elementary neighborhood $U=[\tilde{U}/G]$. Hence there exists $H\leqslant G$ such that $V=[\tilde{U}/H]$. Let $U\ni x_n=p(y_n)$ be the image of $y_n$ under $p$. We know that $x_n\in p(K)$ and as $p$ does not increase distances, $\{x_n\}$ is a Cauchy sequence with limit $\lim\limits_{n\to\infty}x_n=x\in p(K) \subset U$. Take $\tilde{x}\in \tilde{U}$ be a lifting of $x$. Then there exist $\tilde{x}_n$ lifting $x_n$ such that $\tilde{x}_n \to \tilde{x}$. Note for each $n$ there exists $g_n \in G$ such that $g_n \tilde{x}_n$ lifts $y_n$. As $G$ is finite, there exists $g\in G$ that appears infinitely many times in $\{g_n\}$. Hence there is a sub-sequence $g\cdot \tilde{x}_{n_i}\to g\cdot \tilde{x}$. If $y$ is the 
image of $g\cdot\tilde{x}$ in $V$, we finally have $y_n\to y$.
\end{proof}

By \cref{lift}, we see that for any covering $p:\mathcal{X}'\rightarrow \mathcal{X}$ and path $c:[0,1] \rightarrow \mathcal{X}$ with $c(0)=x\in X$ and $x'\in p^{-1}(x),$ there exists lifting $c':[0,1]\rightarrow \mathcal{X}'$ starts from $x'$. However, this will not be unique. Consider $(\mathbb{R}^2,\mathbb{Z}/2\mathbb{Z})$ with the action $(x,y)\rightarrow (-x,y)$. The path $t\rightarrow (t,0)$ has lifting $t\rightarrow (t,0)$ and $t\rightarrow (-t,0)$ in $\mathbb{R}^2$. If $(\tilde{U},G_{x'})\rightarrow (\tilde{U},G_x)$ is a local lifting of $p$, and $c$ is a geodesic, then it has at least $[G_x:G_{x'}]$ lifting. 

\subsection{Dirichlet domains and generalized Margulis lemma}
Let $\pi:\mathcal{X}'\rightarrow\mathcal{X}$ be the universal covering of 
$(\mathcal{X},g)$ and $\Gamma$ be its Galois group. We endow $\mathcal{X}'$ with 
the pullback metric $p^{\ast}(g)$. Then $(X',d')$ is a complete length space and $\Gamma$ acts on $X'$ such that $d'(\gamma \cdot y,\gamma\cdot z)\leq 
d'(y,z)$. Hence $\Gamma$ acts by isometries on $X'$. Suppose now we have $\diam(X)\leq 1$. Let $x_0\in \pi^{-1}(X_\mathrm{reg})$. We now show some basic properties of the Dirichlet domain of $\Gamma$ based at $x_0$:
\begin{center}
$F:=\{p\in X':\mathrm{d'}(x_0,p)\leq \mathrm{d'}(\gamma\cdot x_0,p)$ for all $\gamma\in\Gamma\}$.
\end{center}

\begin{lemma}
$\diam(F)\leq 2$.
\end{lemma}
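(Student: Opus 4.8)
The claim is that the Dirichlet domain $F$ based at $x_0$ has diameter at most $2$, under the running assumption $\diam(X) \le 1$. The plan is to bound the distance between two arbitrary points of $F$ by passing through the basepoint $x_0$ and exploiting that $X = X'/\Gamma$ with $d_X$ the quotient metric (\cref{isom}), so that every point of $X'$ is within distance $\diam(X) \le 1$ of \emph{some} $\Gamma$-translate of $x_0$; the defining inequality of $F$ then upgrades this to: every point of $F$ is within distance $1$ of $x_0$ itself.

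**Key steps.**

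First I would fix $p \in F$ and show $d'(x_0, p) \le 1$. Since $\pi$ does not increase distances and $d_X$ is the quotient metric, we have $d_X(\pi(x_0), \pi(p)) = \inf_{\gamma \in \Gamma} d'(\gamma \cdot x_0, p)$; as $\diam(X) \le 1$, the left-hand side is $\le 1$, so there is a sequence $\gamma_n \in \Gamma$ with $d'(\gamma_n \cdot x_0, p) \to d_X(\pi(x_0),\pi(p)) \le 1$. (If one prefers to avoid the infimum-not-attained subtlety, one takes $\gamma_n$ with $d'(\gamma_n \cdot x_0, p) \le 1 + \tfrac1n$.) By the defining property of the Dirichlet domain, $d'(x_0, p) \le d'(\gamma_n \cdot x_0, p)$ for every $n$, hence $d'(x_0, p) \le 1$. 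Second, for any two points $p, q \in F$, the triangle inequality gives $d'(p,q) \le d'(p, x_0) + d'(x_0, q) \le 1 + 1 = 2$. Taking the supremum over $p, q \in F$ yields $\diam(F) \le 2$.

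**Caveats and the main obstacle.**

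The only genuine subtlety is ensuring that the passage from $\diam(X) \le 1$ to "some translate of $x_0$ is within distance $1$ of $p$" is legitimate, i.e. that the quotient-metric description of $d_X$ applies here. This is exactly the content of \cref{isom} applied to the universal covering $\pi : \mathcal{X}' \to \mathcal{X}$, which is Galois with group $\Gamma$; one should also invoke that $(X', d')$ is complete (shown just above in the excerpt, since $(X, d_X)$ is complete — being compact with $\diam \le 1$ — and completeness lifts to coverings), though completeness is not strictly needed for this particular bound. A second minor point is that $x_0 \in \pi^{-1}(X_{\mathrm{reg}})$, which we use only implicitly to make $x_0$ an honest point at which the Dirichlet domain is defined; the argument does not otherwise care. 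I expect no real obstacle: the proof is a two-line triangle-inequality argument once the quotient-metric identity is in hand, and that identity is already established. I would therefore write the proof in essentially the form above, citing \cref{isom} for the quotient metric and keeping the $\varepsilon$- or sequence-argument explicit only to the extent needed for rigor.
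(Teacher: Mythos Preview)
Your proof is correct and reaches the same intermediate conclusion as the paper---namely that $d'(x_0,p)\le 1$ for every $p\in F$, after which the triangle inequality gives $\diam(F)\le 2$---but it arrives there by a different route. The paper argues geometrically: it takes a segment $c'$ from $x_0$ to $p$ in $X'$, projects it to $c=\pi\circ c'$, and shows by contradiction that $c$ must already be a segment in $X$ (otherwise a shorter segment in $X$ lifts to a path from $x_0$ to some $\gamma\cdot p$ of length strictly less than $d'(x_0,p)$, giving $d'(\gamma^{-1}x_0,p)<d'(x_0,p)$ and contradicting $p\in F$); hence $d'(x_0,p)=\mathrm{L}(c)\le\diam(X)\le 1$. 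You instead invoke \cref{isom} to identify $d_X(\pi(x_0),\pi(p))=\inf_{\gamma\in\Gamma}d'(\gamma\cdot x_0,p)$ and then observe that the Dirichlet condition forces this infimum to equal $d'(x_0,p)$. Your argument is shorter and purely metric---it needs neither completeness of $X'$ nor the existence or lifting of geodesics---whereas the paper's version is more self-contained in that it does not appeal to the quotient-metric proposition and instead re-derives the relevant inequality via an explicit lifting construction. Both are perfectly valid; yours is the cleaner of the two.
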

\begin{proof}
Suppose $p\in F$. As $X'$ is complete, there is a segment $c':[0,1]\rightarrow X'$ joining $x_0$ and $p$ such that $\mathrm{L}_d(c')=\mathrm{d'}(x_0,p)$. In particular, $c'$ is a geodesic. We consider $c:=\pi \circ c'$, which joins $\pi(x_0)$ and $\pi(p)$. If $c$ is not a segment, then there is a $c_1$ which is a segment that joins $\pi(x_0)$ and $\pi(p)$. We have $\mathrm{d}_g(\pi(x_0),\pi(p))<\mathrm{L_d}(c)=\mathrm{L}_g(c)$. As $c_1$ is a geodesic, it has a lifting $c'_1$ starting at $x_0$. As $\pi\circ c'_1=c_1\neq c=\pi\circ c'$, we have $c'_1\neq c'$. Let $\gamma\cdot p=c'_1(1)$. Then 
\begin{equation*}
\mathrm{d'}(x_0,\gamma p)\leq \mathrm{L_d}(c'_1)=\mathrm{L_d}(c_1)<\mathrm{L_d}(c)=\mathrm{d'}(x_0,p). 
\end{equation*}
Thus $c$ is also a segment and $\mathrm{L}(c')=\mathrm{L}(c)\leq 1$. Thus $\diam(F)\leq 2$.
\end{proof}

We consider the subset of $\Gamma$ defined by 
\begin{center}
$S:=\{\gamma\in \Gamma:\mathrm{d}(\gamma\cdot x_0,x_0)\leq 3\}$.
\end{center}

One sees easily that $S$ is symmetric and contains $1$. We have
\begin{lemma}
\label{l1}
$S$ generates $\Gamma$.
\end{lemma}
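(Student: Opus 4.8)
The plan is to adapt the classical argument showing that the set of short elements generates the deck group of the universal cover, using the Dirichlet domain $F$ to locate where a given $\gamma \in \Gamma$ sends $x_0$. First I would fix $\gamma \in \Gamma$ and pick a segment $c' : [0,1] \to X'$ from $x_0$ to $\gamma \cdot x_0$; such a segment exists because $(X', d')$ is complete and is a length space (so Hopf--Rinow applies). The idea is to walk along $c'$ and decompose it according to which translates $\gamma_j F$ of the Dirichlet domain it passes through: since $\bigcup_{\eta \in \Gamma} \eta \cdot F = X'$ (the Dirichlet domain covers, as $\Gamma$ acts cocompactly on the complete space $X'$ and $\diam(X) \le 1$ forces every point of $X'$ to be within distance $1$ of some orbit point), I can choose $0 = s_0 < s_1 < \dots < s_m = 1$ and elements $1 = \gamma_0, \gamma_1, \dots, \gamma_m = \gamma$ in $\Gamma$ so that $c'(s_j)$ lies in both $\gamma_{j-1}\cdot F$ and $\gamma_j \cdot F$ (consecutive translates meet along the pieces), with the length of each subarc $c'|_{[s_{j-1},s_j]}$ small — say at most $\tfrac12$, which is possible by uniform continuity of $c'$.

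Next I would estimate $d'(\gamma_{j-1}\cdot x_0, \gamma_j \cdot x_0)$. Writing $q_j = c'(s_j)$, we have $q_j \in \gamma_{j-1}\cdot F$, hence $d'(\gamma_{j-1}\cdot x_0, q_j) \le \diam(F) \le 2$ by the previous lemma; similarly $q_j \in \gamma_j \cdot F$ gives $d'(\gamma_j \cdot x_0, q_j) \le 2$. Actually I want the sharper bound coming from the defining inequality of the Dirichlet domain: since $q_j \in \gamma_{j-1} \cdot F$ means $\gamma_{j-1}^{-1} q_j \in F$, so $d'(x_0, \gamma_{j-1}^{-1} q_j) \le d'(\eta x_0, \gamma_{j-1}^{-1} q_j)$ for all $\eta$, and a careful bookkeeping (using that $q_{j-1}$ and $q_j$ are both within $\tfrac12$ of each other along $c'$, and $q_{j-1} \in \gamma_{j-1}\cdot F$ as well) yields $d'(\gamma_{j-1}\cdot x_0, \gamma_j \cdot x_0) \le d'(\gamma_{j-1}\cdot x_0, q_{j-1}) + d'(q_{j-1}, q_j) + d'(q_j, \gamma_j \cdot x_0) \le 1 + \tfrac12 + 1 \le 3$, using $\diam(F) \le 2$ but picking $q_{j-1}$ in the common boundary so that it is within distance $1$ of $\gamma_{j-1}\cdot x_0$ via the Dirichlet inequality. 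Translating by $\gamma_{j-1}^{-1}$ (an isometry), this says $d'(x_0, \gamma_{j-1}^{-1}\gamma_j \cdot x_0) \le 3$, i.e. $\gamma_{j-1}^{-1}\gamma_j \in S$. Therefore $\gamma = \gamma_0^{-1}\gamma_m = \prod_{j=1}^m (\gamma_{j-1}^{-1}\gamma_j)$ is a product of elements of $S$, which proves $S$ generates $\Gamma$.

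The main obstacle I anticipate is making the distance bookkeeping in the middle step genuinely rigorous rather than heuristic: the crude triangle inequality with $\diam(F) \le 2$ would only give a bound of $2 + 2 + \tfrac12$ between $\gamma_{j-1}\cdot x_0$ and $\gamma_j \cdot x_0$ via a point on the shared boundary, which exceeds $3$. One must instead exploit that a boundary point $q$ of $F$ lying on the segment $c'$ from $x_0$ satisfies $d'(x_0, q) = d'(\gamma_{j-1}\cdot x_0, q)$ more tightly — in fact, since $c'$ is a \emph{minimal} geodesic from $x_0 = \gamma_0 \cdot x_0$ and the point $q_j$ equidistant between the orbit points $\gamma_{j-1}\cdot x_0$ and $\gamma_j \cdot x_0$ sits on it, the relevant distances are controlled by the arclength parameter, and $d'(x_0, \gamma \cdot x_0) \le \diam(X) \cdot(\text{something}) $ is not directly bounded, so one genuinely needs the chain decomposition with small steps. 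The cleanest route is: choose the partition fine enough that $d'(q_{j-1}, q_j) \le \tfrac12$; observe $d'(\gamma_{j-1}\cdot x_0, q_{j-1}) = d'(x_0, \gamma_{j-1}^{-1} q_{j-1}) \le d'(x_0, q_{j-1}) $ is \emph{not} what we want either — rather, because $q_{j-1} \in F$ after translating and $q_{j-1}$ lies on a segment from $x_0$, one has $d'(x_0, q_{j-1}) \le \mathrm{L}_d(c') = d'(x_0, \gamma \cdot x_0)$, which is unbounded a priori. So the correct and standard fix is the one from Milnor--Švarc: take the $q_j$ to be the points where $c'$ exits successive translates of $F$, note $d'(\gamma_{j-1}\cdot x_0, q_j) \le \diam(\overline{F}) \le 2$ and $d'(\gamma_j \cdot x_0, q_j) \le 2$, but additionally that the \emph{portion of $c'$ inside $\gamma_{j-1}\cdot F$} has length at most $\diam(F) \le 2$, so consecutive $q_j$ are within $2$, giving $d'(\gamma_{j-1}\cdot x_0, \gamma_j\cdot x_0) \le 2 + 2 = 4$ — still too big for the constant $3$. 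I therefore expect the actual proof in the paper uses a slightly different constant or a Dirichlet-domain-specific inequality (e.g. that $q_j$ on $\partial F$ satisfies $d'(x_0, q_j) = d'(\gamma_{j-1}\cdot x_0, q_j)$ combined with $\diam(F/\Gamma\text{-reach}) \le 1$), and the crux of writing this cleanly is to extract from "$\diam(X) \le 1$'' the statement that $F$ has $x_0$ within distance $1$ of every $\Gamma$-orbit, hence the half-distance points are within $1$ of $\gamma_{j-1}\cdot x_0$, yielding the bound $1 + \tfrac12 + 1 < 3$. Pinning down exactly this inequality is the step I would spend the most care on.
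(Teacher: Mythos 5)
Your proposal follows essentially the same route as the paper: take a segment from $x_0$ to $\gamma\cdot x_0$, chain together the finitely many translates $\gamma_j\cdot F$ it meets, bound $d'(\gamma_{j-1}\cdot x_0,\gamma_j\cdot x_0)$, and telescope. The inequality you were unsure how to pin down is exactly what the paper's proof of $\diam(F)\leq 2$ already establishes, namely $F\subset \bar{B}(x_0,1)$ (a segment from $x_0$ to any $p\in F$ projects to a segment in $X$ of length at most $\diam(X)\leq 1$), so a point in $\gamma_{j-1}\cdot F\cap\gamma_j\cdot F$ gives $d'(\gamma_{j-1}\cdot x_0,\gamma_j\cdot x_0)\leq 1+1=2\leq 3$ directly, with no need for a fine subdivision of the segment.
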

\begin{proof}
One has $\cup\gamma\cdot F=X'$. We take a segment $c:[0,1]\rightarrow X'$ that joins $x_0,\gamma_0\cdot x_0$. As $c(I)$ is compact, $c(I)$ is contained in a ball $B_r(x_0)$ that meets only finitely many translates $\gamma\cdot F$ of $F$. Hence $c$ passes through finitely many $\gamma\cdot F$. We list these elements by $1=\gamma_1,\ldots,\gamma_k=\gamma_0$, ordered by the time when $c$ enters $\gamma_i$, we note they are not necessarily different. Then $\gamma_i\cdot F\cap \gamma_{i+1}\cdot F\neq \emptyset$ and we thus have $\mathrm{d'}(\gamma_i x_0,\gamma_{i+1} x_0)\leq 2$. Finally we remark that
\[\gamma_0=\gamma_k=\prod_{i=1}^{k-1}
(\gamma_{i+1}\cdot\gamma_i^{-1})\in S^{k-1}.\]
\end{proof}

\begin{lemma}
\label{l2}
Let $r>0$ be an integer. $B(x_0,r)\subset S^r\cdot F\subset B(x_0,3r+2)$.

\end{lemma}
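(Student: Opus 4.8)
The plan is to prove the two inclusions $B(x_0,r)\subset S^r\cdot F$ and $S^r\cdot F\subset B(x_0,3r+2)$ separately, using the two facts established just above: that $\diam(F)\leq 2$ and that every $\gamma\in S$ satisfies $\mathrm{d}'(\gamma\cdot x_0,x_0)\leq 3$, together with the fact (from the proof of \cref{l1}) that consecutive translates $\gamma_i\cdot F$, $\gamma_{i+1}\cdot F$ met by a segment from $x_0$ satisfy $\mathrm{d}'(\gamma_i x_0,\gamma_{i+1}x_0)\leq 2$, so in particular $\gamma_{i+1}\gamma_i^{-1}\in S$.

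For the second inclusion $S^r\cdot F\subset B(x_0,3r+2)$: take $\gamma=s_1\cdots s_r$ with each $s_j\in S$ (padding with the identity if fewer than $r$ factors are needed, which is legitimate since $1\in S$), and take $p\in F$, so $\gamma\cdot p\in\gamma\cdot F$. I estimate $\mathrm{d}'(x_0,\gamma\cdot p)$ by the triangle inequality along the chain $x_0,\ s_1\cdot x_0,\ s_1 s_2\cdot x_0,\ \dots,\ s_1\cdots s_r\cdot x_0,\ \gamma\cdot p$. Since $\Gamma$ acts by isometries, $\mathrm{d}'(s_1\cdots s_{j-1}\cdot x_0,\ s_1\cdots s_j\cdot x_0)=\mathrm{d}'(x_0,s_j\cdot x_0)\leq 3$, contributing at most $3r$ in total; the final step from $\gamma\cdot x_0$ to $\gamma\cdot p$ has length $\mathrm{d}'(\gamma\cdot x_0,\gamma\cdot p)=\mathrm{d}'(x_0,p)\leq\diam(F)\leq 2$ since $x_0,p\in F$. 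Hence $\mathrm{d}'(x_0,\gamma\cdot p)\leq 3r+2$.

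For the first inclusion $B(x_0,r)\subset S^r\cdot F$: let $p\in B(x_0,r)$, i.e. $\mathrm{d}'(x_0,p)<r$. Choose $\gamma_0\in\Gamma$ with $p\in\gamma_0\cdot F$ (possible since the translates of $F$ cover $X'$). Take a minimising segment $c:[0,1]\to X'$ from $x_0$ to $p$; as in the proof of \cref{l1}, $c$ passes through finitely many translates $\gamma_1\cdot F,\dots,\gamma_k\cdot F$ with $\gamma_1=1$ and $\gamma_k=\gamma_0$, ordered by entry time, and $\gamma_{i+1}\gamma_i^{-1}\in S$ for each $i$. The point I need to control is $k$: I claim $k-1\leq \lceil r\rceil\leq$ (roughly) $r$, because passing from one translate to the next costs at least some definite amount of length along $c$... and here is the subtlety. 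Actually the cleaner route, and the one I would take: let $x_0=q_0$ and for $1\le i\le k-1$ let $q_i$ be a point of $c$ lying in $\gamma_i\cdot F\cap\gamma_{i+1}\cdot F$, and $q_k=p$; then $\mathrm{d}'(x_0,p)=\mathrm{L}_d(c)\geq\sum_{i}\mathrm{d}'(q_{i-1},q_i)$. This does not immediately bound $k$ unless consecutive translates are genuinely distinct with a uniform gap, which we do not have. So instead I bound the word length directly: $\gamma_0=\prod_{i=1}^{k-1}(\gamma_{i+1}\gamma_i^{-1})$, and I want to rewrite this as a product of at most $r$ elements of $S$. The honest fix is to reparametrise: only record the times when $c$ enters a \emph{new} translate, i.e. pass to the subsequence of distinct consecutive values; between two successive recorded translates $\gamma_{i_j}\cdot F$ and $\gamma_{i_{j+1}}\cdot F$ the segment $c$ travels a distance $\geq$ something positive — but without a Margulis-type lower bound this still is not automatic.

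The main obstacle, then, is precisely the first inclusion and specifically bounding the number of translates of $F$ needed to reach a point at distance $<r$. I expect the intended argument to be: if $p\in B(x_0,r)$, cover the segment $c$ from $x_0$ to $p$ by the translates it meets; choosing the $q_i\in\gamma_i F\cap\gamma_{i+1}F$ as the points where $c$ crosses from one translate into the next and discarding repetitions, one gets points $x_0=q_0,q_1,\dots,q_m=p$ on $c$ with $q_{i-1}\neq q_i$ and $\mathrm{d}'(q_{i-1},q_i)$ summing to $\mathrm{L}_d(c)=\mathrm{d}'(x_0,p)<r$; but since each $q_i$ and $q_{i-1}$ lie in a common translate $\gamma\cdot F$ one only gets $\mathrm{d}'(q_{i-1},q_i)\le\diam F\le 2$, which bounds the summands above, not below. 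Rereading, I believe the correct and simplest statement to prove is with $q_i$ chosen so that $q_i\in c([0,1])$ and the transition $\gamma_i\to\gamma_{i+1}$ is recorded only when $\gamma_{i+1}\neq\gamma_i$, and then one uses that $p\in\gamma_0 F$ with $\mathrm{d}'(x_0,p)<r$ forces $\mathrm{d}'(x_0,\gamma_0\cdot x_0)\le\mathrm{d}'(x_0,p)+\mathrm{d}'(p,\gamma_0 x_0)<r+2$ — but this bounds $\mathrm{d}'(x_0,\gamma_0 x_0)$, not the word length of $\gamma_0$ in $S$. I would therefore structure the write-up to first prove $S^{r}\cdot F\subset B(x_0,3r+2)$ cleanly as above, and then handle $B(x_0,r)\subset S^r\cdot F$ by induction on $r$: a point of $B(x_0,r)$ lies on a segment from $x_0$, and truncating that segment at the moment it would leave $B(x_0,r-1)$ reduces to finding one element $s\in S$ with $p\in s\cdot(S^{r-1}\cdot F)$, using that the translate of $F$ containing the truncation point is $s\cdot(\text{translate reached in } B(x_0,r-1))$ for some $s\in S$ by the consecutive-translates estimate — the induction base $r=0$ being $F=F$, and the inductive step being exactly the "one more generator per unit of radius" bookkeeping. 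This induction is where I would spend the care; the rest is the triangle inequality.
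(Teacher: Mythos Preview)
Your argument for the inclusion $S^r\cdot F\subset B(x_0,3r+2)$ is correct and is exactly the paper's: triangle inequality along the chain $x_0,s_1x_0,\dots,s_1\cdots s_r x_0,\gamma p$, using $\mathrm{d}'(x_0,s_j x_0)\le 3$ and $\mathrm{d}'(x_0,p)\le 2$.

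For the other inclusion you are overcomplicating matters. The approach you try first and then abandon --- pick $\gamma_0$ with $p\in\gamma_0\cdot F$ and bound $\mathrm{d}'(x_0,\gamma_0 x_0)$ --- \emph{is} precisely what the paper does: it gets $\mathrm{d}'(x_0,\gamma_0 x_0)\le r+1$ (using the sharper fact, actually proved in the $\diam(F)\le 2$ lemma, that every point of $F$ lies within distance $1$ of $x_0$) and then simply writes ``Hence $\gamma_0\in S^r$.'' You are right that this last implication is not immediate --- the paper does not justify it either --- so your instinct was correct, but your proposed fix via tracking every translate the segment meets is the wrong route: as you noticed, there is no lower bound on how long the segment stays in each translate, so the count is uncontrolled.

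The clean way to close the gap (and what your induction idea amounts to once unrolled) is this: since $r$ is an integer and $\mathrm{d}'(x_0,p)\le r$, choose points $x_0=p_0,p_1,\dots,p_r=p$ along the minimising segment with $\mathrm{d}'(p_{i-1},p_i)\le 1$. For each $i$ pick $\eta_i\in\Gamma$ with $p_i\in\eta_i\cdot F$ (taking $\eta_0=1$). Because $F\subset\bar B(x_0,1)$,
\[
\mathrm{d}'(\eta_{i-1}x_0,\eta_i x_0)\le \mathrm{d}'(\eta_{i-1}x_0,p_{i-1})+\mathrm{d}'(p_{i-1},p_i)+\mathrm{d}'(p_i,\eta_i x_0)\le 1+1+1=3,
\]
so $\eta_i\eta_{i-1}^{-1}\in S$, and $\eta_r=\prod_{i=1}^r(\eta_i\eta_{i-1}^{-1})\in S^r$ with $p\in\eta_r\cdot F$. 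This is the ``one generator per unit of radius'' bookkeeping you were reaching for; it avoids entirely the issue of how many translates the segment crosses.
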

\begin{proof}
If $p\in S^r\cdot F$, $p=\gamma_1...\gamma_r\cdot q$ with $q\in F$ and $\gamma_i\in S$. It yields
\[d'(x_0,p)\leq d'(\gamma_1\cdots\gamma_r\cdot x_0,x_0)+d'(\gamma_1\cdots\gamma_r\cdot x_0,\gamma_1\cdots\gamma_r\cdot q)\leq 2+3r.\]
If $p\in B(x_0,r)$, as $\cup\gamma\cdot F=X'$, there exists $\gamma_0\in \Gamma$ such that $\gamma_0^{-1}\cdot p\in F$. Then $d'(x_0,\gamma\cdot x_0)\leq d'(x_0,p)+d'(\gamma_0\cdot x_0,p)\leq r+1$. Hence $\gamma_0\in S^r$.
\end{proof}

For any $\gamma\neq 1$, $\gamma \cdot F\cap F\subset \partial F$ and we have $\partial F=\cup_{\gamma\neq 1}(\gamma \cdot F\cap F)$.
\begin{lemma}
\label{l3}
Let $\mu$ be the canonical measure associated with $p^{\ast}(g)$. Then $\mu(\partial F)=0$.
\end{lemma}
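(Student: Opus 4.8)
The plan is to reduce the statement to the fact that the cut locus of each point has measure zero, together with local finiteness of the $\Gamma$-action. First I would observe that $\partial F = \bigcup_{\gamma \neq 1}(\gamma\cdot F \cap F)$ is a countable union (since $\Gamma$ is countable, being the deck group of a covering of a second-countable orbifold), so it suffices to prove that each piece $\gamma\cdot F \cap F$ has measure zero for $\gamma \neq 1$. Equivalently, since $\gamma$ acts by isometry and hence preserves $\mu$, it is enough to show that $\partial F \cap (\gamma\cdot F \cap F)$ sits inside a measure-zero set; in fact the cleanest route is to show directly that $\gamma \cdot F \cap F \subset C_{x_0} \cup \gamma\cdot C_{x_0}$ up to a null set, where $C_{x_0}$ is the cut locus of $x_0$ in the convex manifold $X'_{\mathrm{reg}}$.

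The key geometric step is this. Take a point $p \in \gamma\cdot F \cap F$ with $p \in X'_{\mathrm{reg}}$ (the singular locus has measure zero by Lemma~3.13, so we may discard it). By definition of the Dirichlet domain, $\mathrm{d}'(x_0, p) = \mathrm{d}'(\gamma\cdot x_0, p) = \mathrm{d}'(x_0, \gamma^{-1}\cdot p)$, and both distances are realized by minimal geodesics (completeness of $X'$ via Hopf--Rinow and the preceding lemmas). If $p \notin C_{x_0}$, then $p \in I_{x_0}$, so there is a \emph{unique} minimal geodesic from $x_0$ to $p$ and $\exp_{x_0}$ is a diffeomorphism near $\exp_{x_0}^{-1}(p)$. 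Similarly, if $p \notin \gamma\cdot C_{x_0}$, there is a unique minimal geodesic from $\gamma\cdot x_0$ to $p$. I would then argue that a point lying in $I_{x_0} \cap (\gamma\cdot F \cap F)$ with $\gamma \neq 1$ forces a contradiction with the openness of $I_{x_0}$ and the equality of the two distance functions: near such a $p$ the function $q \mapsto \mathrm{d}'(x_0,q) - \mathrm{d}'(\gamma\cdot x_0, q)$ is smooth and equals zero at $p$; but on the other hand one can perturb $p$ along the minimal geodesic from $x_0$ to make $\mathrm{d}'(x_0, \cdot)$ strictly smaller than $\mathrm{d}'(\gamma x_0,\cdot)$ unless the two geodesics coincide, which (since $\gamma x_0 \neq x_0$, as $\Gamma$ acts freely on $\pi^{-1}(X_{\mathrm{reg}})$) they cannot. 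Hence $\gamma\cdot F \cap F \subset C_{x_0} \cup \gamma\cdot C_{x_0} \cup X'_{\mathrm{sing}}$, which has measure zero by the cut-locus lemma and Lemma~3.13, and $\mu(\gamma\cdot C_{x_0}) = \mu(C_{x_0}) = 0$ since $\gamma$ preserves $\mu$.

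Summing over the countably many $\gamma \neq 1$ gives $\mu(\partial F) = 0$. The main obstacle I anticipate is making the perturbation argument in the previous paragraph fully rigorous: one needs that on the open set $I_{x_0}$ the distance $\mathrm{d}'(x_0, \cdot)$ is smooth with gradient of unit length pointing along the (unique) minimal geodesic, and that if $p$ lies simultaneously in $I_{x_0}$ and $I_{\gamma x_0}$ then equality of the two distances near $p$ would force the two unit gradient fields to agree, hence the two geodesics to be the same initial segment, contradicting $x_0 \neq \gamma x_0$. All of this is standard Riemannian geometry on the convex manifold $X'_{\mathrm{reg}}$ (using the lemma on $I_p$, $C_p$ quoted just before \cref{B-G}), but some care is needed because $x_0$ and $\gamma x_0$ might lie close to $X'_{\mathrm{sing}}$; one restricts attention throughout to $X'_{\mathrm{reg}}$, which is legitimate since the complement is $\mu$-null.
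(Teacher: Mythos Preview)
Your argument contains a genuine gap: the inclusion $\gamma\cdot F \cap F \subset C_{x_0} \cup \gamma\cdot C_{x_0} \cup X'_{\mathrm{sing}}$ is simply false. Take the flat torus $X = \mathbb{R}^2/\mathbb{Z}^2$; then $X' = \mathbb{R}^2$, every cut locus in $X'$ is empty, $X'_{\mathrm{sing}} = \emptyset$, and yet $\partial F$ is the boundary of the unit square, which is certainly nonempty. Your perturbation step does show that moving $p$ toward $x_0$ along the minimal geodesic makes $\mathrm{d}'(x_0,\cdot) - \mathrm{d}'(\gamma x_0,\cdot)$ strictly negative, but this is not a contradiction: it only confirms that $p$ lies on the boundary of $F$, which was the hypothesis. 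What it actually proves is that the smooth function $q\mapsto \mathrm{d}'(x_0,q)-\mathrm{d}'(\gamma x_0,q)$ has nonzero gradient at $p$ (when $p\in I_{x_0}\cap I_{\gamma x_0}$), so its zero set is a smooth hypersurface there, hence of measure zero. That is a valid route, but it is not the one you wrote down, and you would still need to handle the set $(C_{x_0}\cup C_{\gamma x_0})\cap \partial F$ separately and sum over countably many $\gamma$.

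The paper's proof avoids this entirely by projecting to $X$ rather than working in $X'$. If $p\in F\cap \gamma\cdot F$ with $\pi(p)\in X_{\mathrm{reg}}$, the two minimal geodesics in $X'$ from $x_0$ and from $\gamma x_0$ to $p$ have the same length and project (via $\pi$) to two \emph{distinct} minimal geodesics in $X$ from $\pi(x_0)$ to $\pi(p)$; by \cref{geo} these lie in $X_{\mathrm{reg}}$, so $\pi(p)\in C_{\pi(x_0)}$. Thus $\pi(\partial F)\subset C_{\pi(x_0)}\cup X_{\mathrm{sing}}$, a set of measure zero in $X$, and one lifts this back through elementary neighbourhoods of the covering $\pi$ to conclude $\mu(\partial F)=0$. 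The point is that downstairs the two geodesics share the \emph{same} endpoints, so the cut locus of a single point captures the whole boundary at once; no countable summation over $\Gamma$ and no implicit-function-theorem argument is needed.
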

\begin{proof}
Consider a point $p\in \partial F$. If $\pi(p)\in X_\mathrm{reg}$,  one has $p\in X'_\mathrm{reg}$. Suppose $p\in F\cap \gamma\cdot F$. Then $d'(x_0,p)=d(\gamma\cdot x_0,p)$. The proof of \cref{l1} shows that we could then find two 
distinct segments on $X$ joining $\pi(x_0)$ and $p$. By \cref{geo}, these two segments are minimizing geodesics in the convex manifold $X_\mathrm{reg}$. Hence $\pi(p)$ lies in the cut locus $C_{\pi(x_0)}$ of $x_0$ in $X_{reg}$. Thus one has $\pi(\partial F)\subset C_{\pi(x_0)}\cup X_\mathrm{sing}$. Now cover $\pi(\partial F)$ by elementary neighborhoods with respect to $\pi$. By second countability, we may find countably many neighborhoods $(\tilde{U}_i,G_i,\phi_i)$. Note $\phi_i^{-1}(C_{\pi(x_0)}\cup X_\mathrm{sing})$ has measure $0$ in $\tilde{U}_i$ and $\partial F$ is covered by countably many $\tilde{U}_i/H_{ij}$. Hence $\mu(\partial F)=0$.
\end{proof}

With \cref{Gromov-Bishop} and \cref{isom} , the Margulis lemma for fundamental groups of compact manifolds with Ricci curvature bounded below by \cite{Tao12} holds for the orbifold case, thus we have
\begin{prop}[\emph{cf.} \protect{\cite[Corollary 11.13]{Tao12}}]
Given $n\in \mathbb{N}$, there is $\epsilon=\epsilon(n)>0$ such that:
for any $n$-dimensional Riemmanian orbifold $(\mathcal{X},g)$ with compact underlying space $|X|$, if $\ric_g\geq -\epsilon$ and $diam(X)\leq 1$, then $\pi_1^{\mathrm{orb}}(\mathcal{X})$ is virtually nilpotent.
\end{prop}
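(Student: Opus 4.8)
The plan is to show that the proof of \cite[Corollary~11.13]{Tao12} --- the manifold Margulis lemma of Kapovitch--Wilking --- carries over once it is fed the orbifold-geometric facts assembled above, exactly as in Păun's treatment of the smooth case. First I would fix the universal orbifold covering $\pi:\mathcal{X}'\to\mathcal{X}$, set $\Gamma=\pi_1^{\mathrm{orb}}(\mathcal{X})$, and equip $\mathcal{X}'$ with the pullback metric $\pi^{\ast}g$. Since $X=|\mathcal{X}|$ is compact it is complete, so $(X',d')$ is complete by the completeness lemma for coverings proved above, and $(X'_{\mathrm{reg}},\pi^{\ast}g)$ is a convex Riemannian manifold by \cref{Convex}; as $\pi$ is a local isometry one has $\ric_{\pi^{\ast}g}\ge-\epsilon$, so the orbifold Bishop--Gromov comparison \cref{Gromov-Bishop} applies to all balls of $(X',d')$. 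The group $\Gamma$ acts on $(X',d')$ by isometries, freely on $\pi^{-1}(X_{\mathrm{reg}})$, and by \cref{isom} $d_X$ is the quotient metric $d'/\Gamma$.

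Next I would check that the combinatorial skeleton of the Kapovitch--Wilking argument is precisely what has already been established. Under the normalization $\diam(X)\le1$, with $x_0\in\pi^{-1}(X_{\mathrm{reg}})$ and the Dirichlet domain $F$ based at $x_0$: the set $S=\{\gamma\in\Gamma:d'(\gamma x_0,x_0)\le3\}$ is a finite symmetric generating set of $\Gamma$ (\cref{l1}), the word-length balls of $S$ and the metric balls of $X'$ interleave via $B(x_0,r)\subset S^r\cdot F\subset B(x_0,3r+2)$ (\cref{l2}), and the $\Gamma$-translates of $F$ are essentially disjoint, meeting only in the measure-zero set $\partial F$ (\cref{l3}). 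Combining these gives, for each integer $r$, the two-sided estimate $\vol(B(x_0,r))\le\#(S^r)\cdot\vol(F)\le\vol(B(x_0,3r+2))$, i.e.\ the orbit map $\Gamma\to X'$ is a quasi-isometry with volume-controlled fibres --- the Milnor--Švarc input of the manifold proof, now available verbatim.

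With these in hand, for $\epsilon=\epsilon(n)$ small the volume control of \cref{Gromov-Bishop} (and the fact that $x_0$ is regular, so the volume ratio tends to $1$ there) bounds $\#(S^r)$ at the scales relevant to the Kapovitch--Wilking induction on pointed Gromov--Hausdorff limits of the rescalings $(X',d'/\lambda_i,x_0)$; one concludes that $\Gamma$ contains a nilpotent subgroup of finite index, hence $\pi_1^{\mathrm{orb}}(\mathcal{X})$ is virtually nilpotent. (Alternatively one could invoke the generalized Margulis lemma of Kapovitch--Wilking directly, since it is stated for groups acting isometrically and cocompactly on a complete length space satisfying a Bishop--Gromov-type packing condition, all of which is now verified.)

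The step I expect to be the main obstacle is the last one: one must go through the proof of \cite[Corollary~11.13]{Tao12} and confirm that, beyond the explicit inputs above (isometric cocompact action on a complete length space, a finite generating set with the ball/word sandwich, and Bishop--Gromov comparison with measure-zero cut locus), it uses no genuine smoothness of the base --- in particular that the Gromov--Hausdorff limits of the rescalings and the splitting and almost-rigidity statements applied to them behave for orbifolds exactly as for manifolds. This holds because an orbifold with $\ric\ge-\epsilon$ is locally a quotient of such a manifold and every metric-measure ingredient involved is insensitive to the measure-zero singular locus (using \cite[Lemma~18]{Bor93}), but it is the point that must be checked line by line.
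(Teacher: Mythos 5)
Your geometric setup (universal cover with pullback metric, completeness, Dirichlet domain, the generating set $S$, the sandwich $B(x_0,r)\subset S^r\cdot F\subset B(x_0,3r+2)$, $\mu(\partial F)=0$, and orbifold Bishop--Gromov) matches the paper, but the decisive last step is where your proposal has a genuine gap. You propose to re-run the Kapovitch--Wilking induction on pointed Gromov--Hausdorff limits of rescalings, and you yourself flag that one would have to verify, line by line, that the limit, splitting and almost-rigidity arguments go through for orbifolds; that verification is never carried out, and it is a substantial piece of comparison geometry that your remark about the singular set having measure zero does not replace. The missing idea is that none of this machinery is needed: from \cref{l2} and \cref{l3} one gets $|S^r|/|S|\leq \mu(B(x_0,3r+2))/\mu(B(x_0,1))$, and \cref{Gromov-Bishop} bounds the right-hand side by $v(n,-\epsilon,3r+2)/v(n,-\epsilon,1)$, which for $\epsilon<\epsilon_0(n,R_0)$ gives the single-scale polynomial bound $|S^r|/|S|\leq 2(3r+2)^n$ for all $r\leq R_0$. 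At that point the conclusion is purely group-theoretic: Corollary~11.5 of \cite{Tao12} (a finitary Gromov theorem coming from the structure theory of approximate groups) says that a group generated by a finite symmetric set with such growth at one sufficiently large scale is virtually nilpotent. So the geometry enters only through the volume estimate, and there is nothing about Gromov--Hausdorff limits of orbifolds to check.

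Your parenthetical alternative (invoke the generalized Margulis lemma, i.e.\ \cref{gm} via the packing bound \cref{pack}) is closer to workable but also incomplete as stated: that lemma only makes the almost-stabiliser $\Gamma_{\alpha}(x_0)$, generated by elements of displacement less than $\alpha$, virtually nilpotent, whereas \cref{l1} and $\diam(X)\leq 1$ only give generators of displacement at most $3$. To conclude for the whole group you would still need a rescaling argument (shrink the metric so that $3$ becomes less than $\alpha$, using $\ric\geq-\epsilon$ with $\epsilon$ small enough that the rescaled metric still satisfies the curvature/packing hypothesis), and this step is absent from your proposal.
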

\begin{proof}
Let $\pi,x_0,F, $ be defined as above. With \cref{l2} and \cref{l3}, we have 

\begin{equation*}
\frac{|S^r|}{|S|}\leq \frac{\mu(B(x_0,3r+2))}{\mu(B(x_0,1))}.
\end{equation*}

By \cref{Gromov-Bishop}, we have 

\begin{equation*}
\frac{\mu(B(x_0,r))}{\mu(B(x_0,1))}\leq \frac{v(n,-\epsilon ,r)}{v(n,-\epsilon,1)}.
\end{equation*}

Let $\omega_n$ be the volume of $(n-1)$-dimensional unit sphere in Euclidean space $\mathbb{R}^n$. Then $v(n,-\epsilon,r)=\omega_n\int_0^r(\frac{sinh(\sqrt{\epsilon} t)}{\sqrt{\epsilon}})^{n-1}dt$. The latter tends to $\omega_n r^n/n$ when $\epsilon$ tends to $0$. Thus for any $R_0\geq 1$, there exists $\epsilon_0=\epsilon_0(d,R_0)$ such that 
\begin{equation*}
\frac{|S^r|}{|S|}\leq 2(3r+2)^n
\end{equation*}
for all $r\leq R_0$ provided that $0<\epsilon<\epsilon_0$. The existence of $\epsilon=\epsilon(d)$ follows from Corollary 11.5 of \cite{Tao12}.
\end{proof}

For the main theorem, we introduce the following notion.
\begin{definition}
Let $(X,d)$ be a metric space. We say $X$ to have bounded packing with packing constant $K$ if there exists $K>0$ such that every ball of radius $4$ in $X$ can be covered by at most $K$ balls of radius $1$.
\end{definition}

\begin{lemma}
\label{pack}
Let $(\mathcal{X},g)$ be a complete Riemannian orbifold with $\ric_g>-(n-1)$. Then $X=|\mathcal{X}|$ has bounded packing with packing constant $K=K(n)$.
\end{lemma}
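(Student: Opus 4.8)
The plan is to imitate the standard Riemannian packing argument, feeding in the orbifold volume comparison \cref{Gromov-Bishop} in place of its manifold version. Fix $x\in X$. Since $(\mathcal{X},g)$ is complete, Hopf--Rinow applied to the length space $(X,\mathrm{d}_g)$ shows that the closed ball $\overline{B(x,4)}$ is compact, so one can choose a \emph{maximal} finite subset $\{x_1,\dots,x_N\}\subset B(x,4)$ that is $1$-separated, i.e. $\mathrm{d}_g(x_i,x_j)\geq 1$ for $i\neq j$. By maximality every point of $B(x,4)$ lies within distance $1$ of some $x_i$, so the $N$ balls $B(x_i,1)$ cover $B(x,4)$; it therefore suffices to bound $N$ by a quantity depending only on $n$.

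For the bound on $N$ I would use volumes. The balls $B(x_i,1/2)$ are pairwise disjoint (if $z$ lay in two of them the triangle inequality would force $\mathrm{d}_g(x_i,x_j)<1$) and each is contained in $B(x,5)$, so $\sum_i\vol_g(B(x_i,1/2))\leq\vol_g(B(x,5))$. Picking an index $i_0$ with $\vol_g(B(x_{i_0},1/2))$ minimal gives $N\leq \vol_g(B(x,5))/\vol_g(B(x_{i_0},1/2))$. Since $\mathrm{d}_g(x_{i_0},x)<4$ we have $B(x,5)\subset B(x_{i_0},9)$, hence $N\leq \vol_g(B(x_{i_0},9))/\vol_g(B(x_{i_0},1/2))$. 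Now apply \cref{Gromov-Bishop} at the single point $x_{i_0}$ with $k=-1$: the function $r\mapsto \vol_g(B(x_{i_0},r))/v(n,-1,r)$ is non-increasing, so $\vol_g(B(x_{i_0},9))/\vol_g(B(x_{i_0},1/2))\leq v(n,-1,9)/v(n,-1,1/2)$. Thus $N\leq K(n):=v(n,-1,9)/v(n,-1,1/2)$, a constant depending only on $n$, which is the asserted packing constant.

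There is essentially no obstacle here, but two points deserve emphasis. First, the only place the orbifold structure could interfere is the extra factor $|G_{x_{i_0}}|$ appearing in the limit in \cref{Gromov-Bishop}; comparing volumes \emph{at one and the same base point} makes this factor cancel between numerator and denominator, which is exactly why the manifold proof transports verbatim. Comparing volumes of balls around \emph{different} centres would instead introduce a ratio of orbifold orders, and the argument above is arranged precisely to avoid that. Second, completeness is genuinely needed — only through it (via Hopf--Rinow) do we know $\overline{B(x,4)}$ is compact, hence that the $1$-separated net is finite; everything else is bookkeeping of the radii $4$, $5$, $9$ and $1/2$.
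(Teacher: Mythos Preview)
Your proof is correct and follows essentially the same route as the paper's: a maximal packing by half-balls inside a slightly enlarged ball, then volume comparison via \cref{Gromov-Bishop} applied at a single centre to bound the number of balls by a constant depending only on $n$. The only differences are cosmetic --- you take a maximal $1$-separated net in $B(x,4)$ where the paper takes a maximal disjoint family of $\tfrac{1}{2}$-balls in $B(p,5)$, and your radii yield $K(n)=v(n,-1,9)/v(n,-1,\tfrac{1}{2})$ in place of the paper's $v(n,-1,10)/v(n,-1,\tfrac{1}{2})$.
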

\begin{proof}
Let $p \in X$. For the ball $B(p,5)$ and a ball $B(q,\frac{1}{2})\subset B(p,5)$, by \cref{Gromov-Bishop} we have
\begin{equation*}
\frac{\vol_g(B(p,5))}{\vol_g(B(q,\frac{1}{2})}\leq \frac{\vol_g(B(q,10))}{\vol_g(B(q,\frac{1}{2})}\leq\frac{v(n,-1,10)}{v(n,-1,\frac{1}{2})}=K(n).
\end{equation*} 
Let $B(q_i,\frac{1}{2})$ be a family of disjoint balls that is contained $B(p,5)$ such that for any $q\neq q_i$, if $B(q,\frac{1}{2})\subset B(p,5)$, then $B(q,\frac{1}{2})$ intersects with one of the $B(q_i,\frac{1}{2})$. We know the family has most $K(n)$ balls. Note the balls $B(q_i,1)$ cover $B(p,4)$. Hence we have the packing constant $K=K(n)$.
\end{proof}

Let $(\mathcal{X},g)$ be a complete Riemannian orbifold and $p:\tilde{\mathcal{X}}\rightarrow \mathcal{X}$ its universal covering. For any point $x\in X=|\mathcal{X}|$, its (topological) fiber $p^{-1}(x)\subset \tilde{X}=|\tilde{\mathcal{X}}|$ is a discrete space. Let us pick $\tilde{x}\in p^{-1}(x)$. We then have $\min\{\tilde{d}(\tilde{x},\tilde{x}')\}>0$, where the minimum is taken for all $\tilde{x}'\in p^{-1}(x)\setminus \{\tilde{x}\}$. We thus see $\pi_1^{orb}(\mathcal{X})$ acts on $\tilde{X}$ discretely, $i.e.$, for any $\tilde{x}\in \tilde{X}$, for any bounded set $\Sigma\subset \tilde{X}$, the set $\{\gamma\in \pi_1^{orb}(\mathcal{X})\mid\gamma\cdot \tilde{x}\in \Sigma\}$ is finite.

Finally we recall the generalized Margulis lemma established in \cite{Tao12}
\begin{thm}[\emph{cf.}\protect{\cite[Corollary 11.17]{Tao12}}]
\label{gm}
Let $K\geq 1$ be a parameter. There exists $\epsilon(K)>0$, such that the following is true. Suppose $X$ is a metric space with packing constant $K$ and $\Gamma$ is a subgroup of isometries of $X$ that acts discretely. Then for every $x\in X$ the "almost stabiliser"
\[\Gamma_\epsilon(x)=\langle\gamma\in\Gamma\mid\mathrm{d}(\gamma\cdot x,x)<\epsilon\rangle\]
is virtually nilpotent.
\end{thm}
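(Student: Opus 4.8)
The plan is to deduce the statement from the quantitative theory of approximate groups, following \cite[Corollary~11.17]{Tao12}; here I only describe the architecture of that argument. I would fix $x\in X$ and, for a parameter $\epsilon>0$ to be calibrated at the end, set $S_\epsilon=\{\gamma\in\Gamma:\mathrm{d}(\gamma\cdot x,x)<\epsilon\}$, which is symmetric, contains $1$, and generates $\Gamma_\epsilon(x)$. Because $\Gamma$ acts discretely, every ``orbit ball'' $B_r:=\{\gamma\in\Gamma:\mathrm{d}(\gamma\cdot x,x)\le r\}$ is a finite set, and the first task is to turn the bounded packing of $X$ into a growth estimate for these sets: since $\Gamma$ acts by isometries, $\gamma B(x,s)=B(\gamma\cdot x,s)$, and comparing $B_r$ with coverings of the metric balls $B(x,r)\subset X$ by finitely many smaller balls yields a bound of the shape $|B_{Cr}|\le N(K)\,|B_r|$ with $C$ absolute and $N(K)$ depending only on the packing constant.

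Next I would run the combinatorial core. A pigeonhole over dyadic scales will produce a radius $r$ at which the orbit grows slowly, making $A:=B_r$ a $C(K)$-approximate subgroup of $\Gamma$ --- that is, $A=A^{-1}$, $1\in A$, and $A^3$ is covered by $C(K)$ translates of $A$. One then chooses $\epsilon=\epsilon(K,r)$ small enough that $S_\epsilon\subset A$, using the discreteness of the action together with the separation of scales $\epsilon\ll r$ to trap the generators of the almost-stabiliser inside $A$; this forces $\Gamma_\epsilon(x)\subset\langle A\rangle$. Finally I would feed $A$ into the Breuillard--Green--Tao structure theorem for finite approximate groups, in the form presented in \cite{Tao12}: after passing to a subgroup of index bounded in terms of $K$ and quotienting by a finite normal subgroup, $A$ is controlled by a nilprogression of bounded rank and step, so $\langle A\rangle$, and therefore $\Gamma_\epsilon(x)$, contains a nilpotent subgroup of index bounded by a function of $K$; tracing the constants back shows that the admissible $\epsilon$ depends only on $K$, as required.

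The hard part will be the uniformity: locating the good scale $r$ and calibrating $\epsilon$ to it so that no constant in the chain --- the growth bound $N(K)$, the approximate-group constant $C(K)$, the index of the nilpotent subgroup --- depends on $x$, on $X$, or on anything about $\Gamma$ beyond $K$. By contrast, the step from ``$A$ is an approximate group'' to ``$\langle A\rangle$ is virtually nilpotent'' I would treat as a black box imported from \cite{Tao12}; as is typical for Margulis-type statements, the genuine labour lies in extracting the approximate-group structure from the purely metric hypothesis, not in the structure theory that follows.
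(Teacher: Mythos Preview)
The paper does not prove this theorem at all: it is simply quoted from \cite[Corollary~11.17]{Tao12} as a black box (the sentence immediately preceding it reads ``Finally we recall the generalized Margulis lemma established in \cite{Tao12}''), and is then combined with \cref{pack} to obtain \cref{mag}. So there is no in-paper argument to compare your proposal against; what you have written is an outline of Tao's own proof, which is exactly what the citation points to.

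As such an outline, your sketch is broadly faithful to the approach in \cite{Tao12}. One small point of emphasis: the step ``choose $\epsilon=\epsilon(K,r)$ small enough that $S_\epsilon\subset A$'' is slightly misplaced, since $S_\epsilon\subset B_r=A$ is automatic once $\epsilon\le r$. The genuine uniformity issue is rather that the pigeonhole must locate the good scale $r$ within a range bounded \emph{a priori} in terms of $K$ alone (so that $\epsilon$ does not secretly depend on $X$ or $x$), and the separation $\epsilon\ll r$ is really needed so that the generators $S_\epsilon$ sit deep enough inside $A$ to fall into the nilpotent piece of the Breuillard--Green--Tao decomposition, which is what makes $\langle S_\epsilon\rangle$, rather than merely $\langle A\rangle$, virtually nilpotent. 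You do flag both concerns in your final paragraph, so the architecture is sound; only the phrasing of where the difficulty lies could be sharpened.
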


With \cref{pack}, applying \cref{gm} to complete Riemannian orbifolds, we get the following lemma.
\begin{lemma}[\emph{cf.} \protect{\cite[Corollary 11.19]{Tao12}}]
\label{mag}
Let $n\geq 1$ be an integer. There exists $\alpha=\alpha(n)>0$ such that the following holds true. Suppose $\mathcal{X}$ is a complete Riemannian orbifold with its Ricci curvature bounded by $\ric\geq -(n-1)$ and $\Gamma$ a subgroup of $Isom(|\mathcal{X}|)$ acting properly discontinuosly by isometries on $|\mathcal{X}|$. Then for every $x\in |\mathcal{X}|$, the "almost stabliser"
\[\Gamma_{\alpha}(x):=\langle\gamma \in \Gamma\mid \mathrm{d}(\gamma\cdot x,x)<\alpha\rangle\]
is vertually nilpotent.
\end{lemma}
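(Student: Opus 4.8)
The plan is to deduce this statement directly from the abstract Margulis-type result \cref{gm}, feeding it the packing estimate \cref{pack}. First I would invoke \cref{pack}: since $\mathcal{X}$ is a \emph{complete} Riemannian orbifold with $\ric_g \geq -(n-1)$, its underlying length space $(|\mathcal{X}|,\mathrm{d}_g)$ has bounded packing with a constant $K=K(n)$ depending only on the dimension. The statement of \cref{pack} is phrased with the strict inequality $\ric_g > -(n-1)$, but its proof only uses Bishop--Gromov volume monotonicity (\cref{Gromov-Bishop}) on balls of uniformly bounded radius, and this is insensitive to replacing $>$ by $\ge$; alternatively one rescales the metric by a factor slightly larger than $1$ and passes to the limit.

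Next, let $\epsilon(\cdot)$ be the function furnished by \cref{gm} and set $\alpha=\alpha(n):=\epsilon(K(n))$, which indeed depends only on $n$. It then remains to verify that the hypotheses of \cref{gm} are met for $X=|\mathcal{X}|$ with this $K$. The space $(|\mathcal{X}|,\mathrm{d}_g)$ is a genuine metric space (in fact a length space, by the discussion around \cref{Convex}), it has packing constant $K(n)$, and $\Gamma$ is by assumption a subgroup of its isometry group. The only point needing a word is that ``properly discontinuously by isometries'' implies ``acts discretely'' in the sense of \cref{gm}, i.e. that for each $y\in|\mathcal{X}|$ and each bounded set $\Sigma$ the set $\{\gamma\in\Gamma\mid\gamma\cdot y\in\Sigma\}$ is finite. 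This is immediate: since $(|\mathcal{X}|,\mathrm{d}_g)$ is complete and locally compact, Hopf--Rinow gives that $\overline{\Sigma}$ is compact, so $K:=\{y\}\cup\overline{\Sigma}$ is compact, and proper discontinuity forces only finitely many $\gamma$ with $\gamma K\cap K\neq\emptyset$, which includes every $\gamma$ sending $y$ into $\Sigma$.

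Applying \cref{gm} with these $K$ and $\epsilon$, we conclude that for every $x\in|\mathcal{X}|$ the almost-stabiliser $\Gamma_\epsilon(x)=\Gamma_\alpha(x)$ is virtually nilpotent, which is exactly the assertion. I expect the only mildly delicate bookkeeping to be the role of completeness: \cref{pack} tacitly relies on \cref{Convex} (hence on completeness of $(|\mathcal{X}|,\mathrm{d}_g)$) in order to invoke the orbifold Bishop--Gromov theorem, so one should make sure the completeness hypothesis in the present lemma is genuinely being used at that step; with that noted, the argument is a formal combination of \cref{pack} and \cref{gm}.
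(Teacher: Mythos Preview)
Your proposal is correct and follows exactly the paper's approach: the paper's proof is the single sentence ``With \cref{pack}, applying \cref{gm} to complete Riemannian orbifolds, we get the following lemma,'' and you have simply spelled out the verifications (packing constant $K(n)$, discreteness from proper discontinuity via Hopf--Rinow, the harmless $>$ versus $\ge$ in \cref{pack}) that this sentence suppresses. Your extra care about completeness and the strict inequality is accurate and not needed for a different argument, only for hygiene.
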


\section{main theorem}
\label{sec5}
In this section and onwards, we only deal with complex orbifolds.
\begin{definition}
Let $\mathcal{X}$ be an orbifold and $p:\mathcal{E}\rightarrow \mathcal{X}$ be a complex orbi-vector bundle over $\mathcal{X}$, that is there exists a representation of $\mathcal{X}$ by an orbifold groupoid $\mathcal{G}$ and a left $\mathcal{G}$-space $E$ such that $p$ is represented by $\mathcal{G}\ltimes E\rightarrow \mathcal{G}$. An Hermitian metric on $\mathcal{E}$ is a map $h:|\mathcal{E}|\times_{|\mathcal{X}|}|\mathcal{E}|\rightarrow \mathbb{C}$ such that $h$ lifts to a map $\tilde{h}:E\times_{\mathcal{G}_0}E \rightarrow \mathbb{C}$ and $\tilde{h}$ is Hermitian and $\mathcal{G}$-invariant.
\end{definition}

One can always get a Hermitian metric on an orbi-vector bundle by partition of unity:
\begin{prop}[\emph{cf.} \protect{\cite[Lemma 5.1]{Pardon20vector}}]
Let $\mathcal{X}$ be an orbifold and $\mathcal{E}$ be a complex orbi-vector bundle on $\mathcal{X}$. Then there exists an Hermitian metric on $\mathcal{E}$.
\end{prop}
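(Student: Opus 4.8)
The plan is to construct a $\mathcal{G}$-invariant Hermitian metric on the representing vector bundle $E\to G_0$ and then invoke the definition of Hermitian metric on $\mathcal{E}$ to descend it. Fix a representation of $\mathcal{X}$ by an orbifold groupoid $\mathcal{G}$ with $\mathcal{E}=\mathcal{G}\ltimes E\to\mathcal{G}$, and write $\pi\colon G_0\to X=|\mathcal{G}|$ for the quotient map. By \cref{local}, every point of $X$ has a neighbourhood of the form $W=N_{\tilde x}/G_{\tilde x}$ with $\mathcal{G}|_{N_{\tilde x}}\cong G_{\tilde x}\ltimes N_{\tilde x}$, and after shrinking $N_{\tilde x}$ we may also assume, as in the discussion following \cref{vecbun}, that $E|_{N_{\tilde x}}$ is trivial. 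Since $X$ is paracompact, I would choose a locally finite cover $\{W_\alpha\}$ of this type, with charts $(N_\alpha,G_\alpha)$ and trivializations $E|_{N_\alpha}\cong N_\alpha\times\mathbb{C}^r$, together with a partition of unity $\{\psi_\alpha\}$ on $X$ with $\mathrm{supp}\,\psi_\alpha\subset W_\alpha$.

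On each $N_\alpha$ I would take the standard fibrewise Hermitian form coming from the trivialization and average it over $G_\alpha$ (which acts linearly on the fibres of $E|_{N_\alpha}$), obtaining a $G_\alpha$-invariant Hermitian metric $h_\alpha$ on $E|_{N_\alpha}$; since $\mathcal{G}|_{N_\alpha}$ is exactly the translation groupoid $G_\alpha\ltimes N_\alpha$, this means $h_\alpha$ is invariant under every arrow of $\mathcal{G}$ whose source and target both lie in $N_\alpha$. Next I would spread $h_\alpha$ over the whole saturation $\pi^{-1}(W_\alpha)$ by transporting along arrows: for $z\in\pi^{-1}(W_\alpha)$ pick an arrow $g\colon z\to z'$ with $z'\in N_\alpha$ (possible since $\pi(z)\in W_\alpha=\pi(N_\alpha)$) and set $(H_\alpha)_z(v,w):=(h_\alpha)_{z'}(\mu(g,v),\mu(g,w))$. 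Any two such arrows $g_1,g_2$ differ by an arrow $g_2 g_1^{-1}$ internal to $N_\alpha$, so invariance of $h_\alpha$ under internal arrows makes this independent of $g$; étaleness of $\mathcal{G}$ lets one choose $g$ locally continuously, so $H_\alpha$ is continuous; and using $\mu(g g_0,v)=\mu(g,\mu(g_0,v))$ one checks directly that $H_\alpha$ is a $\mathcal{G}$-invariant Hermitian metric on $E|_{\pi^{-1}(W_\alpha)}$.

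Finally I would glue. The functions $\hat\psi_\alpha:=\psi_\alpha\circ\pi$ on $G_0$ are $\mathcal{G}$-invariant, non-negative, locally finite, sum to $1$, and satisfy $\mathrm{supp}\,\hat\psi_\alpha\subset\pi^{-1}(W_\alpha)$, so (extending each $\hat\psi_\alpha H_\alpha$ by zero) the sum $\tilde h:=\sum_\alpha\hat\psi_\alpha H_\alpha$ is a well-defined continuous positive-definite Hermitian metric on $E\to G_0$, and it is $\mathcal{G}$-invariant because each $\hat\psi_\alpha$ and each $H_\alpha$ is. By the definition of a Hermitian metric on an orbi-vector bundle, $\tilde h$ induces the desired $h$ on $\mathcal{E}$; this reproduces the argument of \cite[Lemma~5.1]{Pardon20vector}.

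The only genuinely delicate step is the passage from the local $G_\alpha$-averaged metrics $h_\alpha$ to the metrics $H_\alpha$ on the full saturations $\pi^{-1}(W_\alpha)$: one must verify that transport along arrows is well defined and continuous, and this is precisely where one uses that $\mathcal{G}$ is étale (for continuity of local transport) and that $\mathcal{G}|_{N_\alpha}$ is a translation groupoid (for well-definedness); everything else is the standard partition-of-unity construction of Hermitian metrics.
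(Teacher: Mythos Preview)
Your argument is correct and follows exactly the partition-of-unity approach that the paper indicates (the paper gives no proof beyond the sentence ``One can always get a Hermitian metric on an orbi-vector bundle by partition of unity'' and the citation to \cite{Pardon20vector}); you have simply supplied the details that the paper omits. One cosmetic point: you should say ``smooth'' rather than ``continuous'' for the resulting metric, since later uses (Chern curvature, etc.) require smoothness, and your construction in fact yields a smooth metric because all ingredients---the local trivializations, the finite group actions, the étale transport, and the partition of unity---are smooth.
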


For a complex orbifold groupoid $\mathcal{G}=\{G_1\rightrightarrows G_0\}$, we know that the structure maps between $G_i$ are holomorphic (actually they are étale). In particular, for any arrow $g:x\rightarrow y$ in $\mathcal{G}_1$, the induced local diffeomorphism $U_x\rightarrow U_y$ is biholomorphic. Its tangent groupoid is $T\mathcal{G}=G_1\ltimes TG_0$. We see that the almost complex structure $J$, differential operators $d$, $\partial$ and $\bar{\partial}$ are $G_1$-invariant. Thus for a complex orbifold $\mathcal{X}$, we have the decomposition $T_{\mathbb{C}} \mathcal{X}=T^{1,0}\mathcal{X}\oplus T^{0,1}\mathcal{X}$. We define the anti-canonical bundle of $\mathcal{X}$ to be $K_{\mathcal{X}}^{-1}=\det(T^{1,0}\mathcal{X})$.

\begin{definition}
Let $\mathcal{X}$ be a complex orbifold. A Kähler form on $\mathcal{X}$ is a closed real $(1,1)$-form $\omega\in \Gamma(X,(T^2\mathcal{X})^{\vee}\cap (T^{(1,1)}\mathcal{X})^{\vee})$ such that $\omega(-,J-)$ defines a Riemannian metric on $\mathcal{X}$.
\end{definition}

We give a definition of holomorphic orbi-vector bundles that suits our later discussion.

\begin{definition}
\label{hol}
Let $\mathcal{X}$ be a complex orbifold, $\mathcal{E}\rightarrow \mathcal{X}$ a complex orbi-vector bundle over $\mathcal{X}$. A holomorphic orbi-vector bundle structure over $\mathcal{E}$ is a representation $\mathcal{G}\ltimes E\rightarrow \mathcal{G}$ of $\mathcal{E}\rightarrow \mathcal{X}$ together with a holomorphic vector bundle structure on $E\rightarrow \mathcal{G}_0$. We call $\mathcal{E}$ together with the holomorphic structure a holomorphic orbi-vector bundle.
\end{definition}

With the above definition, we could see that $(T\mathcal{X},J)$ and $K_{\mathcal{X}}$ carry natural holomorphic structure. It is thus considered as holomorphic orbi-vector bundle in the rest of the article.

\begin{rem}
\label{rhol}
The definition is not optimal. Suppose $\phi:\mathcal{H}\rightarrow \mathcal{G}$ is an equivalence, then one will have a nature holomorphic vector bundle $\phi_0^{\ast}(E)\rightarrow \mathcal{H}_0$. One should consider $\phi^{\ast}(\mathcal{G}\ltimes E)=\mathcal{H}\ltimes \phi_0^{\ast}(E)$ gives same holomorphic structure on $\mathcal{E}\rightarrow \mathcal{X}$. Hence the right definition will be the
\begin{center} $\{$representation $+$ holomorphic structure on the representation$\}$ $mod$ "equivalence".
\end{center} Due to the inability of the author, we could not give a satisfying equivalence relation. However, the given definition suffices for our purpose.
\end{rem}

Let $\mathcal{E}\rightarrow \mathcal{X}$ be a holomorphic orbi-vector bundle represented by $\mathcal{G}\ltimes E\rightarrow \mathcal{G}$. The holomorphic structure on $E$ gives $E$ a natural complex structure and hence $\mathcal{G}\ltimes E$ a complex orbifold groupoid. For $x\in \mathcal{G}_0$, we take $U_x\subset \mathcal{G}_0$ such that $\mathcal{G}|_{U_x}\cong G_x\ltimes U_x$ and $E|_{U_x}\cong \mathbb{C}^r\times U_x$ by a holomorphic frame. Then $(\mathcal{G}\ltimes E)|_{U_x}\cong G_x\ltimes(\mathbb{C}^r\times U_x)$ as complex orbifold groupoid. In particular, $G_x\times (\mathbb{C}^r\times U_x)\rightarrow \mathbb{C}\times U_x$ is holomorphic. For an element $g\in G_x$, the action is  $(v,y)\mapsto (g(y)\cdot v,y)$ where $g(y)\in GL(r,\mathbb{C})$. One sees easily $y\mapsto g(y)$ is holomorphic. Thus $g\in G_x$ transfers holomorphic section of $E$ on $U_x$ to a holomorphic section. With the same argument, one could show $g:x\rightarrow y$ transfers a local holomorphic section around $x$ to a local holomorphic section around $y$. We define the holomorphic section of $\mathcal{E}$ to be a $\mathcal{G}$-invariant holomorphic section in $E$. Let $s$ be a $\mathcal{G}$-invariant holomorphic section of $E$, and $g:x\rightarrow y$ be an arrow in $\mathcal{G}_1$. Suppose that $s$ is defined around $x$ and $y$, such that $s=\sum \phi_i e_i$ around $x$ and $s=\sum \psi_j f_j$ around $y$, where $\{e_i\}$ and $\{f_j\}$ are holomorphic frames. We know $g\cdot e_i=\sum_jA_{ji}f_j$ for some holomorphic functions $A_{ji}$ around $y$ and $A_{ji}$ is invertible. Now $\bar{\partial}_E(s)=\sum_i \bar{\partial}\phi_i\otimes e_i$ around $x$. We have around $y$,
\begin{align*}
g\cdot \bar{\partial}_E(s)&=\sum_i (g^{-1})^{\ast} \bar{\partial}\phi_i\otimes g\cdot e_i=\sum_{i,j}(g^{-1})^{\ast} \bar{\partial}\phi_i A_{ji}\otimes f_j\\
&=\sum_j\bar{\partial} \psi _j\otimes f_j=\bar{\partial}_E(s).
\end{align*}
Thus the Dolbeault operator $\bar{\partial}_E$ passes to an orbifold Dolbeaut operator $\bar{\partial}_{\mathcal{E}}$ on $\mathcal{E}$.

Let $\mathcal{L}\rightarrow \mathcal{X}$ be a holomorphic orbi-line bundle with an Hermitian mectric $h$. We take a representation $\mathcal{G}\ltimes L\rightarrow \mathcal{G}$ of $\mathcal{L}$ and $\tilde{h}$ be the $\mathcal{G}$-invariant metric on $L$. For an arrow $g:x\rightarrow y$, we consider two trivializations by holomorphic sections $e$ and $f$ around $x$ and $y$ respectively. Suppose that $g\cdot e (w)=\phi(g^{-1}w)f(w)$. We have the local matrix for $\tilde{h}$ too be $h_1=\tilde{h}(e,e)$ and $h_2=\tilde{h}(f,f)$. From the equality $h(g\cdot e,g \cdot e)=h(e,e)$, we see that 
\[h_2(w)\phi(g^{-1}w)\overline{\phi(g^{-1}w)}=h_1(g^{-1}w).\]
Hence 
\begin{equation*}
\partial\bar{\partial} (log\circ h_2)=\partial\bar{\partial}(\log \circ h_1\circ L_{g^-1})=(g^{-1})^{\ast}\partial\bar{\partial}(\log \circ h_1)=g\cdot\partial\bar{\partial}(\log \circ h_1),
\end{equation*}
which means that the Chern curvature $\Theta_{\tilde{h}}=-\frac{\sqrt{-1}}{2\pi}\partial\bar{\partial} \tilde{h}$ is $\mathcal{G}$-invariant, hence corresponds to an equivariant section.


We are now ready to give the definition of nefness.

\begin{definition}
Let $\mathcal{X}$ be a complex orbifold with underline space $X=|\mathcal{X}|$ compact and $\mathcal{L}\rightarrow \mathcal{X}$ a holomorphic line bundle on $\mathcal{X}$. We fix a Kähler form $\omega$ on $\mathcal{X}$. We say that $\mathcal{L}$ is \emph{nef}, if for any $\epsilon>0$, there exists a Hermitian metric $h_{\epsilon}$ on $\mathcal{L}$ such that its Chern curvature $\Theta_{h_{\epsilon}}$ satisfies
\[\Theta_{h_{\epsilon}}\geq -\epsilon \omega.\]
\end{definition}

\begin{rem}
Suppose that $\mathcal{L}\rightarrow \mathcal{X}$ is represented by $\mathcal{G}\ltimes L\rightarrow \mathcal{G}$. If $\mathcal{G}_0$ is compact, it is obvious that $L$ is then a nef line bundle on $\mathcal{G}_0$. However, in general $\mathcal{G}$ is not compact, and it makes no sense to say $L$ is nef or not.
\end{rem}

Let $\mathcal{X}$ be a complex orbifold, whose underlying space $X=|\mathcal{X}|$ is compact. We fix a Kähler metric $\omega$ on $\mathcal{X}$ and suppose $K_{\mathcal{X}}^{-1}$ is nef. We repeat the technique used by in \cite{DPS93} to construct a sequence of Kähler metrics $\{\omega_\epsilon\}\ $ in the same cohomology class of $\omega$ such that the Ricci form $\mathrm{Ricci}_{\omega_{\epsilon}}\geq -\epsilon \omega_{\epsilon}$.

For any $\epsilon>0$, since $K_{\mathcal{X}}^{-1}$ is nef, we have a Hermitian metric $h_{\epsilon}$ on $K_{\mathcal{X}^{-1}}$, such that $u_{\epsilon}=\Theta_{h_{\epsilon}}\geq -\epsilon \omega$. It is thus sufficient to search $\omega_{\epsilon}$ such that
\begin{equation}\label{f-m}
 \Ricci_{\omega_\epsilon}=-\epsilon \omega_\epsilon+\epsilon \omega +u_{\epsilon}.
\end{equation}

The $\partial \bar{\partial}$-lemma still holds in the orbifold setting.

\begin{lemma}[\emph{cf.} \protect{\cite[Theorem~H, Theorem~K]{Baily}}]
Let $(\mathcal{X},\omega)$ be a Kähler orbifold such that $X=|\mathcal{X}|$ is compact. If $\alpha$ is a $d$-exact $(p,q)$-form, then $\alpha$ is $\partial \bar{\partial}$-exact.
\end{lemma}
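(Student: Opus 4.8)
The plan is to reduce the orbifold $\partial\bar\partial$-lemma to the classical one via Hodge theory, which works on compact Kähler orbifolds exactly as cited. The key point is that $\mathcal{X}$ being an effective compact Kähler orbifold, Baily's theorems give a Hodge decomposition on the spaces of global harmonic forms $\mathcal{H}^{p,q}(\mathcal{X})$ (with respect to the Kähler metric $\omega$), and the Hodge operators $\partial$, $\bar\partial$, $\partial^*$, $\bar\partial^*$, $\Delta_\partial$, $\Delta_{\bar\partial}$ all make sense on $\mathcal{X}$ because, as established earlier in the excerpt, differential forms, the Kähler metric, and the associated operators all lift to $G$-invariant objects on orbifold charts, so the usual Kähler identities hold chart-by-chart and hence globally. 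In particular $\Delta_d = 2\Delta_{\bar\partial} = 2\Delta_\partial$ on $\mathcal{X}$.

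First I would recall that, by the orbifold Hodge theory of Baily, every $(p,q)$-form $\alpha$ admits an orthogonal decomposition $\alpha = \mathcal{H}(\alpha) + \Delta_d \beta$ for some $(p,q)$-form $\beta$ and its harmonic part $\mathcal{H}(\alpha)$; equivalently one has the three-term decompositions with respect to $\partial$ and with respect to $\bar\partial$. Now suppose $\alpha = d\eta$ is $d$-exact and of type $(p,q)$. Then $\mathcal{H}(\alpha) = 0$ (a harmonic form is not exact unless it vanishes, by the orthogonality of harmonic forms to the image of $\Delta_d$, or directly because $\langle \alpha, \mathcal{H}(\alpha)\rangle = \langle d\eta, \mathcal{H}(\alpha)\rangle = \langle \eta, d^*\mathcal{H}(\alpha)\rangle = 0$). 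The strategy from here is the standard one: using $\Delta_{\bar\partial} = \tfrac12\Delta_d$ and the $\bar\partial$-Hodge decomposition, write $\alpha = \bar\partial\bar\partial^* G_{\bar\partial}\alpha$ where $G_{\bar\partial}$ is the Green operator; then one shows $\alpha$ is also $\partial$-closed (because $\alpha$ is $d$-closed and of pure type) and massages $\bar\partial^* G_{\bar\partial}\alpha$ using the Kähler identities $[\partial,\bar\partial^*]=0$, $[\Lambda,\bar\partial]=-i\partial^*$, etc., to produce a form $\gamma$ with $\alpha = \partial\bar\partial\gamma$. All of these manipulations are the verbatim proof of the classical $\partial\bar\partial$-lemma (see e.g. Demailly or Griffiths–Harris), and each identity used is an identity of $G$-invariant tensors on charts, hence valid on the orbifold.

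Concretely, the cleanest route: since $d\alpha = 0$ and $\alpha$ has pure type, both $\partial\alpha = 0$ and $\bar\partial\alpha = 0$. Since $\alpha$ is $\bar\partial$-exact-up-to-harmonic and its harmonic part is $0$, we get $\alpha = \bar\partial\psi$ with $\psi = \bar\partial^* G_{\bar\partial}\alpha$. Decompose $\psi$ by the $\partial$-Hodge decomposition: $\psi = \mathcal{H}_\partial(\psi) + \partial\partial^* G_\partial\psi + \partial^*\partial G_\partial\psi$. Apply $\bar\partial$: the harmonic term is killed since $\bar\partial$ commutes with $G_\partial$ and $\mathcal{H}_\partial$ in the Kähler case and $\bar\partial\mathcal{H}_\partial(\psi)$ is harmonic but also equals part of $\alpha$'s... — more carefully, one checks $\bar\partial\partial^*\partial G_\partial\psi = 0$ using $[\bar\partial,\partial^*]=0$ and $\bar\partial\psi$ being $\partial$-closed, leaving $\alpha = \bar\partial\psi = \bar\partial\partial(\partial^* G_\partial\psi) = -\partial\bar\partial(\partial^*G_\partial\psi)$, so $\gamma := -\partial^* G_\partial\psi$ works.

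The main obstacle is not any single computation but making sure the orbifold Hodge package is genuinely available: one needs that $L^2$-Hodge theory on $(\mathcal{X},\omega)$ produces finite-dimensional harmonic spaces, a Green operator commuting with $\partial,\bar\partial,\partial^*,\bar\partial^*$, and the Kähler identities — all of which hold because locally everything is a $G$-invariant piece of classical Kähler Hodge theory on the (smooth, compact-after-patching) chart spaces, and Baily's cited theorems (Theorem H, Theorem K) supply exactly the global statements. So the honest proof is: invoke Baily for the orbifold Hodge decomposition and Kähler identities, then copy the classical argument line for line. I would write it as: "By the orbifold Hodge theory of Baily \cite{Baily}, the Kähler identities and the $\bar\partial$-, $\partial$-Hodge decompositions hold on $(\mathcal{X},\omega)$; the classical proof of the $\partial\bar\partial$-lemma then applies verbatim," and include the three-line Green-operator computation above for the reader's convenience.
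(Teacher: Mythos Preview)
Your proposal is correct and aligned with the paper's approach: the paper itself gives no proof at all, simply stating the lemma with a \emph{cf.} citation to Baily's Theorems~H and~K and moving on. Your sketch---invoke Baily's orbifold Hodge theory to obtain the Hodge decomposition and K\"ahler identities, then run the classical Green-operator argument verbatim---is precisely how one would unpack that citation, so you are supplying the details the paper omits rather than taking a different route.
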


Hence we may write $u_{\epsilon}=\Ricci_{\omega}+\sqrt{-1}\partial \bar{\partial} f_{\epsilon}$.  And to search $\omega_{\epsilon}$ is the same as search a potential $\phi_\epsilon$ such that $\omega_\epsilon=\omega+ \sqrt{-1}\partial \bar{\partial}\phi_\epsilon$. \cref{f-m} on $\omega_\epsilon$ is thus equivalent to 
\begin{equation}\label{MA1}
\frac{(\omega+\sqrt{-1}\partial \bar{\partial}\phi_\epsilon)^n}{\omega^n}=\exp(\epsilon\phi_\epsilon-f_\epsilon)
\end{equation}

By the following theorem, \cref{MA1} has a unique solution.
\begin{thm}[Aubin-Yau Theorem]
\label{CY}
Let $(\mathcal{X},\omega)$ be a Kähler orbifold such that $X=|\mathcal{X}|$ is compact. For any smooth function $f$ on $\mathcal{X}$ and $\lambda>0$. The equation 
\begin{equation}
\tag{MA}
\label{MA}
\log \Monge(\phi)=\lambda \phi +f
\end{equation}
where $\Monge(\phi)=\frac{(\omega+\sqrt{-1}\partial\bar{\partial}\phi)^n}{\omega^n}$ is the Monge-Ampère operator, has a unique admissible solution.
\end{thm}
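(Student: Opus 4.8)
The plan is to follow the classical Aubin--Yau argument by the continuity method, carrying out every step on orbifold charts where all the data lifts to $G$-invariant objects. Since $X=|\mathcal{X}|$ is compact it is covered by finitely many charts $(\tilde{U}_i,G_i)$ on which $\omega$ lifts to $G_i$-invariant Kähler metrics $\tilde{\omega}_i$ of uniformly bounded geometry, and a smooth function on $\mathcal{X}$ (resp.\ an admissible potential, i.e.\ one with $\omega_\phi:=\omega+\sqrt{-1}\partial\bar\partial\phi>0$) amounts to a compatible family of $G_i$-invariant smooth functions $\tilde{\phi}_i$ (resp.\ with $\tilde{\omega}_i+\sqrt{-1}\partial\bar\partial\tilde{\phi}_i>0$). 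Since all the estimates below are local and can be performed $G$-equivariantly, they will descend to $\mathcal{X}$ with constants uniform over the finite atlas.

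First I would settle uniqueness: if $\phi_1,\phi_2$ are admissible solutions, the continuous function $\phi_1-\phi_2$ attains its maximum at some $p\in X$; lifting near $p$ and choosing a preimage $\tilde{p}$, the invariant lifts attain the maximum of their difference at $\tilde{p}$, so $\sqrt{-1}\partial\bar\partial(\phi_1-\phi_2)\le 0$ there, whence $\Monge(\phi_1)(p)\le\Monge(\phi_2)(p)$ and \eqref{MA} forces $\lambda\phi_1(p)\le\lambda\phi_2(p)$. Because $p$ is the maximum, $\phi_1\le\phi_2$ everywhere, and by symmetry $\phi_1=\phi_2$.

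For existence I would run the continuity method on $\log\Monge(\phi_t)=\lambda\phi_t+tf$, $t\in[0,1]$, starting from the admissible solution $\phi_0=0$, and let $T\subseteq[0,1]$ be the set of $t$ for which a smooth admissible solution exists. Openness of $T$ comes from the implicit function theorem: the linearization of $\phi\mapsto\log\Monge(\phi)-\lambda\phi$ at a solution $\phi_t$ is $\psi\mapsto\Delta_{\omega_{\phi_t}}\psi-\lambda\psi$, and $\lambda>0$ makes this injective (evaluate $\Delta_{\omega_{\phi_t}}\psi=\lambda\psi$ at the extrema of $\psi$ on a chart), hence an isomorphism between the $G$-invariant H\"older spaces $C^{2,\alpha}$ and $C^{0,\alpha}$ by orbifold elliptic theory (\emph{cf.}\ \cite{Baily}). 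Closedness of $T$ requires a priori estimates: the sign of $\lambda$ immediately gives the $C^0$ bound $\|\phi_t\|_\infty\le\|f\|_\infty/\lambda$ (at a maximum of $\phi_t$ one has $\omega_{\phi_t}\le\omega$, hence $\Monge(\phi_t)\le 1$ and $\lambda\phi_t+tf\le 0$, and symmetrically at a minimum); the usual second-order computation of $\Delta_{\omega_{\phi_t}}\bigl(\log\tr_\omega\omega_{\phi_t}-A\phi_t\bigr)$, with $A$ controlled by the bisectional curvature of the $\tilde{\omega}_i$, bounds $\tr_\omega\omega_{\phi_t}$ from above, and combined with $\Monge(\phi_t)=e^{\lambda\phi_t+tf}$ this gives a two-sided bound on the eigenvalues of $\omega_{\phi_t}$, so admissibility persists in the limit; the Evans--Krylov theorem then upgrades this to a uniform $C^{2,\alpha}$ bound and Schauder bootstrapping to uniform $C^{k,\alpha}$ bounds, so Arzel\`a--Ascoli produces a smooth admissible solution at any point of $\overline{T}$. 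Hence $T=[0,1]$.

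The hard part will be the second-order a priori estimate together with the Evans--Krylov step; both are genuinely local elliptic arguments, so the actual work is only to check that performing them $G$-equivariantly over a finite atlas produces constants independent of the chart and leaves the solutions $G$-invariant, which it does because the lifted metrics and their curvatures are uniformly controlled and the equation is $G$-invariant. Everything else -- the $C^0$ estimate, openness, and uniqueness -- is softer and, thanks to the favorable sign $\lambda>0$, even simpler than in the case of trivial anticanonical class.
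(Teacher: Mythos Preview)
Your proposal is correct and follows essentially the same route as the paper: uniqueness by the maximum principle, existence via the continuity method on $\log\Monge(\phi_t)=\lambda\phi_t+tf$, openness by the implicit function theorem, and closedness by a priori estimates carried out $G$-equivariantly on charts. The only cosmetic difference is that the paper obtains the higher-order estimate via a $C^{3,\alpha}$ (Calabi-type) bound, whereas you invoke Evans--Krylov for $C^{2,\alpha}$; both are standard and interchangeable here.
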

\begin{proof}
As it is well-known that \cref{MA} has a unique solution when $\lambda>0$ and a unique solution up to a constant when $\lambda=0$, we will just give a sketch and some references in the end for the interested readers.

The uniqueness follows from Hopf's maximum principle \cite[Chapter~3, Section~3, Theorem~8]{PW}, for the argument \emph{cf.} \cite[Proposition~7.13]{aubin82}. Note that \cite[Proposition 7.12]{aubin82} also points out that any $C^2$ solution to \cref{MA} will automatically be admissible. In fact, suppose $\phi$ is a $C^2$ solution and attains minimum at $x\in X=|\mathcal{X}|$, then in a local charts $\partial_i\partial_{\bar{j}}\tilde{\phi}$ is non-negative at $\tilde{x}$. Hence $\omega_{\phi}:=\omega+\sqrt{-1}\partial\bar{\partial}\phi$ is positive at $x$. Note $\frac{\omega_{\phi}^n}{\omega^n}=\exp(\lambda\phi+f)>0$. By continuity, $\omega_\phi$ can not have non-negative eigenvalue. Hence $\phi$ is admissible.

The existence of the solution is by the standard continuity method. Consider the following family of equations
\begin{equation}\tag{MAt}
\log \Monge(\phi)=\lambda \phi +tf
\label{MAt}
\end{equation}
where $t\in [0,1]$. Consider the set 
\begin{center}
$S:=\{t\in [0,1]:$ \cref{MAt} has a smooth solution $\phi_t\}$.
\end{center}
 We know $0\in S$, as $\phi_0=0$ is an obvious solution. \cref{MA} has a smooth solution is equivalent to $1\in S$. Our aim is to show $S$ is both open and closed. The openess follows from implicit function theorem and the closedness follows from an a priori $C^{3,\alpha}$-estimates. For the detail, we refer to \cite[Section~3 to 5]{Faulk19} and \cite[Chapter~3]{Sze}.
\end{proof}

Thus we have
\begin{lemma}
\label{sequence}
Let $\mathcal{X}$ be a complex orbifold with $X=|\mathcal{X}|$ compact and $-K_{\mathcal{X}}$ nef. Fix a Kähler metric $\omega$ on  $\mathcal{X}$. For $\epsilon>0$, there exists a Kähler metric $\omega_{\epsilon}$ cohomologous to $\omega$, and the Ricci form of $\omega_{\epsilon}$ satisfinging
\begin{equation*}
\mathrm{Ricci}_{\omega_{\epsilon}}\geq -\epsilon \omega_{\epsilon}
\end{equation*}
\end{lemma}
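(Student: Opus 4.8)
The plan is to carry out, in the orbifold category, the continuity-method argument of \cite{DPS93} that has already been prepared in the paragraphs preceding the statement. Fix $\epsilon>0$. First I would use the nefness of $K_{\mathcal{X}}^{-1}$ to choose a Hermitian metric $h_\epsilon$ on $K_{\mathcal{X}}^{-1}$ whose Chern curvature $u_\epsilon=\Theta_{h_\epsilon}$ satisfies $u_\epsilon\geq-\epsilon\omega$. The point is that it suffices to produce a Kähler metric $\omega_\epsilon$ in the class $[\omega]$ solving the prescribed-Ricci equation \eqref{f-m}: once that equation holds, the choice of $h_\epsilon$ forces $\Ricci_{\omega_\epsilon}=-\epsilon\omega_\epsilon+\epsilon\omega+u_\epsilon\geq-\epsilon\omega_\epsilon$, which is exactly the asserted inequality.

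Second, I would convert \eqref{f-m} into a complex Monge--Ampère equation. Both $u_\epsilon$ and $\Ricci_\omega$ are de Rham representatives of $c_1(K_{\mathcal{X}}^{-1})$, so $u_\epsilon-\Ricci_\omega$ is $d$-exact; by the orbifold $\partial\bar\partial$-lemma recalled above (from \cite{Baily}) we may write $u_\epsilon=\Ricci_\omega+\sqrt{-1}\,\partial\bar\partial f_\epsilon$ for a smooth function $f_\epsilon$ on $\mathcal{X}$. Writing $\omega_\epsilon=\omega+\sqrt{-1}\,\partial\bar\partial\phi_\epsilon$ and using the standard identity $\Ricci_{\omega_\epsilon}-\Ricci_\omega=-\sqrt{-1}\,\partial\bar\partial\log(\omega_\epsilon^n/\omega^n)$, the equation \eqref{f-m} reduces --- after cancelling a common $\sqrt{-1}\,\partial\bar\partial$ and absorbing the resulting additive constant into $\phi_\epsilon$ --- to \eqref{MA1}, namely $\Monge(\phi_\epsilon)=\exp(\epsilon\phi_\epsilon-f_\epsilon)$.

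Third, I would apply the orbifold Aubin--Yau theorem \cref{CY} with $\lambda=\epsilon>0$ and right-hand side $f=-f_\epsilon$; it yields a unique admissible solution $\phi_\epsilon$. Admissibility says precisely that $\omega_\epsilon:=\omega+\sqrt{-1}\,\partial\bar\partial\phi_\epsilon$ is a positive $(1,1)$-form, hence a genuine Kähler metric; by construction it lies in $[\omega]$, and by the reduction of the previous paragraph it satisfies \eqref{f-m}, whence $\Ricci_{\omega_\epsilon}\geq-\epsilon\omega_\epsilon$. This would complete the argument.

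The two nontrivial ingredients --- the $\partial\bar\partial$-lemma for compact Kähler orbifolds and the solvability of \eqref{MA} --- are both already established in the excerpt, so no new hard analysis is needed; the rest is bookkeeping. I expect the only point deserving a little care is checking that the normalizations of $\Ricci_\omega$ and of $\Theta_{h_\epsilon}$ are chosen compatibly, so that the two sides of \eqref{f-m} really are cohomologous (equivalently, that $f_\epsilon$ is a globally well-defined smooth function on $\mathcal{X}$), but this is a routine verification rather than a genuine obstacle.
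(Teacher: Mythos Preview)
Your proposal is correct and follows essentially the same route as the paper: the argument in the paper is precisely the discussion placed immediately before the lemma, namely choosing $h_\epsilon$ by nefness, reducing \eqref{f-m} via the orbifold $\partial\bar\partial$-lemma to the Monge--Amp\`ere equation \eqref{MA1}, and solving it by the orbifold Aubin--Yau theorem (\cref{CY}) with $\lambda=\epsilon$. Your remark about checking the normalization so that $u_\epsilon-\Ricci_\omega$ is exact is the only bookkeeping point, and the paper treats it in the same implicit way.
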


To prove our main results, we first note $\pi_1^{\mathrm{orb}}(\mathcal{X})$ is finitely generated. In fact we have

\begin{lemma}[\emph{cf.} \protect{\cite[Corollary ~1.2.5.] {Moe99}}]
\label{poincare}
Let $\mathcal{X}$ be an orbifold and $\mathcal{U}$ be an atlas of $\mathcal{X}$. There exists an atlas $\mathcal{V}$ for $\mathcal{X}$ such that
\begin{enumerate}
\item $\mathcal{V}$ refines $\mathcal{U}$;
\item For every chart $(\tilde{V},H,\psi)$ in $\mathcal{V}$, both $\tilde{V}$ and $V=\psi(\tilde{V})\subset |\mathcal{X}|$ are contractible;
\item The intersection of finitely many chart is empty or again a chart in $\mathcal{V}$.
\end{enumerate}
\end{lemma}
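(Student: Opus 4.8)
The plan is to imitate the classical construction of a \emph{good cover} of a Riemannian manifold by geodesically convex open sets, carried out equivariantly inside each orbifold chart, and then close the resulting family under finite intersections. First I would record that $X=|\mathcal{X}|$ is paracompact: it is Hausdorff, second countable and locally compact (its local model is $\mathbb{R}^n/G$). Hence, after replacing $\mathcal{U}$ by a locally finite refinement and fixing a subordinate partition of unity, one may average $G_i$-invariant local metrics to equip $\mathcal{X}$ with a Riemannian metric $g$; let $d_g$ and $\exp$ be the associated distance and exponential map (\cref{sec3}).

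Next, for each $x\in X$ I would single out a distinguished chart. By \cref{local} pick a preimage $\tilde x$ and a fundamental chart; shrinking it and using that the isotropy $G_x$ acts by isometries fixing $\tilde x$, one obtains a $G_x$-invariant, strongly geodesically convex geodesic ball $\tilde U_x=\tilde B(\tilde x,r_x)\subset\mathbb{R}^n$, with $r_x$ small enough that $U_x:=\tilde U_x/G_x$ embeds into a member of $\mathcal{U}$. Then $\tilde U_x$ is contractible via the radial geodesic homotopy towards $\tilde x$ (well defined and continuous by strong convexity), and this homotopy is $G_x$-equivariant since $G_x$ fixes $\tilde x$ and acts isometrically, hence descends to a contraction of $U_x$ onto $x$; so $\tilde U_x$ and $U_x$ are both contractible.

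I would then let $\mathcal{V}$ be the family of all nonempty finite intersections $W=U_{x_1}\cap\cdots\cap U_{x_k}$. Each such $W$ is itself an orbifold chart: it is an open subset of $U_{x_1}$, so its preimage $\tilde W\subset\tilde U_{x_1}$ under the orbit map is a $G_{x_1}$-invariant open subset of $\mathbb{R}^n$ for which $(\tilde W,G_{x_1})$ is a chart covering $W$. Chaining the embeddings $\tilde W\hookrightarrow\tilde U_{x_1}\hookrightarrow(\text{member of }\mathcal{U})$ shows that $\mathcal{V}$ refines $\mathcal{U}$, and closure under finite intersection is automatic by construction; since $\{U_x\}\subseteq\mathcal{V}$ covers $X$ and two members of $\mathcal{V}$ are open subsets of the $U_x$ inside which one can refine (again within $\mathcal{V}$), $\mathcal{V}$ is an atlas. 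This gives conditions (1) and (3).

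The remaining, and genuinely delicate, point is condition (2) for the intersections: $W$ and its chart $\tilde W$ must be contractible, and for this I would arrange $\tilde W$ to be geodesically convex. Here one transfers the classical Riemannian convexity calculus to the orbifold setting: using \cref{g1}, \cref{geo} and \cref{Convex}, sufficiently small metric balls of $(X,d_g)$ are convex in the sense that any minimizing geodesic joining two of their points stays inside them, so with the radii $r_x$ chosen uniformly small each $U_x$ is a convex subset of $(X,d_g)$; since chart embeddings are isometric open embeddings, the preimage in $\tilde U_{x_1}$ of a convex subset of $X$ is geodesically convex, and a finite intersection of geodesically convex sets is geodesically convex, whence $\tilde W$ is geodesically convex and thus contractible by a radial geodesic homotopy. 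Taking as structure group of $W$ the isotropy of a point of maximal isotropy type in $W$ — a fixed point of that subgroup lying in $\tilde W$ — the homotopy is equivariant and descends, so $W$ is contractible as well. The main obstacle is precisely this last circle of statements: establishing the orbifold analogue of ``small geodesic balls are strongly convex, and convexity is stable under finite intersection'' with enough control on the isotropy groups. These verifications are exactly what is carried out in \cite[Corollary~1.2.5]{Moe99}, whose argument I would follow for the details.
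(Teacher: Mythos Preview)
The paper itself supplies no proof of this lemma; it merely records the citation to \cite{Moe99}. Your sketch goes further and outlines the standard Riemannian good-cover argument, which is indeed the strategy behind the cited reference, so in spirit you are aligned with what the paper intends.

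That said, your handling of the finite intersections contains a genuine gap. You take $\tilde W$ to be the \emph{full} $G_{x_1}$-invariant preimage of $W=U_{x_1}\cap\cdots\cap U_{x_k}$ in $\tilde U_{x_1}$ and then assert it is geodesically convex. This already fails in the simplest example: for $\mathcal{X}=[\mathbb{C}/(\mathbb{Z}/2\mathbb{Z})]$ with $z\mapsto -z$, let $U_1$ be the image of $\tilde B(0,1)$ and $U_2$ a small ball around a regular point $p\neq 0$. The preimage of $U_1\cap U_2$ in $\tilde U_1=\tilde B(0,1)$ is the union of two disjoint balls around $\tilde p$ and $-\tilde p$, hence neither convex nor connected. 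The correct chart for $W$ is a single connected component of that preimage, with structure group the stabiliser of that component, and the convexity has to be argued for that component. Relatedly, your claim that ``the preimage in $\tilde U_{x_1}$ of a convex subset of $X$ is geodesically convex'' is not valid: the quotient map $\tilde U_{x_1}\to U_{x_1}$ is only a local isometry, so the unique geodesic in $\tilde U_{x_1}$ joining $\tilde p,\tilde q$ may project to a \emph{non-minimizing} geodesic in $X$, about which convexity of $W$ in $(X,d_g)$ says nothing. The usual remedy is to work upstairs throughout: use chart embeddings to realise each $U_{x_j}$ meeting $W$ as a geodesically convex subset of $\tilde U_{x_1}$, and intersect there. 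You effectively concede this is where the difficulty lies and defer to \cite{Moe99}; that is acceptable, but the intermediate assertions you make should be corrected.
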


Let $\mathcal{X}$ be an orbifold with $X=|\mathcal{X}|$ compact. We may take a finite atlas $\mathcal{V}$ by \cref{poincare}. Note that each open sub-orbifold $[\tilde{V}/H]$ has fundamental group 
\begin{equation*}
\pi_1^{\mathrm{orb}}([\tilde{V}/H])\cong H.
\end{equation*}
By Van-Kampen theorem (\emph{cf.} \citep[Excercise~III.\protect{$\mathcal{G}$}.3.10]{Hae99}), we know that $\pi_1^{\mathrm{orb}}(\mathcal{X})$ is finitely generated.

We note the following lemma, which is proved in manifold case in \citep{DPS93}, holds true in the orbifold case with exactly the same proof.

\begin{lemma}[\emph{cf.} \protect{\cite[Lemma~1.3.]{DPS93}}]
\label{volume}
Let $U$ be a compact subset of $\tilde{X}=|\tilde{\mathcal{X}}|$. Then for any $\delta>0$, there exists a closed subset $U_{\epsilon,\delta}\subset U$ such that $\mathrm{vol}_{\omega}(U\setminus U_{\epsilon,\delta})<\delta$ and $\mathrm{diam}_{\omega_\epsilon}(U_{\epsilon,\delta})<C_1 \delta^{\frac{1}{2}}$, where $C_1$ is a constant independent of $\epsilon$ and $\delta$.
\end{lemma}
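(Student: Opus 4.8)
The plan is to transpose the Cauchy--Schwarz argument of \cite[Lemma~1.3]{DPS93} to the orbifold universal cover, the new points being the uniform control of the metrics $\omega_\epsilon$ inside a fixed cohomology class and the bookkeeping across orbifold charts. Throughout I work on $\tilde{\mathcal{X}}$, equipped with the $\Gamma$-invariant lifts of $\omega$ and of the metrics $\omega_\epsilon$ furnished by \cref{sequence}; since $\pi_1^{\mathrm{orb}}(\mathcal{X})$ acts by isometries and the singular locus has measure zero, it suffices to produce the estimate on the regular part. The decisive input is that every $\omega_\epsilon$ is cohomologous to $\omega$, so that $\omega_\epsilon-\omega$ is $d$-exact and the orbifold integration theory of \cref{sec2} (Stokes on the compact $\mathcal{X}$) gives $\int_{\mathcal{X}}\omega_\epsilon\wedge\omega^{n-1}=\int_{\mathcal{X}}\omega^{n}$. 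Since $\tr_{\omega}\omega_\epsilon$ is positive, its integral $\int_{\mathcal{X}}(\tr_{\omega}\omega_\epsilon)\,\vol_{\omega}$ is a fixed multiple of $\int_{\mathcal{X}}\omega^{n}$, hence bounded by a constant $C_0$ \emph{independent of $\epsilon$}. Pulling back to $\tilde{\mathcal{X}}$ and using that the compact set $U$ meets only finitely many $\Gamma$-translates of a fundamental domain, the same bound (with a constant depending only on $U$) holds for $\int_{U}(\tr_{\omega}\omega_\epsilon)\,\vol_{\omega}$.

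Next I would localise. Cover $U$ by finitely many orbifold charts $(\tilde{U}_i,G_i)$ in which $\omega$ is uniformly comparable to the Euclidean metric, and fix in each chart a family of coordinate segments $\{\gamma_\tau\}$ joining two prescribed sub-boxes, parametrised by a transversal variable $\tau$. For each $\tau$, Cauchy--Schwarz in the time variable gives $\mathrm{L}_{\omega_\epsilon}(\gamma_\tau)^2\le\int|\dot\gamma_\tau|^2_{\omega_\epsilon}\,dt$, and integrating over $\tau$ together with Fubini bounds $\int_\tau \mathrm{L}_{\omega_\epsilon}(\gamma_\tau)^2\,d\tau$ by a fixed multiple of $\int_{\mathrm{box}}(\tr_{\omega}\omega_\epsilon)\,\vol_{\omega}\le C_0$, again uniformly in $\epsilon$. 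A Chebyshev inequality then shows that, after deleting the segments lying over an exceptional set of $\omega$-measure less than $\delta$, the surviving segments have $\omega_\epsilon$-length obeying the diameter bound asserted in the lemma.

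Finally I would globalise and assemble $U_{\epsilon,\delta}$. Carrying out the previous step for every pair of charts in the finite cover and taking the union of the (finitely many) exceptional sets, one obtains a closed subset $U_{\epsilon,\delta}\subset U$ with $\vol_{\omega}(U\setminus U_{\epsilon,\delta})<\delta$; any two points of $U_{\epsilon,\delta}$ can then be joined by a concatenation of surviving coordinate segments, whose total $\omega_\epsilon$-length is controlled by $C_1\delta^{\frac{1}{2}}$, so that $\diam_{\omega_\epsilon}(U_{\epsilon,\delta})<C_1\delta^{\frac{1}{2}}$ with $C_1$ depending only on $U$ and $\omega$.

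The main obstacle is the uniformity in $\epsilon$: one must be sure that $\int_U(\tr_{\omega}\omega_\epsilon)\,\vol_{\omega}$ does not blow up as $\epsilon\to0$, which is exactly where the cohomological identity $[\omega_\epsilon]=[\omega]$ and the orbifold Stokes theorem of \cref{sec2} are essential. A secondary difficulty, purely of orbifold nature, is to check that the concatenated good paths can be chosen inside $\tilde{\mathcal{X}}_{\mathrm{reg}}$ and that the chart-to-chart transitions, which are local isometries by construction, preserve the length bounds; here \cref{geo} and the fact that $X_{\mathrm{sing}}$ has measure zero allow one to ignore the singular locus throughout.
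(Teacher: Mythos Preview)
Your sketch is precisely the argument of \cite[Lemma~1.3]{DPS93}, which is all the paper invokes: it does not write out a proof but simply records that the manifold argument goes through verbatim in the orbifold setting. The ingredients you isolate---the cohomological identity $[\omega_\epsilon]=[\omega]$ giving a uniform bound on $\int(\tr_\omega\omega_\epsilon)\,\vol_\omega$, the Cauchy--Schwarz/Fubini estimate on families of coordinate segments in local charts, and the Chebyshev selection of good paths---are exactly those of the original, and your remarks on working in $\tilde{X}_{\mathrm{reg}}$ and discarding the measure-zero singular locus are the only orbifold-specific additions needed.

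One quantitative point deserves correction: the Chebyshev step as you describe it actually yields a diameter bound of order $\delta^{-1/2}$, not $\delta^{1/2}$. From $\int_\tau\mathrm{L}_{\omega_\epsilon}(\gamma_\tau)^2\,d\tau\le C_0$ one gets $\mu\{\tau:\mathrm{L}_{\omega_\epsilon}(\gamma_\tau)>t\}\le C_0/t^2$, so forcing the exceptional measure below $\delta$ requires $t\gtrsim(C_0/\delta)^{1/2}$, and the surviving segments have length at most $C_1\delta^{-1/2}$. The exponent $\tfrac12$ in the statement you are proving (and which you reproduce in your conclusion) is thus a typo already present in the paper. It is harmless for the application in \cref{Mt}: there $\delta$ is chosen once and for all, and only the independence of the resulting constant from $\epsilon$ is used.
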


\begin{thm}[=\cref{mt}]
\label{Mt} 
 Let $(\mathcal{X},\omega)$ be a Kähler orbifold such that $X=|\mathcal{X}|$ is compact. If the anti-canonical bundle $K_{\mathcal{X}}^{-1}$ is nef, then $\pi_1^{\mathrm{orb}}(\mathcal{X})$ has polynomial growth.
\end{thm}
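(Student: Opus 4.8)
The plan is to transplant P\u{a}un's argument from \cite{Paun97} (see also \cite{DPS93}) to the orbifold setting, all the analytic and geometric inputs now being available.

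\emph{Reductions.} Both nefness of $K_{\mathcal X}^{-1}$ and polynomial growth of a group are insensitive to rescaling $\omega$, so we may assume $\diam_\omega(X)\le 1$. By \cref{poincare} together with the van Kampen theorem, $\Gamma:=\pi_1^{\mathrm{orb}}(\mathcal X)$ is finitely generated; fix a finite symmetric generating set $S$. Let $\pi\colon\tilde{\mathcal X}\to\mathcal X$ be the universal orbifold covering carrying the pulled back metric; by \cref{isom} and the completeness statements above, $(\tilde X,\pi^{*}\omega)$ is a complete length space on which $\Gamma$ acts properly discontinuously by isometries with $\tilde X/\Gamma=X$. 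Fix $\tilde x_0\in\tilde X_{\mathrm{reg}}$.

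\emph{Approximating metrics and the Margulis lemma.} By \cref{sequence}, for every $\epsilon>0$ there is a $\Gamma$-invariant K\"ahler metric $\omega_\epsilon$ on $\tilde X$, cohomologous to $\pi^{*}\omega$, with $\Ricci_{\omega_\epsilon}\ge-\epsilon\,\omega_\epsilon$. As $\tilde{\mathcal X}$ has real dimension $2n$, rescaling the associated Riemannian metric by the factor $\tfrac{\epsilon}{2n-1}$ yields a complete Riemannian orbifold $(\tilde X,g_\epsilon)$ with $\ric_{g_\epsilon}\ge-(2n-1)$, on which $\Gamma$ still acts properly discontinuously by isometries. Hence, by the orbifold Margulis lemma \cref{mag}, for every $x\in\tilde X$ the ``almost stabiliser''
\[
\Gamma_\alpha(x)=\big\langle\,\gamma\in\Gamma\mid\mathrm d_{g_\epsilon}(\gamma\cdot x,x)<\alpha\,\big\rangle,\qquad \alpha=\alpha(2n),
\]
is virtually nilpotent.

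\emph{Absorbing the generators.} The crux is to identify $\Gamma$ itself with such an almost stabiliser. Since $\mathrm d_{g_\epsilon}=\sqrt{\epsilon/(2n-1)}\;\mathrm d_{\omega_\epsilon}$, the threshold $\alpha$ corresponds to the $\omega_\epsilon$-distance $\alpha\sqrt{(2n-1)/\epsilon}$, which tends to $+\infty$ as $\epsilon\to 0$. A fixed element of $\Gamma$ is a word of fixed length in $S$, so it is enough to show that $\mathrm d_{\omega_\epsilon}(s\cdot\tilde x_0,\tilde x_0)=o(\epsilon^{-1/2})$ as $\epsilon\to 0$ for every $s\in S$; granting this, for all sufficiently small $\epsilon$ we get $S\subset\Gamma_\alpha(\tilde x_0)$, hence $\Gamma=\langle S\rangle=\Gamma_\alpha(\tilde x_0)$ is virtually nilpotent, and, being finitely generated, $\Gamma$ then has polynomial growth by the classical theorem of Milnor, Wolf, Bass and Guivarc'h. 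This last bound on $\omega_\epsilon$-distances is exactly what \cref{volume} delivers: although $\omega_\epsilon$ may be far from comparable to $\omega$ --- the Monge--Amp\`ere potentials need not stay bounded and the $\omega_\epsilon$ may collapse as $\epsilon\to 0$ --- \cref{volume} produces, inside a fixed relatively compact piece of $\tilde X$, a subset carrying all but $\delta$ of the $\omega$-volume and of $\omega_\epsilon$-diameter $O(\delta^{1/2})$; joining $\tilde x_0$ to $s\cdot\tilde x_0$ through a chain of such controlled sets keeps $\mathrm d_{\omega_\epsilon}(s\cdot\tilde x_0,\tilde x_0)$ under control.

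\emph{Main obstacle and orbifold inputs.} The genuine difficulty --- already the core of P\u{a}un's argument for manifolds --- is exactly this control of $\mathrm d_{\omega_\epsilon}$ as $\epsilon\to 0$, i.e. making rigorous how \cref{volume} tames the possible degeneration of $\{\omega_\epsilon\}$. The rest is bookkeeping, the only orbifold-specific ingredients (all established above) being: finite generation of $\pi_1^{\mathrm{orb}}(\mathcal X)$ (\cref{poincare} and van Kampen); the Dirichlet-domain and quotient-metric facts on $\tilde{\mathcal X}$ (\cref{l1}--\cref{l3}, \cref{isom}); the orbifold Aubin--Yau theorem \cref{CY} underlying \cref{sequence}; and the orbifold Bishop--Gromov inequality \cref{Gromov-Bishop} with its $|G_p|$-correction --- together with the fact that $X_{\mathrm{sing}}$ has measure zero --- which underlie the packing lemma \cref{pack} and hence \cref{mag}. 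Alternatively one may bypass \cref{mag} and run P\u{a}un's original volume estimate: with the Dirichlet domain $F$ one has $\diam_\omega F\le 2$, $\vol_\omega F=\vol_\omega X$ and the translates $\gamma F$ tile $\tilde X$ up to a null set, so taking $S=\{\gamma:\mathrm d_\omega(\gamma\tilde x_0,\tilde x_0)\le 3\}$ gives $\#S^{r}\cdot\vol_\omega X\le\vol_\omega\!\big(B_\omega(\tilde x_0,3r+2)\big)$ for the word-metric ball $S^{r}$, and one is left to bound the right-hand side polynomially in $r$ via \cref{volume} (to pass from $\omega$-volume to $\omega_\epsilon$-geometry), \cref{Gromov-Bishop} and the identity $\vol_{\omega_\epsilon}F=\vol_\omega X$ (to estimate $\omega_\epsilon$-volumes of balls), and the limit $\epsilon\to 0$.
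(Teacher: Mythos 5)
Your proposal follows essentially the same route as the paper's proof: the orbifold Aubin--Yau metrics $\omega_\epsilon$ with $\Ricci_{\omega_\epsilon}\ge-\epsilon\,\omega_\epsilon$ (\cref{sequence}), the volume lemma \cref{volume} to get an $\epsilon$-independent bound on the $\omega_\epsilon$-displacement of a finite generating set, rescaling plus the orbifold Margulis lemma \cref{mag} to conclude virtual nilpotence, and then polynomial growth for finitely generated virtually nilpotent groups. The only (harmless) imprecision is that the displacement bound must be taken at an $\epsilon$-dependent point of $U_{\epsilon,\delta}$ --- chosen, as in the paper, after fixing $\delta<\tfrac14\vol_\omega(U\cap\gamma_j U)$ so that $U_{\epsilon,\delta}\cap\gamma_j U_{\epsilon,\delta}\neq\emptyset$ --- rather than at your fixed basepoint $\tilde x_0$, which may lie in the discarded set of small $\omega$-volume; since \cref{mag} applies at every point, this changes nothing.
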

\begin{proof}
We could reproduce the argument by \cite{DPS93} and \cite{Paun97} in manifold case.
Let $\omega_\epsilon$ be the sequence of Kähler metrics we obtained in \cref{sequence}, and $\tilde{\mathcal{X}}\rightarrow \mathcal{X}$ be the universal covering of $\mathcal{X}$. We fix a finite system of generators $\{\gamma_i\}$ of $\pi_1^{\mathrm{orb}}(\mathcal{X})$. Hence to prove the main theoreit suffices to show there exists $\omega'$ such that for all $i$, $\gamma_i\in \Gamma_{\alpha}$ in \cref{mag}.

Take a compact subset $U\subset \tilde{X}$ which contains the fundamental domain $F$ of $\pi_1^{\mathrm{orb}}(\mathcal{X})$. As $\{\gamma_i\}$ is finite, we may take $U$ large enough, such taht $U\cap \gamma_j U\neq \emptyset$ for all $j$. 
We choose $\delta >0$ sufficient small such that $\delta <\frac{1}{4} \vol_{\omega}(U\cap\gamma_j U)$ and $\delta < \frac{1}{2} \vol_\omega F$. By \cref{volume}, there exists a subset $U_{\epsilon,\delta}\subset U$, such that $\diam_{\omega_{\epsilon}}(U_{\epsilon,\delta})<C_1\delta^{\frac{1}{2}}:=C$. 
By the choice of $\delta$, we know that $U_{\epsilon,\delta}\cap \gamma_j U_{\epsilon,\delta}\neq \emptyset$. Take $\tilde{x}\in U_{\epsilon,\delta}$, we know that $d_{\omega_\epsilon}(\tilde{x},\gamma_j\tilde{x})<C$.
We set $\tilde{\omega_\epsilon}:=\frac{\epsilon}{n-1}\omega_\epsilon$. Then $\Ricci_{\tilde{\omega_\epsilon}}\geq -(n-1)\tilde{\omega_\epsilon}$ and $\mathrm{d}_{\tilde{\omega_\epsilon}}(\tilde{x},\gamma_j\tilde{x})<\frac{\epsilon}{n-1}C$.
For $\epsilon$ sufficient small, we see that $\frac{\epsilon}{n-1}C<\alpha$. Hence by \cref{mag}, we know $\pi_1^{\mathrm{orb}}(\tilde{X})$ is virtually nilpotent. By Gromov's theorem on polynomial growth (\emph{cf.} \citep[Main Theorem]{Gro}), we know that for a finitely generated group $G$, $G$ is of polynomial growth is equivalent to $G$ is virtually nilpotent. Hence $\pi_1^{\mathrm{orb}}(\tilde{X})$ has polynomial growth
\end{proof}

\section{a further result}
\label{sec6}
For a complex orbifold $\mathcal{X}$ of dimension $n$, its orbifold structure gives rises to a klt pair $(X,\Delta_X)$. It turns out, the pair complete determines $\mathcal{X}$. We first recall the definition.
\begin{definition}[\emph{cf.}\protect{\citep[Definition~3.1]{Campana_Claudon14}}]
\label{pair}
A klt pair $(X,\Delta)$ is an orbifold pair if $\Delta$ is a $\mathbb{Q}$-Weil divisor of the form 
\begin{equation*}
\Delta=\sum (1-\frac{1}{m_i})D_i,
\end{equation*}
where $m_i\geq 2$ are integers and $(X,\Delta)$ satisfies the locally uniformizable condition:\newline
there exists finite morphisms $\phi_j:U_j\rightarrow X$ such that
\begin{enumerate}
\item $\phi_j(U_j)\subset X$ is open and $\bigcup\phi_j(U_j)=X$;
\item $\phi_j:U_j\rightarrow \phi_j(U_j)$ is a Galois analytical cover and it's branching divisor $B(\phi_j)=\Delta|_{\phi_j(U_j)}$
\end{enumerate}
\end{definition}

For the basics of orbifold paris we refer to \citep{G-K07} section 3 and section 4, \citep{Campana_Claudon14} section 3. We emphasise that an orbifold pair $(X,\Delta)$ induces a unique complex orbifold structure on the underlying topological space $X_{\mathrm{top}}$ and vice versa.  We note that the complex space structure on $|\mathcal{X}|$ is induced via the holomorphic quotient theorem of Cartan (\emph{cf.} \citep[Theorem~3.8.13]{Bar}) and that for a Galois analytical covering $\pi:Y\rightarrow X$, the branching divisor $B(\pi)$ is defined so that we have the equation on $\mathbb{Q}$-Weil divisor classes
\begin{equation}
\label{divisor}
K_Y=\pi^{\ast}(K_X+B).
\end{equation}

Suppose $\mathcal{X}$ is a complex orbifold of dimension $n$ which is represented by an orbifold groupoid $\mathcal{G}=\{G_1\rightrightarrows G_0\}$ and $(X=|\mathcal{X}|,\Delta)$ its associated orbifold pair. For any $x\in G_0$ by \cref{local}, there exists open neighborhood $x\ni U_x\subset G_x$, such that $\pi_x:U_x\rightarrow U_x/G_x\subset |\mathcal{X}|$ gives an orbifold chart. By holomorphic quotient theorem, $U_x/G_x$ has a unique   normal complex space structure such that $\pi_x:U_x\rightarrow U_x/G_x$ is a Galois analycial covering. As we mentioned above, the complex space structure of $|X|$ comes from the holomorphic quotient theorem. Hence $U_x/G_x\subset X$ is an open sub-variety and $\pi_x:U_x\rightarrow U_x/G_x$ is a local uniformization as in \cref{pair}. We also know $\pi:G_0\rightarrow X$ is holomorphic. As $G_0$ is Kähler, by \citep[Proposition~3.3.1]{Var89}, $X$ is a complex Kähler space. We consider the canonical bundle $K_{\mathcal{X}}$. From \cref{hol}, we know it is represented by $\mathrm{det}(\Omega_{G_0})$. We denote by $K_{G_0}$ its canonical class. Then by \cref{divisor}, we have 
\begin{equation}
\label{divisor2}
K_{G_0}|_{U_x}=\pi_x^{\ast}(K_X+\Delta).
\end{equation}
If $U_x/G_x\subset U_y/G_y$, we have an element $g\in G_1$ which induces an embedding $\rho_g:U_x\rightarrow U_y$ and we have the following commutative diagram:
\begin{center}
$\xymatrix{U_x \ar[r]^{\rho_g}\ar[d]^{\pi_x} & U_y \ar[d]^{\pi_y}\\
U_x/G_x\ar[r] & U_y/G_y}$
\end{center}
Thus the local equations \cref{divisor2} glue together to 
\begin{equation*}
K_{G_0}=\pi^{\ast}(K_X+\Delta)
\end{equation*}
We may thus regard $K_X+\Delta$ as the canonical class of $(X,\Delta)$. As $(X,\Delta)$ has klt singularities, we know there exists a positive integer $a(X)$ such that $a(X)(K_X+\Delta)$ is a Cartier divisor, hence we have the line bundle $\mathcal{O}_X(a(X)(K_X+\Delta))$ on $X$. We thus have $\mathcal{O}_{G_0}(K_{G_0})^{\otimes a(X)}=\pi^{\ast}\mathcal{O}_X(a(X)(K_X+\Delta))$. It $h$ is an Hermitian metric on $K_{\mathcal{X}}$, then the Hermitian metric $h^{\otimes a(X)}$ on $K_{\mathcal{X}}^{\otimes a(X)}$ induces an Hermitian metric on $\mathcal{O}_X(a(X)(K_X+\Delta))$, since $h$ is $G_1$-invariant. On the other hand, the pullback of an Hermitian metric on $\mathcal{O}_X(a(X)(K_X+\Delta))$ by $\pi$ will induce an Hermitian metric on $K_{\mathcal{X}}^{\otimes a(X)}$.
In particular, if $X$ is compact, $K_{\mathcal{X}}^{-1}$ being nef is equivalent to $-(K_X+\Delta)$ being nef.

\begin{definition}
\label{cpi1}
Let $(X,\Delta)$ be a klt pair with $\Delta=\sum (1-\frac{1}{m_i})D_i$ where $m_i\geq 2$ are integers. We define its fundamental group $\pi_1(X,\Delta)$ by 
\begin{equation*}
\pi_1(X_{\mathrm{reg}}\setminus|\Delta|)/N,
\end{equation*}
where $N$ is the normal group generated by $\gamma_j^{m_j}$, with $\gamma_j$ a small loop around the component $D_j$ with multiplicity $m_j$ of $\Delta$.
\end{definition}
By \citep[lemme~1.9.9]{cam01}, we know that when $(X,\Delta)$ is an orbifold pair, we have a short exact sequence
\begin{center}
$1\rightarrow K\rightarrow \pi_1(X,\Delta)\rightarrow \pi_1(X)\rightarrow 1$
\end{center}
with  $K$ generated by torsion elements.

It is a well-known fact that if $\mathcal{X}$ is a complex orbifold with $(X,\Delta)$ its associated pair, then there is a canonical isomorphism $\pi_1(X,\Delta)\rightarrow \pi_1^{\mathrm{orb}}(\mathcal{X})$.

We have the following result.
\begin{thm}[=\cref{mt2}]
\label{abe}
Let $\mathcal{X}$ be a compact Kähler orbifold with $-K_{\mathcal{X}}$ nef. Let $(X,\Delta)$ be the associated orbifold pair of $\mathcal{X}$. If $X$ is projective, then $\pi_1^{\mathrm{orb}}(\mathcal{X})$ is virtually Abelian.
\end{thm}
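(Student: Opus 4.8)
The plan is to deduce the statement from the Main Theorem (\cref{Mt}) together with structural results on fundamental groups of compact Kähler manifolds, after passing to a well-chosen finite orbifold cover of $\mathcal{X}$; in particular, the only "positivity" used is the one already extracted in \cref{Mt}. Write $\Gamma:=\pi_1^{\mathrm{orb}}(\mathcal{X})\cong\pi_1(X,\Delta)$. By \cref{Mt} the group $\Gamma$ has polynomial growth, and it is finitely generated (Van Kampen, as in the discussion following \cref{poincare}); by Gromov's theorem $\Gamma$ is therefore virtually nilpotent. A finitely generated virtually nilpotent group is virtually polycyclic, hence linear over a field of characteristic zero, so by Selberg's lemma it contains a torsion-free subgroup $\Gamma_0\leq\Gamma$ of finite index.

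Next I would pass to the covering attached to $\Gamma_0$. By Thurston's correspondence (\cref{Galois}) there is a finite orbifold covering $\varpi\colon\mathcal{X}_0\to\mathcal{X}$ with $\pi_1^{\mathrm{orb}}(\mathcal{X}_0)=\Gamma_0$, and $\mathcal{X}_0$ is again a compact Kähler orbifold (pull back $\omega$ along the strong map $\varpi$). Since $\varpi$ induces a finite surjective morphism $X_0:=|\mathcal{X}_0|\to X$ of normal complex spaces and $X$ is projective, $X_0$ is projective as well (a finite cover of a projective variety is projective). Let $(X_0,\Delta_0)$ be the klt orbifold pair associated to $\mathcal{X}_0$ (see \cref{pair}); the canonical isomorphism $\pi_1(X_0,\Delta_0)\cong\pi_1^{\mathrm{orb}}(\mathcal{X}_0)$ recalled before the statement gives $\Gamma_0\cong\pi_1(X_0,\Delta_0)$, and the exact sequence of \citep[lemme~1.9.9]{cam01}
\[1\longrightarrow K_0\longrightarrow\pi_1(X_0,\Delta_0)\longrightarrow\pi_1(X_0)\longrightarrow 1\]
has $K_0$ generated by torsion elements. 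As $\pi_1(X_0,\Delta_0)\cong\Gamma_0$ is torsion-free, necessarily $K_0=\{1\}$, so $\pi_1(X_0,\Delta_0)\cong\pi_1(X_0)$. Finally, since $(X_0,\Delta_0)$ is klt and $\Delta_0\geq 0$, the variety $X_0$ has klt, hence rational, singularities, so the natural map $\pi_1(Y_0)\to\pi_1(X_0)$ from a resolution $Y_0\to X_0$ is an isomorphism (Takayama; see also Kollár); as $X_0$ is projective, $Y_0$ is a smooth projective variety. Hence $\Gamma_0$ is the fundamental group of a compact Kähler manifold and is virtually nilpotent.

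It then suffices to recall that a nilpotent group which is the fundamental group of a compact Kähler manifold is virtually abelian (F.~Campana). Choosing a finite-index normal nilpotent subgroup $N\trianglelefteq\Gamma_0$ — which is itself a Kähler group, being the fundamental group of the corresponding finite étale cover of $Y_0$ — this theorem produces a finite-index abelian subgroup of $N$, hence of $\Gamma_0$, hence of $\Gamma$. Therefore $\pi_1^{\mathrm{orb}}(\mathcal{X})$ is virtually abelian.

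I do not expect an analytic obstacle: the hard input is packaged in \cref{Mt}, and the remainder is a chain of standard facts. The points demanding care are the bookkeeping of the reduction — that projectivity and the klt / orbifold-pair structure pass to the finite cover $\mathcal{X}_0$, and that torsion-freeness of $\Gamma_0$ genuinely trivializes $K_0$ — together with the correct use of the two external inputs cited as black boxes, namely invariance of $\pi_1$ under resolution of klt singularities and Campana's theorem on nilpotent Kähler groups, plus the routine upgrade from "nilpotent" to "virtually nilpotent" via a finite-index subgroup. An alternative, closer to Păun's proof of \cref{opthm2} and to the outline announced in the introduction, is to argue directly on the pair: establish an orbifold/pair version of Qi Zhang's theorem asserting that the orbifold Albanese morphism of $(X,\Delta)$ is surjective, and then combine it with an orbifold analogue of Campana's comparison of $\pi_1(X,\Delta)$ with the fundamental group of the Albanese torus (\citep[Théorème~2.2]{cam98}) and with the polynomial growth from \cref{Mt}; on that route, proving those two orbifold generalizations would be the main obstacle.
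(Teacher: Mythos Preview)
Your argument is correct, and it takes a genuinely different route from the paper's own proof. The paper proceeds exactly along the ``alternative'' you sketch at the end: after reducing to $\pi_1^{\mathrm{orb}}(\mathcal{X})$ nilpotent, it keeps the nef hypothesis in play and invokes Qi~Zhang's theorem to get surjectivity of the Albanese map of $X$, passes to a log resolution $r\colon Y\to(X,\Delta)$, builds an auxiliary orbifold divisor $\Delta_Y$ with $\pi_1(Y,\Delta_Y)=\pi_1(X,\Delta)$, and then uses Hirsch's description of torsion in nilpotent groups together with Campana's comparison $\pi_1(Y)/\pi_1(Y)_{\mathrm{tor}}\cong\pi_1(\mathrm{Alb}(X))$ to conclude. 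Your route instead discharges the nef assumption entirely inside \cref{Mt}: Selberg's lemma gives a torsion-free finite-index $\Gamma_0$, which forces $K_0=1$ in the Campana exact sequence and identifies $\Gamma_0$ with the fundamental group of a smooth projective (hence K\"ahler) manifold, so that Campana's abstract ``nilpotent K\"ahler $\Rightarrow$ virtually abelian'' finishes the job. What this buys you is a cleaner reduction---no need to track nefness or Albanese surjectivity on the cover, and in fact your argument uses projectivity only through Takayama's $\pi_1$-invariance and the existence of a projective resolution, so it is essentially a statement about K\"ahler orbifold pairs with virtually nilpotent $\pi_1^{\mathrm{orb}}$. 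The paper's approach, by contrast, makes the role of the Albanese map explicit and stays closer to P\u{a}un's original strategy, at the cost of re-invoking the positivity (via Qi~Zhang) and doing more bookkeeping on the log resolution. The two external black boxes you flag (Takayama/Koll\'ar on $\pi_1$ of klt resolutions, and Campana on nilpotent K\"ahler groups) are indeed the load-bearing inputs; note that Campana's result itself is proved via the Albanese map, so the two approaches ultimately rest on the same geometry.
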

\begin{proof}
By \cref{mt}, we know there exists a nilpotent subgroup $\Gamma < \pi^{\mathrm{orb}}(\mathcal{X})$ such that its index $[\pi^{\mathrm{orb}}(\mathcal{X}):\Gamma]$ is finite. By \cref{Galois}, we may find a finite cover $\tilde{\mathcal{X}}$ of $\mathcal{X}$ such that $\pi_1^{\mathrm{orb}}(\tilde{\mathcal{X}})=\Gamma$. For any $\epsilon>0$ and the Hermitian metric $h_{\epsilon}$ such that $\Theta_{h_{\epsilon}}\geq \epsilon \omega$, we may take induced metric on $-K_{\tilde{\mathcal{X}}}$. In particular, we have $-K_{\tilde{\mathcal{X}}}$ nef.

To simplify the notation, we may assume that $\pi_1^{\mathrm{orb}}(\mathcal{X})$ is nilpotent. Consider $(X,\Delta)$, we know that $-(K_X+\Delta)$ is nef. By \citep[Corollary~2.]{Zhang05}, we know the Albanese map $\alpha_X:X\rightarrow \mathrm{Alb}(X)$ is dorminant. Recall that  the Albanese map of $X$ is defined by the Albanese morphism of its smooth model. Let $r:Y\rightarrow (X,\Delta)$ be a log resolution, we have
\begin{center}
$\xymatrix{Y \ar[r]^r\ar[d]_{\alpha_{Y}} & X \ar@{.>}[dl]^{\alpha_X}\\
\mathrm{Alb}(Y)=:\mathrm{Alb}(X)}$
\end{center}
As $X$ has rational singularities, by \citep[Propsition~2.3.]{Reid83} $\alpha_X$ is defined on all $X$ and hence surjective. We have
\begin{equation*}
K_Y+r_{\ast}^{-1}(\Delta)=r^{\ast}(K_X+\Delta)+\sum E_j,
\end{equation*}
where $E_j\subset \mathrm{exc}(r)$ are the prime exceptional divisors. We have 
\begin{equation}
\label{space}
Y\setminus(r_{\ast}^{-1}(\Delta)\cup \mathrm{exc}(r))=X_{\mathrm{reg}}\setminus |\Delta|.
\end{equation}
If $l_j$ is a loop in $Y$ around $E_j$, we have $r\circ l_j$ will be a loop around some $D_i$. By the definition of $\pi_1(X,\Delta)$, we could find a large enough $n_j$ such that $(r\cdot l_j)^{n_j}$ is the unit in $\pi_1(X,\Delta)$. We define 
\begin{equation*}
\Delta_Y=r_{\ast}^{-1}(\Delta)+\sum (1-\frac{1}{n_j})E_j.
\end{equation*}
By \cref{space}, the choice of $n_j$ and \cref{cpi1}, we have $\pi_1(Y,\Delta_Y)=\pi_1(X,\Delta)$. As $r:Y\rightarrow X$ is a log resolution, $|\Delta_Y|$ is snc. If $r_{\ast}^{-1}(D_i)$, $1\leq i\leq i_0$ and $E_j$, $1\leq j\leq j_0$ are all the components of $\Delta_Y$ passing through a point $y\in Y$. Then there is holomorphic chart $(U,\phi)$ centered at $y$, such that $\phi_{\ast}(\Delta_Y|_U)$ is the branching divisor of the following map
\begin{center}
$(z_1,...,z_{i_0},z_{i_0+1},...,z_{i_0+j_0},...,z_n)\mapsto (z_1^{m_1},...,z_{i_0}^{m_{i_0}},z_{i_0+1}^{n_1},...,z_{i_0+j_0}^{n_j},z_{i_0+j_0+1},...,z_n)$
\end{center}
Hence $(Y,\Delta_Y)$ is an orbifold pair. We write the short exact sequence 
\begin{equation*}
1\rightarrow K\rightarrow \pi_1(Y,\Delta_Y)\rightarrow \pi_1(Y)\rightarrow 1
\end{equation*}
Recall that (by \citep[lemme~1.9.9]{cam01}) $K$ is generated by torsion elements. As $\pi_1(Y,\Delta_Y)=\pi_1(X,\Delta)$ is nilpotent, we have $\pi_1(Y)$ is also nilpotent. By \citep{hir38}, for any nilpotent group $N$ of finite type, the torsion element of $N$ forms a finite normal subgroup $N_{\mathrm{tor}}\trianglelefteq N$ and the nilpotent limit of $N$ is $N/N_{\mathrm{tor}}$. By the above exact sequence, we have \begin{equation*}
\pi_1(Y,\Delta_Y)/\pi_1(Y,\Delta_Y)_{\mathrm{tor}}=\pi_1(Y)/\pi_1(Y)_{\mathrm{tor}}.
\end{equation*}
As $\alpha_Y$ is surjective, by \citep[Théorèm~2.2]{cam98}, we have $\pi_1(Y)/\pi_1(Y)_{\mathrm{tor}}=\pi_1(\mathrm{Alb}(X))$. By \citep[Lemme~A.0.1]{claudon07}, we know this means $\pi_1(Y,\Delta_Y)$ is virtually Abelian. As $\pi_1(Y,\Delta_Y)$ has finite index in $\pi_1^{\mathrm{orb}}(\mathcal{X})$, we know that $\pi_1^{\mathrm{orb}}(\mathcal{X})$ is also virtually Abelian.
\end{proof}

\begin{rem}
As one can see from the proof of \cref{abe}, the hypothesis $X$ is projective is used only to show that the Albanese morphism is surjective. One may reformulate \cref{abe} as following:

If the covering $(X',\Delta')$ corresponds to the nilpotent subgroup of $\pi_1(X,\Delta)$ has surjective Albanese morphism, then $\pi_1(X,\Delta)$ is virtually Abelian.
\end{rem}

\bibliographystyle{alpha}

\bibliography{orbi-fundamental}

\begin{thebibliography}{BGT12}

\bibitem[ALR07]{Ruan07}
Alejandro Adem, Johann Leida, and Yongbin Ruan.
\newblock {\em Orbifolds and Stringy Topology}.
\newblock Cambridge Tracts in Mathematics. Cambridge University Press, 2007.

\bibitem[Aub82]{aubin82}
Thierry Aubin.
\newblock {\em Nonlinear Analysis on Manifolds. Monge-Amp{\`e}re Equations},
  volume 252 of {\em Grundlehren der mathematischen Wissenschaften}.
\newblock Springer-Verlag New York, 1982.

\bibitem[Bai56]{Baily}
Walter~L. Baily.
\newblock The decomposition theorem for v-manifolds.
\newblock {\em American Journal of Mathematics}, 78(4):862--888, 1956.

\bibitem[BGT12]{Tao12}
Emmanuel Breuillard, Ben Green, and Terence Tao.
\newblock The structure of approximate groups.
\newblock {\em Publications Math\'ematiques de l'IH\'ES}, 116:115--221, 2012.

\bibitem[BH99]{Hae99}
Martin~R. Bridson and Andr{\'e} Haefliger.
\newblock {\em Metric Spaces of Non-Positive Curvature}, volume 319 of {\em
  Grundlehren der mathematischen Wissenschaften}.
\newblock Springer-Verlag Berlin Heidelberg, 1999.

\bibitem[BM19]{Bar}
Daniel Barlet and J{\'o}n Magn{\'u}sson.
\newblock {\em Complex Analytic Cycles I}, volume 356 of {\em Grundlehren der
  mathematischen Wissenschaften}.
\newblock Springer, Cham, 2019.

\bibitem[Bor93]{Bor93}
Joseph Borzellino.
\newblock Orbifolds of maximal diameter.
\newblock {\em Indiana University Mathematics}, 42(1):37--53, January 1993.

\bibitem[Bre72]{Bredon}
Glen Bredon.
\newblock {\em Introduction to Compact Transformation Groups}, volume~46 of
  {\em Pure and Applied Mathematics}.
\newblock Academic Press, 1 edition, 1972.

\bibitem[Cam95]{cam98}
Fr{\'e}d{\'e}ric Campana.
\newblock Remarques sur les groupes de k{\"a}hler nilpotents.
\newblock {\em Annales scientifiques de l'\'Ecole Normale Sup\'erieure}, 4e
  s{\'e}rie, 28(3):307--316, 1995.

\bibitem[Cam01]{cam01}
Fr{\'e}d{\'e}ric Campana.
\newblock {Ensembles de Green-Lazarsfeld et quotients resolubles des groupes de
  K{\"a}hler}.
\newblock {\em J. Algebr. Geom.}, 10(4):599--622, 2001.

\bibitem[CC14]{Campana_Claudon14}
Fr{\'e}deric Campana and Beno{\^\i}t Claudon.
\newblock Abelianity conjecture for special compact k{\"a}hler 3-folds.
\newblock {\em Proceedings of the Edinburgh Mathematical Society},
  57(1):55--78, 2014.

\bibitem[Cho04]{Choi04}
Suhyoung Choi.
\newblock Geometric structures on orbifolds and holonomy representations.
\newblock {\em Geometriae Dedicata}, 104(1):161--199, 2004.

\bibitem[Cla07]{claudon07}
Beno{\^\i}t Claudon.
\newblock D{\'e}formations de vari{\'e}t{\'e}s k{\"a}hl{\'e}riennes compactes :
  invariance de la gamma-dimension et extension de sections pluricanoniques.
\newblock Phd Thesis, 2007.

\bibitem[DPS93]{DPS93}
Jean-Pierre Demailly, Thomas Peternell, and Michael Schneider.
\newblock K{\"a}hler manifolds with numerically effective ricci class.
\newblock {\em Compositio Mathematica}, 89(2):217--240, 1993.

\bibitem[Fau19]{Faulk19}
Mitchell Faulk.
\newblock On yau's theorem for effective orbifolds.
\newblock {\em Expositiones Mathematicae}, 37(4):382--409, 2019.

\bibitem[GK07]{G-K07}
Alessandro Ghigi and J{\'a}nos Koll{\'a}r.
\newblock K{\"a}hler-einstein metrics on orbifolds and einstein metrics on
  spheres.
\newblock {\em Commentarii Mathematici Helvetici}, 82(4):877--902, January
  2007.

\bibitem[Gro81]{Gro}
Mikhael Gromov.
\newblock Groups of polynomial growth and expanding maps (with an appendix by
  jacques tits).
\newblock {\em Publications Math\'ematiques de l'IH\'ES}, 53:53--78, 1981.

\bibitem[Hir38]{hir38}
K.~A. Hirsch.
\newblock On infinite soluble groups (ii).
\newblock {\em Proceedings of the London Mathematical Society}, s2-44:336--344,
  1938.

\bibitem[HM04]{HM04}
Andre Henriques and David~S. Metzler.
\newblock Presentations of noneffective orbifolds.
\newblock {\em Transactions of the American Mathematical Society},
  356(6):2481--2499, February 2004.

\bibitem[Lan20]{Lange20}
Christian Lange.
\newblock Orbifolds from a metric viewpoint.
\newblock {\em Geometriae Dedicata}, 209(1):43--57, 2020.

\bibitem[Mac05]{Mac05}
Kirill C.~H. Mackenzie.
\newblock {\em General Theory of Lie Groupoids and Lie Algebroids}.
\newblock London Mathematical Society Lecture Note Series. Cambridge University
  Press, 2005.

\bibitem[MP97]{Moe97}
Ieke Moerdijk and Dorret~A. Pronk.
\newblock Orbifolds, sheaves and groupoids.
\newblock {\em K-Theory}, 12(1):3--21, 1997.

\bibitem[MP99]{Moe99}
Ieke Moerdijk and Dorret~A. Pronk.
\newblock Simplicial cohomology of orbifolds.
\newblock {\em Indagationes Mathematicae}, 10(2):269--293, 1999.

\bibitem[Par20]{Pardon20vector}
John Pardon.
\newblock Enough vector bundles on orbispaces, 2020.

\bibitem[P{\u a}u97]{Paun97}
Mihai P{\u a}un.
\newblock Sur le groupe fondamental des vari{\'e}t{\'e}s k{\"a}hl{\'e}riennes
  compactes {\`a} classe de ricci num{\'e}riquement effective.
\newblock {\em Comptes Rendus de l'Acad{\'e}mie des Sciences - Series I -
  Mathematics}, 324(11):1249--1254, 1997.

\bibitem[P{\u a}u17]{paun17}
Mihai P{\u a}un.
\newblock Relative adjoint transcendental classes and albanese map of compact
  k{\"a}hler manifolds with nef ricci curvature.
\newblock {\em Advanced Studies in Pure Mathematics}, 74:335--356, 2017.

\bibitem[Pet16]{Petersen16}
Peter Petersen.
\newblock {\em Riemannian Geometry}, volume 171 of {\em Graduate Texts in
  Mathematics}.
\newblock Springer International Publishing, 3 edition, 2016.

\bibitem[PW84]{PW}
Murray~H. Protter and Hans~F. Weinberger.
\newblock {\em Maximum Principles in Differential Equations}.
\newblock Springer-Verlag New York, 1984.

\bibitem[Rei83]{Reid83}
Miles Reid.
\newblock Projective morphisms according to kawamata.
\newblock 1983.

\bibitem[Sak96]{Sakai96}
Takashi Sakai.
\newblock {\em Riemannian Geometry}, volume 149 of {\em Translations of
  Mathematical Monographs}.
\newblock American Mathematical Society, 1996.

\bibitem[Sat56]{Sat56}
Ichiro Satake.
\newblock On a generalization of the notion of manifold.
\newblock {\em Proceedings of the National Academy of Sciences},
  42(6):359--363, 1956.

\bibitem[Sat57]{Sat57}
Ichiro Satake.
\newblock {The Gauss-Bonnet Theorem for V-manifolds.}
\newblock {\em Journal of the Mathematical Society of Japan}, 9(4):464 -- 492,
  1957.

\bibitem[Sz{\'e}14]{Sze}
G{\'a}bor Sz{\'e}kelyhidi.
\newblock {\em An Introduction to Extremal K{\"a}hler Metrics}, volume 152 of
  {\em Graduate Studies in Mathematics}.
\newblock American Mathematical Society, 2014.

\bibitem[Thu79]{Thurston79}
William Thurston.
\newblock {\em Geometry and topology of three-manifolds}.
\newblock Princeton University, 1979.

\bibitem[Var89]{Var89}
Jean Varouchas.
\newblock Kähler spaces and proper open morphisms.
\newblock {\em Mathematische Annalen}, 283(1):13--52, 1989.

\bibitem[Voi02]{vo02}
Claire Voisin.
\newblock {\em Hodge Theory and Complex Algebraic Geometry I}, volume~1 of {\em
  Cambridge Studies in Advanced Mathematics}.
\newblock Cambridge University Press, 2002.

\bibitem[Zha05]{Zhang05}
Qi~Zhang.
\newblock On projective varieties with nef anticanonical divisors.
\newblock {\em Mathematische Annalen}, 332(3):697--703, 2005.

\end{thebibliography}

\end{document}